\documentclass{amsart}

\usepackage{graphicx}
\usepackage{morefloats,bm,caption,amsbsy,enumerate,amsmath,amsthm,
amssymb,mathtools,amsfonts,multirow,verbatim,tikz,tikz-cd,thmtools,thm-restate,chngcntr}
\usepackage{enumitem}
\usepackage[alphabetic]{amsrefs}
\usepackage[hidelinks,colorlinks=true,linkcolor=blue, citecolor=black,linktocpage=true]{hyperref}
\usetikzlibrary{matrix,arrows,decorations.pathmorphing}
\usepackage[top=1.25in, bottom=1in, left=1.25in, right=1.25in,marginpar=1in]{geometry}
\usepackage{url}
\pagestyle{headings}
\counterwithin{figure}{section}
\numberwithin{equation}{section}

\counterwithin{figure}{section}

\setlength{\parindent}{1em}

\newenvironment{customthm}[1]
  {\innercustomthm}
  {\endinnercustomthm}

\newenvironment{customcor}[1]
  {\innercustomcor}
  {\endinnercustomcor}

\newtheorem{thm}{Theorem}[section]
\newtheorem{prop}[thm]{Proposition}

\newtheorem{cor}[thm]{Corollary}
\newtheorem{lem}[thm]{Lemma}

\theoremstyle{definition}
\newtheorem{define}[thm]{Definition}

\theoremstyle{remark}
\newtheorem{rem}[thm]{Remark}
\newtheorem{example}[thm]{Example}

\newcommand{\ve}[1]{\boldsymbol{\mathbf{#1}}}
\newcommand{\R}{\mathbb{R}}

\newcommand{\Z}{\mathbb{Z}}

\newcommand{\Q}{\mathbb{Q}}
\newcommand{\C}{\mathbb{C}}

\renewcommand{\d}{\partial}
\renewcommand{\subset}{\subseteq}

\renewcommand{\tilde}{\widetilde}
\renewcommand{\bar}{\overline}
\renewcommand{\Hat}{\widehat}
\newcommand{\iso}{\cong}

\DeclareMathOperator{\Char}{{Char}}

\DeclareMathOperator{\Crit}{{Crit}}

\DeclareMathOperator{\gr}{{gr}}

\DeclareMathOperator{\id}{{id}}

\DeclareMathOperator{\Int}{{int}}

\DeclareMathOperator{\MCG}{{MCG}}

\DeclareMathOperator{\rank}{{rank}}

\DeclareMathOperator{\Spin}{{Spin}}

\DeclareMathOperator{\Span}{{Span}}
\DeclareMathOperator{\Sym}{{Sym}}

\DeclareMathOperator{\Tors}{{Tors}}

\DeclareMathOperator{\rep}{{rep}}

\newcommand{\lk}{\ell k}

\newcommand{\bA}{\mathbb{A}}

\newcommand{\bF}{\mathbb{F}}
\newcommand{\bG}{\mathbb{G}}

\newcommand{\bJ}{\mathbb{J}}
\newcommand{\bK}{\mathbb{K}}
\newcommand{\bL}{\mathbb{L}}

\newcommand{\bP}{\mathbb{P}}

\newcommand{\bS}{\mathbb{S}}
\newcommand{\bT}{\mathbb{T}}
\newcommand{\bU}{\mathbb{U}}

\newcommand{\CP}{\mathbb{CP}}

\newcommand{\cA}{\mathcal{A}}
\newcommand{\cB}{\mathcal{B}}
\newcommand{\cC}{\mathcal{C}}
\newcommand{\cD}{\mathcal{D}}

\newcommand{\cF}{\mathcal{F}}
\newcommand{\cG}{\mathcal{G}}
\newcommand{\cH}{\mathcal{H}}

\newcommand{\cL}{\mathcal{L}}
\newcommand{\cM}{\mathcal{M}}

\newcommand{\cO}{\mathcal{O}}
\newcommand{\cP}{\mathcal{P}}

\newcommand{\cR}{\mathcal{R}}
\newcommand{\cS}{\mathcal{S}}
\newcommand{\cT}{\mathcal{T}}

\newcommand{\cW}{\mathcal{W}}

\newcommand{\frd}{\mathfrak{d}}

\newcommand{\frs}{\mathfrak{s}}

\newcommand{\cCFL}{\mathcal{C\!F\!L}}
\newcommand{\cHFL}{\mathcal{H\!F\! L}}
\newcommand{\CF}{\mathit{CF}}
\newcommand{\HF}{\mathit{HF}}
\newcommand{\CFK}{\mathit{CFK}}
\newcommand{\HFK}{\mathit{HFK}}
\newcommand{\tHFK}{\mathit{tHFK}}
\newcommand{\tCFK}{\mathit{tCFK}}
\newcommand{\CFL}{\mathit{CFL}}
\newcommand{\HFL}{\mathit{HFL}}

\newcommand{\PD}{\mathit{PD}}
\newcommand{\tF}{\mathit{tF}}

\newcommand{\xs}{\ve{x}}
\newcommand{\ys}{\ve{y}}
\newcommand{\zs}{\ve{z}}
\newcommand{\ws}{\ve{w}}

\newcommand{\as}{\ve{\alpha}}
\newcommand{\bs}{\ve{\beta}}
\newcommand{\gs}{\ve{\gamma}}

\newcommand{\taus}{\ve{\tau}}
\newcommand{\sigmas}{\ve{\sigma}}

\renewcommand{\a}{\alpha}
\renewcommand{\b}{\beta}
\newcommand{\g}{\gamma}

\newcommand{\bmP}{{\bm{P}}}

\title{Link cobordisms and absolute gradings on link Floer homology}
\author{Ian Zemke}
\address{Department of Mathematics\\Princeton University\\  Princeton, NJ 08544, USA}
\email{izemke@math.princeton.edu}
\thanks{This research was supported by NSF grant DMS-1703685}

\setcounter{tocdepth}{1}

\begin{document}
\begin{abstract}We show that the link cobordism maps defined by the author are graded and satisfy a grading change formula.  Using the grading change formula, we prove a new bound for $\Upsilon_K(t)$ for knot cobordisms in negative definite 4-manifolds.  As another application, we show that the link cobordism maps associated to a connected, closed surface in $S^4$ are determined by the genus of the surface. We also prove a new adjunction relation and adjunction inequality for the link cobordism maps. Along the way, we see how many known results in Heegaard Floer homology can be proven using basic properties of the link cobordism maps, together with the grading change formula.
\end{abstract}
\maketitle

\tableofcontents

\section{Introduction}

Ozsv\'{a}th and Szab\'{o} \cite{OS4ballgenus} define a homomorphism $\tau$ from the smooth concordance group to $\Z$, which satisfies 
\begin{equation}
|\tau(K)|\le g_4(K),\label{eq:tau4-ballgenusbound}
\end{equation}
where $g_4(K)$ denotes the smooth 4-ball genus. More generally if $(W,\Sigma)\colon (S^3,K_1)\to (S^3,K_2)$ is an oriented knot cobordism with $b_1(W)=b_2^+(W)=0$, they proved that 
\begin{equation}
\tau(K_2)\le \tau(K_1)-\frac{\big|[\Sigma]\big|+[\Sigma]\cdot [\Sigma]}{2}+g(\Sigma)\label{eq:taubounds}
\end{equation}
 where $|[\Sigma]|$ denotes the integer obtained by factoring $[\Sigma]\in H_2(W,\d W;\Z)$ into $H_2(W;\Z)$ and setting
\[
|[\Sigma]|:=\max_{\substack{C\in \Char(Q_W)\\ C^2=-b_2(W)}} \langle C, [\Sigma]\rangle.
\]
In the above expression, $\Char(Q_W)$ denotes the set of characteristic vectors of $H^2(W;\Z)/\Tors$.

Another set of concordance invariants from Heegaard Floer homology are Rasmussen's local $h$-invariants \cite{RasmussenKnots}. If $k$ is a nonnegative integer, Rasmussen defines a nonnegative integer invariant $V_k(K)$, and proves   that
\begin{equation}
V_k(K)\le \left\lceil \frac{g_4(K)-k}{2} \right\rceil,\label{eq:localhinvariantbounds}
\end{equation}
whenever $k\le g_4(K)$ \cite{RasmussenGodaTeragaito}*{Theorem~2.3}.

The original proofs of Equation~\eqref{eq:taubounds} by Ozsv\'{a}th and Szab\'{o} and Equation~\eqref{eq:localhinvariantbounds} by Rasmussen used the behavior of Heegaard Floer homology with respect to surgeries on a 3-manifold. 

There are several notable examples of geometric results which have been proven using maps induced by link cobordisms. Sarkar gave a combinatorial proof of Equation~\eqref{eq:tau4-ballgenusbound} using maps associated to link cobordisms defined using grid diagrams \cite{SarkarTau}, though the proof does not extend to prove the full version of the bound in Equation~\eqref{eq:taubounds}. Rasmussen \cite{RasSliceGenus} gave a proof of the Milnor conjecture using the $s$-invariant and cobordism maps defined on Khovanov homology.

The motivation of the present paper is to extend the tools of link Floer homology to prove geometric results using the link cobordism maps from \cite{ZemCFLTQFT}.  As an example, we will show how Equations~\eqref{eq:taubounds} and \eqref{eq:localhinvariantbounds} can be proven using link cobordism techniques; See Theorems~\ref{thm:OSboundtauproof} and \ref{thm:Rasmussensbound}. In the rest of the paper, we apply our techniques to derive new geometric applications of link Floer homology.

\subsection{A bound on $\Upsilon_K(t)$}

Using the techniques we develop in this paper, we prove a new bound on Ozsv\'{a}th, Stipsicz and Szab\'{o}'s concordance invariant $\Upsilon_K(t)$ \cite{OSSUpsilon}, which is analogous to the bound on $\tau(K)$ in Equation~\eqref{eq:taubounds}.

 For a fixed knot $K$, the invariant $\Upsilon_K(t)$ is a piecewise linear function from $[0,2]$ to $\R$. The maps $\Upsilon_K(t)$ determine a homomorphism from the concordance group to the group of piecewise linear functions from $[0,2]$ to $\R$.

Suppose $W$ is a compact, oriented 4-manifold with boundary equal to two rational homology spheres.  If $[\Sigma]\in H_2(W,\d W;\Z)$ is a class whose image in $H_1(\d W;\Z)$ vanishes, then $[\Sigma]$ determines a unique element of $H_2(W;\Z)/\Tors$, for which we also write $[\Sigma]$. We define the quantity
\[
M_{[\Sigma]}(t):=\max_{C\in \Char(Q_W)} \frac{C^2+b_2(W)-2 t\langle C, [\Sigma]\rangle+2t([\Sigma]\cdot [\Sigma])}{4}.
\]

Although the expression for $M_{[\Sigma]}(t)$ looks unmotivated, we note that if $\rank (H^2(W;\Z))=1$, $Q_W=(-1)$ and $\PD[\Sigma]=n \cdot E$ for a generator $E\in H^2(W;\Z)$ and integer $n\ge 0$, then
\[
M_{[\Sigma]}(t)=\Upsilon_{T_{n,n+1}}(t),
\]
where $T_{n,n+1}$ denotes the $(n,n+1)$-torus knot; See Lemma~\ref{lem:UpsilonTnn+1}.

We now state our result about the invariant $\Upsilon_K(t)$:

\begin{thm}\label{thm:upsiloninvariant}If $(W,\Sigma)\colon (S^3,K_1)\to (S^3,K_2)$ is an oriented knot cobordism with $b_1(W)=b_2^+(W)=0$, then 
 \[
 \Upsilon_{K_2}(t)\ge \Upsilon_{K_1}(t)+ M_{[\Sigma]}(t)+ g(\Sigma)\cdot (|t-1|-1).
 \]
\end{thm}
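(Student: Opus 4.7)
The argument is a direct extension of the proof of the $\tau$-bound \eqref{eq:taubounds} sketched just above, replacing the single Maslov grading by a $t$-weighted combination of the two Maslov gradings on the full knot Floer complex. Let $\gr_w$ and $\gr_z$ denote the Maslov gradings associated to the two basepoints on $K$, so that $A = (\gr_w - \gr_z)/2$. Following \cite{OSSUpsilon}, the invariant $\Upsilon_K(t)$ can be computed (up to sign conventions and an overall factor of two) as the extremum, over chain representatives of the generator of $\widehat{HF}(S^3)$ in the appropriate filtration level of $CFK^\infty(S^3,K)$, of the $t$-graded quantity $\gr^t = (1-t/2)\gr_w + (t/2)\gr_z$. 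I would begin by fixing such a representative $x\in CFK^\infty(S^3,K_1)$ realizing $\Upsilon_{K_1}(t)$.

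Next, I would apply the link cobordism map $F_{W,\Sigma,\frs}$ of \cite{ZemCFLTQFT} to $x$ for a $\Spin^c$ structure $\frs$ and a dividing set on $\Sigma$ to be chosen in the optimization step. The main grading change formulas of the present paper, specialized to a decoration splitting $\Sigma$ into type-$w$ and type-$z$ regions of genera $g_w, g_z$ with $g_w + g_z = g(\Sigma)$, take the form
\begin{align*}
\Delta\gr_w &= \tfrac{c_1(\frs)^2 - 2\chi(W) - 3\sigma(W)}{4} + \tfrac{\langle c_1(\frs),[\Sigma]\rangle - [\Sigma]\cdot[\Sigma]}{2} - g_w,\\
\Delta\gr_z &= \tfrac{c_1(\frs)^2 - 2\chi(W) - 3\sigma(W)}{4} - \tfrac{\langle c_1(\frs),[\Sigma]\rangle - [\Sigma]\cdot[\Sigma]}{2} - g_z.
\end{align*}
Combining $t$-weightedly yields $\Delta\gr^t$ applied to $x$. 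For this to give a bound on $\Upsilon_{K_2}(t)$, the image $F_{W,\Sigma,\frs}(x)$ must still represent the generator of $\widehat{HF}(S^3)$; this nonvanishing is where the hypotheses $b_1(W)=0$ and $b_2^+(W)=0$ enter, via the classical nonvanishing of Ozsv\'ath--Szab\'o \cite{OSIntersectionForms} for the ambient 3-manifold cobordism map together with its compatibility with the link Floer cobordism maps.

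The two correction terms in the statement then emerge from optimization. Placing all the genus on the $z$-side when $t\in[0,1]$ and on the $w$-side when $t\in[1,2]$, both realizable by simple dividing sets, contributes $-\tfrac12(1-|t-1|)g(\Sigma)$ to $\Delta\gr^t$; the sign conversion from $\gr^t$ back to $\Upsilon$ flips this to the term $g(\Sigma)(|t-1|-1)$. Using $b_1(W)=b_2^+(W)=0$, choose an orthonormal basis $e_1,\dots,e_n$ of $H_2(W;\Z)$, so that $\Spin^c$ structures are parametrized by characteristic vectors $c_1(\frs)=\sum a_i e_i$ with $a_i\in 2\Z+1$; writing $[\Sigma]=\sum s_i e_i$, the $4$-manifold part of the $t$-weighted shift decouples coordinate-wise, and its maximum over each $a_i$ is exactly $M_t(s_i)$, which sums to $M_t([\Sigma])$. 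The main obstacle will be the nonvanishing step: one must arrange for $F_{W,\Sigma,\frs}$ to act nontrivially on the class of $x$ for a $\Spin^c$ structure simultaneously achieving the coordinatewise maximum. I expect this to be handled by reducing to a blown-up model (a connected sum of $W$ with $\overline{\CP}^2$'s) where the nonvanishing follows from the classical negative-definite argument of \cite{OSIntersectionForms} combined with the blow-up formula for the link cobordism maps established earlier in the paper.
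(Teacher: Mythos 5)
Your plan is the paper's plan: apply a decorated link cobordism map with a suitable dividing set, use the negative-definite nonvanishing argument to guarantee that non-torsion classes in $tHFK^-$ are carried to non-torsion classes, compute the $\gr_t$-grading change, and maximize simultaneously over the $\Spin^c$ structure (coordinate-wise, yielding $M_t([\Sigma])$) and over the choice of dividing set. Your choice to place all the genus on the $\ve{z}$-side when $t\in[0,1]$ and on the $\ve{w}$-side when $t\in[1,2]$ is equivalent to the paper's shortcut of always taking $\Sigma_{\ve{w}}$ to be a strip and invoking the symmetries $M_t=M_{2-t}$ and $\Upsilon_K(t)=\Upsilon_K(2-t)$; the paper remarks explicitly that varying the dividing set yields nothing more.

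However, the Maslov grading change formulas as you have written them are wrong, and the errors are not sign or normalization ambiguities that can be absorbed into ``up to a factor of two.'' By Theorem~\ref{thm:3}, the $\gr_{\ve{w}}$ shift has \emph{no} term involving $\langle c_1(\frs),[\Sigma]\rangle$: it is $\frac{c_1(\frs)^2-2\chi(W)-3\sigma(W)}{4}+\tilde{\chi}(\Sigma_{\ve{w}})$. The $\gr_{\ve{z}}$ shift instead replaces $\frs$ by $\frs-PD[\Sigma]$, and since $c_1(\frs-PD[\Sigma])^2=c_1(\frs)^2-4\langle c_1(\frs),[\Sigma]\rangle+4[\Sigma]\cdot[\Sigma]$, the extra contribution is $-\langle c_1(\frs),[\Sigma]\rangle+[\Sigma]\cdot[\Sigma]$ --- twice the coefficient you wrote. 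Moreover, for a connected piece $\Sigma_{\ve{o}}$ meeting each knot boundary in a single arc, $\tilde{\chi}(\Sigma_{\ve{o}})=-2g(\Sigma_{\ve{o}})$, not $-g(\Sigma_{\ve{o}})$. Feeding your formulas into $\gr_t=(1-\tfrac{t}{2})\gr_{\ve{w}}+\tfrac{t}{2}\gr_{\ve{z}}$ produces a $(1-t)$-coefficient on $\tfrac{1}{2}(\langle c_1(\frs),[\Sigma]\rangle-[\Sigma]\cdot[\Sigma])$ where the correct coefficient is $-t$, and a genus term half the right size; you cannot simultaneously reach the stated bound and match the coordinate-wise expansion that defines $M_t(s_i)$. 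Substituting the correct Theorem~\ref{thm:3} formulas (combined as in Equation~\eqref{eq:grtgradingchange}) gives the stated bound on the nose, with no residual factor or sign to fix.
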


The bound in Theorem~\ref{thm:upsiloninvariant} is sharp in the following sense: for any positive torus knot $T_{a,b}$, there is a knot cobordism $(W,\Sigma)$ from the unknot to $T_{a,b}$, with $g(\Sigma)=b_1(W)=b_2^+(W)=0$, such that
\[
\Upsilon_{T_{a,b}}(t)=M_{[\Sigma]}(t).
\]
See Proposition~\ref{prop:generaltorusknots}.

For small $t$, the function  $\Upsilon_K(t)$ satisfies $\Upsilon_K(t)=-\tau(K)\cdot t$. Correspondingly, for small $t$, our bound reads
\[
\Upsilon_{K_2}(t)\ge \Upsilon_{K_1}(t)+t\cdot \left(\frac{\big|[\Sigma]\big|+[\Sigma]\cdot [\Sigma]}{2}- g(\Sigma)\right),
\]
 reflecting Ozsv\'{a}th and Szab\'{o}'s bound on $\tau$ in Equation \eqref{eq:taubounds}.

\subsection{Alexander and Maslov grading change formulas}
\label{sec:introduction:gradingformulas}
Theorem~\ref{thm:upsiloninvariant} follows from a much more general formula for the grading change associated to the link cobordism maps from \cite{ZemCFLTQFT}, as well as some properties of the link cobordism maps. In this section, we describe a general result about the link cobordism map grading changes.

Before we state our grading formula, we recall the setup of link Floer homology.

\begin{define}An \emph{oriented multi-based link} $\bL=(L,\ve{w},\ve{z})$ in $Y^3$ is an oriented link $L$ with two disjoint collections of basepoints $\ve{w}=\{w_1,\dots, w_n\}$ and $\ve{z}=\{z_1,\dots, z_n\}$, such that as one traverses $L$, the basepoints alternate between $\ve{w}$ and $\ve{z}$. Furthermore, each component of $L$ has a positive (necessarily even) number of basepoints, and each component of $Y$ contains at least one component of $L$. 
\end{define}

Suppose $\bL$ is a multi-based link in $Y^3$, and  $\frs$ is a  $\Spin^c$ structure on $Y$. There are several algebraic variations of link Floer homology. The most general construction is a curved chain complex
\[
\cCFL^\infty(Y,\bL,\frs)
\]
 over the ring 
 \[
 \bF_2[U_{\ve{w}},V_{\ve{z}}]:=\bF_2[U_{w_1},\dots, U_{w_n}, V_{z_1}, \dots, V_{z_n}].
 \] 
The curved chain complex $\cCFL^\infty(Y,\bL,\frs)$ also has a filtration indexed by $\Z^{\ve{w}}\oplus \Z^{\ve{z}}$. The construction is outlined in Section~\ref{sec:background}. 

To simplify the notation in the introduction, we describe a slight algebraic simplification. There is a natural ring homomorphism
 \[
\sigma\colon \bF_2[U_{w_1},\dots, U_{w_n}, V_{z_1},\dots, V_{z_n}]\to \bF_2[U,V],
 \] defined by sending each $U_{w_i}$ to $U$, and each $V_{z_i}$ to $V$. The map $\sigma$ gives an action of $\bF_2[U_{\ws},V_{\zs}]$ on $\bF_2[U,V]$, allowing us to define the complex
 \[
 \cCFL^\infty(Y,\bL^\sigma,\frs):=\cCFL^\infty(Y,\bL,\frs)\otimes_{\bF_2[U_{\ws},V_{\zs}]} \bF_2[U,V].
 \]
which is a module over $\bF_2[U,V]$ and has a filtration indexed by $\Z\oplus \Z$. The simplification obtained by taking the tensor product using the homomorphism $\sigma$ is an example of a more general operation called \emph{coloring} a chain complex; See Section~\ref{sec:coloring}.

In \cite{ZemCFLTQFT}, the author described a TQFT for the complexes $\cCFL^\infty(Y,\bL^\sigma,\frs)$, modeled on the TQFT for the hat version $\Hat{\HFL}(Y,\bL)$ constructed by Juh\'{a}sz \cite{JCob}. The following is an adaptation of Juh\'{a}sz's notion of a decorated link cobordism:

\begin{define}\label{def:decoratedlinkcobordism} We say a pair $(W,\cF)$ is  a \emph{decorated link cobordism}  between two 3-manifolds with multi-based links, and write
\[
(W,\cF)\colon (Y_1,\bL_1)\to (Y_2,\bL_2),
\]
 if the following are satisfied:
\begin{enumerate}
\item $W$ is a 4-dimensional cobordism from $Y_1$ to $Y_2$.
\item $\cF=(\Sigma, \cA)$ consists of an oriented surface $\Sigma$ with a properly embedded 1-manifold $\cA$, whose complement in $\Sigma$ consists of two disjoint subsurfaces, $\Sigma_{\ve{w}}$ and $\Sigma_{\ve{z}}$. The intersection of the closures of $\Sigma_{\ws}$ and $\Sigma_{\zs}$ is $\cA$.
\item $\d \Sigma=-L_1\sqcup L_2$.
\item Each component of $L_i\setminus \cA$ contains exactly one basepoint.
\item  The $\ve{w}$ basepoints are all in $\Sigma_{\ve{w}}$ and the $\ve{z}$ basepoints are all in $\Sigma_{\ve{z}}$.
\end{enumerate}
\end{define}

In \cite{ZemCFLTQFT}, to a decorated link cobordism $(W,\cF)\colon (Y,\bL_1)\to (Y,\bL_2)$, the author associates a $\Z\oplus \Z$-filtered homomorphism of $\bF_2[U,V]$-modules
\[
F_{W,\cF,\frs}\colon \cCFL^\infty(Y_1,\bL_1^\sigma,\frs|_{Y_1})\to \cCFL^\infty(Y_2,\bL_2^\sigma,\frs|_{Y_2}),
\]
which is an invariant up to $\bF_2[U,V]$-equivariant, $\Z\oplus \Z$-filtered chain homotopy.

Adapting the construction of Alexander and Maslov gradings from \cite{OSLinks}, if $c_1(\frs)$ is torsion and $L$ is null-homologous, there are three gradings on $\cCFL^\infty(Y,\bL^\sigma,\frs)$. There are two Maslov gradings, $\gr_{\ws}$ and $\gr_{\zs}$, as well as an Alexander grading $A$. The three gradings are related by the formula
\begin{equation}
A=\frac{1}{2}(\gr_{\ws}-\gr_{\zs}). \label{eq:Alexandergradingrelation1}
\end{equation}

More generally, the Alexander grading can be defined even when $c_1(\frs)$ is non-torsion, though it will depend on a choice of Seifert surface $S$ for $L$; See  Theorem~\ref{thm:1}. In this more general situation, we will write $A_S$ for the Alexander grading, for the choice of Seifert surface $S$.

We will prove the following grading change formula:
\begin{thm}\label{thm:maingradingchangeformula}
Suppose that $(W,\cF)\colon (Y_1,\bL_1)\to (Y_2,\bL_2)$ is a decorated link cobordism, with type-$\ws$ and type-$\zs$ subsurfaces $\Sigma_{\ws}$ and $\Sigma_{\zs}$. 
\begin{enumerate}
\item \label{thm:maingradingchange:1} If $c_1(\frs|_{Y_1})$ and $c_1(\frs|_{Y_2})$ are torsion, then $F_{W,\cF,\frs}$ is a graded with respect to $\gr_{\ws}$, and satisfies
\[
\gr_{\ws}(F_{W,\cF,\frs}(\xs))-\gr_{\ws}(\xs)=\frac{c_1(\frs)^2-2\chi(W)-3\sigma(W)}{4}+\tilde{\chi}(\Sigma_{\ve{w}}),
\] 
where
\[
\tilde{\chi}(\Sigma_{\ve{w}}):=\chi(\Sigma_{\ve{w}})-\frac{1}{2}(|\ve{w}_{1}|+|\ve{w}_{2}|).
\]
\item\label{thm:maingradingchange:2} If $c_1(\frs|_{Y_1}-\PD[L_1])$ and $c_1(\frs|_{Y_2}-\PD[L_2])$ are torsion, then the map $F_{W,\cF,\frs}$ is graded with respect to $\gr_{\zs}$, and satisfies
\[
\gr_{\ve{z}}(F_{W,\cF,\frs}(\ve{x}))-\gr_{\ve{z}}(\ve{x})=\frac{c_1(\frs-\PD[\Sigma])^2-2\chi(W)-3\sigma(W)}{4}+\tilde{\chi}(\Sigma_{\ve{z}}).
\]
 \item If $L_1$ and $L_2$ are null-homologous, and $S_1$ and $S_2$ are Seifert surfaces of $L_1$ and $L_2$, respectively, then the map $F_{W,\cF,\frs}$ is graded with respect to the Alexander grading, and satisfies
 \[
A_{S_2}(F_{W,\cF,\frs}(\xs))-A_{S_1}(\xs)= \frac{\langle c_1(\frs),\hat{\Sigma}\rangle -[\hat{\Sigma}]\cdot [\hat{\Sigma}]}{2}+\frac{\chi(\Sigma_{\ws})-\chi(\Sigma_{\zs})}{2},
 \]
 where $\hat{\Sigma}=(-S_1)\cup \Sigma\cup S_2$.
\end{enumerate} 
\end{thm}

 We  also prove a more general version of Theorem~\ref{thm:maingradingchangeformula} for the Alexander multi-grading; see Theorem~\ref{thm:generalAlexandergradingformula}.
 
Theorem~\ref{thm:maingradingchangeformula} will follow from a description of the absolute Maslov and Alexander gradings in terms of surgery presentations of the link complement; See Theorem~\ref{thm:1}. Our description of the absolute gradings is modeled on the description of absolute gradings on $\HF^-(Y,\frs)$ by Ozsv\'{a}th and Szab\'{o} \cite{OSTriangles}.
 
  We note that in \cite{JMConcordance} and \cite{JMComputeCobordismMaps},  Juh\'{a}sz and Marengon   compute the Alexander and Maslov grading changes for Juh\'{a}sz's link cobordism maps on $\Hat{\HFL}$   when the underlying 4-manifold is $[0,1]\times S^3$.  Their formula for the grading changes agree with the ones from Theorem~\ref{thm:maingradingchangeformula} (though in their case, the only non-zero terms involve the Euler characteristic of the subsurfaces).

\subsection{Adjunction relations and link Floer homology}

We will show that the techniques developed in this paper naturally give simple proofs of some known adjunction relations and inequalities on the 3- and 4-manifold invariants constructed by Ozsv\'{a}th and Szab\'{o}. We then prove several new adjunction relations and inequalities for the link cobordism maps.

In addition to link Floer homology, Ozsv\'{a}th and Szab\'{o} \cite{OSDisks} described an invariant $\CF^-(Y,\frs)$ of a closed 3-manifold $Y$ equipped with a $\Spin^c$ structure $\frs$. For our purposes, the invariant $\CF^-(Y,\frs)$ is a free, finitely generated chain complex over $\bF_2[U]$. If $W\colon Y_1\to Y_2$ is a cobordism of connected 3-manifolds and $\frs\in \Spin^c(W)$, they describe a map \cite{OSTriangles}
\begin{equation}
F_{W,\frs}\colon \Lambda^*( H_1(W;\Z)/\Tors)\otimes_{\bF_2[U]} \CF^-(Y_1,\frs|_{Y_1})\to \CF^-(Y_2,\frs|_{Y_2}),
\label{eq:OScobordismmapintro}
\end{equation}
well defined up to $\bF_2[U]$-equivariant chain homotopy. We note that technically the maps $F_{W,\frs}$ depend on a choice of path from $Y_1$ to $Y_2$, though we suppress this dependency from the introduction; See Section~\ref{sec:computations}, and more generally \cite{ZemGraphTQFT}, for more on the dependency of Ozsv\'{a}th and Szab\'{o}'s cobordism maps on the choice of path.

In Section~\ref{sec:adjunctionrelations}, we show that our results about link cobordisms can be used to prove several familiar adjunction relations from Heegaard Floer homology. First, if $\Sigma$ is an oriented, connected surface with exactly one boundary component, there is a distinguished element
\[
\xi(\Sigma)\in \bF_2[U]\otimes_{\bF_2}\Lambda^*( H_1(\Sigma;\bF_2)),
\]
 constructed by picking a collection of $2g$ simple closed curves $A_1,\dots, A_g,$ $B_1,\dots, B_g$ on $\Sigma$ such that the geometric intersection number $|A_i\cap B_j|$ is $\delta_{ij}$. The element $\xi(\Sigma)$ is defined as
\[
\xi(\Sigma):=\prod_{i=1}^{g(\Sigma)} (U+A_i\wedge B_i).
\]
The element $\xi(\Sigma)$ is in fact independent of the choice of basis, and is fixed by the mapping class group of $\Sigma$; See Proposition~\ref{prop:fixedbymappingclassgroup}.

We prove the following result about the cobordism maps on $\CF^-$:

\begin{thm}\label{thm:7}Suppose that $\cF=(\Sigma,\cA)$ is an oriented, closed, decorated surface inside of the cobordism $W\colon Y_1\to Y_2$. Write $\Sigma_{\ws}$ and $\Sigma_{\zs}$ for the type-$\ws$ and type-$\zs$ subsurfaces of $\cF$. Suppose $\cA$ consists of a simple closed curve, the surfaces $\Sigma_{\ws}$ and $\Sigma_{\zs}$ are connected, and
\[
\langle c_1(\frs),[\Sigma]\rangle -[\Sigma]\cdot [\Sigma]+2g(\Sigma_{\zs})-2g(\Sigma_{\ws})=0.
\]
 Then
\[
F_{W,\frs}(\xi(\Sigma_{\ws})\otimes -)\simeq F_{W,\frs-\PD[\Sigma]}(\xi(\Sigma_{\zs})\otimes -),
\]
as maps on $\CF^-$.
\end{thm}

When $g(\Sigma)=g(\Sigma_{\zs})$ and $g(\Sigma_{\ws})=0$, Theorem~\ref{thm:7} is well known in the Seiberg-Witten setting \cite{OSSymplecticThom}*{Theorem 1.3} \cite{FintushelSternImmersedSpheres}*{Lemma 5.2}, and essentially well known in the Heegaard Floer setting \cite{OSTrianglesandSymplectic}*{Theorem 3.1}.

We also prove a generalization of Theorem~\ref{thm:7} which holds for the link cobordism maps; See Theorem~\ref{thm:adjunctionrelationlinkcob}.

Theorem~\ref{thm:7} is a very powerful relation satisfied by the Heegaard Floer cobordism maps. For example, it implies Ozsv\'{a}th and Szab\'{o}'s adjunction inequality for $\HF^+$ \cite{OSProperties}*{Theorem~7.1}, which states that  if $W\colon Y_1\to Y_2$ is a cobordism which contains a smoothly embedded surface $\Sigma$ with $g(\Sigma)>0$ and $[\Sigma]\cdot [\Sigma]\ge 0$, and the induced map $F_{W,\frs}\colon \HF^+(Y_1,\frs|_{Y_1})\to \HF^+(Y_2,\frs|_{Y_2})$ is non-trivial, then 
 \begin{equation}
 |\langle c_1(\frs),\Sigma\rangle |+[\Sigma]\cdot [\Sigma]\le 2g(\Sigma)-2;\label{eq:adjunctioninequalitymapsonHF+}
 \end{equation}  See Corollary~\ref{cor:adjunctioninequality}.

By using Theorem~\ref{thm:adjunctionrelationlinkcob}, our refinement of Theorem~\ref{thm:7} for the link cobordism maps, 
we will prove an adjunction inequality for the link cobordism maps which is analogous to Equation~\eqref{eq:adjunctioninequalitymapsonHF+}.

 If $\bL$ is a link in $Y$, we define $\CFL^-(Y,\bL,\frs)$ to be the $\bF_2[U]$-module obtained by setting $V=0$ in the chain complex $\cCFL^-(Y,\bL^\sigma,\frs)$ (which we recall is a chain complex over $\bF_2[U,V]$), i.e.,
\[
\CFL^-(Y,\bL,\frs):= \cCFL^-(Y,\bL^\sigma,\frs)\otimes_{\bF_2[U,V]} \bF_2[U,V]/(V).
\]
Our link Floer homology adjunction inequality states the following:

\begin{thm}\label{thm:linkadjunctioninequality}Suppose that $(W,\cF)\colon (Y_1,\bL_1)\to (Y_2,\bL_2)$ is a link cobordism with $b_1(W)=0$, such that $\bL_1$ and $\bL_2$ are null-homologous in $Y_1$ and $Y_2$, respectively, and suppose that the induced map 
\[
F_{W,\cF,\frs}\colon \CFL^-(Y_1,\bL_1,\frs|_{Y_1})\to \CFL^-(Y_2,\bL_2,\frs|_{Y_2})
\]
 is not $\bF_2[U]$-equivariantly chain homotopic to the zero map. If $\Sigma$ is a closed, oriented surface in the complement of $\cF$ with $g(\Sigma)>0$, then
 \[
|\langle c_1(\frs), [\Sigma]\rangle| +[\Sigma]\cdot [\Sigma] \le 2 g(\Sigma)-2.
 \] 
 \end{thm}

Several examples of Theorem~\ref{thm:linkadjunctioninequality} are described in Section~\ref{sec:adjunctioninequalityforlinkcobs}.

\subsection{Computations of the link cobordism maps}

Using techniques of this paper, we compute the maps $F_{W,\cF,\frs}$ in certain special cases. Our first computational result is the induced map on $\cHFL^\infty$ when $b_1(W)=b_2^+(W)=0$ and when the dividing set is relatively simple:

\begin{thm}\label{thm:10} Suppose that $(W,\cF)\colon (S^3,\bK_1)\to (S^3,\bK_2)$ is a knot cobordism between two doubly based knots with $b_1(W)=b_2^+(W)=0$. Suppose that the decorated surface $\cF=(\Sigma, \cA)$ is connected, and $\cA$ consists of a pair of arcs, both running from $\bK_1$ to $\bK_2$, and $\Sigma_{\ws}$ and $\Sigma_{\zs}$ are both connected. Then the induced map on homology
\[
F_{W,\cF,\frs}\colon \cHFL^\infty(S^3,\bK_1)\to \cHFL^\infty(S^3,\bK_2)
\]
is an isomorphism. In fact, under the canonical identification $\cHFL^\infty(S^3,\bK_i)\iso \bF_2[U,V,U^{-1},V^{-1}]$, it is the map
\[
1\mapsto U^{-d_1/2} V^{-d_2/2},
\]
 where
\[
d_1=\frac{c_1(\frs)^2-2\chi(W)-3\sigma(W)}{4}-2g(\Sigma_{\ve{w}})
\]
 and
\[
d_2=\frac{c_1(\frs-\PD[\Sigma])^2-2\chi(W)-3\sigma(W)}{4}-2g(\Sigma_{\ve{z}}).
\]
\end{thm}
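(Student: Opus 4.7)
The plan is a two-step argument: first, Theorem~\ref{thm:3} pins down the Maslov bidegree of $F^\infty_{W,F,\frs}$, forcing the image of $1$ to lie in a one-dimensional $\Z_2$-space inside $\cHFL^\infty(S^3,\bK_2)$; second, a decomposition/functoriality argument shows the map is nonzero, hence equals the unique nonzero possibility, which is a unit.

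For the bidegree step, I would first check that under these hypotheses, $\tilde{\chi}(\Sigma_{\ve{w}}) = -2g(\Sigma_{\ve{w}})$ and $\tilde{\chi}(\Sigma_{\ve{z}}) = -2g(\Sigma_{\ve{z}})$. Since $\cA$ consists of two arcs from $\bK_1$ to $\bK_2$ and $\Sigma_{\ve{w}}$ is connected, the four arcs making up $\d\Sigma_{\ve{w}}$ (two from $\cA$, one subarc of each $\bK_i$) glue into a single circle, so $\Sigma_{\ve{w}}$ has one boundary component and $\chi(\Sigma_{\ve{w}}) = 1 - 2g(\Sigma_{\ve{w}})$. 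The two $\ve{w}$-basepoints (one per end) then give $\tilde{\chi}(\Sigma_{\ve{w}}) = -2g(\Sigma_{\ve{w}})$; the $\ve{z}$ case is symmetric. Substituting into Theorem~\ref{thm:3} shows that $F^\infty_{W,F,\frs}$ has $\gr_{\ve{w}}$--shift $d_1$ and $\gr_{\ve{z}}$--shift $d_2$. Under the identification $\cHFL^\infty(S^3,\bK_i) \iso \Z_2[U^{\pm 1},V^{\pm 1}]$ with $1$ in bidegree $(0,0)$ and $U$, $V$ of bidegrees $(-2,0)$ and $(0,-2)$, any $\Z_2[U^{\pm 1},V^{\pm 1}]$--equivariant bihomogeneous map of bidegree $(d_1,d_2)$ sends $1$ either to $0$ or to the monomial $U^{-d_1/2} V^{-d_2/2}$.

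The main obstacle is establishing nonvanishing. I would proceed by decomposing $(W,F)$ into elementary cobordisms and verifying that each induces a unit on $\cHFL^\infty$. Because $b_1(W) = b_2^+(W) = 0$, $W$ admits a relative handle decomposition consisting of canceling 1--3 handle pairs and negative-definite 2-handles only. The surface $F$ decomposes into trivial cylinders, stabilizations adding genus to $\Sigma_{\ve{w}}$ or $\Sigma_{\ve{z}}$, and product cobordisms through the 2-handles. By the computations in \cite{ZemCFLTQFT}, stabilizations of $\Sigma_{\ve{w}}$ (resp.\ $\Sigma_{\ve{z}}$) induce multiplication by $U$ (resp.\ $V$) on $\cHFL^\infty$, which is invertible. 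For the 2-handle pieces, after inverting $U$ and $V$ the $\cHFL^\infty$ map reduces to the classical $HF^\infty$ cobordism map for a negative-definite 2-handle between rational homology spheres, which is an isomorphism by \cite{OSIntersectionForms}.

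The subtle point in this last step is identifying the $\cHFL^\infty$ map attached to a 2-handle in the knot exterior with the corresponding $HF^\infty$ 2-handle map after $U$,$V$ inversion, and tracking $\Spin^c$ structures through the surgery exact triangle. Once this is in hand, each elementary building block acts by a unit, the composite $F^\infty_{W,F,\frs}$ is forced to be nonzero, and combining with the bidegree step identifies it as multiplication by $U^{-d_1/2} V^{-d_2/2}$, an isomorphism.
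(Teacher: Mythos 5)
Your first step (the bidegree computation via Theorem~\ref{thm:3}) is correct and matches the paper: the two arcs of $\cA$ and the connectedness of $\Sigma_{\ve{w}}$, $\Sigma_{\ve{z}}$ give $\tilde\chi(\Sigma_{\ve{o}})=-2g(\Sigma_{\ve{o}})$, pinning the image of $1$ down to either $0$ or $U^{-d_1/2}V^{-d_2/2}$.

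Your nonvanishing step, however, has a genuine gap, and it takes a different route from the paper. You propose to decompose $(W,F)$ into elementary 4-dimensional handles and elementary pieces of $F$, and then argue that each piece separately induces a unit on $\cHFL^\infty$. This does not work as stated. First, the intermediate 3-manifolds in a relative handle decomposition of $W$ are not rational homology spheres: each 1-handle produces an $S^1\times S^2$ connect summand, so the intermediate manifolds after the 1-handles are $\#^k(S^1\times S^2)$, and the 2-handle cobordisms go between manifolds of this type, not between rational homology spheres. Second, even granting that, it is not true (and not what \cite{OSIntersectionForms} says) that each negative-definite 2-handle individually induces an isomorphism on $HF^\infty$; the theorem you want (\cite{OSIntersectionForms}*{Theorem 9.1}) is a global statement about the full cobordism $W$ with $b_1(W)=b_2^+(W)=0$ between $S^3$'s, and its proof does not proceed by showing each handle gives a unit. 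Third, the statement that a genus-adding stabilization of $\Sigma_{\ve{w}}$ induces multiplication by $U$ on $\cHFL^\infty$ relies on the ambient 3-manifold being $S^3$ (so that the $H_1$-action in Lemma~\ref{lem:genus1case} vanishes); once you are in an intermediate slice $\#^k(S^1\times S^2)$ those homology actions are no longer automatically zero, so even the surface pieces need a more careful argument than ``each is a unit.''

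The paper sidesteps all of this by not working piece by piece. Instead, after the bidegree step, it sets $V=1$ to pass to the underlying $CF^\infty(S^3)$ map. Applying Lemma~\ref{lem:genus1case} and the composition law (exactly as in the proof of Theorem~\ref{thm:7}) identifies the resulting $CF^\infty$ map with $F_{W,\frs}^\infty\bigl(\iota_*(\xi(\Sigma_{\ve{w}}))\otimes\cdot\bigr)$; the hypothesis $b_1(W)=0$ then kills the $H_1$ terms in $\xi(\Sigma_{\ve{w}})$, leaving $U^{g(\Sigma_{\ve{w}})}\cdot F_{W,\frs}^\infty$. Finally, $F_{W,\frs}^\infty$ is an isomorphism by \cite{OSIntersectionForms}*{Theorem 9.1} applied to the whole cobordism $W$, which forces the undetermined coefficient to be $1$. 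You should replace your decomposition argument with this reduction: invoke the $V=1$ specialization and the adjunction relation to relate the $\cHFL^\infty$ map to the classical $HF^\infty$ map of $W$ as a single object, rather than trying to prove each elementary cobordism is invertible.
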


\begin{figure}[ht!]
	\centering
	%% Creator: Inkscape inkscape 0.92.3, www.inkscape.org
%% PDF/EPS/PS + LaTeX output extension by Johan Engelen, 2010
%% Accompanies image file '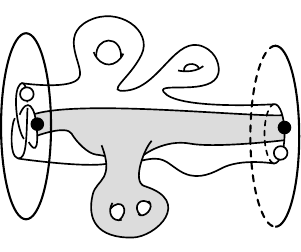' (pdf, eps, ps)
%%
%% To include the image in your LaTeX document, write
%%   \input{<filename>.pdf_tex}
%%  instead of
%%   \includegraphics{<filename>.pdf}
%% To scale the image, write
%%   \def\svgwidth{<desired width>}
%%   \input{<filename>.pdf_tex}
%%  instead of
%%   \includegraphics[width=<desired width>]{<filename>.pdf}
%%
%% Images with a different path to the parent latex file can
%% be accessed with the `import' package (which may need to be
%% installed) using
%%   \usepackage{import}
%% in the preamble, and then including the image with
%%   \import{<path to file>}{<filename>.pdf_tex}
%% Alternatively, one can specify
%%   \graphicspath{{<path to file>/}}
%% 
%% For more information, please see info/svg-inkscape on CTAN:
%%   http://tug.ctan.org/tex-archive/info/svg-inkscape
%%
\begingroup%
  \makeatletter%
  \providecommand\color[2][]{%
    \errmessage{(Inkscape) Color is used for the text in Inkscape, but the package 'color.sty' is not loaded}%
    \renewcommand\color[2][]{}%
  }%
  \providecommand\transparent[1]{%
    \errmessage{(Inkscape) Transparency is used (non-zero) for the text in Inkscape, but the package 'transparent.sty' is not loaded}%
    \renewcommand\transparent[1]{}%
  }%
  \providecommand\rotatebox[2]{#2}%
  \newcommand*\fsize{\dimexpr\f@size pt\relax}%
  \newcommand*\lineheight[1]{\fontsize{\fsize}{#1\fsize}\selectfont}%
  \ifx\svgwidth\undefined%
    \setlength{\unitlength}{144.12394485bp}%
    \ifx\svgscale\undefined%
      \relax%
    \else%
      \setlength{\unitlength}{\unitlength * \real{\svgscale}}%
    \fi%
  \else%
    \setlength{\unitlength}{\svgwidth}%
  \fi%
  \global\let\svgwidth\undefined%
  \global\let\svgscale\undefined%
  \makeatother%
  \begin{picture}(1,0.83893422)%
    \lineheight{1}%
    \setlength\tabcolsep{0pt}%
    \put(0,0){\includegraphics[width=\unitlength,page=1]{fig15.pdf}}%
    \put(0.29151437,0.39156386){\color[rgb]{0,0,0}\makebox(0,0)[lt]{\lineheight{0}\smash{\begin{tabular}[t]{l}$\Sigma_{\ve{w}}$\end{tabular}}}}%
    \put(0.25690877,0.50526779){\color[rgb]{0,0,0}\makebox(0,0)[lt]{\lineheight{0}\smash{\begin{tabular}[t]{l}$\Sigma_{\ve{z}}$\end{tabular}}}}%
    \put(0,0){\includegraphics[width=\unitlength,page=2]{fig15.pdf}}%
  \end{picture}%
\endgroup%

	\caption{\textbf{An example of the dividing sets considered in Theorems~\ref{thm:10} and~\ref{cor:2knotcomp}.} Here $\cA$ consists of two arcs, and $\Sigma\setminus \cA$ consists of two connected components, $\Sigma_{\ve{w}}$ and $\Sigma_{\ve{z}}$.}\label{fig::15}
\end{figure}

More generally, we will prove that if $(W,\cF)$ is a decorated link cobordism between two knots in $S^3$, then the induced maps $F_{W,\cF,\frs}$ on $\cHFL^\infty$ can always be computed in terms of the cobordism maps on $\HF^\infty$; see Theorem~\ref{thm:mostgeneralcompHFLinfty}.

As a consequence of Theorem~\ref{thm:10}, we will compute the maps associated to closed surfaces in $S^4$, with simple decoration:

\begin{thm}\label{cor:2knotcomp} Suppose that $\cF=(\Sigma,\cA)$ is a closed, oriented, decorated surface in $S^4$ such that $\cA$ consists of a single closed curve which divides $\Sigma$ into two connected subsurfaces, $\Sigma_{\ve{w}}$ and $\Sigma_{\ve{z}}$. The link cobordism map
\[
F_{S^4,\cF,\frs_0}\colon \cCFL^\infty(\varnothing)\to \cCFL^\infty(\varnothing) 
\]
is equal to the map
\[
1\mapsto U^{g(\Sigma_{\ve{w}})} V^{g(\Sigma_{\ve{z}})},
\]
under the canonical identification $\cCFL^\infty(\varnothing)\iso \bF_2[U,V,U^{-1},V^{-1}]$.
\end{thm}

\subsection{Translating between common conventions}
\label{subsec:usersguide}
We note that for technical reasons, some of our conventions differ slightly from more common conventions in the literature, so we now provide a brief guide to help translate between various conventions.

If $\bK=(K,w,z)$ is a doubly based knot in  an integer homology sphere $Y$, it is more common to use a different version of the full link Floer complex than the complex $\cCFL^\infty(S^3,\bK,\frs)$ considered in this paper. Instead, one often considers a $\Z\oplus \Z$-filtered chain complex $\CFK^\infty(Y,K)$ over $\bF_2[U,U^{-1}]$, defined by Ozsv\'{a}th and Szab\'{o} \cite{OSKnots}.  Using Ozsv\'{a}th and Szab\'{o}'s notation, $\CFK^\infty(Y,K)$ is generated over $\bF_2$ by elements of the form $[\xs,i,j]$ where $A(\xs)-j+i=0$. 

In the terminology of this paper, $[\xs,i,j]$ corresponds to the element $U^{-i} V^{-j} \cdot \xs$. In particular, using our description of the Alexander grading, one has
\[
\CFK^\infty(Y,K)=\cCFL^\infty(Y,\bK,\frs_0)_0,
\]
 where  $\cCFL^\infty(Y,\bK,\frs_0)_0$ denotes the homogeneous subset of zero Alexander grading.  Recalling from  Equation~\eqref{eq:Alexandergradingrelation1} that $A=\tfrac{1}{2}(\gr_{\ve{w}}-\gr_{\ve{z}})$, it follows that the two Maslov gradings $\gr_{\ve{w}}$ and $\gr_{\ve{z}}$ coincide on $\cCFL^\infty(Y,\bK,\frs_0)_0=\CFK^\infty(Y,K)$, reflecting the usual convention that  $\CFK^\infty$ has a single Maslov grading.

  The action of $U$ on $\CFK^\infty(Y,K)$ is normally described by the formula $U\cdot [\xs,i,j]=[\xs,i-1,j-1]$, and hence corresponds in our notation to the action of $UV$ on $\cCFL^\infty(Y,\bK,\frs_0)$. To disambiguate the notation, we will often write $\hat{U}$ for the product $UV$, which we think of as the standard action of $U$ on $\CFK^\infty$.

The $\Z\oplus \Z$ filtrations are similarly translated between $\CFK^\infty$ and $\cCFL^\infty$. One normally defines a subset $\cC_{(i,j)}(Y,K)\subset \CFK^\infty(Y,K)$ generated over $\bF_2$ by elements $[\xs,i',j']$ with $i'\le i$ and $j'\le i$.  The chain complex $\cCFL^\infty(Y,\bK,\frs)$ also has a $\Z\oplus \Z$-filtration, given by filtering over powers of the variables. If $i,j\in \Z$, then we define a subset $\cG_{(i,j)}(Y,\bK,\frs)\subset \cCFL^\infty(Y,\bK,\frs)$ generated over $\bF_2$ by monomials of the form $U^{i'} V^{j'} \cdot \xs$ where $i'\ge i$ and $j'\ge j$. The correspondence between the two $\Z\oplus \Z$ filtrations is that $\cC_{(i,j)}(Y,K)\subset \CFK^\infty(Y,K)$ is equal to $\cG_{(-i,-j)}(Y,\bK,\frs)\cap \CFK^\infty(Y,K)$, i.e., the subset of $\cG_{(-i,-j)}(Y,\bK,\frs)$ in Alexander grading zero.

Next, we note that if $(W,\cF)$ is a decorated link cobordism from $(S^3,\bK_1)$ to $(S^3,\bK_2)$, then $F_{W,\cF,\frs}$ may not send $\CFK^\infty(S^3,K_1)$  to $\CFK^\infty(S^3,K_2)$, since the Alexander grading change may be nonzero. Instead, $F_{W,\cF,\frs}$ will send $\CFK^\infty(S^3,K_1)$ to $\CFK^\infty(S^3,K_2)\{k\}$ for some shift $k\in \Z$ in the Alexander grading (i.e. the cobordism map sends monomials $ U^i V^j\cdot \ve{x}$ with $A(\ve{x})+(j-i)=0$ to sums of monomials of the form $U^{i'}V^{j'}\cdot \ve{y}$ with $A(\ve{y})+(j'-i')=k$).  See Theorem~\ref{thm:Rasmussensbound} for a concrete example of this phenomenon in action.

Finally, we discuss the equivalence of our grading conventions with those in the literature. In Proposition~\ref{prop:agreeswithOSconstruction}, we show that our Alexander multi-grading coincides with the description due to Ozsv\'{a}th and Szab\'{o} \cite{OSKnots} \cite{OSLinks}. For the Maslov grading, one must be somewhat careful, since there are two natural normalization conventions. The first is the \emph{invariance convention}, which is normalized by requiring that the relatively graded $\bF_2$-module 
\[
\Hat{\HF}(S^3,\ws)\iso \bigotimes_{i=1}^{|\ws|-1}\left((\bF_2)_{-\frac{1}{2}}\oplus (\bF_2)_{\frac{1}{2}}\right)
\] 
have top degree generator in grading $0$, regardless of how many basepoints are in $\ws$. There is another natural convention, the \emph{cobordism convention}, which is normalized by setting the Maslov grading of the top degree element of $\Hat{\HF}(S^3,\ws)$ to be $\tfrac{1}{2}(|\ws|-1)$. This is the convention that we take, since it is the most natural from the perspective of the grading change formulas. We note that when $\bK$ is a doubly based knot in $S^3$, the two conventions coincide, and our absolute Maslov gradings coincide with those from \cite{OSKnots} and \cite{OSLinks}.

\subsection{Organization}
In Section~\ref{sec:background} we provide some background  on link Floer homology, define some technical notions about indexings and colorings of links, and state our most general grading formulas. In Section~\ref{sec:heegaardtriplesandspinc} we describe some technical results concerning Heegaard triples and link cobordisms.  Section~\ref{sec:kirbycalculus} describes a relative version of Kirby calculus for link complements. In Section~\ref{sec:definitiongradings} we describe the relative gradings, and state the definition of the absolute gradings in terms of Kirby diagrams. In Section~\ref{sec:invarianceofgradings} we prove invariance of the absolute gradings. In Section~\ref{sec:gradingchange} we prove the grading change formulas for the link cobordism maps. In Section~\ref{sec:equivalence}, we prove that our construction is equivalent to Ozsv\'{a}th and Szab\'{o}'s construction for links in $S^3$. In Section~\ref{sec:computations} we prove some computational results about the link cobordism maps, which are useful for proving bounds. In Section~\ref{sec:bounds}, we show how the link cobordisms give conceptually simple proofs of well known bounds on $\tau$ and $V_k$.  Section~\ref{sec:Upsilon} covers our bound on $\Upsilon_K(t)$. In Section~\ref{sec:adjunctionrelations} we prove the adjunction relations and inequalities.

\subsection{Acknowledgments}

I would like to thank Kristen Hendricks, Jen Hom, Andr\'{a}s Juh\'{a}sz,  David Krcatovich, Robert Lipshitz,  Ciprian Manolescu, Marco Marengon and Peter Ozsv\'{a}th for helpful conversations and suggestions. I would also like to thank the two anonymous referees for their careful readings and very helpful suggestions.

\section{Preliminaries and statement of the full grading theorem}
\label{sec:background}

In this section we describe background material on the link Floer complexes, focusing on the terminology necessary to state the Alexander multi-grading formula.

\subsection{Link Floer homology}
\label{sec:backgroundlinkFloer}
Knot Floer homology was originally constructed by Ozsv\'{a}th and Szab\'{o} \cite{OSKnots}, and independently by Rasmussen \cite{RasmussenKnots}. Link Floer homology is a generalization to links, constructed by Ozsv\'{a}th and Szab\'{o} \cite{OSLinks}. In this section, we recall the basic construction of link Floer homology, focusing on the curved variation considered in \cite{ZemCFLTQFT}.

 To an oriented multi-based link $\bL$ in $Y$, one can construct a multi-pointed Heegaard diagram $\cH=(\Sigma, \as,\bs,\ve{w},\ve{z})$. We consider the two tori $\bT_{\alpha},\bT_{\beta}\subset \Sym^n(\Sigma)$, defined as the Cartesian products
 \[
 \bT_{\alpha}=\alpha_1\times \cdots\times \alpha_n\qquad \text{and} \qquad \bT_{\beta}=\beta_1\times \cdots\times \beta_n,
 \] 
 where $n=|\as|=|\bs|=g(\Sigma)+|\ws|-1=g(\Sigma)+|\zs|-1$. Ozsv\'{a}th and Szab\'{o} define a map 
 \[
 \frs_{\ve{w}}\colon \bT_{\alpha}\cap \bT_{\beta}\to \Spin^c(Y)
 \]
  in \cite{OSDisks}*{Section~2.6}. Recall that if $\ws=\{w_1,\dots, w_m\}$ and $\zs=\{z_1,\dots, z_n\}$,  we write $\bF_2[U_{\ws},V_{\zs}]$ for the polynomial ring $\bF_2[U_{w_1},\dots, U_{w_n}, V_{z_1},\dots, V_{z_n}]$.

  If $\frs\in \Spin^c(Y)$, then under appropriate admissibility assumptions on the diagram $\cH$, we define $\cCFL^-(\cH,\frs)$ to be the free $\bF_2[U_{\ve{w}},V_{\ve{z}}]$-module generated by intersection points $\ve{x}\in\bT_{\alpha}\cap \bT_{\beta}$ with $\frs_{\ve{w}}(\ve{x})=\frs$. We define $\cCFL^\infty(\cH,\frs)$ to be the free $\bF_2[U_{\ws}, U_{\ws}^{-1}, V_{\zs}, V_{\zs}^{-1}]$-module generated by intersection points $\xs\in \bT_{\alpha}\cap \bT_{\beta}$ with $\frs_{\ws}(\xs)=\frs$.
 
Both $\cCFL^-(\cH,\frs)$ and $\cCFL^\infty(\cH,\frs)$ have a filtration which is indexed by $\Z^{\ws}\oplus \Z^{\zs}$, i.e.,  the set of pairs $(I,J)$ where $I\colon \ws\to \Z$ and $J\colon \zs\to \Z$ are functions. We now describe the filtration in detail.  If $(I,J)\in \Z^{\ws}\oplus \Z^{\zs}$, we write $U_{\ws}^I V_{\zs}^J$ for the monomial $U_{w_1}^{I(w_1)}\cdots U_{w_n}^{I(w_n)} V_{z_1}^{J(z_1)}\cdots V_{z_n}^{J(z_n)}$. Given $(I,J)\in \Z^{\ws}\oplus \Z^{\zs}$, we define $\cG_{(I,J)}(\cH,\frs)\subset \cCFL^\infty(\cH,\frs)$ to be the $\bF_2[U_{\ws}, V_{\zs}]$-submodule
\[
\cG_{(I,J)}(\cH,\frs):=\Span_{\bF_2}\{ U_{\ws}^{I'} V_{\zs}^{J'} \cdot \xs: I'\ge I \text{ and } J'\ge J\}. 
\]
 Note that with this notation, $\cCFL^-(\cH,\frs)(\cH,\frs)=\cG_{(\ve{0},\ve{0})}(\cH,\frs)$ where $(\ve{0},\ve{0})$ denotes the zero map from $\ws\cup \zs$ to $\Z$.

  After choosing a generic 1-parameter family of almost complex structures on $\Sym^{n}(\Sigma)$, we can define an endomorphism $\d$ of $\cCFL^-(\cH,\frs)$ by counting holomorphic disks via the formula
 \begin{equation}
\d(\xs)=\sum_{\substack{\phi\in \pi_2(\xs,\ys)\\ \mu(\phi)=1}} \#\Hat{\cM}(\phi)\cdot  U_{w_1}^{n_{w_1}(\phi)}\cdots U_{w_n}^{n_{w_n}(\phi)} V_{z_1}^{n_{z_1}(\phi)}\cdots V_{z_n}^{n_{z_n}(\phi)}\cdot \ys. \label{eq:del}
 \end{equation}
  Because boundary degenerations appear in the ends of the moduli spaces of Maslov index 2 holomorphic curves, the map $\d$ does not square to zero for a general link. Instead, according to \cite{ZemQuasi}*{Lemma~2.1}, we have that
  \[
  \d^2=\omega_{\bL}\cdot \id,
\]
where $\omega_{\bL}\in \bF_2[U_{\ws}, V_{\zs}]$ denotes the scalar
\[
\omega_{\bL}=\sum_{K\in C(L)} (U_{w_{K,1}}V_{z_{K,1}}+V_{z_{K,1}}U_{w_{K,2}}+U_{w_{K,2}}V_{z_{K,2}}+\cdots+ V_{z_{K,n_K}}U_{w_{K,1}}).
  \]
  In the above expression,  $w_{K,1},$ $z_{K,1},$ $\dots,$  $w_{K,n_K},$  $z_{K,n_K}$ denote the basepoints on the link component $K\in C(L)$, in the order that they appear. In general, the curvature constant $\omega_{\bL}$ may be non-zero, though we note that if each link component has exactly two basepoints, then $\omega_{\bL}=0$.

An important property of Heegaard Floer homology is that it is natural with respect to the choice of Heegaard diagram. We will need the following result:

\begin{prop}\label{prop:naturality}If $\cH$ and $\cH'$ are two strongly $\frs$-admissible diagrams for the pair $(Y,\bL)$, then there is a map
\[
\Phi_{\cH\to \cH'}\colon \cCFL^\infty(\cH,\frs)\to \cCFL^\infty(\cH',\frs),
\]
which is filtered, and $\bF_2[U_{\ws},V_{\zs}]$-equivariant. The map $\Phi_{\cH\to \cH'}$ is a chain homotopy equivalence, and further, it is well defined up to filtered, $\bF_2[U_{\ws},V_{\zs}]$-equivariant chain homotopy.
\end{prop}

A summary of the proof can be found in \cite{ZemCFLTQFT}*{Proposition~3.5}, though most of the details  are due to other mathematicians, notably Ozsv\'{a}th, Szab\'{o}, Juh\'{a}sz and Thurston. See \cite{JTNaturality} for more on the question of naturality.

We define $\cCFL^\infty(Y,\bL,\frs)$ to be the collection of all of the complexes $\cCFL^\infty(\cH,\frs)$, together with the transition maps $\Phi_{\cH\to \cH'}$. We call the object $\cCFL^\infty(Y,\bL,\frs)$ the \emph{transitive chain homotopy type}.  There is an obvious notion of morphism between transitive chain homotopy type invariants.

\subsection{Link cobordisms, colorings, and functoriality}

\label{sec:coloring}

In this section, we state the main result from \cite{ZemCFLTQFT}, concerning the functoriality of link Floer homology.

There is an algebraic modification that one must do to the complexes to achieve functoriality, which we call \emph{coloring} the chain complexes.

\begin{define} If $\bL=(L,\ws,\zs)$ is a multi-based link in $Y$, a \emph{coloring} of $\bL$ is a function $\sigma\colon \ws\cup \zs\to \bmP$ where $\bmP$ is a finite set.
\end{define}

We call $\bmP$ the set of colors, and we note that the set $\bmP$ is part of the data of a coloring. We write $\bL^\sigma$ for a multi-based link $\bL$ equipped with a coloring $\sigma$.

If $\bmP=\{p_1,\dots, p_n\}$ is a set of colors, we define the ring $\cR_{\bmP}^-$ to be
\[
\cR_{\bmP}^-:=\bF_2[X_{p_1},\dots, X_{p_n}],
\]
the free polynomial ring generated by the formal variables $X_{p_1},\dots, X_{p_n}$. We define the ring $\cR^\infty_{\bmP}$ by adjoining the multiplicative inverses of the variables $X_{p_1},\dots, X_{p_n}$.

A coloring $\sigma\colon \ws\cup \zs\to \bmP$ gives the ring $\cR^\infty_{\bmP}$ the structure of an $\bF_2[U_{\ws}, V_{\zs}]$-module. This allows us to form the complex
\[
\cCFL^\infty(Y,\bL^\sigma,\frs):= \cCFL^\infty(Y,\bL,\frs)\otimes_{\bF_2[U_{\ws},V_{\zs}]} \cR_{\bmP}^\infty.
\]

The module $\cCFL^\infty(Y,\bL^\sigma,\frs)$ has a natural filtration by $\Z^{\bmP}$, defined by filtering by powers of the variables, similar to the filtration defined on the uncolored modules described in Section~\ref{sec:backgroundlinkFloer}.

To define functorial cobordism maps, we need the following notion of a coloring for a decorated link cobordism:

\begin{define} Suppose that $\cF=(\Sigma,\cA)$ is a surface with divides. A \emph{coloring} of $\cF$ is a map $\sigma\colon C(\Sigma\setminus \cA)\to \bmP$, where $\bmP$ is a finite set of colors.
\end{define}

We will write $\cF^\sigma$ for a decorated surface $\cF$ equipped with a coloring $\sigma$. If $(W,\cF)\colon (Y_1,\bL_1)\to (Y_2,\bL_2)$ is a decorated link cobordism (see Definition~\ref{def:decoratedlinkcobordism}), then a coloring $\sigma$ of $\cF$ naturally induces colorings of $\bL_1$ and $\bL_2$, for which we write $\sigma|_{\bL_i}$. 

The following is the main result of \cite{ZemCFLTQFT}:

\begin{thm}[\cite{ZemCFLTQFT}*{Theorem~A}] If $(W,\cF)$ is a decorated link cobordism equipped with a coloring $\sigma$, there are $\Spin^c$ functorial chain maps
\[
F_{W,\cF^\sigma,\frs}\colon \cCFL^\infty(Y_1,\bL_1^{\sigma_1}, \frs|_{Y_1})\to \cCFL^\infty(Y_2,\bL_2^{\sigma_2}, \frs|_{Y_2}),
\]
where $\sigma_i=\sigma|_{\bL_i}$. Furthermore, the maps $F_{W,\cF^\sigma,\frs}$ are $\cR_{\bmP}^\infty$-equivariant, $\Z^{\bmP}$-filtered, and are invariants up to $\cR_{\bmP}^\infty$-equivariant, $\Z^{\bmP}$-filtered chain homotopy.
\end{thm}

\subsection{Indexings of links and compatible colorings}
\label{sec:defs}

 In this section we describe some notions which are necessary for defining the Alexander multi-grading. As motivation, for an $\ell$-component link in $L\subset S^3$, the group $\Hat{\HFL}(L)$ has an $\ell$-component Alexander multi-grading. To obtain an Alexander multi-grading formula defined on $\cCFL^\infty(Y,\bL,\frs)$ for the link cobordism maps, we need to collapse certain indices of the Alexander multi-grading. We encode this notion in an \emph{indexing} of a link or link cobordism (Definition~\eqref{def:index}). Since the actions of the $U_{\ws}$ and $V_{\zs}$ have different gradings in different components of the Alexander multi-grading, we need to consider colorings which respect the indexing; These are \emph{indexed colorings} (Definition~\eqref{def:indexedcoloring}). Finally, since the $U_{\ws}$ and $V_{\zs}$ variables behave differently with respect to the gradings, we also need to consider colorings which don't identify any of the $U_{\ws}$ variables with any of the $V_{\zs}$ variables; These are the \emph{type-partitioned} colorings (Definition~\eqref{def:typepartitioned}).
 
 We begin with the formal definitions:

\begin{define}\label{def:index}If $A$ is a topological space, an \emph{indexing} of $A$ by a finite set $\bJ$ is a locally constant map
$J\colon A\to \bJ$.
\end{define}

Whenever $A$ is a finite set, we implicitly give $A$ the discrete topology, so an indexing of $A$ by $\bJ$ is the same as a map from $A$ to $\bJ$.

If $A$ is indexed over $\bJ$, we say that the map $J\colon A\to \bJ$ is the \emph{index assignment}, and the set $\bJ$ is the \emph{index set}. If $A$ is indexed over $\bJ$ and $j\in \bJ$, we write $A_j$ for
\[
A_j:=J^{-1}(j).
\]

 \begin{define}\label{def:indexedcoloring} If $\bL$ is a multi-based link, an \emph{indexed coloring} of $\bL$ is a pair $(\sigma,J)$ consisting of a coloring $\sigma\colon \ws\cup \zs\to \bmP$ together with an indexing $J\colon \bmP\to \bJ$, such that $(J\circ \sigma)(p)=(J\circ \sigma)(p')$ whenever $p,p'\in \ws\cup \zs$ are basepoints on the same component of $\bL$.
 \end{define}
 
 If $(\sigma, J)$ is an indexed coloring of $\bL$, then there is an induced indexing of $L$. Abusing notation slightly, we will also write $J\colon L\to \bJ$ for the induced indexing.

 \begin{define}\label{def:typepartitioned} A \emph{type-partitioned} coloring of a multi-based link $\bL=(L,\ws,\zs)$ is a coloring $\sigma\colon \ws\cup \zs\to \bmP$ such that the following hold:
 \begin{enumerate}
 \item $\bmP$ is partitioned as $\bmP=\bmP_{\ws}\sqcup \bmP_{\zs}$.
 \item $\sigma(\ws)\subset \bmP_{\ws}$ and $\sigma(\zs)\subset \bmP_{\zs}$.
\end{enumerate}
 \end{define}

An indexed, or type-partitioned coloring of a surface with divides is defined similarly.

\begin{example}For any multi-based link $\bL=(L,\ws,\zs)$, there are two type-partitioned, indexed colorings $(\sigma,J)$ to keep in mind:
\begin{enumerate}
\item $\bJ=\{*\}$ and $\bmP=\{U,V\}.$ The map $J\colon \bmP\to \bJ$ is constant. The map $\sigma$ sends any basepoint in $\ws$ to the element $U\in \bmP$, and similarly sends any basepoint in $\zs$ to $V$. Abusing notation slightly, we identify the ring $\cR_{\bmP}^-$ with $\bF_2[U,V]$.
\item $\bJ=C(L)$ and $\bmP=\ws\cup \zs$. The map $\sigma\colon \ws\cup \zs\to \bmP$ is the identity, and $J\colon \bmP\to C(L)$ is the natural map. The ring $\cR_{\bmP}^-$ can be identified with $\bF_2[U_{\ws},V_{\zs}]$.
\end{enumerate}
\end{example}

\subsection{$\bJ$-null-homologous links and generalized Seifert surfaces}

\begin{define}\label{def:seifertsurface} Suppose that $L$ is an oriented link in $Y$ which is indexed over $\bJ$.  We say that $L$ is $\bJ$\emph{-null-homologous} if 
	\[
	 [L_j]=0\in H_1(Y;\Z),
	 \] 
	 for each $j\in \bJ$. 
\end{define}

\begin{comment}
\begin{rem}The property of being $\bJ$-null-homologous depends on the orientation of $L$.  As an example, consider $L=S^1\times \{p_1,p_2\}\subset S^1\times S^2$. For a grading assignment sending both components to a single index of $\bJ$,  $L$ is $\bJ$-null-homologous if the two components are given orientation in opposite directions, but if they are oriented in the same direction.
\end{rem}
\end{comment}

In this paper, we will use the following notion of a Seifert surface:

\begin{define}\label{def:orientationconventionSeifertsurface} If $L$ is an oriented link in $Y$, a \emph{generalized Seifert surface} of $L$ is an integral 2-chain $S$ in $Y$ with boundary $\d S=-L$.
\end{define}

 We will need the following notion of a Seifert surface which is indexed by $\bJ$:
 \begin{define}\label{def:generalizedJSeifertsruface} Suppose $L$ is an oriented link which is indexed by $\bJ$. A \emph{generalized $\bJ$-Seifert surface} $S=\{S_j\}_{j\in \bJ}$ is a collection of integral 2-chains $S_j$ with 
 \[
\d S_j=-L_j. 
 \]
\
\end{define}

\begin{rem}If $L$ is a $\bJ$-null-homologous link, it is possible to pick a generalized $\bJ$-Seifert surface $(S_j)_{j\in \bJ}$ so that each $S_j$ is an embedded surface. It is not always possible to pick a generalized $\bJ$-Seifert surface $\{S_j\}_{j\in \bJ}$ so that the union is an embedded Seifert surface for $L$, since, for example, $S_i$ and $S_j$ will be forced to intersect if $\lk(L_i,L_j)\neq 0$, for distinct $i,j\in \bJ$. For the purposes of this paper, it is only necessary to work with integral 2-chains, instead of embedded Seifert surfaces, since our description of the Alexander grading only uses $S$ to build an integral homology class; see Equation~\eqref{eq:defabsgrading}.
\end{rem}

\subsection{Statements of the grading theorems}
\label{sec:gradingtheorems}

Having established the necessary notation, we now state the main technical results of this paper, generalizing Theorem~\ref{thm:maingradingchangeformula} in the introduction.

Our first theorem concerns the existence of distinguished absolute gradings on link Floer homology. Using surgery presentations of link complements, we will prove the following:

\begin{thm}\label{thm:1} Suppose that $\bL=(L,\ws,\zs)$ is a multi-based link in $Y$, and $\frs\in \Spin^c(Y)$.
\begin{enumerate}[ ref= $\alph*$, label=(\alph*)]
\item\label{thm1.1a} Suppose $(\sigma, \bJ)$ is a type-partitioned, indexed coloring of $\bL$, $L$ is $\bJ$-null-homologous, and that $S$ is a generalized $\bJ$-Seifert surface of $L$. Then the chain complex $\cCFL^\infty(Y,\bL^{\sigma}, \frs)$ admits an absolute Alexander multi-grading $A_{S}$ which takes values in $\Q^{\bJ}$. The multi-grading is additive with respect to collapsing indices.
\item\label{thm1.1a'}The component $(A_S)_j$ of the Alexander multi-grading takes values in $\Z+\tfrac{1}{2}\lk(L\setminus L_j,L_j)$, where $\lk(L\setminus L_j, L_j)$ denotes $\# ((L\setminus L_j)\cap S_j)$.
\item\label{thm1.1b} If $(\sigma, \bJ)$ is a type-partitioned, indexed coloring of $\bL$, $L$ is $\bJ$-null-homologous, and $\frs$ is torsion, then the multi-grading $A_{S}$ is independent of $S$. More generally if $S$ and $S'$ are two choices of generalized $\bJ$-Seifert surfaces, then 
\[
A_{S'}(\ve{x})_j-A_{S}(\ve{x})_j=\frac{\langle c_1(\frs), [S_j'\cup -S_j]\rangle}{2}.
\]
\item\label{thm1.1c} If $\sigma$ is a type-partitioned coloring of $\bL$, and $c_1(\frs)$ is torsion, then there is a distinguished absolute Maslov grading $\gr_{\ve{w}}$ on $\cCFL^\infty(Y,\bL^\sigma,\frs)$. If $c_1(\frs-\PD[L])$ is torsion, then there is an absolute Maslov grading $\gr_{\ve{z}}$.
\item \label{thm1.1d}   If $\sigma$ is a type partitioned coloring,  $[L]=0\in H_1(Y;\Z)$ and $\frs$ is torsion, then $\gr_{\ws}$, $\gr_{\zs}$ and the collapsed Alexander grading $A$ are all defined, and $A=\tfrac{1}{2}(\gr_{\ve{w}}-\gr_{\ve{z}})$.
\end{enumerate}
\end{thm}

Theorem~\ref{thm:1} is perhaps not of particular interest on its own, since Ozsv\'{a}th and Szab\'{o} described absolute lifts of the Alexander multi-grading for links in integer homology spheres using a conjugation symmetry of the link Floer complexes, similar to the symmetry property of the Alexander polynomial. We will show that the Alexander gradings induced by surgery descriptions of the link complement in Theorem~\ref{thm:1} agree with the gradings defined by Ozsv\'{a}th and Szab\'{o}; See Proposition~\ref{sec:equivalence}.

In Theorem~\ref{thm:maingradingchangeformula} of the introduction, we stated our grading change formula for the Maslov and collapsed Alexander gradings, for link cobordisms which are given a coloring with exactly two colors. In this case, the ring $\cR_{\bmP}^-$ is isomorphic to $\bF_2[U,V]$. We will prove the following refinement of Theorem~\ref{thm:maingradingchangeformula}:

\begin{thm}\label{thm:generalAlexandergradingformula}Suppose that $(W,\cF)\colon (Y_1,\bL_1)\to (Y_2,\bL_2)$ is a decorated link cobordism. 
\begin{enumerate}
\item If $\sigma$ is an arbitrary, type-partitioned coloring of $\cF$, then the maps $F_{W,\cF^\sigma,\frs}$ satisfy the statement of Parts~\eqref{thm:maingradingchange:1}~and~\eqref{thm:maingradingchange:2} of Theorem~\ref{thm:maingradingchangeformula}.
\item If $(\sigma,\bJ)$ is a type-partitioned, indexed coloring of $\cF$, and $\bL_1$ and $\bL_2$ are $\bJ$-null-homologous with generalized $\bJ$-Seifert surfaces $S_1$ and $S_2$, then
\[
A_{S_2}(F_{W,\cF^\sigma,\frs}(\xs))_j-A_{S_1}(\xs)_j= \frac{\langle c_1(\frs),[\hat{\Sigma}_j]\rangle -[\hat{\Sigma}]\cdot [\hat{\Sigma}_j]}{2}+\frac{\chi((\Sigma_{j})_{\ws})-\chi((\Sigma_j)_{\zs})}{2},
\]
where $\hat{\Sigma}_j$ denotes $(-S_1)_j\cup \Sigma_j \cup (S_2)_j$, and $[\hat{\Sigma}]=\sum_{j\in \bJ} [\hat{\Sigma}_j]$.
\end{enumerate}
\end{thm}

\section{Heegaard triples and link cobordisms}
\label{sec:heegaardtriplesandspinc}

In this section we study multi-pointed Heegaard triples and $\Spin^c$ structures on 3- and 4-manifolds.

\subsection{Doubly multi-pointed Heegaard triples}

In this section, we study the following objects:

\begin{define}We  say that a Heegaard triple $(\Sigma, \as,\bs,\ve{\gamma},\ve{w},\ve{z})$ equipped with two collections of basepoints, $\ws$ and $\zs$,  is a \emph{doubly multi-pointed Heegaard triple} if each component of $\Sigma\setminus \taus$ has exactly one $\ve{w}$ basepoint, and exactly one $\ve{z}$ basepoint, for each $\taus\in \{\as,\bs,\gs\}$. 
\end{define}

If $(\Sigma,\as,\bs,\gs)$ is a Heegaard triple, then by adapting the construction from \cite{OSDisks}*{Section~8.1},  we can construct a 4-manifold $X_{\a\b\g}$ via the formula
\begin{equation}
X_{\alpha\beta\gamma}:=\big((\Delta \times \Sigma)\cup (e_{\a}\times U_{\alpha})\cup (e_{\b}\times U_{\beta})\cup (e_{\g}\times U_{\gamma})\big)/\sim, \label{eq:Xabgdef}
\end{equation}
In the above expression, $\Delta$ denotes a triangle with edges $e_{\a},$ $e_{\b}$ and $e_{\g}$, in clockwise order. Also, if $\tau\in \{\a,\b,\g\}$, then $U_{\tau}$ denotes the genus $g(\Sigma)$ handlebody with $\d U_{\g}=\Sigma$ which has $\ve{\tau}$ as a set of compressing curves. The 4-manifold $X_{\alpha\beta\gamma}$ has boundary
\[
\d X_{\alpha\beta\gamma}=-Y_{\alpha\beta}\sqcup -Y_{\beta\gamma}\sqcup Y_{\alpha\gamma}.
\]

If $(\Sigma, \as,\bs,\ve{\gamma},\ve{w},\ve{z})$ is a doubly multi-pointed Heegaard triple, then there is a properly embedded surface 
\[
\Sigma_{\alpha\beta\gamma}\subset X_{\alpha\beta\gamma},
\]
 which is well defined up to isotopy. It is constructed as follows. Let $f_{\alpha},$ $f_{\beta}$ and $f_{\gamma}$ denote Morse functions on $U_{\alpha},$ $U_{\beta}$ and $U_{\gamma}$ which induce the curves $\as$, $\bs$ and $\gs$ on $\Sigma$, and which have $\Sigma$ as a level set. For $\taus\in \{\as,\bs,\gs\}$, let $A_{\tau}\subset U_{\tau}$ denote the union of the flowlines of $\nabla f_{\tau}$ which terminate at a point in $\ws\cup \zs$. Note that $A_{\tau}$ is a properly embedded 1-manifold in $U_{\tau}$.

The surface $\Sigma_{\alpha\beta\gamma}$ is defined as the union
\[
\Sigma_{\alpha\beta\gamma}:=(\Delta\times  (-\ve{w}\cup \ve{z}))\cup (e_{\alpha}\times A_{\alpha})\cup (e_{\beta}\times A_{\beta})\cup (e_{\gamma}\times A_{\gamma}).
\]

For $\taus,\sigmas\in \{\as,\bs,\gs\}$ there is an oriented link $L_{\tau\sigma}\subset Y_{\tau\sigma}$, defined as
 \[
 L_{\tau\sigma}:=A_\tau\cup A_\sigma.
 \]
  The link $L_{\tau\sigma}$ can be naturally oriented by requiring that the intersections with $\Sigma$ are negative at the $\ve{w}$ basepoints, and positive at the $\ve{z}$ basepoints (note that $\Sigma$ is oriented as $\d U_\tau$ inside of $Y_{\tau\sigma}$). We include a picture in Figure~\ref{fig::1}.

\begin{figure}[ht!]
	\centering
	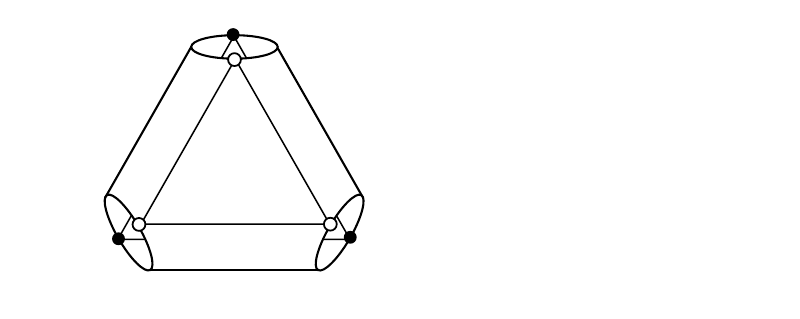
	\caption{\textbf{A schematic of the surface $\Sigma_{\alpha\beta\gamma}$ inside of $X_{\alpha\beta\gamma}$.} Orientations are shown.}\label{fig::1}
\end{figure}

Using the outward normal first convention for the boundary orientation, we have that
\[
\d \Sigma_{\alpha\beta\gamma}=-L_{\alpha\beta}\sqcup -L_{\beta\gamma}\sqcup L_{\alpha\gamma}.
\]

If $(\Sigma,\as,\bs,\gs,\ws,\zs)$ is a Heegaard triple, we will write $\cP_{\alpha\beta\gamma}$ for the set of integral 2-chains on $\Sigma$ which have boundary equal to a linear combination of the $\as,$ $\bs$ and $\gs$ curves. Elements of $\cP_{\a\b\g}$ are referred to as \emph{triply periodic domains}. There is a map
\[
H\colon \cP_{\alpha\beta\gamma}\to H_2(X_{\alpha\beta\gamma};\Z),
\]
described in \cite{OSTriangles}*{pg. 9}. The map $H$ is defined by taking  a domain $\cD\in \cP_{\alpha\beta\gamma}$ and including it into $\Sigma\times \{pt\}\subset X_{\alpha\beta\gamma}$. We then extend the 2-chain outward towards the boundary of $\Delta$, and then cap off with disks in $U_{\a}$, $U_{\b}$ and $U_{\g}$ to get a closed 2-chain $H(\cD)\in H_2(X_{\alpha\beta\gamma};\Z)$. 

Note that the 2-chains $H(\cD)$ and $\Sigma_{\alpha\beta\gamma}$ intersect only at $\{pt\}\times  (\ve{w}\cup \ve{z})$, and the multiplicity of the intersection points is given by the multiplicity of the domain $\cD$ at the basepoints. Hence
\begin{equation}
\langle \PD[H(\cD)],[\Sigma_{\a\b\g}]\rangle=\# (H(\cD)\cap \Sigma_{\alpha\beta\gamma})=(n_{\ve{z}}-n_{\ve{w}})(\cD).\label{eq:intersecttriplyperiodicdomain}
\end{equation} 
More generally, if $J\colon \Sigma_{\a\b\g}\to \bJ$ is an indexing, then the analogous equation for $\langle \PD[H(\cD)], [(\Sigma_{\a\b\g})_j] \rangle$ holds if we sum over only the basepoints mapped to $j\in \bJ$ by $J$.

\subsection{Heegaard triples subordinate to bouquets and bands}
\label{sec:triplesandbouquets}
Suppose that $\bL=(L,\ve{w},\ve{z})$ is a multi-based link in $Y$. Let $\bL_\alpha$ denote the arcs of $L\setminus (\ve{w}\cup \ve{z})$ which go from $\ve{w}$ basepoints to $\ve{z}$ basepoints. Let $\bL_\beta$ denote the arcs of $L\setminus (\ve{w}\cup \ve{z})$ which go from $\ve{z}$ basepoints to $\ve{w}$ basepoints. 

\begin{define}A \emph{$\beta$-bouquet} $\cB^\beta$ for a framed link $\bS_1$ in $Y\setminus L$ is a collection of arcs which connect links in $\bS_1$ to the interior of $\bL_\beta$. We assume that there is exactly one arc per component of $\bS_1$, and each arc has one endpoint on $\bS_1$ and one endpoint in $\bL_\beta$. 
\end{define}

An $\alpha$-bouquet can be defined analogously. We will use $\beta$-bouquets to define the grading, but we will show in Section~\ref{subsec:usealphabouqeuts} that either $\alpha$-bouquets or $\beta$-bouquets can be used.

Given a $\beta$-bouquet $\cB^\beta$ for a framed link $\bS_1\subset Y\setminus L$, we can consider the sutured manifold $Y(\bL\cup\cB^\beta)$ obtained by removing a regular neighborhood of $L\cup \cB^\beta$, and adding meridional sutures corresponding to the basepoints $\ve{w}$ and $\ve{z}$. As an adaptation of \cite{OSTriangles}*{Definition~4.2} and \cite{JCob}*{Definition~6.3}, we make the following definition:

\begin{define}Suppose that $\bL=(L,\ws,\zs)$ is a multi-based link in $Y$, and that $\bS_1\subset Y\setminus L$ is a framed 1-dimensional link. Write $\ell_1,\dots, \ell_k$ for the connected components of $\bS_1$. We say that a triple $(\Sigma,\alpha_1,\dots, \alpha_n,\beta_1,\dots, \beta_n,\beta_1',\dots, \beta_n',\ve{w},\ve{z})$ is \emph{subordinate} to the $\beta$-bouquet $\cB^\beta$ for a framed 1-dimensional link $\bS_1\subset Y\setminus L$ if the following hold:
\begin{enumerate}
\item $\Sigma\subset Y$ is an embedded Heegaard surface such that $\Sigma\cap L=\ws\cup \zs$.
\item After removing neighborhoods of $\ws$ and $\zs$ from the surface $\Sigma$, the diagram \sloppy $(\Sigma, \alpha_1,\dots, \alpha_n, \beta_{k+1},\dots, \beta_n, \ve{w},\ve{z})$ induces a sutured diagram for $Y(\bL\cup\cB^\beta)$.
\item  For $i\in \{1,\dots, k\}$, the curve $\beta_i$ is a meridian of $\ell_i$. Hence \sloppy $(\Sigma, \alpha_1,\dots, \alpha_n, \beta_{1},\dots, \beta_n, \ve{w},\ve{z})$ is a diagram for $(Y,\bL)$.
\item For $i\in \{k+1,\dots, n\}$, the curve $\beta_{i}'$ is a small Hamiltonian translate of $\beta_i$, and $|\beta_i\cap \beta'_j|=2\delta_{ij}$.
\item For $i\in \{1,\dots, k\}$, the curve $\beta'_i$ is a longitude of the link component that $\beta_i$ is a meridian of. Furthermore, for $i\in \{1,\dots, k\}$, the curves $\beta_i$ and $\beta'_i$ intersect in a single point.
\end{enumerate}
\end{define}

To prove invariance of our gradings, we will need the following result about connecting two Heegaard triples subordinate to a fixed bouquet:

\begin{lem}\label{lem:movesbetweenbetasubordinatediagrams}  Suppose that $\bL$ is a multi-based link in $Y$, and $\bS_1$ is a framed 1-dimensional link in the complement of $L$, and $\bS_1$ has components $\ell_1,\dots, \ell_k$. Any two Heegaard triples  subordinate to a fixed $\beta$-bouquet $\cB^\beta$ for $\bS_1$ can be connected via a sequence of the following moves:
	\begin{enumerate}
		\item\label{move:connecttwotriples1} An isotopy or handleslide amongst the $\as$ curves.
		\item\label{move:connecttwotriples2} An isotopy or handleslide amongst the curves $\{\beta_{k+1},\dots, \beta_n\}$, while performing the analogous isotopy or handleslide of the curve $\{\beta'_{k+1},\dots, \beta'_n\}$.
		\item\label{move:connecttwotriples3} An index 1/2-stabilization or destabilization, i.e., taking the connected sum of $\cT$ with a triple $(\bT^2,\alpha_0,\beta_0,\beta_0')$ where $\beta_0$ and $\beta_0'$ are small Hamiltonian isotopies of each other, intersecting at two points, and $|\alpha_0\cap \beta_0|=|\alpha_0\cap \beta_0'|=1$.
		\item\label{move:connecttwotriples4} For $i\in \{1,\dots, k\}$,  an isotopy of $\beta_i$, or a handleslide of $\beta_i$ across a $\beta_j$ with $j\in \{k+1,\dots, n\}$.
		\item\label{move:connecttwotriples5} For $i\in \{1,\dots, k\}$, an isotopy of $\beta'_i$, or a handleslide of $\beta'_i$ across a $\beta'_j$ with $j\in \{k+1,\dots, n\}$.
		\item\label{move:connecttwotriples6} An isotopy of the surface $\Sigma$ inside of $Y$, through surfaces whose intersection with $L\cup \cB^\beta$ is exactly $\ws\cup \zs$.
		\end{enumerate}
\end{lem}
\begin{proof}This can be proven by adapting \cite{OSTriangles}*{Lemma~4.5} or \cite{JCob}*{Lemma~6.5}.
\end{proof}

The maps from \cite{ZemCFLTQFT} use the following notion of bands:
\begin{define}\label{def:alphaband} An \emph{oriented $\alpha$-band} $B$ is an embedded rectangle in $Y$ such that $B\cap L$ consists of two opposite sides of $\d B$, and each component of $B\cap L$ is contained in a distinct component of $\bL_{\a}$,  where $\bL_{\alpha}$ denotes the subset of $L\setminus (\ws\cup \zs)$ consisting of arcs which go from $\ws$ to $\zs$.
\end{define}

We note that $\bL_{\alpha}$ can be equivalently described as the subset of $L\setminus (\ws\cup \zs)$ consisting of arcs contained in the $\alpha$-handlebody of $Y$, for any Heegaard diagram of $(Y,\bL)$.  Given a $\alpha$-band, we can form the multi-based link $\bL(B)$ in $Y$.
 
 Analogous to a Heegaard triple subordinate to a $\beta$-bouquet of a framed link in $Y\setminus L$, we need use the following notion of a Heegaard triple which is adapted to an $\alpha$-band:

\begin{define}[\cite{ZemCFLTQFT}*{Definition~6.2}]\label{def:subordinatetoalphaband} We say a triple $(\Sigma,\alpha_1',\dots, \alpha_n',\alpha_1,\dots, \alpha_n,\beta_1,\dots,\beta_n,\ws,\zs)$ is \emph{subordinate} to the $\alpha$-band $B$ if the following hold:
\begin{enumerate}
\item $\Sigma\subset Y$ is an embedded Heegaard surface such that $\Sigma\cap L=\ws\cup \zs$.
\item After removing neighborhoods of the $\ws$ and $\zs$ basepoints from 
\[
(\Sigma,\alpha_1,\dots,\alpha_{n-1},\beta_1,\dots, \beta_n,\ws,\zs),
\]
we obtain a sutured Heegaard diagram for the sutured manifold $Y\setminus N(L\cup B)$ (with meridional sutures induced by the basepoints).
\item $\alpha_1',\dots, \alpha_{n-1}'$ are small Hamiltonian isotopies of the curves $\alpha_1,\dots, \alpha_{n-1}$.
\item The curve $\alpha_n$ bounds a compressing disk in the complement of $L$, and the curve $\alpha_n'$ bounds a compressing disk in the complement of $L(B)$. Furthermore, $|\alpha_n'\cap \alpha_n|=2$.
\end{enumerate}
\end{define}

If $\bS_1$ is an $\ell$-component link, we write $W(Y,\bS_1)$ for following 4-dimensional 2-handle cobordism:
\[
W(Y,\bS_1):=([0,1]\times Y)\cup \left(\coprod_{i=1}^\ell D^2\times D^2\right)\cup ([1,2]\times Y(\bS_1)).
\]
We define the properly embedded surface $\Sigma(\bS_1)\subset W(Y,\bS_1)$ as
\[
\Sigma(\bS_1):=[0,2]\times L.
\]
We  write $\cW(Y,L,\bS_1)$ for the link cobordism
\[
\cW(Y,L,\bS_1):=(W(Y,\bS_1), \Sigma(\bS_1))\colon (Y,L)\to (Y,L).
\]

Similarly, if $B$ is an oriented band for the link $L\subset Y$, there is a well defined link cobordism
\[
\cW(Y,L,B)=([0,2]\times Y, \Sigma(B)),
\]
where $\Sigma(B)$ is the surface
\[
\Sigma(B):=([0,1]\times L)\cup (\{1\}\times B)\cup ([1,2]\times L).
\]

The following observation will be extremely important (compare \cite{OSTriangles}*{Proposition~4.3} and \cite{JCob}*{Proposition~6.6}):

\begin{lem}\label{lem:uniqueembeddingintoW}Let $(Y,\bL)$ be a 3-manifold containing a multi-based link.
\begin{enumerate}
\item\label{lem:embedpart1} Suppose $\bS_1\subset Y\setminus L$ is an $\ell$-component framed 1-dimensional link and  $(\Sigma,\as,\bs,\bs',\ws,\zs)$ is subordinate to a $\beta$-bouquet of $\bS_1$. Then there is an embedding of link cobordisms
\[
(X_{\a\b\b'}, \Sigma_{\a\b\b'})\hookrightarrow \cW(Y,L,\bS_1),
\]
which is well defined up to isotopy. The complement of $X_{\a\b\b'}$ in $W(Y,\bS_1)$ consists of a 4-dimensional 1-handlebody of genus $g(\Sigma)- \ell$. Furthermore $\Sigma_{\a\b\b'}$ intersects $Y_{\b\b'}$ in a $|\ws|$-component unlink.
\item\label{lem:embedpart2} Suppose $B$ is an $\alpha$-band for $\bL$ in $Y$, and $(\Sigma,\as',\as,\bs,\ws,\zs)$ is a Heegaard triple subordinate to $B$. Then there is an embedding of link cobordisms
\[
(X_{\a'\a\b},\Sigma_{\a'\a\b})\hookrightarrow \cW(Y,L,B),
\]
which is well defined up to isotopy. Furthermore, the complement of $X_{\a'\a\b}$ in $[0,2]\times Y$ consists of a  4-dimensional  1-handlebody of genus $g(\Sigma)$. The surface $\Sigma_{\a'\a\b}$ intersects $Y_{\a'\a}$ in a $|\ws|-1$ component unlink.
\end{enumerate}
\end{lem}
\begin{proof}Let us consider Part~\eqref{lem:embedpart1}, when $(\Sigma,\as,\bs,\bs',\ws,\zs)$ is subordinate to a bouquet of a framed link $\bS_1$. First, recall that by assumption $(\Sigma,\as,\bs,\ws,\zs)$ is an embedded Heegaard diagram for $(Y,\bL)$.  Consider 
\[
H_{\b\b'}:=([1-\epsilon,1]\times U_\b)\cup \left(\coprod_{i=1}^\ell D^2\times D^2\right)\subset W(Y,\bS_1).
\] We note that $[1-\epsilon,1]\times U_\b$ is a 4-dimensional 1-handlebody of genus $g(\Sigma)$, and we can view $H_{\b\b'}$ as being obtained by attaching $\ell$ 2-handles, which each cancel one of the 1-handles forming $[1-\epsilon,1]\times U_\b$. Hence $H_{\b\b'}$ is a 4-dimensional 1-handlebody of genus $g(\Sigma)-\ell$. 

We now claim that $X_{\a\b\b'}$ and $W(Y,\bS_1)\setminus \Int (H_{\b\b'})$ are diffeomorphic, via a diffeomorphism which is well defined up to isotopy. It is convenient to thicken the Heegaard surface and view $Y$ as $U_\alpha\cup ([-1,1]\times \Sigma) \cup U_\b$. Similarly, it is convenient to also fatten the vertices of the triangle $\Delta$ in the construction of $X_{\a\b\b'}$ and view $\Delta$ as a hexagon, with sides that alternate between being in the interior of $X_{\a\b\b'}$, and being on the boundary. We can write
\[
W(Y,\bS_1)\setminus \Int(H_{\b\b'})=([0,2]\times [-1,1]\times \Sigma)\cup ([0,2]\times U_\a)\cup ([0,1-\epsilon]\times U_\b)\cup ([1,2]\times U_{\b'}). 
\]
Up to rounding corners, this is canonically diffeomorphic to the 4-manifold $X_{\a\b\b'}$ defined in Equation~\eqref{eq:Xabgdef}, as long as we identify $[0,2]\times [-1,1]\times \Sigma$ with $\Delta\times \Sigma$. Furthermore, the surface $\Sigma_{\a\b\b'}$ is mapped into $[0,2]\times L$, by construction.

The argument for Part~\eqref{lem:embedpart2}, where $(\Sigma,\as',\as,\bs,\ws,\zs)$ is subordinate to an $\alpha$-band, is a straightforward adaptation.
\end{proof}

\subsection{Heegaard triples and $\Spin^c$ structures}

In this section we discuss $\Spin^c$ structures on 3- and 4-manifolds.

Heegaard Floer homology uses Turaev's interpretation of $\Spin^c$ structures on 3-manifolds as  homology classes of non-vanishing vector fields. Two non-vanishing vector fields on $Y$ are said to be homologous if they are homotopic on the complement of a set of 3-balls. 

If $Y$ is a closed 3-manifold, $\Spin^c(Y)$ has an affine action of $H_1(Y;\Z)$. The action has a convenient description in terms of vector fields,  referred to as \emph{Reeb surgery} \cite{Nicolasecu}*{Section~3.2}, which we describe presently.  Suppose $v$ is a non-vanishing vector field on $Y^3$, and $\gamma\subset Y$ is an oriented, simple closed curve. We can homotope $v$ so that $v|_\gamma=-\gamma'$. View a neighborhood of $\gamma$ as $S^1\times D^2$, and assume that $v=\d/\d \theta$ on this neighborhood. Viewing $D^2$ as the unit complex disk, let $[0,1]\subset D^2$ denote a radius of $D^2$, and pick a non-vanishing vector field $v(\gamma)$ on $[0,1]$ so that $(v(\gamma))(1)=v(1)=\d/\d \theta$ and $(v(\gamma))(0)=-\d/\d \theta$. We can extend $v(\gamma)$ over $D^2$ by requiring it to be invariant under the action of rotation on $D^2$. Next, we extend $v(\gamma)$ over $S^1\times D^2$ by requiring $v(\gamma)$ to be invariant under the action of $S^1$. We extend $v(\gamma)$ to all of $Y$ by setting it equal to $v$, outside of $S^1\times D^2$. This is illustrated schematically in Figure~\ref{fig::40}.

\begin{figure}[ht!]
\centering
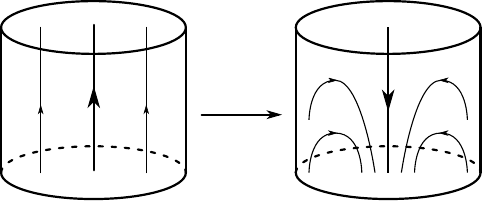
\caption{\textbf{Reeb surgery of a non-vanishing vector field $v$ along a curve $\gamma$.} On the left, $v$ is shown in a regular neighborhood of $\gamma$. We assume that $v|_{\gamma}=-\gamma'$. On the right is $v(\gamma)$, the result of Reeb surgery.}\label{fig::40}
\end{figure}

If $\frs(v)$ denotes the $\Spin^c$ structure induced by $v$, we claim that
\begin{equation}
\frs(v(\gamma))=\frs(v)+\PD[\gamma].\label{eq:Reebsurgery}
\end{equation}
To establish Equation~\eqref{eq:Reebsurgery}, one considers the set of relative $\Spin^c$ structures on a solid torus $N$, which we identify with homology classes of non-vanishing vector fields on $N$ which agree with some fixed vector field $v_0$ on $\d N$. The set $\Spin^c(N,\d N)$ is an affine space over $H^2(N,\d N;\Z)\iso H_1(N;\Z)$. If $\tau_0$ is a fixed trivialization of $v_0^\perp$ on $\d N$,  then the relative Chern class 
\[
c_1(\frs(v), \tau_0):=c_1(v^\perp, \tau_0)\in H^2(N,\d N;\Z)
\]
 gives a way of distinguishing relative $\Spin^c$ structures.  Abstractly, one knows that
\[
c_1(\frs+[\gamma],\tau_0)=c_1(\frs,\tau_0)+2\PD[\gamma].
\] 
 If $D\subset N$ is a disk such that $\langle \PD[\gamma], [D,\d D]\rangle =1$, then Equation~\eqref{eq:Reebsurgery} can be verified directly by computing
\begin{equation}
\langle c_1(v(\gamma)^{\perp},\tau_0),[D, \d D]\rangle-\langle c_1(v^\perp,\tau_0),[D, \d D]\rangle = 2\langle \PD[\gamma], [D, \d D]\rangle,\label{eq:Reebsurgeryexplicitcomputation}
\end{equation}
 as follows. We pick $\tau_0$ to be the planar trivialization induced by $D$. The 2-plane field $v^\perp$ has a non-vanishing section whose restriction to $\d D$ is constant with respect to $\tau_0$, so $\langle c_1(v^\perp, \tau_0),[D, \d D]\rangle$ vanishes. A non-vanishing section $\eta$ of $v(\gamma)^{\perp}$ can be constructed by picking a non-vanishing section along $[0,1]\subset D$, and then using the action of $S^1$ on $D$ (i.e. rotation) simultaneously with the action of $S^1$ on the fibers of $v(\gamma)^\perp$ to extend $\eta$ over all of $D$. The section $\eta$ induces a map from $\d D$ to the unit sphere bundle of $v(\gamma)^{\perp}|_{\d D}$, and the trivialization $\tau_0$ gives a well defined degree of this map. Noting that the orientation of $T D$ is opposite to the orientation of $v^\perp$ along $\d D$, the degree is $-2$. The evaluation of $c_1(v(\gamma)^\perp,\tau_0)$ can be computed by using $\tau_0$ to glue an oriented 2-plane bundle over a disk $\hat{D}$, with constant fiber, to the 2-plane bundle $v^{\perp}\to D$, and then extending $\eta$ to a generic section over $\hat{D}\cup D$. The value $\langle c_1(v(\gamma)^{\perp},\tau_0),[D, \d D]\rangle$ is the algebraic intersection number of $\eta$ on $D\cup \hat{D}$ with the zero section, which is equal to the degree of the induced map from $\d \hat{D}$ to the unit sphere bundle of $v(\gamma)^{\perp}$. The degree of this map over $\d \hat{D}$ is $+2$, since $\d \hat{D}$ has the opposite orientation as $\d D$. Hence $\langle c_1(v(\gamma)^{\perp},\tau_0),[D, \d D]\rangle=+2$, establishing Equation~\eqref{eq:Reebsurgeryexplicitcomputation}.

In \cite{OSDisks}*{Section~2.6}, to a Heegaard diagram $(\Sigma, \as,\bs,\ve{w})$ for $Y$, Ozsv\'{a}th and Szab\'{o} associate a map
\[
\frs_{\ve{w}}\colon \bT_{\alpha}\cap \bT_{\beta}\to \Spin^c(Y).
\] 
One starts with the upward gradient vector field for a Morse function inducing the Heegaard diagram. In a neighborhood of the flowlines passing through $\ve{w}$, as well as the flowlines passing through the intersection points of $\ve{x}\in \bT_{\alpha}\cap \bT_{\beta}$, one modifies the gradient vector field so that it is non-vanishing (which can be done since the chosen flowlines connect critical points of opposite parities). The construction of $\frs_{\ws}$ has the following dependence on the basepoints (compare \cite{OSDisks}*{Lemma~2.19}):

\begin{lem}\label{lem:spincpoincareduallink}If $\cH=(\Sigma, \as,\bs,\ve{w},\ve{z})$ is a diagram for $(Y,\bL)$, and $\ve{x}\in \bT_{\alpha}\cap \bT_{\beta}$, then
\[
\frs_{\ve{w}}(\ve{x})-\frs_{\ve{z}}(\ve{x})=\PD[L].
\]
\end{lem}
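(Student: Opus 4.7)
The plan is to compute the difference of Spin$^c$ structures using Turaev's description of $\Spin^c(Y)$ as homotopy classes of nonvanishing vector fields on $Y$. Recall that the Ozsv\'ath--Szab\'o construction of $\frs_{\ve{w}}(\ve{x})$ starts with the gradient vector field $\nabla f$ of a Morse function $f$ compatible with $\cH$ and modifies it in tubular neighborhoods of two collections of flowlines: the index 1-to-index 2 flowlines through the components of $\ve{x}$, and the index 0-to-index 3 flowlines through the basepoints $\ve{w}$. In each such tubular neighborhood the two critical points at the ends of the flowline are surgered away to produce a local nonvanishing model, and the resulting vector field $v_{\ve{w}}$ represents $\frs_{\ve{w}}(\ve{x})$. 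Analogously, replacing the flowlines through $\ve{w}$ by those through $\ve{z}$ produces $v_{\ve{z}}$ representing $\frs_{\ve{z}}(\ve{x})$.

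Next, I would construct a generic 1-parameter family $\{v_t\}_{t\in [0,1]}$ with $v_0=v_{\ve{w}}$ and $v_1=v_{\ve{z}}$. Outside tubular neighborhoods of the flowlines through $\ve{w}\cup \ve{z}$, the two vector fields agree (since the modifications near $\ve{x}$ are identical), so one takes $v_t$ constant there. Inside these neighborhoods, one interpolates between the two local models. The zero set $Z\subset Y\times [0,1]$ of this family is generically a 1-manifold, and its projection $\pi_Y(Z)$ determines a well-defined homology class in $H_1(Y;\Z)$.

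A standard obstruction theory argument, which is exactly the content of Turaev's reconstruction of $\Spin^c(Y)$ from vector fields, identifies the primary obstruction to homotoping $v_{\ve{w}}$ to $v_{\ve{z}}$ through nonvanishing vector fields with the class $PD[\pi_Y(Z)]\in H^2(Y;\Z)$; by definition this is $\frs_{\ve{w}}(\ve{x})-\frs_{\ve{z}}(\ve{x})$. A direct local computation in each tubular neighborhood then identifies $\pi_Y(Z)$ with the link $L$: concatenating the flowline through $w_i$ (traversed upward) with the flowline through $z_i$ (traversed downward) yields precisely a component of $L$, and these are exactly the arcs where the two local vector field models differ.

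The main obstacle is orientation bookkeeping. One has to verify that the local interpolation produces the zero locus with the orientation specified by the convention in the paper, namely that $L$ meets $\Sigma$ negatively at $\ve{w}$ and positively at $\ve{z}$. This amounts to computing in an explicit local model for the Morse-theoretic modification near an index 0-to-3 flowline, tracking how the interpolating family rotates the vector field. Once this orientation check is completed, the identification $[\pi_Y(Z)]=[L]\in H_1(Y;\Z)$ yields $\frs_{\ve{w}}(\ve{x})-\frs_{\ve{z}}(\ve{x})=PD[L]$, as claimed.
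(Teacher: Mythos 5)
Your approach is correct in spirit and is, in essence, the argument underlying the paper's one-line proof, which simply cites \cite{OSDisks}*{Lemma 2.19}. That lemma already describes the difference of Spin$^c$ structures coming from two different choices of basepoints as the Poincar\'e dual of a $1$--cycle built from arcs joining old basepoints to new basepoints (pushed into the two handlebodies), and its proof is the vector-field/obstruction-theory computation you sketch. Applying it to a multi-pointed link diagram and using the paper's orientation convention (that $L$ meets $\Sigma$ negatively at $\ve{w}$ and positively at $\ve{z}$) gives the statement immediately, so the paper has no independent argument to compare against.

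That said, one step is stated too strongly: the flowline through $w_i$ concatenated with the flowline through $z_i$ is \emph{not} ``precisely a component of $L$,'' nor is it even a closed loop in general. When a component of $L$ carries a single pair of basepoints, those two flowlines share both their index-$0$ and index-$3$ endpoints and do close up into a loop homologous (not equal) to that component. When a component carries more pairs, the flowlines through $w_i$ and $z_i$ have different index-$3$ endpoints, and one must chain together \emph{all} the flowlines through all basepoints on that component (the result then traverses them in reverse cyclic order). The resulting $1$--cycle is still homologous to $L$, but verifying this, and in particular fixing the sign so that one obtains $PD[L]$ rather than $-PD[L]$, is exactly the orientation check you flag and defer. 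That check is the only nontrivial content if one insists on a self-contained argument rather than invoking \cite{OSDisks}*{Lemma 2.19}, so it should be carried out explicitly rather than left as an aside.
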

\begin{proof}Write $A_{\ws}$ and $A_{\zs}$ for the flowlines passing through $\ws$ and $\zs$, respectively. Note $A_{\ws}\cup A_{\zs}=L$, and that  $\frs_{\ws}(\xs)$ and $\frs_{\zs}(\xs)$ differ only on a neighborhood of $L$. Let us consider the disk $D:=N(A_{\zs})\cap \Sigma$.  As we described above, the set $\Spin^c(N(L), \d N(L))$ is an affine space over $H_1(N(L);\Z)\iso \Z$, and the difference between two $\Spin^c$ structures can be computed by evaluating their relative Chern classes on $[D,\d D]$. Note that our orientation convention is that $L$ goes from $\zs$ to $\ws$ in the handlebody $U_{\b}$, and furthermore, the Morse function defining the diagram $(\Sigma,\as,\bs,\ws,\zs)$ obtains its maximum on $U_{\b}$. The vector field corresponding to $\frs_{\ws}(\xs)$ naturally has $L$ as an orbit, while the vector field for $\frs_{\zs}(\xs)$ naturally has $-L$ as an orbit. We can pick the vector fields for $\frs_{\ws}(\xs)$ and $\frs_{\zs}(\xs)$ so that on $D$, $\frs_{\ws}(\xs)$ agrees with the Reeb surgery of $\frs_{\zs}(\xs)$ on $L$. The proof then follows from Equation~\eqref{eq:Reebsurgeryexplicitcomputation}, which implies that the homology class of $\frs_{\ws}(\xs)$ is equal to the Reeb surgery of $\frs_{\zs}(\xs)$ on $L$, completing the proof.
\end{proof}

Analogous to Turaev's description of a 3-dimensional  $\Spin^c$ structure as a homology class of non-vanishing vector fields, there is an interpretation of $\Spin^c$ structures on a 4-manifold $W$ in terms of homotopy classes of almost complex structures on the 2-skeleton of $W$, which extend over the 3-skeleton; See \cite{OSDisks}*{Section~8.1.4} and \cite{GompfSpinc}*{pg.~49}. Similar to Reeb surgery in 3-dimensions, if $\Sigma$ is a properly embedded surface in $W$, then the action of $\PD[\Sigma]\in H^2(W;\Z)$ can be described geometrically by modifying an almost complex structure in a neighborhood of $\Sigma$. Rather than give a general description, we will focus on a particular example arising from Heegaard triples, though it is straightforward to extend our result to the general case.

To a Heegaard triple $(\Sigma, \as,\bs,\ve{\gamma},\ve{w})$, in \cite{OSDisks}*{Section~8} Ozsv\'{a}th and Szab\'{o} associate a map
\[
\frs_{\ve{w}}\colon \pi_2(\ve{x},\ve{y},\ve{z})\to\Spin^c(X_{\alpha\beta\gamma}).
\] 
Similar to Lemma~\ref{lem:spincpoincareduallink}, the map $\frs_{\ws}$ has an  important dependence on the basepoints $\ws$:

\begin{lem}\label{lem:poincaredualofsurface}If $(\Sigma, \as,\bs,\ve{\gamma},\ve{w},\ve{z})$ is a doubly multi-pointed Heegaard triple, then
\[
\frs_{\ve{w}}(\psi)-\frs_{\ve{z}}(\psi)=\PD[\Sigma_{\alpha\beta\gamma}].
\]
\end{lem}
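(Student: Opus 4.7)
The strategy mirrors the proof of Lemma \ref{lem:spincpoincareduallink}: both sides are classes in $H^2(X_{\alpha\beta\gamma};\Z)$, and I verify the equality by restricting to $\d X_{\alpha\beta\gamma}$ and pairing with a generating set for $H_2(X_{\alpha\beta\gamma};\Z)$. As a preliminary observation, the difference $\frs_{\ve{w}}(\psi)-\frs_{\ve{z}}(\psi)$ is independent of the triangle class $\psi$: replacing $\psi$ by another $\psi'$ with the same three corners changes each of $\frs_{\ve{w}}(\psi)$ and $\frs_{\ve{z}}(\psi)$ by the same class, namely the one represented by $H(\psi-\psi')$, so the difference is intrinsic to the doubly multi-pointed Heegaard triple.

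First, I would restrict to the boundary. The restriction of $\frs_{\ve{w}}(\psi)$ to a component $\pm Y_{\tau\sigma}$ of $\d X_{\alpha\beta\gamma}$ is the 3-dimensional $\Spin^c$ structure $\frs_{\ve{w}}$ at the corresponding corner of $\psi$, and likewise for $\frs_{\ve{z}}$. Lemma \ref{lem:spincpoincareduallink} gives
\[(\frs_{\ve{w}}(\psi)-\frs_{\ve{z}}(\psi))\big|_{\pm Y_{\tau\sigma}}=\pm PD[L_{\tau\sigma}],\]
which matches the restriction of $PD[\Sigma_{\alpha\beta\gamma}]$, since $\d \Sigma_{\alpha\beta\gamma}=-L_{\alpha\beta}\sqcup -L_{\beta\gamma}\sqcup L_{\alpha\gamma}$.

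Next, I would pair both sides with classes $H(\cD)$ coming from triply periodic domains $\cD\in \cP_{\alpha\beta\gamma}$. The right-hand side evaluates to $n_{\ve{z}}(\cD)-n_{\ve{w}}(\cD)$ by equation \eqref{eq:intersecttriplyperiodicdomain}. For the left-hand side, the first Chern class formula for Heegaard triples (\cite{OSTriangles}*{Proposition 6.3}) expresses $\langle c_1(\frs_{\ve{w}}(\psi)),H(\cD)\rangle$ as a sum in which the basepoint dependence enters only through a term of the form $-2n_{\ve{w}}(\cD)$, all other summands being determined by the combinatorics of $\cD$ and $\psi$ alone. Subtracting the analogous formula with $\ve{z}$ in place of $\ve{w}$ and using the identity $c_1(\frs)-c_1(\frs')=2(\frs-\frs')$ gives
\[\langle \frs_{\ve{w}}(\psi)-\frs_{\ve{z}}(\psi),H(\cD)\rangle=n_{\ve{z}}(\cD)-n_{\ve{w}}(\cD),\]
matching the other side.

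A Mayer--Vietoris argument applied to the decomposition $X_{\alpha\beta\gamma}=(\Sigma\times\Delta)\cup(U_\alpha\times e_\alpha)\cup(U_\beta\times e_\beta)\cup(U_\gamma\times e_\gamma)$ shows that $H_2(X_{\alpha\beta\gamma};\Z)$ is generated by the classes $H(\cD)$ together with classes pushed in from the boundary components, so the two identifications above determine the equality modulo torsion in $H^2(X_{\alpha\beta\gamma};\Z)$. The main subtlety I expect is pinning down the torsion, which I would handle by returning to the Ozsv\'{a}th--Szab\'{o} definition of $\frs_{\ve{w}}(\psi)$: both $\Spin^c$ structures are constructed from a common gradient-like vector field on $X_{\alpha\beta\gamma}$ by modifications supported on neighborhoods of the flow arcs $f_\alpha,f_\beta,f_\gamma$ through $\ve{w}$ versus those through $\ve{z}$, and a direct obstruction-theoretic comparison of the two nonvanishing extensions shows that their difference is Poincar\'{e} dual to exactly $\Sigma_{\alpha\beta\gamma}=(-\ve{w}\cup\ve{z})\times\Delta\cup(e_\alpha\times f_\alpha)\cup(e_\beta\times f_\beta)\cup(e_\gamma\times f_\gamma)$ on the nose.
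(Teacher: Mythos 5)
Your approach is genuinely different in structure from the paper's, and the first part of it is correct: restricting $\frs_{\ve{w}}(\psi)-\frs_{\ve{z}}(\psi)$ to the three boundary components and invoking Lemma \ref{lem:spincpoincareduallink}, together with pairing $c_1$ of both sides against $H(\cD)$ for triply periodic domains $\cD$ (here \cite{OSDisks}*{Proposition 8.3} in fact tells you that the $H(\cD)$ already generate all of $H_2(X_{\alpha\beta\gamma};\Z)$, so the extra ``pushed in from the boundary'' classes are unnecessary), does pin down the difference up to torsion in $H^2(X_{\alpha\beta\gamma};\Z)$. But that is precisely where your argument stops being a proof. Pairing with $H_2$ sees only the free part of $H^2$, and $X_{\alpha\beta\gamma}$ has no reason to have torsion-free $H^2$; so after your steps the statement remains open modulo a torsion class, and everything hinges on the last paragraph.

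That last paragraph is where the gap is. You describe $\frs_{\ve{w}}(\psi)$ and $\frs_{\ve{z}}(\psi)$ as coming from modifications of ``a common gradient-like vector field on $X_{\alpha\beta\gamma}$'' near the flow arcs through $\ve{w}$ versus $\ve{z}$; that is the $3$--dimensional picture (relevant for Lemma \ref{lem:spincpoincareduallink}), not the $4$--dimensional one. For a Heegaard triple, Ozsv\'{a}th--Szab\'{o} build a singular codimension-one foliation of $X_{\alpha\beta\gamma}$ and an oriented $2$--plane field defined only on the complement of a contractible set, so ``the common vector field'' is not actually there and the ``direct obstruction-theoretic comparison'' you gesture at is not a one-line observation. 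The content of the paper's proof is exactly this: one removes a ball $\Sigma\times B$ around the singular locus, observes that the two $2$--plane fields $\xi_{\ve{w}},\xi_{\ve{z}}$ agree outside a tubular neighborhood $N(\Sigma_{\alpha\beta\gamma}^0)$, uses injectivity of restriction to $X_{\alpha\beta\gamma}^0$ and the Thom isomorphism $H^2(N(\Sigma_{\alpha\beta\gamma}^0),\d N;\Z)\iso H^0(\Sigma_{\alpha\beta\gamma}^0;\Z)$ (which is torsion-free, so that a pairing computation does determine the class), and then runs an explicit model computation of sections near an intersection point to get the coefficient $+1$. That localization and model computation is what actually resolves the torsion ambiguity; without it, your proof does not close. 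As written, the first three paragraphs are a correct but ultimately redundant warm-up (they become unnecessary once the local comparison is done on the nose), and the fourth paragraph names the hard step without carrying it out.
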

\begin{proof} We briefly recall how the maps $\frs_{\ws}$ and $\frs_{\zs}$ are defined. To a homology class of triangles $\psi$, Ozsv\'{a}th and Szab\'{o} associate a real, oriented 2-plane field on the complement of a collection of 4-balls in $X_{\alpha\beta\gamma}$.  We will write $\xi_{\ws}$ and $\xi_{\zs}$ for the oriented 2-plane fields associated to $\frs_{\ws}(\psi)$ and $\frs_{\zs}(\psi)$. We recall the construction.

One starts with a partially defined oriented 2-plane field $\xi_0$. On $e_{\a}\times U_{\a}$,   $\xi_0$ is defined on the complement of $e_{\a}\times \Crit(f_\a)$ to be $\{0\}\oplus \ker (d f_{\a})=\{0\}\oplus (\nabla f_a)^{\perp}$. On $e_\b\times U_\b$ and $e_\g\times U_\g$, $\xi_0$ is defined similarly. The 2-plane field $\xi_0$ is defined on all of $\Delta\times \Sigma$ to be $\{0\}\oplus T\Sigma$.

If $\psi\in \pi_2(\xs,\ys,\zs)$ is a homology class and $u\colon \Delta\to \Sym^n(\Sigma)$ is a topological representative, we define the immersed surface $S_u\subset \Delta\times \Sigma$ as the set of  points $(x,\sigma)$ such that $\sigma\in u(x)$. One extends $S_u$ into $e_{\a}\times U_{\a}$ by extending the image of $u$ radially over a set of compressing disks of  $U_{\a}$ which have boundary $\as$. The immersed surface $S_u$ is extended similarly into $e_{\b}\times U_{\b}$ and $e_{\g}\times U_{\g}$.

 Next, we define the surface $S_{\ws}$  by extending $\Delta\times \ws$ into $e_{\a}\times U_{\a}$ with the product of $e_{\a}$ and the flowlines of $\nabla f_{\a}$ passing through $\ws$. The surface $S_{\ws}$ is similarly extended into $e_{\b}\times U_{\b}$ and $e_{\g}\times U_{\g}$. A surface $S_{\zs}$ is defined similarly. Note that  $S_{\ws}\cup S_{\zs}=\Sigma_{\a\b\g}$.

 Ozsv\'{a}th and Szab\'{o} describe a codimension 1 singular foliation $F$ on $X_{\alpha\beta\gamma}$ \cite{OSDisks}*{Figure~5}. The intersection of each leaf of $F$ with $\Sigma_{\alpha\beta\gamma}$ induces a codimension 1 singular foliation $F_{\Sigma}$ on  $\Sigma_{\alpha\beta\gamma}$. Let $\Gamma\subset \Sigma_{\alpha\beta\gamma}$ denote the union of the singular leaves of $F_{\Sigma}$. The foliation $F_{\Sigma}$ is schematically shown in Figure~\ref{fig::39}. 

\begin{figure}[ht!]
\centering
%% Creator: Inkscape inkscape 0.92.3, www.inkscape.org
%% PDF/EPS/PS + LaTeX output extension by Johan Engelen, 2010
%% Accompanies image file '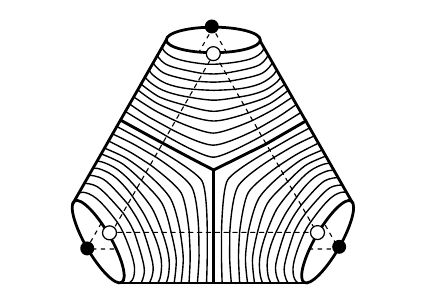' (pdf, eps, ps)
%%
%% To include the image in your LaTeX document, write
%%   \input{<filename>.pdf_tex}
%%  instead of
%%   \includegraphics{<filename>.pdf}
%% To scale the image, write
%%   \def\svgwidth{<desired width>}
%%   \input{<filename>.pdf_tex}
%%  instead of
%%   \includegraphics[width=<desired width>]{<filename>.pdf}
%%
%% Images with a different path to the parent latex file can
%% be accessed with the `import' package (which may need to be
%% installed) using
%%   \usepackage{import}
%% in the preamble, and then including the image with
%%   \import{<path to file>}{<filename>.pdf_tex}
%% Alternatively, one can specify
%%   \graphicspath{{<path to file>/}}
%% 
%% For more information, please see info/svg-inkscape on CTAN:
%%   http://tug.ctan.org/tex-archive/info/svg-inkscape
%%
\begingroup%
  \makeatletter%
  \providecommand\color[2][]{%
    \errmessage{(Inkscape) Color is used for the text in Inkscape, but the package 'color.sty' is not loaded}%
    \renewcommand\color[2][]{}%
  }%
  \providecommand\transparent[1]{%
    \errmessage{(Inkscape) Transparency is used (non-zero) for the text in Inkscape, but the package 'transparent.sty' is not loaded}%
    \renewcommand\transparent[1]{}%
  }%
  \providecommand\rotatebox[2]{#2}%
  \newcommand*\fsize{\dimexpr\f@size pt\relax}%
  \newcommand*\lineheight[1]{\fontsize{\fsize}{#1\fsize}\selectfont}%
  \ifx\svgwidth\undefined%
    \setlength{\unitlength}{206.76112225bp}%
    \ifx\svgscale\undefined%
      \relax%
    \else%
      \setlength{\unitlength}{\unitlength * \real{\svgscale}}%
    \fi%
  \else%
    \setlength{\unitlength}{\svgwidth}%
  \fi%
  \global\let\svgwidth\undefined%
  \global\let\svgscale\undefined%
  \makeatother%
  \begin{picture}(1,0.66007391)%
    \lineheight{1}%
    \setlength\tabcolsep{0pt}%
    \put(0,0){\includegraphics[width=\unitlength,page=1]{fig39.pdf}}%
    \put(0.8283691,0.043829){\color[rgb]{0,0,0}\makebox(0,0)[lt]{\lineheight{1.25}\smash{\begin{tabular}[t]{l}$-L_{\as,\bs}$\end{tabular}}}}%
    \put(0.56269506,0.61125088){\color[rgb]{0,0,0}\makebox(0,0)[lt]{\lineheight{1.25}\smash{\begin{tabular}[t]{l}$L_{\as,\gs}$\end{tabular}}}}%
    \put(0.1502686,0.04480181){\color[rgb]{0,0,0}\makebox(0,0)[rt]{\lineheight{1.25}\smash{\begin{tabular}[t]{r}$-L_{\bs,\gs}$\end{tabular}}}}%
    \put(-0.50476828,-1.17772195){\color[rgb]{0,0,0}\makebox(0,0)[lt]{\begin{minipage}{1.66256638\unitlength}\raggedright \end{minipage}}}%
    \put(0.78036671,0.32892479){\color[rgb]{0,0,0}\makebox(0,0)[lt]{\lineheight{1.25}\smash{\begin{tabular}[t]{l}$\Gamma$\end{tabular}}}}%
    \put(0,0){\includegraphics[width=\unitlength,page=2]{fig39.pdf}}%
  \end{picture}%
\endgroup%

\caption{\textbf{The codimension 1 singular foliation $F_{\Sigma}$ on $\Sigma_{\alpha\beta\gamma}$.} The union of the singular leaves $\Gamma$ is shown bold.}\label{fig::39}
\end{figure}

Let $B_{u,\rep}\subset \Delta \times \Sigma$ denote a neighborhood of the set of points $(x,\sigma)\in S_u\cap (\Delta \times \Sigma)$ where $u(x)\in \Sym^n(\Sigma)$ has a repeated entry. Define $\Gamma_{\ws}:=\Gamma\cap S_{\ws}$ and $\Gamma_{\zs}:=\Gamma\cap S_{\zs}$. Let $B_{u,\ws}\subset (\Delta\times \Sigma)$ denote a neighborhood of the set of points $(x,\sigma)\in S_u\cap (\Delta \times \Sigma)$ where $u(x)\cap \ws \neq \varnothing$. Define $B_{u,\zs}$ similarly. By choosing the map $u$ generically, we may assume that $B_{u,\rep}\cap B_{u,\ws}=\varnothing$. Furthermore, by picking $u$ and $N(\Gamma)$ appropriately, we may assume that $B_{u,\ws}\subset N(\Gamma_{\ws})$ and $B_{u,\zs}\subset N(\Gamma_{\zs})$.

 The 2-plane field $\xi_{\ws}$ is defined in the complement of $N(\Gamma_{\ws})\cup B_{u,\rep}$. It is constructed by extending the non-singular 2-plane field $\xi_0|_{X_{\a\b\g}\setminus N(S_{\ws}\cup S_u)}$ over all of $X_{\a\b\g}\setminus N(\Gamma_{\ws}\cup B_{u,\rep})$.   See \cite{OSDisks}*{Section~8.1.4} for a precise description of the extension to $X_{\a\b\g}\setminus N(\Gamma_{\ws}\cup B_{u,\rep})$. The 2-plane field $\xi_{\zs}$ is constructed analogously.

Note that $\xi_{\ws}$ and $\xi_{\zs}$ are both defined on the complement of $N(\Gamma)\cup B_{u,\rep}$, and they differ only in a neighborhood of $\Sigma_{\alpha\beta\gamma}\setminus N(\Gamma)$.

Let us write
\[
X_0:=X_{\alpha\beta\gamma}\setminus N(\Gamma) \qquad \text{and} \qquad \Sigma_0:=\Sigma_{\alpha\beta\gamma}\setminus N(\Gamma).
\]
 Note that $\Sigma_0$ is a disjoint union of properly embedded annuli in $X_0$, each with one end on $\d N(\Gamma)$ and one end on $\d X_{\alpha\beta\gamma}$. The singular foliation $F_{\Sigma}$ restricts to a non-singular, codimension 1 foliation on each connected component of $\Sigma_0$, and furthermore, each leaf is a simple closed curve, which is homologically essential in $\Sigma_0$.

 Since $\Gamma$ is a 1-dimensional cell complex, the map $\Spin^c(X_{\alpha\beta\gamma})\to \Spin^c(X_0)$ is an isomorphism. Hence it is sufficient to show that
\begin{equation}
(\frs_{\ws}(\psi)-\frs_{\zs}(\psi))|_{X_0}=\PD[\Sigma_0].\label{eq:diffspincremovegraph}
\end{equation}

Suppose that $\ell$ is a leaf of $F$ which is contained in $X_0$. Note that $\ell$ can be identified with either $Y_{\a\b}$, $Y_{\b\g}$ or $Y_{\a\g}$. Similarly the corresponding leaf $\ell_{\Sigma}=\ell\cap \Sigma_{\alpha\beta\gamma}\subset F_{\Sigma}$ can be identified with one of the links $L_{\a\b}$, $L_{\b\g}$ or $L_{\a\g}$. 

The foliations $F$ and $F_{\Sigma}$ are not orientable. Nonetheless, the non-singular leaves are canonically oriented, since if $\ell$ is a non-singular leaf of $F$, and $\ell_\Sigma\subset \ell$ is the corresponding leaf of $F_{\Sigma}$, then the pair $(\ell,\ell_\Sigma)$ is canonically identified with one of the boundary components of $(X_{\a\b\g},\Sigma_{\a\b\g})$ which we give the boundary orientation.

Now suppose that $(\ell,\ell_\Sigma)$ is a nested pair of leaves of $F$ and $F_\Sigma$. Note that $\xi_{\ws}|_\ell$ and $\xi_{\zs}|_{\ell}$ are subbundles of $T\ell$. Let $v_{\ws}$ and $v_{\zs}$ denote the orthogonal complements of $\xi_{\ws}$ and $\xi_{\zs}$ inside of $T\ell$.

Note that using the outward normal first boundary orientation convention, it is easily checked that
\begin{equation}
v_{\zs}=-T\ell_{\Sigma},\qquad v_{\ws}= T\ell_\Sigma, \qquad (\xi_{\zs})^{\perp}=-T \Sigma_0\qquad \text{and}\qquad  (\xi_{\ws})^{\perp}= T\Sigma_0. \label{eq:orientationsofbundles}
\end{equation}
On $\ell$,  the oriented 1-plane bundles $v_{\ws}$ and $v_{\zs}$ differ only in a neighborhood of $\ell_{\Sigma}$, and there they differ exactly the same as the vector fields built using the 3-dimensional $\Spin^c$ structure maps considered in Lemma~\ref{lem:spincpoincareduallink}. Hence, by Lemma~\ref{lem:spincpoincareduallink}, we can take $v_{\ws}$ to be obtained from  $v_{\zs}$ by Reeb surgery (see Figure~\ref{fig::40}) on $\ell_{\Sigma}$.

Together with  their orthogonal complements, the oriented 2-plane fields $\xi_{\ws}$ and $\xi_{\zs}$ determine almost complex structures $J_{\ws}$ and $J_{\zs}$ on $T X_0$, up to homotopy. We will show that
  \begin{equation}
(TX_0, J_{\ws})\iso (TX_0, J_{\zs})\otimes_{\C} L,\label{eq:isoofcomplexbundles}  
  \end{equation} where  $L$ is a complex line bundle on $X_0$ with $c_1(L)=\PD[\Sigma_0]$. Note that this will imply Equation~\eqref{eq:diffspincremovegraph}, and  complete the proof.

 Our strategy for showing  Equation~\eqref{eq:isoofcomplexbundles} is to find a complex 1-plane subbundle  $\zeta_{\ws}\subset (TX_0,J_{\ws})$  such that
\begin{equation}
\zeta_{\ws}\iso \xi_{\zs}\otimes_{\C} L \qquad \text{and} \qquad \zeta_{\ws}^{\perp}\iso \xi_{\zs}^{\perp}\otimes_{\C} L,\label{eq:whatzetawillsatisfy}
\end{equation}
for a line bundle $L\to X_0$ with $c_1(L)=\PD[\Sigma_0]$. Note Equation~\eqref{eq:whatzetawillsatisfy} will imply Equation~\eqref{eq:isoofcomplexbundles}.

We can trivialize $N(\Sigma_0)$ as $\Sigma_0\times D^2$, viewing $D^2$ as the unit complex disk. By construction, $\xi_{\ws}$ and $\xi_{\zs}$ are both invariant under the $S^1$ action on $D^2$, and also constant on the $\Sigma_0$ factor, under this trivialization of $N(\Sigma_0)$.

We now describe our choice of $\zeta_{\ws}\subset (T X_0,J_{\ws})$.  Write $[0,1]\subset D^2$ for a radius. Let $G_{\ws}$ denote the $\CP^1$-bundle over $X_0$ whose fiber over  $p\in X_0$ is the set of complex lines in $(T_p X_0, J_{\ws})$. We define  a bundle $G_{\zs}$ similarly. Pick  $p_0\in \Sigma_0$, and let $\gamma\colon \{p_0\}\times [0,1]\to G_{\ws}$ denote a section of the bundle $G_{\ws}|_{\{p_0\}\times [0,1]}$ which satisfies 
\[\gamma(p_0,0)=(\xi_{\ws}^{\perp})_{(p_0,0)}\qquad \text{and}\qquad  \gamma(p_0,1)=(\xi_{\ws})_{(p_0,1)}.\]
 Since the subspace $\xi_{\ws}^\perp|_{(p_0,0)}$ is fixed by the rotation action  on $D^2$, we can construct a complex 1-plane subbundle $\zeta_{\ws}$ of $(TX_0|_{\{p_0\}\times D^2},J_{\ws})$ by declaring it to be equal to $\gamma$ on $\{p_0\}\times [0,1]$ and also to be invariant under the $S^1$-action on $D^2$. Next, we extend $\zeta_{\ws}$ to all of $\Sigma_0\times D^2$ by declaring it to be constant on the $\Sigma_0$ factor of $\Sigma_0\times D^2$.   Since $\zeta_{\ws}$ agrees with $\xi_{\ws}$ on $\d N(\Sigma_0)$, we can extend $\zeta_{\ws}$ to all of $X_0$ by declaring it to be $\xi_{\ws}$ outside of $N(\Sigma_0)$.

We now describe the complex line bundle $L\to X_0$, which will feature in Equation~\eqref{eq:whatzetawillsatisfy}. We will define $L$ to be an oriented, real 2-plane subbundle of $\underline{\C}\oplus \underline{\C}\to X_0$, where $\underline{\C}$ denotes the trivial complex line bundle. By trivializing $T\Sigma_0$ as an oriented 2-plane bundle, we can write
\begin{equation}
T N(\Sigma_0)\iso T\Sigma_0\oplus TD^2\iso \underline{\C}\oplus \underline{\C}.\label{eq:thetrivialization}
\end{equation}
  By Equation~\eqref{eq:orientationsofbundles}, the 2-plane field $\zeta_{\ws}$  has constant fiber $-\{0\}\oplus \C$ along $\d N(\Sigma_0)$, under this trivialization. We define the fiber of $L$ over $x$ to be the fiber of $\zeta_{\ws}$ over $x$, when $x\in N(\Sigma_0)$, and we define the fiber of $L$ over $x$ to be $-\{0\}\oplus \C$ for $x\not \in N(\Sigma_0)$.  We define $L^{\perp}$ to be the real orthogonal complement of $L$ inside of $\underline{\C}\oplus \underline{\C}\to X_0$.

We  now show that
\begin{equation}
\zeta_{\ws}\iso \xi_{\zs}\otimes_{\C} L\qquad \text{and} \qquad \zeta_{\ws}^\perp\iso \xi_{\zs}^\perp\otimes_{\C} L^{\perp}.\label{eq:bundleisomorphisms0}
\end{equation} To establish  Equation~\eqref{eq:bundleisomorphisms0}, note that on $N(\Sigma_0)$, $\xi_{\zs}$ can be identified with $-TD\iso-\{0\}\oplus \underline{\C}$ under the trivialization in Equation~\eqref{eq:thetrivialization}. Outside of $N(\Sigma_0)$, the bundle $L$ is identified with $-\{0\}\oplus \underline{\C}$. Hence we can define a bundle isomorphism $\xi_{\zs}\otimes_{\C} L\to \zeta_{\ws}$ via the formula
\begin{equation}
(z\otimes w)_x\mapsto \begin{cases}
\bar{z}\cdot (w)_x& \text{if } x\in  N(\Sigma_0)\\
\bar{w}\cdot (z)_x & \text{if } x\not \in N(\Sigma_0).
\end{cases}
\label{eq:stranglebundleisomorphism}
\end{equation}
In Equation~\eqref{eq:stranglebundleisomorphism},  `$\cdot$' denotes multiplication with respect to the homotopically unique complex structure on $\zeta_{\ws}$, induced from its orientation. This not the same as the complex structure of $\underline{\C}\oplus \underline{\C}$.  A bundle isomorphism  $\xi_{\zs}^\perp\otimes_{\C} L^{\perp}\to \zeta_{\ws}^\perp$ is defined similarly, establishing Equation~\eqref{eq:bundleisomorphisms0}.

Finally, it remains to show
\begin{equation}
c_1(L)=c_1(L^\perp)=\PD[\Sigma_0],\ \label{eq:generalChernclassformula}
\end{equation}
which also implies $L\iso L^\perp$ as complex line bundles. Viewing $S^2$ as the quotient $D^2/\d D^2$,  we note that $L$ and $L^\perp$ are isomorphic to the pullbacks of two complex line bundles over $S^2$ under a map $\pi\colon X_0\to S^2$. Writing $p$ for the point $[\d D^2]$ in $S^2=D^2/\d D^2$, the map $\pi$ is given by $\pi(x)= p$ if $x\not \in N(\Sigma_0)$, and $\pi(z,w)= w$ if $(z,w)\in N(\Sigma_0)\iso \Sigma_0\times D^2$. Write $L_0$ and $L_0^\perp$ for these two complex line bundles, over $S^2$. Note that $\pi^*(\PD[p])=\PD[\Sigma_0]$, so it is sufficient to show that 
\begin{equation}
 c_1(L_0)=c_1(L_0^\perp)=\PD[p].\label{eq:simplifiedChernclasscomp}
\end{equation}

Equation~\eqref{eq:simplifiedChernclasscomp} can be established by the following direct computation. For the computation, it is easier to view $S^2$ as the union of $D^2$ and another disk $\hat{D}^2$, whose center is the point $p$. Over $\hat{D}^2$, we define the fibers of the bundles $L_0$ and $L_0^\perp$ to be $-\{0\}\oplus \C$ and $-\C\oplus \{0\}$, respectively. To prove Equation~\eqref{eq:simplifiedChernclasscomp}, we will prove that a generic section of each of $L_0$ and $L_0^\perp$ intersects the zero section once, algebraically, with positive sign.

 Let $\rho^\theta\colon D^2\to D^2$ denote the diffeomorphism obtained by multiplication by $e^{i\theta}$. We will write $\rho_*^\theta$ for the bundle automorphism of the trivial bundle $\underline{\C}\oplus \underline{\C}\to D^2$, which covers the diffeomorphism $\rho^\theta\colon D^2\to D^2$, and is defined by the formula
 \[
 \rho_*^\theta((v,w)_{z})=(v,e^{i\theta}\cdot w)_{e^{i\theta}\cdot z}.
 \]
 
  On $[0,1]\subset D^2$ we pick any nonvanishing section $v$ of $L_0$. We can extend $v$ to a non-vanishing section on all of $D^2$ using the formula
\[
v(re^{i\theta})=\rho^\theta_*(v(r)).
\]
 Since $v(0)\in  \C\oplus \{0\}$, which is fixed by the action of $\rho^\theta_*$, the vector field $v$ is well defined when $r=0$. The bundle $L_0$ has fiber $-\{0\}\oplus \C$, along $\d D^2$, and hence the vector field $v$ determines a map from $\d D^2$ to $S^1$ with respect to this trivialization. The map induced by $v$ is of degree $-1$, with respect to the complex orientation of $L_0$. If we extend $v$ generically over $\hat{D}^2$, then the index of $v$ over $\hat{D}^2$ is the same as the oriented intersection of a generic perturbation of $v$ with the zero section. The index of $v$ over $\hat{D}^2$ is $+1$, since it is the same as the degree of $v$ as a map from $\d \hat{D}^2$ to $S^1$, and the orientation of $\d \hat{D}^2$ is opposite to $\d D^2$. Hence  $c_1(L_0)= \PD[p].$
 
 We can analyze $L_0^\perp$ similarly. We let $\tilde{v}\colon [0,1]\to L^\perp_0$ be a nonvanishing section. Since the fiber over 0 of $L_0^\perp$ is $\{0\}\oplus \C$, we can define an extension of $\tilde{v}$ to all of $D^2$ via the formula
 \[
\tilde{v}(re^{i\theta})=e^{-i\theta}\cdot \rho_*^\theta(\tilde{v}(r)). 
 \]
 Multiplication is with respect to the complex structure of $L_0^\perp$. The bundle $L_0^{\perp}$ has constant fiber $-\{0\}\oplus \C$ on $\d D^2$, and with respect to the induced trivialization of $L_0^\perp$ on $\d D^2$, the degree of the induced map from $\d D^2$ to $S^1$ is $-1$. As with $L_0$, this implies that $c_1(L_0^\perp)=\PD[p]$. Equation~\eqref{eq:simplifiedChernclasscomp} follows, and hence so does Equation~\eqref{eq:generalChernclassformula}.
 
 Combining Equations~ \eqref{eq:bundleisomorphisms0} and \eqref{eq:generalChernclassformula} implies Equation~\eqref{eq:isoofcomplexbundles} and completes the proof.
\end{proof}

\section{Kirby calculus for manifolds with boundary}
\label{sec:kirbycalculus}
Our strategy for constructing an absolute grading on $\cCFL^\infty$ will parallel the construction of the absolute $\Q$-gradings on the groups $\HF^\infty(Y,\frs)$ in \cite{OSTriangles}. W define a notion of Kirby diagram for  a 3-manifold $Y$ with an embedded link $L$, by presenting the link complement $Y\setminus N(L)$ as surgery on the standard unlink complement $S^3\setminus N(U)$. We then consider a Kirby calculus argument for how to relate two such presentations.

It will be useful for our purposes to first define a more general notion of surgery presentations, not specific to link complements:

\begin{define}\label{def:parsurgdata} If $M$ and $M'$ are oriented 3-manifolds with boundary and $\phi\colon \d M\to \d M'$ is a fixed, orientation preserving diffeomorphism, we say that a pair $(\bS_1,f)$ is a \emph{parametrized surgery presentation} for $(M,M',\phi)$ if $\bS_1\subset \Int M$ is a framed link and $
f\colon M(\bS_1)\to M' $
is a diffeomorphism which extends $\phi$.
\end{define}

It is well known that if $M$ and $M'$ are connected, oriented 3-manifolds and $\phi\colon \d M\to \d M'$ is an orientation preserving diffeomorphism, then there exists a parametrized surgery presentation of $(M,M',\phi)$. This can be seen by the following argument (cf. \cite{RobertsRelativeKirbyCalculus}). Using the diffeomorphism $\phi$, we form the closed, oriented three manifold $-M\cup (\d M\times [0,1])\cup M'$. This bounds a compact oriented 4-dimensional manifold $W$. We can view such a manifold as a cobordism of manifolds with boundary from $M$ to $M'$. We think of $M$ and $M'$ as the ``horizontal'' parts of $\d W$ and $[0,1]\times \d M$ as the ``vertical'' part of the boundary. We can find a Morse function which is $0$ on $M$, $t$ on $\{t\}\times  \d M$ and $1$ on $M'$, which has only index 1, 2, and 3 critical points. By changing the 4-manifold, we can replace index 1 and 3 critical points with index 2 critical points, to get a cobordism from $M$ to $M'$ which has a Morse function with only index 2 critical points. If we take a gradient like vector field on $W$ which is $\d/\d t$ on $[0,1]\times \d M$, then the descending manifolds from the index 2 critical points yield a framed link $\bS_1$ in $M$, and the Morse function and gradient like vector field determine a diffeomorphism $f\colon M(\bS_1)\to M'$ which is well defined up to isotopy and extends $\phi$.

Using our terminology, Kirby's calculus of links \cite{KirbyCalculus} gives a set of moves which can relate any two parametrized surgery presentations of triples $(M,M',\phi)=(S^3,Y,\varnothing)$, when $Y$ is a closed, oriented 3-manifold. The moves are blow-ups, blow-downs, handleslides, and isotopies of $f$ or $\bS_1$. In \cite{FennRourke},  Fenn and Rourke  extended the calculus to the case that $M$ is an arbitrary, closed, oriented 3-manifold, though an extra move is required, which is supported in a solid torus; See Figure~\ref{fig::22}. In \cite{RobertsRelativeKirbyCalculus}, Roberts extends the calculus to arbitrary 3-tuples $(M,M',\phi)$.

Keeping track of the parametrization $f\colon M(\bS_1)\to M'$ in the definition of a parametrized surgery decomposition is  important for our purposes. A Kirby move between two framed links $\bS_1$ and $\bS_1'$ in $M$ canonically yields a diffeomorphism $M(\bS_1)\to M(\bS_1')$ as we describe in the following paragraph. In particular, if $(\bS_1,f)$ is parametrized surgery data, and $\bS_1'$ is the result of one of the above moves on $\bS_1$, then there is a diffeomorphism $f'\colon M(\bS_1')\to M'$ which is canonically induced, and is well defined up to isotopy. 

 We now illustrate the canonical diffeomorphism from $M(\bS_1)$ to $M(\bS_1')$ resulting from a Kirby move; See \cite{GompfStipsicz}*{pg.~160}. Suppose that $\bS_1$ and $\bS_1'$ are two framed links in $M$ and  $\bS_1'=\bS_1\cup \{U\}$, where $U$ is a $\pm 1$ framed unknot which is contained in a 3-ball in $M\setminus \bS_1$.  The manifolds $M(\bS_1)\setminus B $ and $M(\bS_1')\setminus B(U)$ are canonically diffeomorphic, via the identity map. Noting that $B$ and $B(U)$ are both 3-balls with an identification of their boundaries, the diffeomorphism can be extended over $B$. Furthermore, the extension is unique up to isotopy, since $\MCG(B^3,S^2)=\{*\}$.
 
 Similarly, consider the case that $\bS_1'\subset M$ is obtained from $\bS_1$ via a handleslide. Let $H\subset S^3(\bS_1)$ and $H'\subset S^3(\bS_1')$ denote the genus 2 handlebodies which contain the support of the handleslide. The manifolds $S^3(\bS_1)\setminus H$ and $S^3(\bS_1')\setminus H'$ are canonically diffeomorphic (via the identity map, and clearly this diffeomorphism extends over $H$). Since $\MCG(H_g,\d H_g)=\{*\}$ for the genus $g$ handlebody $H_g$, the extension is unique, up to isotopy.
 
  As a specific example, a diffeomorphism $\psi\colon M\to M$ which is the identity on $\d M$ may be presented as a sequence of Kirby moves on framed links in $M$, starting and ending at the empty link in $\Int (M)$.

 We state the following version of the main result from \cite{RobertsRelativeKirbyCalculus}:

\begin{thm}\label{thm:movesbetweenparsurdecomps} Any two parametrized surgery presentations of a triple $(M,M',\phi)$ can be connected by a sequence of the following moves:
\begin{enumerate}[leftmargin=22mm, ref= $\cO_{\arabic*}$, label=\textrm{(Move $\cO_{\arabic*}$):}]
\setcounter{enumi}{-1}
\item\label{move:O0}  Isotopies of $f$ or $\bS_1$ which fix $\d M$.
\item\label{move:O1}  Handleslides of link components amongst each other.
\item\label{move:O2}  Blow-ups or blow-downs along a $\pm 1$ framed unknot in $\Int (M)$.
\item\label{move:O3}   Addition or removal to $\bS_1$ of a two component link $K\cup \mu_K$ inside of a solid torus which is disjoint from $\bS_1$, where $K$ is a core of the solid torus, and $\mu_K$ is a meridian of $K$. Furthermore, $K$ can be given arbitrary framing, though $\mu_K$ must be given the Seifert framing. See Figure~\ref{fig::22}.
\end{enumerate}
\end{thm}

\begin{figure}[ht!]
\centering
%% Creator: Inkscape inkscape 0.92.3, www.inkscape.org
%% PDF/EPS/PS + LaTeX output extension by Johan Engelen, 2010
%% Accompanies image file '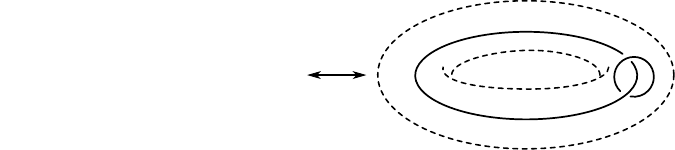' (pdf, eps, ps)
%%
%% To include the image in your LaTeX document, write
%%   \input{<filename>.pdf_tex}
%%  instead of
%%   \includegraphics{<filename>.pdf}
%% To scale the image, write
%%   \def\svgwidth{<desired width>}
%%   \input{<filename>.pdf_tex}
%%  instead of
%%   \includegraphics[width=<desired width>]{<filename>.pdf}
%%
%% Images with a different path to the parent latex file can
%% be accessed with the `import' package (which may need to be
%% installed) using
%%   \usepackage{import}
%% in the preamble, and then including the image with
%%   \import{<path to file>}{<filename>.pdf_tex}
%% Alternatively, one can specify
%%   \graphicspath{{<path to file>/}}
%% 
%% For more information, please see info/svg-inkscape on CTAN:
%%   http://tug.ctan.org/tex-archive/info/svg-inkscape
%%
\begingroup%
  \makeatletter%
  \providecommand\color[2][]{%
    \errmessage{(Inkscape) Color is used for the text in Inkscape, but the package 'color.sty' is not loaded}%
    \renewcommand\color[2][]{}%
  }%
  \providecommand\transparent[1]{%
    \errmessage{(Inkscape) Transparency is used (non-zero) for the text in Inkscape, but the package 'transparent.sty' is not loaded}%
    \renewcommand\transparent[1]{}%
  }%
  \providecommand\rotatebox[2]{#2}%
  \newcommand*\fsize{\dimexpr\f@size pt\relax}%
  \newcommand*\lineheight[1]{\fontsize{\fsize}{#1\fsize}\selectfont}%
  \ifx\svgwidth\undefined%
    \setlength{\unitlength}{323.81996251bp}%
    \ifx\svgscale\undefined%
      \relax%
    \else%
      \setlength{\unitlength}{\unitlength * \real{\svgscale}}%
    \fi%
  \else%
    \setlength{\unitlength}{\svgwidth}%
  \fi%
  \global\let\svgwidth\undefined%
  \global\let\svgscale\undefined%
  \makeatother%
  \begin{picture}(1,0.22179478)%
    \lineheight{1}%
    \setlength\tabcolsep{0pt}%
    \put(0.50146339,0.12931996){\color[rgb]{0,0,0}\makebox(0,0)[t]{\lineheight{0}\smash{\begin{tabular}[t]{c}\ref{move:O3}\end{tabular}}}}%
    \put(0,0){\includegraphics[width=\unitlength,page=1]{fig22.pdf}}%
    \put(0.80949706,0.18059906){\color[rgb]{0,0,0}\makebox(0,0)[lt]{\lineheight{1.25}\smash{\begin{tabular}[t]{l}$K$\end{tabular}}}}%
    \put(0.93123721,0.14658134){\color[rgb]{0,0,0}\makebox(0,0)[lt]{\lineheight{1.25}\smash{\begin{tabular}[t]{l}$\mu_K$\end{tabular}}}}%
    \put(0,0){\includegraphics[width=\unitlength,page=2]{fig22.pdf}}%
  \end{picture}%
\endgroup%

\caption{\textbf{Move~\ref{move:O3}} The move takes place in a solid torus in $M$. The framing on $K$ can be arbitrary, but the framing on $\mu_K$ is the Seifert framing.\label{fig::22}}
\end{figure}

We note that in \cite{RobertsRelativeKirbyCalculus}, the moves between framed links are presented without explicitly referencing diffeomorphism $f$, though for our purposes, it is important to keep track of the diffeomorphism $f$. 

 We now consider the implications of Theorem~\ref{thm:movesbetweenparsurdecomps} when $M$ and $M'$ are link complements.

Let $U$ denote a fixed, oriented, $\ell$-component unlink in $S^3$.
Suppose that $L$ is an $\ell$-component link in a 3-manifold $Y$. Define
\[
 Y_L:=Y\setminus N(L)\qquad \text{and}\qquad S^3_U:=S^3\setminus N(U).
 \]
Let $\phi_0\colon U\to L$ be a fixed, orientation preserving diffeomorphism of compact 1-manifolds. Together with a  choice of framing $\lambda$ of $L$, $\phi_0$ determines a diffeomorphism
\begin{equation}
\phi_\lambda\colon \d (S^3_U)\to \d (Y_L),\label{eq:philambdadef}
\end{equation} 
well defined up to isotopy.

\begin{define} We call a tuple $\bP=(\phi_0,\lambda, \bS_1,f)$ a \emph{parametrized Kirby diagram} for an oriented link $L$ in $Y$, if the following are satisfied:
\begin{enumerate}
\item $\phi_0\colon U\to L$ is an orientation preserving diffeomorphism of 1-manifolds.
\item $\lambda$ is a framing of $L$.
\item $\bS_1$ is a framed link in $S^3_U$.
\item  $f\colon  S^3_U(\bS_1)\to Y_L$ is a diffeomorphism such that $f|_{\d S^3_U}$ is isotopic to the diffeomorphism $\phi_\lambda$ from Equation~\eqref{eq:philambdadef}.
\end{enumerate}
\end{define}

We now describe  a new move, \ref{move:L3}, which we will be a convenient alternative to Move~\ref{move:O3} when we are working with parametrized Kirby diagrams for links. Given a link $L$ in $Y$, with framing $\lambda$, the move \ref{move:L3} consists of performing $\pm 1$ surgery on a knot $K$ which is a meridian of a component of $U$, as in Figure~\ref{fig::23}. Suppose $\bP=(\phi_0,\lambda, \bS_1,f)$ is a choice of parametrized Kirby diagram for $(Y,\bL)$.   The parametrizing diffeomorphism $f\colon S^3_U(\bS_1)\to Y_L$ induces a diffeomorphism  from $ S^3_U(\bS_1\cup \{K\})$ to $ Y_L(f(K))$. Furthermore, there is a canonical diffeomorphism from $Y_L(f(K))$ to $Y_L$, which is the identity outside of a solid torus containing $K$ whose boundary intersects $\d Y_L$ in an annulus. By composing the two maps, we obtain a diffeomorphism
\[
f_K\colon S^3_U(\bS_1\cup \{K\})\to Y_L,
\]
well defined up to isotopy.

On $\d S^3_U$, the map $f_K$ no longer restricts to $\phi_\lambda$, but instead $\phi_{\lambda'}$, where $\lambda'$ is a new framing which differs by $\mp 1$ on the component that $K$ encircled.

 After performing Move \ref{move:L3}, we get a new parametrized Kirby diagram $\bP_K=(\phi_0,\lambda',\bS_1\cup K,f_K)$ for $(Y,\bL)$.

\begin{figure}[ht!]
\centering
%% Creator: Inkscape inkscape 0.92.3, www.inkscape.org
%% PDF/EPS/PS + LaTeX output extension by Johan Engelen, 2010
%% Accompanies image file '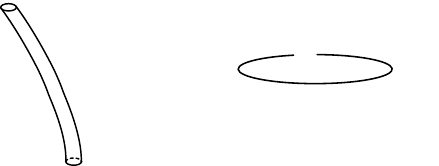' (pdf, eps, ps)
%%
%% To include the image in your LaTeX document, write
%%   \input{<filename>.pdf_tex}
%%  instead of
%%   \includegraphics{<filename>.pdf}
%% To scale the image, write
%%   \def\svgwidth{<desired width>}
%%   \input{<filename>.pdf_tex}
%%  instead of
%%   \includegraphics[width=<desired width>]{<filename>.pdf}
%%
%% Images with a different path to the parent latex file can
%% be accessed with the `import' package (which may need to be
%% installed) using
%%   \usepackage{import}
%% in the preamble, and then including the image with
%%   \import{<path to file>}{<filename>.pdf_tex}
%% Alternatively, one can specify
%%   \graphicspath{{<path to file>/}}
%% 
%% For more information, please see info/svg-inkscape on CTAN:
%%   http://tug.ctan.org/tex-archive/info/svg-inkscape
%%
\begingroup%
  \makeatletter%
  \providecommand\color[2][]{%
    \errmessage{(Inkscape) Color is used for the text in Inkscape, but the package 'color.sty' is not loaded}%
    \renewcommand\color[2][]{}%
  }%
  \providecommand\transparent[1]{%
    \errmessage{(Inkscape) Transparency is used (non-zero) for the text in Inkscape, but the package 'transparent.sty' is not loaded}%
    \renewcommand\transparent[1]{}%
  }%
  \providecommand\rotatebox[2]{#2}%
  \newcommand*\fsize{\dimexpr\f@size pt\relax}%
  \newcommand*\lineheight[1]{\fontsize{\fsize}{#1\fsize}\selectfont}%
  \ifx\svgwidth\undefined%
    \setlength{\unitlength}{210.70255242bp}%
    \ifx\svgscale\undefined%
      \relax%
    \else%
      \setlength{\unitlength}{\unitlength * \real{\svgscale}}%
    \fi%
  \else%
    \setlength{\unitlength}{\svgwidth}%
  \fi%
  \global\let\svgwidth\undefined%
  \global\let\svgscale\undefined%
  \makeatother%
  \begin{picture}(1,0.37810653)%
    \lineheight{1}%
    \setlength\tabcolsep{0pt}%
    \put(0,0){\includegraphics[width=\unitlength,page=1]{fig23.pdf}}%
    \put(0.07750579,0.35106252){\color[rgb]{0,0,0}\makebox(0,0)[lt]{\lineheight{0}\smash{\begin{tabular}[t]{l}$U$\end{tabular}}}}%
    \put(0.67797563,0.35106252){\color[rgb]{0,0,0}\makebox(0,0)[lt]{\lineheight{0}\smash{\begin{tabular}[t]{l}$U$\end{tabular}}}}%
    \put(0,0){\includegraphics[width=\unitlength,page=2]{fig23.pdf}}%
    \put(0.3782459,0.220607){\color[rgb]{0,0,0}\makebox(0,0)[t]{\lineheight{0}\smash{\begin{tabular}[t]{c}\ref{move:L3}\end{tabular}}}}%
    \put(0.85433428,0.25830839){\color[rgb]{0,0,0}\makebox(0,0)[lt]{\lineheight{0}\smash{\begin{tabular}[t]{l}$\pm 1$\end{tabular}}}}%
    \put(0.85460042,0.15871351){\color[rgb]{0,0,0}\makebox(0,0)[lt]{\lineheight{0}\smash{\begin{tabular}[t]{l}$K$\end{tabular}}}}%
    \put(0,0){\includegraphics[width=\unitlength,page=3]{fig23.pdf}}%
  \end{picture}%
\endgroup%

\caption{\textbf{The move $\cL_3$ between two parametrized Kirby diagrams for a link}. The solid tube denotes a boundary component of $S^3_U$. The knot $K$ is a new component in the framed link $\bS_1$.\label{fig::23}}
\end{figure}

We now reformulate Theorem~\ref{thm:movesbetweenparsurdecomps} to describe a sufficient set of moves between any two parametrized Kirby diagrams of a link:

\begin{prop}\label{prop:connecttwoparamsurgdecomps}Suppose $Y$ is a 3-manifold containing an oriented link $L$ with $\ell$ components. Let $U$ denote a fixed $\ell$-component unlink in $S^3$. Any two parametrized Kirby diagrams for $(Y,L)$ can be connected by a sequence of the following moves:
\begin{enumerate}[leftmargin=22mm, ref= $\cL_{\arabic*}$, label=\textrm{(Move $\cL_{\arabic*}$):}]
\setcounter{enumi}{-1}
\item  \label{move:L0} An isotopy of $f$ or $\bS_1$ which fixes $\d S^3_U$ pointwise.
\item \label{move:L1} A handleslide amongst the components of $\bS_1$.
\item \label{move:L2} A blow-up or blow-down along a $\pm 1$ framed unknot which is contained in a 3-ball in $S^3\setminus (U\cup \bS_1)$.
\item \label{move:L3} A blow-up or blow-down along a $\pm 1$ framed unknot which is a meridian of a single component of $U$, and is unlinked from all other components of $U$ and $\bS_1$.
\item  \label{move:L4} If $\psi_0\colon U\to U$ is an orientation preserving diffeomorphism, and $\psi\colon (S^3,U)\to (S^3,U)$ is an orientation preserving extension, we replace $\bP=(\phi_0,\lambda, \bS_1,f)$ with $\bP'=(\phi_0\circ \psi^{-1}_0, \lambda, \psi(\bS_1), f\circ (\psi^{\bS_1})^{-1})$, where $\psi^{\bS_1}\colon S^3_U(\bS_1)\to S^3_U(\psi(\bS_1))$ is the diffeomorphism induced by $\psi$.
\end{enumerate}
\end{prop}
\begin{proof}For fixed $\phi_0$ and $\lambda$, Theorem~\ref{thm:movesbetweenparsurdecomps} implies that Moves~\ref{move:L0}, \ref{move:L1}, \ref{move:L2}  and \ref{move:O3} suffice.

 We first claim that for fixed $\phi_0$ and $\lambda$, it is sufficient to use only instances of Move~\ref{move:O3} where $K$ is a meridian of a single component of $U$, and is unlinked from all other components of $\bS_1$ and $U$. Let us write $\cO_{3}^0$ for an instance of Move~\ref{move:O3} with this configuration. Move $\cO_3^0$ is shown in Figure~\ref{fig::55}.

\begin{figure}[ht!]
\centering
%% Creator: Inkscape inkscape 0.92.3, www.inkscape.org
%% PDF/EPS/PS + LaTeX output extension by Johan Engelen, 2010
%% Accompanies image file '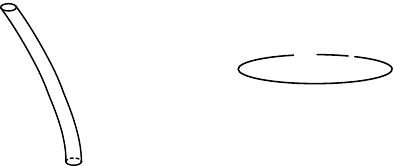' (pdf, eps, ps)
%%
%% To include the image in your LaTeX document, write
%%   \input{<filename>.pdf_tex}
%%  instead of
%%   \includegraphics{<filename>.pdf}
%% To scale the image, write
%%   \def\svgwidth{<desired width>}
%%   \input{<filename>.pdf_tex}
%%  instead of
%%   \includegraphics[width=<desired width>]{<filename>.pdf}
%%
%% Images with a different path to the parent latex file can
%% be accessed with the `import' package (which may need to be
%% installed) using
%%   \usepackage{import}
%% in the preamble, and then including the image with
%%   \import{<path to file>}{<filename>.pdf_tex}
%% Alternatively, one can specify
%%   \graphicspath{{<path to file>/}}
%% 
%% For more information, please see info/svg-inkscape on CTAN:
%%   http://tug.ctan.org/tex-archive/info/svg-inkscape
%%
\begingroup%
  \makeatletter%
  \providecommand\color[2][]{%
    \errmessage{(Inkscape) Color is used for the text in Inkscape, but the package 'color.sty' is not loaded}%
    \renewcommand\color[2][]{}%
  }%
  \providecommand\transparent[1]{%
    \errmessage{(Inkscape) Transparency is used (non-zero) for the text in Inkscape, but the package 'transparent.sty' is not loaded}%
    \renewcommand\transparent[1]{}%
  }%
  \providecommand\rotatebox[2]{#2}%
  \newcommand*\fsize{\dimexpr\f@size pt\relax}%
  \newcommand*\lineheight[1]{\fontsize{\fsize}{#1\fsize}\selectfont}%
  \ifx\svgwidth\undefined%
    \setlength{\unitlength}{188.6322323bp}%
    \ifx\svgscale\undefined%
      \relax%
    \else%
      \setlength{\unitlength}{\unitlength * \real{\svgscale}}%
    \fi%
  \else%
    \setlength{\unitlength}{\svgwidth}%
  \fi%
  \global\let\svgwidth\undefined%
  \global\let\svgscale\undefined%
  \makeatother%
  \begin{picture}(1,0.4223457)%
    \lineheight{1}%
    \setlength\tabcolsep{0pt}%
    \put(0,0){\includegraphics[width=\unitlength,page=1]{fig55.pdf}}%
    \put(0.08657411,0.39213749){\color[rgb]{0,0,0}\makebox(0,0)[lt]{\lineheight{0}\smash{\begin{tabular}[t]{l}$U$\end{tabular}}}}%
    \put(0.75730003,0.39213749){\color[rgb]{0,0,0}\makebox(0,0)[lt]{\lineheight{0}\smash{\begin{tabular}[t]{l}$U$\end{tabular}}}}%
    \put(0,0){\includegraphics[width=\unitlength,page=2]{fig55.pdf}}%
    \put(0.42250137,0.25437041){\color[rgb]{0,0,0}\makebox(0,0)[t]{\lineheight{0}\smash{\begin{tabular}[t]{c}$\cO_3^0$\end{tabular}}}}%
    \put(0,0){\includegraphics[width=\unitlength,page=3]{fig55.pdf}}%
    \put(0.93867306,0.31764093){\color[rgb]{0,0,0}\makebox(0,0)[lt]{\lineheight{1.25}\smash{\begin{tabular}[t]{l}$0$\end{tabular}}}}%
  \end{picture}%
\endgroup%

\caption{\textbf{The move $\cO_3^0$.} It is a special instance of Move~\ref{move:O3}.\label{fig::55}}
\end{figure} 
 
We will show that an arbitrary instance of Move \ref{move:O3}, performed along a knot $K$ and its meridian $\mu_K$, can instead be written as a composition of Moves \ref{move:L0}, \ref{move:L1}, \ref{move:L2} and $\cO_3^0$. Let $\Phi_{K,\cO_3}$ denote the diffeomorphism from $S^3_U(\bS_1)$ to $S^3_U(\bS_1\cup K\cup \mu_K)$ which is the identity outside of a solid torus containing $K$ and $\mu_K$.  The knot $K$ can be transformed into a meridian of a component of $U$ via a sequence of crossing changes of $K$ with components of $\bS_1$, with $U$, or with itself. Hence we will show that if $K'$ is obtained from $K$ by changing a crossing of $K$ with $\bS_1$, $U$, or itself, then  Move~\ref{move:O3}, applied along $K$, can be written as a composition of Move~\ref{move:O3}, applied along $K'$, as well as some combination of Moves \ref{move:L0}, \ref{move:L1}, \ref{move:L2} and $\cO_3^0$.

   First, suppose that $K'$ is obtained from $K$ by either a crossing change of $K$ with itself, or a crossing change of $K$ with another component of $\bS_1$. The link $\bS_1\cup K'\cup \mu_{K'}$ can be obtained from $\bS_1\cup K\cup \mu_K$ by handlesliding a link component across $\mu_K$ (if the crossing change is of $K$ with itself, then $K$ is handleslid across $\mu_K$; if the crossing change is of $K$ with another component of $\bS_1$, then the other component is handleslid across $\mu_K$) followed by an isotopy. Let $\Phi_{H}\colon S^3_U(\bS_1\cup K\cup \mu_K)\to S^3_U(\bS_1\cup K'\cup \mu_{K'})$ denote the diffeomorphism resulting from the composition of this handleslide and isotopy. We will show that
 \begin{equation}
 \Phi_{H}\circ \Phi_{K,\cO_3}\sim \Phi_{K',\cO_3},
 \label{eq:isotopicdiffeos1}
 \end{equation}
  where $\sim$ denotes isotopy. Suppose that $K'$ is obtained by changing a crossing of $K$ with $K_0\subset \bS_1$. We will handleslide $K_0$ across $\mu_K$.  Let $a$ be the handleslide arc connecting $K_0$ and $\mu_K$, and let $D$ denote a Seifert disk of $\mu_K$. Let $N\subset S^3_U$ denote a regular neighborhood of $K\cup a\cup K_0\cup D$; see Figure~\ref{fig::2}. We note $N$ is a genus 2 handlebody.  Note that $\Phi_H$ and $\Phi_{K',\cO_3}\circ \Phi_{K,\cO_3}^{-1}$ both restrict to diffeomorphisms between the surgered 3-manifolds  $N(K_0\cup K\cup \mu_K)$ and $N(K_0\cup K'\cup \mu_{K'}),$ (which are both themselves genus 2 handlebodies), and $\Phi_H$ and $\Phi_{K',\cO_3}\circ \Phi_{K,\cO_3}^{-1}$ agree on $\d N(K_0\cup K\cup \mu_K)$, we conclude that $\Phi_H$ and $\Phi_{K',\cO_3}\circ \Phi_{K,\cO_3}^{-1}$ must be isotopic  since $MCG(H_g,\d H_g)=\{*\}$, where $H_g$ denotes a genus $g$ handlebody. Hence Equation~\eqref{eq:isotopicdiffeos1} holds.   An analogous argument holds for changing a crossing of $K$ with itself. Let $D$ be a Seifert disk of $\mu_K$ (which intersects $K$ at a single point), and let $a$ be an arc from $\mu_K$ to $K$. Let $N$ denote a regular neighborhood of $K\cup a\cup D$. Noting that $N$ is a genus two handlebody, since $\Phi_H$ and $\Phi_{K',\cO_3}\circ \Phi_{K,\cO_3}^{-1}$ differ only inside of $N$, it follows that they must be isotopic.
 A similar argument establishes Equation~\eqref{eq:isotopicdiffeos1} in the the case that $K'$ is obtained by changing a crossing of $K$ with itself. 
 
 \begin{figure}[ht!]
 \centering
 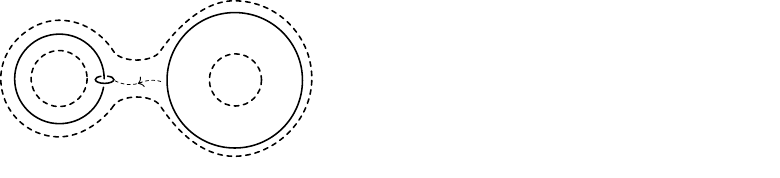
 \caption{\textbf{Move~\ref{move:O3}, applied along $K$, is equal to a composition of Move~\ref{move:O3}, applied along $K'$, and a handleslide.} The region shown is the genus 2 handlebody $N$.  The diffeomorphisms $\Phi_H$ and $\Phi_{K',\cO_3}\circ \Phi_{K,\cO_3}^{-1}$ are equal outside of $N$, and hence are isotopic.\label{fig::2}}
 \end{figure}
 
 \begin{comment}
 Hence in particular, instead of performing Move~\ref{move:O3} on $K$, we can perform Move~\ref{move:O3} on $K'$ (the knot obtained by changing a crossing of $K$ with either itself or with another component of $\bS_1$), and then performing a handleslide, and the resulting framed link $\bS_1$ and parameterizing diffeomorphism $f$ will be the same. 
 \end{comment}
 
Next, we consider the case that $K\subset S^3_U\setminus \bS_1$ is a knot and $K'$ is the result of changing a crossing of $K$ with $U$. We wish to show that Move~\ref{move:O3}, performed along $K$, can be written as a composition of Move~\ref{move:O3}, performed on $K'$, as well as Moves~\ref{move:L0}, \ref{move:L1}, \ref{move:L2} and $\cO_3^0$. The procedure for doing this is shown in Figure~\ref{fig::11}. We perform Move $\cO_3^0$ on a meridian of $U$, then perform a sequence of handleslides, and then perform the inverse of Move $\cO_3^0$.  As before, the parameterizing diffeomorphism resulting from applying Move~\ref{move:O3} along $K$ is isotopic to the parametrized diffeomorphism resulting from applying Move~\ref{move:O3} along $K'$, and then applying a sequence of Moves~\ref{move:L0}, \ref{move:L1} and $\cO_3^0$, since they can be shown to agree outside of a genus two handlebody.
 
  \begin{figure}[ht!]
  \centering
  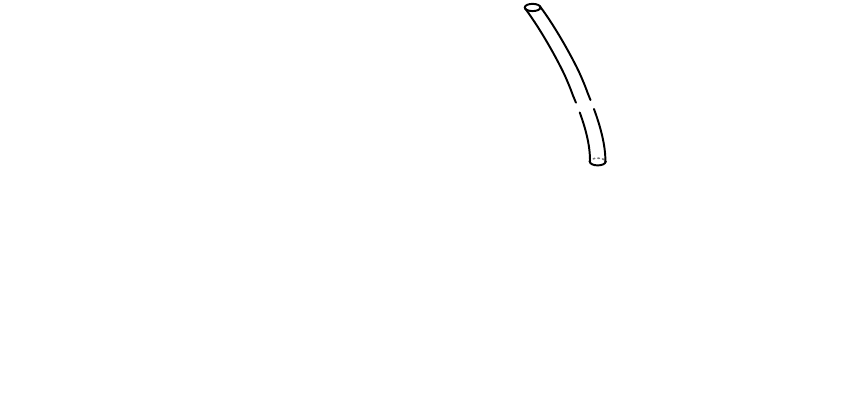
  \caption{\textbf{Writing an instance of Move~\ref{move:O3} along a knot $K'$ in terms of Move~\ref{move:O3} on $K$ as well as Moves~\ref{move:L0}, \ref{move:L1}, \ref{move:L2} and $\cO_3^0$}.\label{fig::11}}
  \end{figure}
 
 In such manner, by performing a sequence of crossing changes, we may reduce $K$ to a meridian of a single component of $U$. Hence we can write an arbitrary \ref{move:O3} move on a knot $K$ as a composition of the Moves~\ref{move:L0}, \ref{move:L1}, \ref{move:L2} and $\cO_3^0$.
 
Next, we note that using a standard trick, Move $\cO_3^0$ can be written as a composition of two \ref{move:L3} moves and possibly moves $\cL_1$ and $\cL_2$, depending on the framing of the knot $K$ in the $\cO_3^0$ move; see \cite{FennRourke}*{Figure~13}. In detail, note that by handlesliding $K$ over $\mu_K$, we can assume that $K$ has framing $0$ or $1$. If $K$ has framing $1$, then handlesliding $\mu_K$ over $K$ leaves two meridians of $U$, one with framing $+1$ and the other with framing $-1$ (i.e. two applications of Move $\cL_3$). If $K$ is instead given framing $0$, then we blow-up along a $-1$ framed unknot $K_0$ (as in Move~\ref{move:L2}), and slide both $K$ and $\mu_K$ over $K_0$. This leaves $K$ and $\mu_K$ both with framing $-1$. We then slide $K_0$ over $\mu_K$, which leaves $K$ with framing $-1$, $K_0$ with framing $0$, and $\mu_K$ with framing $-1$. Furthermore, $\mu_K$ is now unlinked with $K$ and $K_0$, and $K_0$ is now a meridian of $K$. Sliding $K_0$ over $K$ and blowing down along $\mu_K$ yields two meridians of $U$, one with framing $+1$ and the other with framing $-1$.
 
We note also that a single instance of $\cL_3$ changes the framing of one component of $L$ by $\pm 1$. Hence for fixed $\phi_0$, by applying $\cL_3$ some number of times, any framing $\lambda$ can be achieved.
 
We finally  show that any $\phi_0\colon U\to L$ can be achieved.  We note that any two $\phi_0$ maps differ by pre-composition with an orientation preserving diffeomorphism $\psi_0$ from $U$ to itself, and any such diffeomorphism extends to an orientation preserving diffeomorphism $\psi$ of $(S^3,U)$ with itself. Writing $\psi$ also for the induced automorphism of $S^3_U$, there is an induced diffeomorphism
\[
\psi^{\bS_1}\colon S^3_U(\bS_1)\to S^3_U(\psi(\bS_1)).
\]
Hence, via tautology, we get an induced parametrized Kirby decomposition
\[
\bP:=(\phi_0\circ \psi^{-1}_0,\lambda,\psi(\bS_1), f\circ (\psi^{\bS_1})^{-1}).
\]
 Hence Move~\ref{move:L4} can be used  to move between any two $\phi_0$ maps. 
\end{proof}

\section{Definition of the gradings} 
\label{sec:definitiongradings}

In this section, we give the definition of the Alexander and Maslov gradings. In Section~\ref{sec:relativegradings}, we describe the relative versions of the gradings. In Section~\ref{sec:defabsolutegradings}, we define the absolute gradings.

\subsection{Relative gradings}
\label{sec:relativegradings}

We begin with the relative Maslov gradings. For our purposes, it is convenient to describe two Maslov gradings, $\gr_{\ve{w}}$ and $\gr_{\ve{z}}$. The relative grading $\gr_{\ve{w}}$ is defined on generators by
 \begin{equation}
 \gr_{\ve{w}}(\ve{x},\ve{y})=\mu(\phi)-2\sum_{w\in \ve{w}} n_w(\phi),\label{eq:relwgrading}
 \end{equation} 
 for any disk $\phi\in \pi_2(\ve{x},\ve{y})$. By \cite{OSProperties}*{Proposition~7.5}, if $\phi\in \pi_2(\xs,\xs)$ is a class, then
 \begin{equation}
\langle c_1(\frs_{\ws}(\xs)), H(\phi)\rangle =\mu(\phi)-2\sum_{w\in \ws} n_w(\phi),\label{eq:chernclassformula}
 \end{equation}
where $H$ denotes the map $H\colon \pi_2(\xs,\xs)\to H_2(Y;\Z)$ obtained by capping the domain of $\phi$ (viewed as a 2-chain in $\Sigma$ with boundary an integral sum of $\as$ and $\bs$ curves) with a sum of compressing disks in $U_\a$ and $U_\b$ which are attached along the $\as$ and $\bs$ curves. In particular, if $c_1(\frs)$ is torsion, then the quantity $\gr_{\ws}(\xs,\ys)$ defined in Equation~\eqref{eq:relwgrading} is independent of the choice of $\phi$.
 
 We extend $\gr_{\ve{w}}$ to  $\cCFL^\infty(Y,\bL,\frs)$ by declaring all $U_{\ve{w}}$ variables to have grading $-2$, and all $V_{\ve{z}}$ variables to have grading 0.
 
 Analogously, we can define the relative grading $\gr_{\ve{z}}$ via the formula
 \begin{equation}
 \gr_{\ve{z}}(\ve{x},\ve{y})=\mu(\phi)-2\sum_{z\in \ve{z}} n_z(\phi),\label{eq:relzgrading}
 \end{equation}
  for a disk $\phi\in \pi_2(\ve{x},\ve{y})$. We extend $\gr_{\ve{z}}$ to all of $\cCFL^\infty(Y,\bL,\frs)$, by declaring all $U_{\ws}$ variables to be 0 graded, and all $V_{\zs}$ variables to be $-2$ graded. By Equation~\eqref{eq:chernclassformula}, $\gr_{\zs}(\xs,\ys)$ is independent of the choice of $\phi$ when $c_1(\frs_{\ve{z}}(\ve{x}))=c_1(\frs-\PD[L])$ is torsion.

\begin{rem}If $c_1(\frs)$ (resp. $c_1(\frs-\PD[L])$) is torsion, then $\gr_{\ws}$ (resp. $\gr_{\zs}$) will also determine a well defined relative grading on $\cCFL^\infty(Y,\bL^\sigma,\frs)$ whenever $\sigma$ is a type-partitioned coloring.
\end{rem}

We now describe the relative Alexander multi-gradings. Suppose that $J\colon L\to \bJ$ is an indexing of $L$, and $L$ is $\bJ$-null-homologous. The Alexander multi-grading is a relative $\Z^{\bJ}$ grading on $\cCFL^\infty(Y,\bL,\frs)$.

 Given a homology class $\phi\in \pi_2(\xs,\ys)$ and an index $j\in \bJ$, we define 
\[
n_{\ve{z}}(\phi)_j:=\sum_{\substack{z\in \ve{z}\\ J(z)=j}} n_z(\phi),\qquad \text{and} \qquad  n_{\ve{w}}(\phi)_j:=\sum_{\substack{w\in \ve{w}\\ J(w)=j}} n_w(\phi).
\]
 The relative multi-grading is defined by the equation
\begin{equation}
A(\ve{x}, \ve{y})_j=(n_{\ve{z}}-n_{\ve{w}})_j(\phi)
\label{eq:defalexandergrading},
\end{equation}
 for any class $\phi\in \pi_2(\ve{x},\ve{y})$.
 
We extend the grading in Equation~\eqref{eq:defalexandergrading} to $\cCFL^\infty(Y,\bL,\frs)$ by declaring $V_z$ to have grading $+1$ in index grading $J(z)$, and $U_w$ to have grading $-1$ in index $J(w)$. As a concrete example, this implies that $A(V_z\cdot \xs, \xs)_j=1$ if $J(z)=j$.

To see that Equation~\eqref{eq:defalexandergrading} is independent of the choice of class $\phi\in \pi_2(\xs,\ys)$, it is sufficient to show that the expression $(n_{\ve{z}}-n_{\ve{w}})_j(\phi)$ vanishes for any element $\phi\in\pi_2(\ve{x},\ve{x})$. Writing $H\colon \pi_2(\xs,\xs)\to H_2(Y;\Z)$ for the map obtained by capping off a periodic domain, a simple computation shows that if $\phi\in \pi_2(\xs,\xs)$, then
	 \begin{equation}
	  (n_{\ve{z}}-n_{\ve{w}})_j(\phi)=\# (H(\phi)\cap J^{-1}(j)) \label{eq:nz-nw=intersectionnumber},
	  \end{equation}
	  where $\# (H(\phi)\cap J^{-1}(j))$ denotes the oriented intersection number. If $L$ is $\bJ$-null-homologous, then by definition $L_j:=J^{-1}(j)$ is null-homologous, so the expression on the right hand side of Equation~\eqref{eq:nz-nw=intersectionnumber} vanishes.

\begin{rem}Suppose $\bL=(L,\ws,\zs)$ is a link in $Y$. If $(\sigma, J)$ is a type-partitioned,  indexed coloring of $\bL$, with index set $\bJ$, and $L$ is $\bJ$-null-homologous, then the  complex $\cCFL^\infty(Y,\bL^\sigma,\frs)$ has a well defined $\Z^{\bJ}$-valued relative Alexander grading. The type-partitioned requirement on the coloring assures that none of the $U_{\ws}$ variables are identified with one of the $V_{\zs}$ variables. Requiring that the coloring be indexed ensures that if two variables are identified, then their corresponding link components are assigned the same index by $J$.
\end{rem}

\begin{rem}It is straightforward to compute that the differential $\d$ on $\cCFL^\infty$  lowers $\gr_{\ws}$ and $\gr_{\zs}$ by 1, and preserves $A_j$, whenever they are defined.
\end{rem}

We now show that the relatively graded isomorphism type of $\cCFL^\infty$ is an invariant:

\begin{lem}\label{lem:changeofdiagramsrelativegradings}Suppose $\bL$ is a multi-based link in $Y$, with a type-partitioned, indexed coloring $(\sigma,J)$ with indexing set $\bJ$, and $L$ is $\bJ$-null-homologous. If $(\cH,J_s)$ and $(\cH',J_s')$ are two choices of diagrams and almost complex structures for $(Y,\bL)$, then the transition map
\[
\Phi_{(\cH,J_s)\to (\cH',J_s')}\colon \cCFL^\infty_{J_s}(\cH,\sigma,\frs)\to \cCFL^\infty_{J_s'}(\cH',\sigma,\frs)
\]
 preserves the relative Alexander multi-grading over $\Z^{\bJ}$. Similarly, assuming instead that the coloring is type-partitioned and $c_1(\frs)$ (resp. $c_2(\frs-\PD[L])$) is torsion, the transition map preserves the relative $\gr_{\ws}$ (resp. $\gr_{\zs}$) grading.
\end{lem}

\begin{proof} To verify the claim, one must prove that the relative gradings are preserved by the transition maps associated to the following Heegaard moves: isotopies and handleslides of the $\as$- and $\bs$ curves, index $1/2$-(de)stabilizations, isotopies of the Heegaard surface inside $Y$, and changes of the almost complex structure.

We will focus on showing that the transition map associated to a handleslide or isotopy of the $\as$ curves preserves the relative Alexander multi-grading. The transition maps associated to an isotopy or handleslide of the $\as$ curves can be computed by counting holomorphic triangles. Furthermore, an arbitrary isotopy or handleslide of the $\as$ curves can be computed as a sequence of holomorphic triangle maps, such that in each Heegaard triple $(\Sigma, \as',\as,\bs,\ws,\zs)$, the sets $\as'$ and $\as$ satisfy $|\alpha_i'\cap \alpha_j|=2\delta_{ij}$, and there is a unique intersection point $\Theta^+_{\a'\a}\in \bT_{\alpha'}\cap \bT_{\alpha}$ which is the highest $\gr_{\ws}$ and $\gr_{\zs}$ graded intersection point.

Suppose that $\ve{x},\ve{x}'\in \bT_{\alpha}\cap \bT_{\beta}$ are two intersection points with $\frs_{\ws}(\xs)=\frs_{\ws}(\xs')=\frs$, and $\psi\in \pi_2(\Theta_{\a'\a}^+,\ve{x},\ve{y})$ and $\psi'\in \pi_2(\Theta_{\a'\a}^+,\ve{x}',\ve{y}')$ are two homology classes of triangles which are counted by the map $\Phi^{\as\to \as'}_{\bs}$. The triangle classes $\psi$ and $\psi'$ both represent the restriction of $\frs$ to $X_{\a'\a\b}$, under the  inclusion $X_{\a'\a\b}\hookrightarrow [0,1]\times Y$ from Lemma~\ref{lem:uniqueembeddingintoW}. By \cite{OSDisks}*{Proposition~8.5}, it follows  that there are homology classes 
\[
\phi_{\a'\a}\in \pi_2(\Theta_{\a'\a}^+,\Theta_{\a'\a}^+),\qquad \phi_{\a\b}\in \pi_2(\ve{x}',\ve{x})\qquad \text{and} \qquad \phi_{\a'\b}\in \pi_2(\ve{y},\ve{y}')
\] 
such that
\[
\psi'=\psi+\phi_{\a'\a}+\phi_{\a\b}+\phi_{\a'\b}.
\]
 Hence
\begin{equation}
\begin{split}
A( U_{\ve{w}}^{n_{\ve{w}}(\psi)} V_{\ve{z}}^{n_{\ve{z}}(\psi)}\cdot \ve{y},U_{\ve{w}}^{n_{\ve{w}}(\psi')} V_{\ve{z}}^{n_{\ve{z}}(\psi')}\cdot \ve{y}')_j&=(n_{\ve{z}}-n_{\ve{w}})_j(\phi_{\a'\b}+\psi-\psi')\\
&=(n_{\ve{z}}-n_{\ve{w}})_j(-\phi_{\a\b})-(n_{\ve{z}}-n_{\ve{w}})_j(\phi_{\a'\a})\\
&=A(\ve{x},\ve{x}')_j,
\end{split}
\label{eq:invarianceofrelativeAlexander}
\end{equation} since $(n_{\ve{z}}-n_{\ve{w}})_j(\phi_{\a'\a})=A(\Theta_{\a'\a}^+, \Theta^+_{\a'\a})=0$.

Invariance from the Alexander gradings under moves of the $\bs$ curves is handled similarly.  Invariance of the relative Alexander grading under index 1/2 stabilization can be handled as follows. Suppose that $\cH'=(\Sigma\# \bT^2,\as\cup \{\alpha_0\},\bs\cup \{\beta_0\},\ws,\zs)$ is obtained from $\cH=(\Sigma,\as,\bs,\ws,\zs)$ by a stabilization,  and let $c\in \alpha_0\cap \beta_0$ denote the new intersection point. If $\xs\in \bT_{\alpha}\cap \bT_{\beta}$, the transition map $\Phi_{(\cH,J_s)\to (\cH',J_s')}$ sends $\xs$ to $\xs\times \{c\}$, for an appropriately stretched $J_s'$. If $\phi\in \pi_2(\xs,\xs')$ is a class of disks on $\cH$, then we can construct a class $\phi'\in \pi_2(\xs\times \{c\}, \xs'\times \{c\})$ which agrees with $\phi$ outside of $\bT^2$, and has constant multiplicity in $\bT^2$. We note that
\begin{equation}
A(\xs\times \{c\}, \xs'\times \{c\})_j=(n_{\zs}-n_{\ws})_j(\phi')=(n_{\zs}-n_{\ws})_j(\phi)=A(\xs,\xs')_j. \label{eq:Alexandergradingpreservedbystabilization}
\end{equation}
Invariance of the relative Alexander grading under isotopies of the Heegaard surface inside of $Y$ is a tautology.

Invariance of the relative Alexander multi-graded chain homotopy type from the choice of almost complex structure is proven similarly to invariance under moves of the $\as$ and $\bs$ curves, since the transition map 
 \[
 \Phi_{J_s\to J_s'}\colon \cCFL^\infty_{J_s}(\cH,\frs)\to \cCFL^\infty_{J_s'}(\cH,\frs)
 \]
 can be computed by counting index 0 holomorphic disks in $\Sym^n(\Sigma)$, for path of paths of almost complex structures on $\Sym^n(\Sigma)$, connecting $J_s$ and $J_s'$.

 Invariance of the Maslov grading from moves of the $\as$ and $\bs$ curves follows by adapting Equation~\eqref{eq:invarianceofrelativeAlexander} using the definition of the relative Maslov gradings, the fact that the transition maps count holomorphic triangles of Maslov index 0, and that the Maslov index is additive under juxtaposition of triangle and disk classes. Invariance of the Maslov index under index 1/2 stabilization follows by adapting Equation~\eqref{eq:Alexandergradingpreservedbystabilization}, noting that $\mu(\phi')=\mu(\phi)$, $n_{\ws}(\phi')=n_{\ws}(\phi)$ and $n_{\zs}(\phi')=n_{\zs}(\phi)$. 
\end{proof}

\subsection{Two simple examples}

We briefly give two examples, illustrating the gradings when some components of $L$ have non-trivial homology class.

\begin{example}\label{ex-s1xs2}Consider $Y=S^1\times S^2$, with $K=S^1\times \{pt\}$. A Heegaard diagram is shown in Figure~\ref{fig::17}. For $\frs$ the torsion $\Spin^c$ structure, we see that
\[
\cCFL^-(Y,\bL,\frs)\iso \left(\bF_2[U,V]\xrightarrow{1+V} \bF_2[U,V]\right).
\] 
The homology is $\bF_2[U,V]/(1+V)$. The grading $\gr_{\ve{w}}$ is defined, but $\gr_{\ve{z}}$  cannot be defined since $V$ acts by the identity on homology. Note that $\frs-\PD[K]$ is not torsion.
\end{example}

 \begin{figure}[ht!]
 \centering
 %% Creator: Inkscape inkscape 0.92.3, www.inkscape.org
%% PDF/EPS/PS + LaTeX output extension by Johan Engelen, 2010
%% Accompanies image file '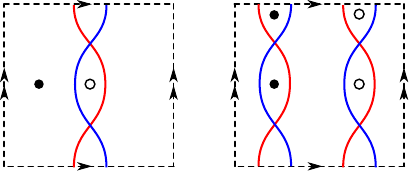' (pdf, eps, ps)
%%
%% To include the image in your LaTeX document, write
%%   \input{<filename>.pdf_tex}
%%  instead of
%%   \includegraphics{<filename>.pdf}
%% To scale the image, write
%%   \def\svgwidth{<desired width>}
%%   \input{<filename>.pdf_tex}
%%  instead of
%%   \includegraphics[width=<desired width>]{<filename>.pdf}
%%
%% Images with a different path to the parent latex file can
%% be accessed with the `import' package (which may need to be
%% installed) using
%%   \usepackage{import}
%% in the preamble, and then including the image with
%%   \import{<path to file>}{<filename>.pdf_tex}
%% Alternatively, one can specify
%%   \graphicspath{{<path to file>/}}
%% 
%% For more information, please see info/svg-inkscape on CTAN:
%%   http://tug.ctan.org/tex-archive/info/svg-inkscape
%%
\begingroup%
  \makeatletter%
  \providecommand\color[2][]{%
    \errmessage{(Inkscape) Color is used for the text in Inkscape, but the package 'color.sty' is not loaded}%
    \renewcommand\color[2][]{}%
  }%
  \providecommand\transparent[1]{%
    \errmessage{(Inkscape) Transparency is used (non-zero) for the text in Inkscape, but the package 'transparent.sty' is not loaded}%
    \renewcommand\transparent[1]{}%
  }%
  \providecommand\rotatebox[2]{#2}%
  \newcommand*\fsize{\dimexpr\f@size pt\relax}%
  \newcommand*\lineheight[1]{\fontsize{\fsize}{#1\fsize}\selectfont}%
  \ifx\svgwidth\undefined%
    \setlength{\unitlength}{195.95248326bp}%
    \ifx\svgscale\undefined%
      \relax%
    \else%
      \setlength{\unitlength}{\unitlength * \real{\svgscale}}%
    \fi%
  \else%
    \setlength{\unitlength}{\svgwidth}%
  \fi%
  \global\let\svgwidth\undefined%
  \global\let\svgscale\undefined%
  \makeatother%
  \begin{picture}(1,0.41778018)%
    \lineheight{1}%
    \setlength\tabcolsep{0pt}%
    \put(0,0){\includegraphics[width=\unitlength,page=1]{fig49.pdf}}%
    \put(0.10027537,0.23148711){\color[rgb]{0,0,0}\makebox(0,0)[lt]{\lineheight{1.25}\smash{\begin{tabular}[t]{l}$w$\end{tabular}}}}%
    \put(0.2182224,0.23148711){\color[rgb]{0,0,0}\makebox(0,0)[lt]{\lineheight{1.25}\smash{\begin{tabular}[t]{l}$z$\end{tabular}}}}%
  \end{picture}%
\endgroup%

 \caption{\textbf{Two diagrams for $S^1\times S^2$, for knots or links with components which are not null-homologous.} On the left is the knot $K=S^1\times \{p\}$ and on the right is $K=S^1\times \{p_1,p_2\}$. All intersection points are mapped to the torsion $\Spin^c$ structure by $\frs_{\ve{w}}$.\label{fig::17}}
 \end{figure}

\begin{example}Consider  $Y=S^1\times S^2$ and $L=S^1\times \{p_1,p_2\}$. We place two basepoints on each  component of $L$, and orient the two components to intersect the sphere $\{pt\}\times S^2$ with opposite sign. A diagram is shown on the right side of Figure~\ref{fig::17}. The intersection points represent the torsion $\Spin^c$ structure with respect to $\frs_{\ve{w}}$. It  is easy to see that the collapsed Alexander grading can be defined, but a two component Alexander grading cannot be defined.
\end{example}

\subsection{Absolute gradings on $\cCFL^\infty$ for unlinks in $(S^1\times S^2)^{\# k}$}
\label{sec:absolutegradingsunknots}

As a step toward describing the absolute gradings in general, we fix the absolute gradings for unlinks in $(S^1\times S^2)^{\# k}$.

\begin{lem}\label{lem:admittopdegreegenerator} Suppose $\bU$ is a multi-based unlink in $(S^1\times S^2)^{\# k}$, with an arbitrary configuration of basepoints. The $\bF_2$-module $\Hat{\HFL}((S^1\times S^2)^{\# k}, \bU,\frs_0)$ has rank $2^{|\ve{w}|+k-1}$. Furthermore, $\Hat{\HFL}((S^1\times S^2)^{\# k}, \bU,\frs_0)$ has a top degree generator with respect to each of the gradings $\gr_{\ve{w}}$ and $\gr_{\ve{z}}$, for which we write $\Theta^{\ve{w}}$ or $\Theta^{\ve{z}}$.
\end{lem}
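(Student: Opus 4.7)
The plan is to reduce to a direct calculation on a Heegaard diagram that exhibits $((S^1\times S^2)^{\#k}, \bU)$ as a connected sum. First I would choose a diagram $\cH = \cH_0 \# \cH_{\bU}$, where $\cH_0$ is a standard (admissible) diagram for $((S^1\times S^2)^{\#k},\frs_0)$ with a single pair of basepoints and $\cH_{\bU}$ is a genus-zero diagram for $\bU\subset S^3$ with the prescribed $(\ve{w},\ve{z})$-configuration. Concretely, $\cH_0$ can be built as the $k$-fold connected sum of the standard genus-one diagram of $S^1\times S^2$, and $\cH_{\bU}$ is built from one sphere per component of $\bU$, carrying $n_i$ pairs of parallel isotopic $\alpha,\beta$-curves if the $i$-th component has $2n_i$ basepoints.

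Then the standard K\"unneth formula for $\hat{HFL}$ under connected sums gives
\[
\hat{HFL}((S^1\times S^2)^{\#k},\bU,\frs_0) \;\cong\; \hat{HF}((S^1\times S^2)^{\#k},\frs_0)\otimes_{\Z_2} \hat{HFL}(S^3,\bU),
\]
as $(\gr_{\ve{w}},\gr_{\ve{z}})$-bigraded modules, with the relative gradings adding over the tensor factors. The first factor has rank $2^k$, being isomorphic to $\Lambda^*\Z_2^k$. A straightforward induction on the number of basepoints shows the second factor has rank $2^{|\ve{w}|-1}$: each additional $(\ve{w},\ve{z})$-pair on a fixed link component tensors the complex with a copy of $V := \Z_2\oplus \Z_2$ (this is the usual basepoint-stabilization principle in link Floer homology, visible directly from the extra parallel $(\alpha,\beta)$-pair in $\cH_{\bU}$). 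Multiplying ranks gives the claimed $2^{|\ve{w}|+k-1}$.

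For the top-degree generators I would exploit the same tensor decomposition. In both factors the $\gr_{\ve{w}}$-grading takes finitely many values and the top value is realized by a unique class: for $\Lambda^*\Z_2^k$ it is the top exterior power, and for $\hat{HFL}(S^3,\bU)$ a direct inspection of $\cH_{\bU}$ shows that each parallel $(\alpha,\beta)$-pair contributes two intersection points differing by $\pm 1$ in $\gr_{\ve{w}}$, so the top generator is the unique intersection point choosing the higher-graded option in each pair. The tensor of these top generators defines $\Theta^{\ve{w}}$, and $\Theta^{\ve{z}}$ is defined entirely symmetrically using the $\gr_{\ve{z}}$-grading.

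The main subtlety is verifying that the connected-sum identification respects $\gr_{\ve{w}}$ and $\gr_{\ve{z}}$ separately rather than only a collapsed Maslov grading. This follows because the standard degeneration at the connected-sum neck counts only holomorphic representatives whose domains avoid the connecting basepoint, so each of $n_{\ve{w}}(\phi)$ and $n_{\ve{z}}(\phi)$ splits additively across the two factors and the relative grading formulas \eqref{eq:relwgrading} and \eqref{eq:relzgrading} in turn split additively. Once this compatibility is in hand, both the rank count and the existence of the top generators $\Theta^{\ve{w}},\Theta^{\ve{z}}$ follow immediately from the factorwise computations.
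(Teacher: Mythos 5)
Your proof is correct and follows essentially the same route as the paper's. The paper argues by stepwise induction: it observes (via Lemma~\ref{lem:changeofdiagramsrelativegradings}) that the relatively graded isomorphism type is diagram-independent, then adds one unknot component at a time (a 1--handle attaching a new sphere carrying two basepoints) and one basepoint pair at a time (quasi-stabilization, invoking \cite{ZemQuasi}*{Proposition 5.3} for the differential). Your K\"unneth decomposition $\hat{HFL}((S^1\times S^2)^{\# k},\bU,\frs_0)\iso \hat{HF}((S^1\times S^2)^{\# k},\frs_0)\otimes \hat{HFL}(S^3,\bU)$ is simply this same split-diagram observation packaged all at once, and your subsequent basepoint induction inside the second factor is the quasi-stabilization step of the paper. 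Your identification of the real issue --- that the split must respect $\gr_{\ve{w}}$ and $\gr_{\ve{z}}$ separately, which holds because holomorphic disks counted in the hat flavor cannot cross any basepoint at the neck, so both $n_{\ve{w}}(\phi)$ and $n_{\ve{z}}(\phi)$ split additively --- is exactly the point that needs care, and you handle it correctly. Two small imprecisions, neither affecting correctness: a component with $2n_i$ basepoints contributes $n_i-1$ (not $n_i$) extra $\alpha,\beta$ pairs on its sphere, and for the quasi-stabilization curves it is clearer to say they intersect in two points with $\gr_{\ve{w}}$ difference $\pm 1$ (rather than calling them ``parallel isotopic''), which is what the cited proposition in \cite{ZemQuasi} computes.
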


\begin{proof}By Lemma~\ref{lem:changeofdiagramsrelativegradings}, the relatively graded isomorphism type of the group $\Hat{\HFL}((S^1\times S^2)^{\# k}, \bU,\frs_0)$ is an invariant, so we need only check the claim for a particular diagram.

We start with the case that each component of $\bU$ contains exactly two basepoints. In this case, we can pick a diagram $\cH=(\Sigma,\as,\bs,\ws,\zs)$ where the $\as$ curves are small Hamiltonian translates of the $\bs$ curves, and the $\ws$ and $\zs$ basepoints come in pairs of adjacent basepoints on $\Sigma\setminus (\as\cup \bs)$. For such a diagram, the holomorphic disks which do not pass over any of the basepoints come in canceling pairs, and there is an isomorphism of groups
\[
\Hat{\CFL}(\cH,\frs_0)=\Hat{\HFL}(\cH,\frs_0):= \bigotimes^{k+|\bU|-1}_{i=1} V,
\]
where $V$ is a 2-dimensional vector space over $\bF_2$ with two generators which have relative $(\gr_{\ws},\gr_{\zs})$-bigrading which differ by $(1,1)$. Hence $\Hat{\HFL}(\cH,\frs_0)$ has an element $\Theta^+=\Theta^{\ws}=\Theta^{\zs}$, which is maximally graded with respect to both $\gr_{\ws}$ and $\gr_{\zs}$. This verifies the claim when $\bU$ has exactly two basepoints.

To verify the claim when $\bU$ has more than two basepoints, we proceed by induction. Supposing the claim holds for an unlink $\bU$ with some configuration of basepoints, we will show that it also holds for the link $\bU'$ obtained by adding two extra basepoints to $\bU$. Adding two basepoints can be achieved by the \emph{quasi-stabilization} operation \cite{MOIntSurg}*{Section~6}. We will consider the quasi-stabilization operation in more detail later; See Figure~\ref{fig::52} for a Heegaard diagrammatic description. If $\cH$ is a diagram for $\bU$, and $\cH'$ is a quasi-stabilization, then there is a relatively graded isomorphism
\begin{equation}
\Hat{\CFL}(\cH',\frs_0)\iso \Hat{\CFL}(\cH,\frs)\otimes_{\bF_2} V',\label{eq:quasi-stabilizedisomorphism}
\end{equation}
where $V'$ is 2-dimensional vector space whose generators have $(\gr_{\ws},\gr_{\zs})$ bi-grading $(\tfrac{1}{2}, -\tfrac{1}{2})$ and $(-\tfrac{1}{2}, \tfrac{1}{2})$. By \cite{ZemQuasi}*{Proposition~5.3}, for an appropriate choice of almost complex structures, the isomorphism in Equation~\eqref{eq:quasi-stabilizedisomorphism} is an isomorphism of chain complexes (viewing $V'$ as having vanishing differential). In particular, if $\Hat{\HFL}(\cH,\frs_0)$ has distinguished generators $\Theta^{\ws}$ and $\Theta^{\zs}$, then so does $\Hat{\HFL}(\cH',\frs_0)$, completing the proof.
\end{proof}

Lemma~\ref{lem:admittopdegreegenerator} allows us to declare absolute lifts of the Maslov gradings on $\Hat{\CFL}((S^1\times S^2)^{\# k}, \bU,\frs)$ by setting
\begin{equation}
\tilde{\gr}_{\ve{w}}(\Theta^{\ve{w}}):=\tilde{\gr}_{\ve{z}}(\Theta^{\ve{z}}):=\frac{1}{2}(k+|\ve{w}|-1)=\frac{1}{2}(k+|\ve{z}|-1).\label{eq:grwtop}
\end{equation}
The declaration in Equation~\eqref{eq:grwtop} specifies the gradings $\tilde{\gr}_{\ws}$ and $\tilde{\gr}_{\zs}$ uniquely on all intersection points representing $\frs_0$. We extend these to $\cCFL^\infty$ by declaring the $U_{\ws}$ variables to have $(\tilde{\gr}_{\ws},\tilde{\gr}_{\zs})$ bi-grading $(-2,0)$, and declaring the $V_{\zs}$ variables to have $(\tilde{\gr}_{\ws},\tilde{\gr}_{\zs})$ bi-grading $(0,-2)$.

In a similar manner, we can declare an absolute lift of the Alexander multi-grading. We index the link $\bU=(U,\ws,\zs)$ using  its set of components, i.e., we set $\bJ_0=C(U)$ and let $J_0\colon U\to \bJ_0$ be the natural map. If $K$ is a component of $U$, we write $n_K$ for one half the total number of basepoints on $K$ (i.e. the number of $\ws$ basepoints on $K$, or the number of $\zs$ basepoints). We set
\begin{equation}
\tilde{A}(\Theta^{\ws})_K:=\frac{1}{2}(n_K-1).\label{eq:AlexandergradinThetaw}
\end{equation}
It is straightforward to see that Equation~\eqref{eq:AlexandergradinThetaw} implies
\[
\tilde{A}(\Theta^{\zs})_K=-\frac{1}{2}(n_K-1).
\]
Writing $\tilde{A}$ for the collapsed Alexander grading, it is straightforward to see that
\begin{equation}
\tilde{A}=\frac{1}{2}(\tilde{\gr}_{\ve{w}}-\tilde{\gr}_{\ve{z}}),\label{eq:collapsedAlexanderMaslov}
\end{equation}

For an arbitrary indexing $J\colon U\to \bJ$, we define the Alexander grading over $\bJ$ by collapsing the Alexander grading which is indexed over $\bJ_0$, i.e., for $j\in \bJ$ we define
\[
\tilde{A}(\xs)_j:=\sum_{K\subset J^{-1}(j)}\tilde{A}(\xs)_K.
\]

\subsection{Transitive systems of gradings}
\label{sec:transitivesystemsofgradings}
Since there are many different diagrams for a pair $(Y,\bL)$, we cannot specify the objects $\cCFL^\infty(Y,\bL^\sigma,\frs)$ as concrete chain complexes. Instead, using naturality (see Proposition~\ref{prop:naturality}) they are \emph{transitive systems} in the category of $\cR_{\bmP}$-equivariant, $\Z^{\bmP}$-filtered chain complexes. Hence, to define the notion of a grading on $\cCFL^\infty(Y,\bL^\sigma,\frs)$, we need an analogous notion of  a transitive system of gradings.

If $(\sigma,J)$ is a type-partitioned, indexed coloring of the link $\bL$ in $Y$, which is $\bJ$-null-homologous and $\cH$ is a diagram for $(Y,\bL)$, we define $\bA(\cH,\sigma,J,\frs)$ to be the set of absolute  lifts to $\Q^{\bJ}$ of the relative Alexander gradings on $\cCFL^\infty(\cH,\sigma,\frs)$, described in Section~\ref{sec:relativegradings}. The set $\bA(\cH,\sigma,J,\frs)$ is an affine space over $\Q^{\bJ}$. 

Similarly, if $\sigma$ is a type partitioned coloring of $\bL$, we define $\bG_{\ws}(\cH,\sigma,\frs)$ and $\bG_{\zs}(\cH,\sigma,\frs)$ to be the set of absolute lifts to $\Q$ of the relative gradings $\gr_{\ws}$ and $\gr_{\zs}$. The sets $\bG_{\ws}(\cH,\sigma,\frs)$ and $\bG_{\zs}(\cH,\sigma,\frs)$ are affine spaces over $\Q^{|Y|}$.

In this section, we prove the following naturality result for gradings:

\begin{prop}\label{prop:changeofdiagramsmapsgradings} Suppose that $\bL$ is a multi-based link in $Y$, with a type-partitioned, indexed coloring $(\sigma,J)$, and $L$ is $\bJ$-null-homologous. If $\cH$ and $\cH'$ are two admissible diagrams, then there is a well defined transition map
$F_{\cH\to \cH'}\colon \bA(\cH,\sigma,J,\frs)\to \bA(\cH',\sigma,J,\frs).$ Furthermore,  the following are satisfied:
\begin{enumerate}
\item $F_{\cH\to \cH}=\id$.
\item $F_{\cH'\to \cH''}\circ F_{\cH\to \cH'}=F_{\cH\to \cH''}$.
\item\label{eq:transitionmapsinteractgrading} $(F_{\cH\to \cH'}(A))(\Phi_{\cH\to \cH'}(\xs))=A(\xs).$
\end{enumerate}

Similarly, if $\sigma$ is a type-partitioned coloring of $\bL$, and $c_1(\frs)$ (resp. $c_1(\frs-\PD[L])$) is torsion, then there are well defined transition maps $F_{\cH\to \cH'}\colon \bG_{\ws}(\cH,\sigma,\frs)\to \bG_{\ws}(\cH',\sigma,\frs)$ (resp.  $F_{\cH\to \cH'}\colon \bG_{\zs}(\cH,\sigma,\frs)\to \bG_{\zs}(\cH',\sigma,\frs)$), satisfying the same axioms.
\end{prop}

Note that Proposition~\ref{prop:changeofdiagramsmapsgradings} implies that we can view the sets $\bA(\cH,\sigma,J,\frs)$ as fitting into a transitive system indexed by the set of admissible diagrams of $(Y,\bL)$. We define $\bA(Y,\bL^{(\sigma,J)},\frs)$ as the transitive limit, i.e., the set of tuples 
\[
(A_{\cH})_{\cH\in \cD(Y,\bL,\frs)}\in \prod_{\cH\in \cD(Y,\bL,\frs)} A(\cH,\sigma,J,\frs)
\]
 satisfying $F_{\cH\to \cH'}(A_{\cH})=A_{\cH'}$ for all $\cH$ and $\cH'$, where $\cD(Y,\bL,\frs)$ denotes the set of admissible diagrams. We define the transitive limits $\bG_{\ws}(Y,\bL^\sigma,\frs)$ and $\bG_{\zs}(Y,\bL^\sigma,\frs)$ similarly.

The rest of the section is devoted to proving Proposition~\ref{prop:changeofdiagramsmapsgradings}. First, we define the maps $F_{\cH\to \cH'}$ when $\cH'$ and $\cH$ differ by an elementary Heegaard move, and then we verify that there is no monodromy around loops in the space of Heegaard diagrams, adapting the strategy of \cite{JTNaturality} for the transition maps on the Heegaard Floer complexes.

Suppose that $\cH'=(\Sigma,\as',\bs,\ws,\zs)$ and $\cH=(\Sigma,\as,\bs,\ws,\zs)$ are related by a handleslide or isotopy of the $\as$ curves, and the triple $(\Sigma,\as',\as,\bs,\ws,\zs)$ is admissible. In this case, we write $F_{\cH\to \cH'}=F^{\as\to \as'}_{\bs}$ for the map
\[
F_{\bs}^{\as\to \as'}\colon \bA(\cH,\sigma,J,\frs)\to \bA(\cH',\sigma,J,\frs),
\]
defined as follows. First note that the indexing $J$ of $\bL$ induces an indexing of the unlink $L_{\a'\a}\subset Y_{\a'\a}$. If $A\in \bA(\cH,\sigma,J,\frs)$, we define
\begin{equation}
F_{\bs}^{\as\to \as'}(A)(\ve{y})_j:= A(\ve{x})_j+\tilde{A}(\Theta)_j+(n_{\ve{w}}-n_{\ve{z}})_j(\psi),\label{eq:changeofdiagramsmapgradingsasmove}
\end{equation}
for any homology class of triangles $\psi\in \pi_2(\Theta,\ve{x},\ve{y})$ with $\frs_{\ve{w}}(\psi)=\iota^*(\frs)$ where  $\iota_*\colon X_{\a'\a\b}\to [0,1]\times Y$ denotes the inclusion from Lemma~\ref{lem:uniqueembeddingintoW}. The grading $F_{\bs}^{\as\to \as'}(A)$ is independent of the choice of the intersection points $\ve{x}$ and $\Theta$, since splicing in homology classes of disks into the ends of $\psi$ does not affect Equation~\eqref{eq:changeofdiagramsmapgradingsasmove}. Similarly, the grading is independent of the homology class $\psi$, since by \cite{OSDisks}*{Proposition~8.5} any other homology class also representing $\frs$ can be obtained from $\psi$ by splicing in homology classes of disks on the diagrams $(\Sigma, \as',\as)$, $(\Sigma, \as,\bs)$, and $(\Sigma,\as',\bs)$.

Analogously, if $\bs'$ differs from $\bs$ by a sequence of handleslides or isotopies, and $(\Sigma, \as,\bs,\bs',\ws,\zs)$ is admissible, we define the transition map $F_{\bs\to \bs'}^{\as}$ via the formula
\[
F_{\bs\to \bs'}^{\as}(A)(\ys)_j:=A(\xs)_j+\tilde{A}(\Theta)_{j}+(n_{\ws}-n_{\zs})_j(\psi),
\]
for a choice of $\psi\in \pi_2(\xs,\Theta,\ys)$.

Similarly, if $\cH'=(\Sigma,\as',\bs,\ws,\zs)$ and $\cH=(\Sigma,\as,\bs,\ws,\zs)$ are related by a handleslide or isotopy of the $\as$ curves, then we define maps 
\[
F_{\bs}^{\as\to \as'}\colon \bG_{\ws}(\cH,\sigma,\frs)\to \bG_{\ws}(\cH',\sigma,\frs)\qquad \text{and} \qquad F_{\bs}^{\as\to \as'}\colon \bG_{\zs}(\cH,\sigma,\frs)\to \bG_{\zs}(\cH',\sigma,\frs),
\] whenever $(\Sigma, \as',\as,\bs,\ws,\zs)$ is admissible, as follows. If $g\in \bG_{\ws}(\cH,\sigma,\frs)$ and $\psi\in \pi_2(\Theta,\xs,\ys)$ we define
\begin{equation}
F_{\ve{\bs}}^{\as\to \as'}(g)(\ys):=g(\ve{x})+\tilde{\gr}_{\ws}(\Theta)-\frac{1}{2}(k+|\ve{w}|-1)-\mu(\psi)+2n_{\ws}(\psi),\label{eq:betamoveMaslovgrading}
\end{equation}
for any choice of $\xs\in \bT_{\alpha}\cap \bT_{\beta}$ and homology class of triangles $\psi\in \pi_2(\Theta,\xs,\ys)$. In Equation~\eqref{eq:betamoveMaslovgrading}, $\tilde{\gr}_{\ws}$ denotes the absolute grading described in Section~\ref{sec:absolutegradingsunknots}.

 By replacing each instance of $\ws$ with $\zs$, we obtain the analogous map from $\bG_{\zs}(\cH,\sigma,\frs)$ to $\bG_{\zs}(\cH',\sigma,\frs)$.

By adapting the argument given above for the Alexander grading, it is straightforward to see that the grading defined in Equation~\eqref{eq:betamoveMaslovgrading} does not depend on the choice of $\psi$, $\xs$ or $\Theta$. 

Next, we suppose that $\cH'$ is a stabilization of $\cH$. If $\xs$ is an intersection point of $\cH$, then we let $\xs\times \{c\}$ denote the product of $\xs$ with the intersection point of the new $\as$ and $\bs$ curves on $\cH'$. We define
\[
F_{\cH\to \cH'}(A)(\xs\times \{c\})_j:=A(\xs)_j.
\]
We define the destabilization map $F_{\cH'\to \cH}$ as the inverse of $F_{\cH\to \cH'}$. An analogous transition map for the $\gr_{\ws}$ and $\gr_{\zs}$ gradings is defined similarly.

Finally, if $\cH'=(\Sigma',\as',\bs',\ws,\zs)$ is obtained by an isotopy of the diagram $\cH=(\Sigma,\as,\bs,\ws,\zs)$ within $Y$, we define the map $F_{\cH\to \cH'}$ via tautology.

\begin{lem}\label{lem:canonicalgradingsaretransitive} Suppose that $\bU$ is a multi-based link in $(S^1\times S^2)^{\# k}$, with any configuration of basepoints, and $\cH$ and $\cH'$ are two diagrams for $((S^1\times S^2)^{\# k}, \bU)$. If $\tilde{A}_{\cH_1}\in \bA(\cH_1,\frs_0)$ and $\tilde{A}_{\cH_2}\in \bA(\cH_2,\frs_2)$ denote the two gradings defined in Section~\ref{sec:absolutegradingsunknots}, and $F_{\cH_1\to \cH_2}$ denotes the transition map (defined using any sequence of elementary Heegaard moves), then
\[
F_{\cH_1\to \cH_2}(\tilde{A}_{\cH_1})=\tilde{A}_{\cH_2}.
\] The same statement holds for the Maslov gradings.
\end{lem}
\begin{proof}We recall the absolute Alexander grading for unlinks in $(S^1\times S^2)^{\# k}$ in Section~\ref{sec:absolutegradingsunknots}  was defined by fixing the grading of the distinguished elements $\Theta^{\ws}$ and $\Theta^{\zs}$. By Lemma~\ref{lem:changeofdiagramsrelativegradings}, the maps $\Phi_{\cH_1\to \cH_2}$ preserve the relatively graded chain homotopy type of $\Hat{\CFL}$, and hence must satisfy
\begin{equation}
\Phi_{\cH_1\to \cH_2}(\Theta^{\ws}_{\cH_1})=\Theta^{\ws}_{\cH_2}, \label{eq:PhiHH'preserveThetaws}
\end{equation}
and similarly for $\Theta^{\zs}$. Next, we note that Part~\eqref{eq:transitionmapsinteractgrading} of Proposition~\ref{prop:changeofdiagramsmapsgradings} is a tautology, and can be verified for each individual Heegaard move. Hence it follows that
\begin{align*}
F_{\cH_1\to \cH_2}(\tilde{A}_{\cH_1})(\Theta^{\ws}_{\cH_2})_j&=F_{\cH_1\to \cH_2}(\tilde{A}_{\cH_1})(\Phi_{\cH_1\to \cH_2}(\Theta^{\ws}_{\cH_1}))_j\\
&=\tilde{A}_{\cH_1}(\Theta^{\ws}_{\cH_1})_j.
\end{align*}
Hence it follows that $F_{\cH_1\to \cH_2}(\tilde{A}_{\cH_1})=\tilde{A}_{\cH_2}$, completing the proof.
\end{proof}

We have the following:
\begin{lem}\label{lem:commutativesquares} Suppose that $\bL$ is a multi-based link in $Y$. The following statements hold for the transition maps on the sets of Alexander, $\gr_{\ws}$  and $\gr_{\zs}$ gradings, whenever they are defined:
\begin{enumerate} 
\item\label{claim:square1} Suppose that $(\Sigma, \as'',\as',\as,\bs,\ws,\zs)$ is an admissible quadruple and $\as,$ $\as'$ and $\as''$ are all related to each other by a sequence of handleslides and isotopies.  Then 
	\[
	F_{\bs}^{\as\to \as''}=F_{\bs}^{\as'\to \as''}\circ F^{\as\to \as'}_{\bs}. 
	\]
	A similar statement holds for an admissible quadruple $(\Sigma,\as,\bs,\bs',\bs'',\ws,\zs)$, where $\bs,$ $\bs'$ and $\bs''$ are related by a sequence of handleslides and isotopies.
\item\label{claim:square2} Suppose $(\Sigma, \as',\as,\bs,\ws,\zs)$ is an admissible Heegaard triple and $\as'$ is related to $\as$ by a sequence of handleslides and isotopies, and suppose $(\Sigma, \hat{\as}',\hat{\as},\hat{\bs},\ws,\zs)$ is a triple obtained by stabilizing $(\Sigma, \ve{\alpha}',\as,\bs,\ws,\zs)$ at a point in $\Sigma\setminus (\as'\cup \as\cup \bs)$. Writing $F_S$  for the transition map associated to stabilization, we have
	\[
	F_S\circ F^{\as\to \as'}_{\bs}=F^{\hat{\as}\to \hat{\as}'}_{\hat{\bs}}\circ F_S.
	\]	
		\item Suppose $(\Sigma, \as,\bs,\ve{w},\ve{z})$ is an admissible diagram for $(Y,\bL)$ and $\phi\colon (\Sigma,\ve{w}\cup \ve{z})\to (\Sigma,\ve{w}\cup \ve{z})$ is a diffeomorphism which is isotopic to $\id_\Sigma$, relative to $\ve{w}\cup \ve{z}$. Then
	\[
	F^{\as\to \phi(\as)}_{\phi(\bs)}\circ F_{\bs\to \phi(\bs)}^{\as}=\phi_*.
	\]	
	\item Suppose that $(\Sigma, \as',\as,\bs,\bs',\ve{w},\ve{z})$ is an admissible quadruple, such that $\as'$ is related to $\as$ by a sequence of handleslides and isotopies, and $\bs'$ is related to $\bs$ by a sequence of handleslides or isotopies. Then
	\[
	F^{\as\to \as'}_{\bs'}\circ F^{\as}_{\bs\to \bs'}=F^{\as'}_{\bs\to \bs'}\circ F^{\as\to \as'}_{\bs}.
	\]
	\item If $S$ and $S'$ are two disjoint stabilizations, then
	\[
	F_S\circ F_{S'}=F_{S'}\circ F_S.
	\]
	\item\label{claim:square6} If $S$ is a stabilization, and $\phi\colon (Y,L)\to (Y,L)$ is a diffeomorphism fixing $\ws\cup \zs$, then
	\[
	\phi_*\circ F_{S}=F_{\phi(S)}\circ \phi_*.	
	\]
\end{enumerate}
\end{lem}

	\begin{proof}We consider Claim~\eqref{claim:square1}, focusing on Alexander gradings. Suppose that $A\in \bA(\Sigma,\as,\bs,\sigma,J,\frs)$, and $\ys_{\a''\b}\in \bT_{\alpha''}\cap \bT_{\beta}$. Pick intersection points $\Theta_{\a''\a'}\in \bT_{\alpha''}\cap \bT_{\alpha'}$, $\Theta_{\a'\a}\in \bT_{\alpha'}\cap \bT_{\alpha}$, $\Theta_{\a''\a}\in \bT_{\alpha''}\cap \bT_{\alpha}$, $\ve{x}_{\a\b}\in \bT_{\alpha}\cap \bT_{\beta}$ and $\ve{x}_{\a'\b}\in \bT_{\alpha'}\cap \bT_{\beta}$, such that $\Theta_{\a''\a},$ $\Theta_{\a'\a}$ and $\Theta_{\a''\a'}$ represent the torsion $\Spin^c$ structures. Pick homology classes of triangles $\psi_{\a''\a'\a}\in \pi_2(\Theta_{\a''\a'}, \Theta_{\a'\a},\Theta_{\a''\a}),$ $\psi_{\a''\a\b}\in \pi_2(\Theta_{\a''\a}, \xs_{\a\b}, \ys_{\a''\b})$, and $\psi_{\a'\a\b}\in \pi_2(\Theta_{\a'\a}, \xs_{\a\b}, \xs_{\a'\b}),$ and $\psi_{\a''\a'\b}\in \pi_2(\Theta_{\a''\a'}, \xs_{\a'\b}, \ys_{\a''\b})$, such that
	\begin{equation}
	\psi_{\a''\a\b}+\psi_{\a''\a'\a}=\psi_{\a'\a\b}+\psi_{\a''\a'\b}.\label{eq:trianglesaddup}
	\end{equation} By definition,
	\begin{equation}
	F^{\as\to \as''}_{\bs}(A)(\ys_{\a''\b})_j=A(\ve{x}_{\a\b})_j+\tilde{A}(\Theta_{\a''\a})_j+(n_{\ve{w}}-n_{\ve{z}})_j(\psi_{\a''\a\b})\label{eq:singletransitionamove}
	\end{equation}
	and
	\begin{equation}
	\begin{split}
	&(F_{\bs}^{\as'\to \as''}\circ F^{\as\to \as'}_{\bs})(A)(\ys_{\a''\b})_j\\
	=&A(\ve{x}_{\a\b})_j+\tilde{A}(\Theta_{\a'\a})_j+\tilde{A}(\Theta_{\a''\a'})_j+ (n_{\ve{w}}-n_{\ve{z}})_j(\psi_{\a''\a'\b}+\psi_{\a'\a\b}).
\end{split}	
	\label{eq:twotransitionamove}
	\end{equation}
	Equation~\eqref{eq:twotransitionamove} minus Equation~\eqref{eq:singletransitionamove} is
	\begin{equation}
	-\tilde{A}(\Theta_{\a''\a})_j+\tilde{A}(\Theta_{\a'\a})_j+\tilde{A}(\Theta_{\a''\a'})_j+(n_{\ve{w}}-n_{\ve{z}})_j(\psi_{\a''\a'\a}).\label{eq:differenceofFAs}
	\end{equation}

Note that the expression in Equation~\eqref{eq:differenceofFAs} is equal to 
\begin{equation}
F_{\as}^{\as'\to \as''}(\tilde{A})(\Theta_{\a''\a})_j-\tilde{A}(\Theta_{\a''\a})_j.\label{eq:pushforwardgradingthesameascanonical}
\end{equation}

By Lemma~\ref{lem:canonicalgradingsaretransitive}, the grading $\tilde{A}$ on unknots in $(S^1\times S^2)^{\# k}$ is a transitive grading, and hence  Equation~\eqref{eq:pushforwardgradingthesameascanonical} must vanish. It follows that  Equation~\eqref{eq:differenceofFAs} vanishes as well, establishing Claim~\eqref{claim:square1} for the Alexander multi-grading.

The proof of Claim~\eqref{claim:square1} for Maslov gradings, as well as the proofs of Claims~\eqref{claim:square2}--\eqref{claim:square6}, are straightforward modifications of the above argument.
	\end{proof}

There is an important class of loops in the set of Heegaard diagrams, called \emph{simple handleswaps}. We refer the reader to \cite{JTNaturality}*{Definition~2.31} for a precise description. We state the following version of handleswap invariance for gradings (compare \cite{JTNaturality}*{Proposition~9.25}):

\begin{lem}\label{lem:handleswapinvariance}If $\cH_1\xrightarrow{e} \cH_2\xrightarrow{f} \cH_3\xrightarrow{g} \cH_1$ is a simple handleswap, then
	\[
	F_g\circ F_f\circ F_e=\id,
	\]
	 as maps on the set of Alexander gradings or Maslov gradings on $\cH_1$.
	\end{lem}
\begin{proof} The maps $F_e$ and $F_f$ are maps induced by handleslides of the $\as$ and $\bs$ curves, respectively. The map $F_g$ is induced by a diffeomorphism. In our context, the maps $F_e$ and $F_g$ are computed  by picking any homology class of triangles. Hence the argument can be proven by adapting the standard proof of handleswap invariance \cite{JTNaturality}*{Proposition~9.25}, noting that in our context, we just need to check the claim for any two homology classes of triangles (one to compute $F_e$ and one to compute $F_f$). 
	\end{proof}
	
We  now prove that the maps $F_{\cH\to \cH'}$ give each of  $\bA(\cH,\sigma,J,\frs)$, $\bG_{\ws}(\cH,\sigma,\frs)$ and $\bG_{\zs}(\cH,\sigma,\frs)$ the structure of a transitive system.

	\begin{proof}[Proof of Proposition~\ref{prop:changeofdiagramsmapsgradings}] It is sufficient to verify that the sets of gradings together with the transition maps we've defined satisfy the axioms of a \emph{strong Heegaard invariant} \cite{JTNaturality}*{Definition 2.33}. Lemmas~\ref{lem:commutativesquares} and \ref{lem:handleswapinvariance} imply that the sets of gradings, together with the transition maps  we previously associated to elementary Heegaard moves, satisfy the axioms of \cite{JTNaturality}*{Definition 2.33}. Hence, by \cite{JTNaturality}*{Theorem~2.38},  the map $F_{\cH\to \cH'}$, defined using elementary moves between  Heegaard diagrams, does not depend on the choice of elementary Heegaard moves between $\cH$ and $\cH'$.
	
	Finally, it remains to show Claim \eqref{eq:transitionmapsinteractgrading}, i.e., that
	\begin{equation}
(F_{\cH\to \cH'}(A))(\Phi_{\cH\to \cH'}(\xs))=A(\xs).\label{eq:changeofdiagramsinteractcomplgrading}
	\end{equation}
 Equation~\eqref{eq:changeofdiagramsinteractcomplgrading} can be checked for each elementary Heegaard move, and is  a tautology from the definitions.
		\end{proof}

\subsection{Definition of the absolute gradings}
\label{sec:defabsolutegradings}

In this section, we give the definition of the absolute gradings. In Section~\ref{sec:invarianceofgradings}, we prove that these gradings are well defined.

Pick a parametrized Kirby diagram $\bP=(\phi_0,\lambda,\bS_1,f)$ for $(Y,L)$. The parametrized Kirby diagram specifies an unlink $U$ in $S^3$, as well as a framed link $\bS_1\subset S^3\setminus U$, and a diffeomorphism $\hat{f}$ between $S^3(\bS_1)$ and $Y$, which maps $U$ to $L$. Abusing notation slightly, let us write $\ws$ and $\zs$ for the basepoints on $U$ obtained by pulling back $\ws\cup \zs\subset L$ under $\hat{f}$. Let $\bU$ denote the multi-based link
\[
\bU:=(U,\ws,\zs).
\]
 We pick a $\beta$-bouquet $\cB$ for $\bS_1$, as well as a Heegaard triple $\cT=(\Sigma,\as,\bs,\bs',\ws,\zs)$ subordinate to $\cB$.

Note that by definition $(\Sigma,\as,\bs,\ws,\zs)$ is a diagram for $(S^3,\bU)$, and $(\Sigma,\as,\bs',\ws,\zs)$ is a diagram for $(Y,\bL)$. The diagram $(\Sigma,\bs,\bs',\ws,\zs)$ is a diagram for an unlink $\bU_{\b\b'}$ in $(S^1\times S^2)^{\#k}$, with exactly two basepoints per link component.

Suppose that $(\sigma,J)$ is a type-partitioned, indexed coloring of $\bL$, with index set $\bJ$, and suppose that $\bL$ is $\bJ$-null-homologous. Let $S=(S_j)_{j\in \bJ}$ be a generalized $\bJ$-Seifert surface of $\bL$. Note that $(\sigma,J)$  induces a type-partitioned, indexed coloring of both $\bU$ and $\bU_{\b\b'}$, for which we will also write $(\sigma,J)$.

Write $W(S^3,\bS_1)$ for the 2-handle cobordism from $S^3$ to $Y$ obtained by attaching 2-handles to $[0,1]\times S^3$ along $\{1\}\times \bS_1$. Let $\Sigma_j$ denote the surface $[0,1]\times U_j\subset W(S^3,\bS_1)$, and let $\hat{\Sigma}_j$ denote the integral 2-cycle obtained by capping off $\Sigma_j$ with $\{1\}\times f^{-1}(S_j)\subset \{1\}\times S^3(\bS_1)$, as well as  an arbitrary Seifert surface of $U_j$ in $\{0\}\times S^3$. Let $[\hat{\Sigma}]$ denote the integral 2-cycle $[\hat{\Sigma}]:=\sum_{j\in \bJ} [\hat{\Sigma}_j].$

If $\ys\in \bT_{\alpha}\cap \bT_{\beta'}$ is an intersection point with $\frs_{\ws}(\ys)=\frs$ and $j\in \bJ$,  we  pick intersection points $\xs\in \bT_{\alpha}\cap \bT_{\beta}$ and $\Theta\in \bT_{\beta}\cap \bT_{\beta'}$, as well as a homology class of triangles $\psi\in \pi_2(\ve{x},\Theta,\ve{y})$, and set
\begin{equation}
A_{S}(\ve{y})_j:=\tilde{A}(\ve{x})_j+\tilde{A}(\Theta)_j+(n_{\ve{w}}-n_{\ve{z}})_j(\psi)+\frac{\langle c_1(\frs_{\ve{w}}(\psi)),[\hat{\Sigma}_j]\rangle -[\hat{\Sigma}]\cdot [\hat{\Sigma}_j]}{2}.\label{eq:defabsgrading}
\end{equation}

As defined above, the grading $A_{S}$ is an element of $\bA(\cH,\sigma,J,\frs)$, where $\cH=(\Sigma,\as,\bs',\ws,\zs)$. However, there is a canonical isomorphism $\bA(\cH,\sigma,J,\frs)\to \bA(Y,\bL^{(\sigma,J)},\frs)$, so there is an induced transitive grading in $\bA(Y,\bL^{(\sigma,J)}, \frs)$. Well definedness of $A_{S}$ amounts to showing that the induced element $A_{S}$ in $\bA(Y,\bL^{(\sigma,J)},\frs)$ is independent from the choice of $\xs,$ $\Theta$, $\psi$,  $\cT$ and $\bP$. This will be addressed in Section~\ref{sec:invarianceofgradings}.

We define the absolute Maslov gradings in a similar fashion. Assuming that $\sigma$ is a type-partitioned coloring of $\bL$, and $c_1(\frs_{\ws}(\ys))$ is torsion, we define
\begin{equation}
\begin{split}
\gr_{\ws}(\ys):=&\tilde{\gr}_{\ws}(\xs)+\tilde{\gr}_{\ws}(\Theta)-\frac{1}{2}(k+|\ws|-1)-\mu(\psi)+2n_{\ws}(\psi)\\
&+\frac{c_1(\frs_{\ws}(\psi))^2-2\chi(W(S^3,\bS_1))-3\sigma(W(S^3,\bS_1))}{4}.
\end{split}
\label{def:absolutegrw}
\end{equation}
We define $\gr_{\zs}$ similarly, by replacing each instance of $n_{\ws},$  $\gr_{\ws}$ or $\frs_{\ws}$  in Equation~\eqref{def:absolutegrw} with $n_{\zs},$ $\gr_{\zs}$  or $\frs_{\zs}$, respectively.

Note that we can immediately prove Part~\eqref{thm1.1a'} of Theorem~\ref{thm:1}, that the Alexander grading $(A_S)_j$ takes values in $\Z+\tfrac{1}{2} \lk(L\setminus L_j, L_j)$:
\begin{proof}[Proof of  Part \eqref{thm1.1a'} of Theorem~\ref{thm:1}] Since $c_1(\frs_{\ws}(\psi))$ is a characteristic vector of $Q_W$, it follows that $\langle c_1(\frs_{\ws}(\psi)),[\hat{\Sigma}_j]\rangle -[\hat{\Sigma}_j]\cdot [\hat{\Sigma}_j]$ is an even integer. Hence, modulo $\Z$, the expression in Equation~\eqref{eq:defabsgrading} is equal to
\begin{equation}
\frac{1}{2} \left([\hat{\Sigma}\setminus \hat{\Sigma}_j]\cdot [\hat{\Sigma}_j]\right).\label{eq:intersectionoflinkcobordismpieces}
\end{equation}
Since the link cobordism surfaces $\Sigma_j$ and $\Sigma_i$ are disjoint whenever $i\neq j$, it is straightforward to see that the expression in Equation~\eqref{eq:intersectionoflinkcobordismpieces} is $\pm \tfrac{1}{2} \#((L\setminus L_j)\cap S_j)$, which is, by definition, $\tfrac{1}{2} \lk(L\setminus L_j,L_j)$.
\end{proof}

\subsection{Rationally null-homologous links and relative cyclic gradings}

There are several additional situations where one can define versions of the Alexander and Maslov gradings.

The first is when $L$ is \emph{rationally null-homologous}, i.e., $[L]=0\in H_1(Y;\Q)$. In this case, Equation~\eqref{eq:nz-nw=intersectionnumber} implies that the relative Alexander grading is still well defined. In fact, by picking a rational 2-chain $S$ with boundary $-L$, the techniques of this paper still give a well defined $\Q$-valued Alexander grading $A_S$. We will focus on integrally null-homologous links, for notational simplicity.

More generally, if $c_1(\frs)$ is non-torsion, then Equation~\eqref{eq:chernclassformula} implies that there is a  $\Z/\frd(\frs) \Z$ valued relative Maslov grading $\gr_{\ws}$, where
\[
\frd(\frs)=\gcd_{\xi\in H_2(Y;\Z)} \langle c_1(\frs),\xi\rangle.
\]
Similarly, there is a $\Z/\frd(\frs-\PD[L]) \Z$ valued relative Maslov grading $\gr_{\zs}$.

Similarly, by examining Equation~\eqref{eq:nz-nw=intersectionnumber} we see that when $L\neq 0\in H_1(Y;\Q)$ there is still a $\Z/\frd(L)\Z$ valued relative Alexander grading, where
\[
\frd(L):=\gcd_{\xi\in H_2(Y;\Z)} \langle \PD[L],\xi\rangle.
\]
The techniques of this paper do not give lifts the relative cyclic gradings to absolute cyclic gradings.

\section{Invariance of the absolute gradings}
\label{sec:invarianceofgradings}

In this section, we prove that the absolute lifts of the Alexander and Maslov gradings defined in Section~\ref{sec:defabsolutegradings} do not depend on the choices made in the construction.

\subsection{Invariance of the absolute Alexander grading}
\label{sec:invariancealexandergrading}
\begin{lem}\label{lem:indoftriangle}Suppose that $(Y,\bL)$ is a multi-based link, and $\bP$ is a parametrized Kirby decomposition with framed link $\bS_1$, $\cB^\beta$ is a $\beta$-bouquet for $\bS_1$, and $\cT=(\Sigma,\as,\bs,\bs',\ws,\zs)$ is a Heegaard triple subordinate to $\cB^\beta$. If $\ys\in \bT_{\alpha}\cap \bT_{\beta'}$, the expression for $A_S(\ys)_j$ in Equation~\eqref{eq:defabsgrading} is independent of the choice of $\xs\in \bT_{\alpha}\cap \bT_{\beta}$, $\Theta\in \bT_{\beta}\cap \bT_{\beta'}$ and $\psi\in \pi_2(\xs,\Theta,\ys)$.
	\end{lem}
	\begin{proof} 
	We first show that $A_S$ is independent of the triangle $\psi$, for fixed $\xs$ and $\Theta$. If $\psi,\psi'\in \pi_2(\ve{x},\Theta,\ve{y})$ are two homology classes, we can write
	\[
	\psi'=\psi+\cP,
	\]
	 for a triply periodic domain $\cP$. By \cite{OSDisks}*{Proposition 8.5},  
	\[
	\frs_{\ve{w}}(\psi')=\frs_{\ve{w}}(\psi)+q_*\PD[H(\cP)],
	\] 
	where $H(\cP)$ is the integral 2-cycle obtained by capping off the triply periodic domain $\cP$, and 
	\[
	q_*\colon H^2(X_{\alpha\beta\beta'},\d X_{\alpha\beta\beta'};\Z)\to H^2(X_{\alpha\beta\beta'};\Z)
	\] is the map in the long exact sequence of cohomology.
	
	Let $A_{S}^{(\xs,\Theta,\psi)}$ denote the grading defined with  $\xs$, $\Theta$ and $\psi$, and let $A_{S}^{(\xs,\Theta,\psi')}$ denote the grading defined with $\xs$, $\Theta$ and $\psi'$. We compute 
		\begin{align*}&A_{S}^{(\xs,\Theta,\psi')}(\ve{y})_j\\
		=&\tilde{A}(\ve{x})_j+\tilde{A}(\Theta)_j+(n_{\ve{w}}-n_{\ve{z}})_j(\psi')+\frac{\langle c_1(\frs_{\ve{w}}(\psi')),[\hat{\Sigma}_j]\rangle -[\hat{\Sigma}]\cdot [\hat{\Sigma}_j]}{2}\\
		=&\tilde{A}(\ve{x})_j+\tilde{A}(\Theta)_j+(n_{\ve{w}}-n_{\ve{z}})_j(\psi)+(n_{\ve{w}}-n_{\ve{z}})_j(\cP)+\frac{\langle c_1(\frs_{\ve{w}}(\psi))+2q_*\PD[H(\cP)],[\hat{\Sigma}_j]\rangle -[\hat{\Sigma}]\cdot [\hat{\Sigma}_j]}{2}\\
		=&\tilde{A}(\ve{x})_j+\tilde{A}(\Theta)_j+(n_{\ve{w}}-n_{\ve{z}})_j(\psi) +\frac{\langle c_1(\frs_{\ve{w}}(\psi)),[\hat{\Sigma}_j]\rangle -[\hat{\Sigma}]\cdot [\hat{\Sigma}_j]}{2}\\
		&+(n_{\ve{w}}-n_{\ve{z}})_j(\cP)+\langle q_*\PD[H(\cP)], \hat{\Sigma}_j \rangle \\
		=&A_{S}^{(\xs,\Theta,\psi)}(\ve{y})_j
		\end{align*} since 
		\begin{align*}&(n_{\ve{w}}-n_{\ve{z}})_j(\cP)+\langle q_*\PD[H(\cP)], \hat{\Sigma}_j \rangle\\
		=&(n_{\ve{w}}-n_{\ve{z}})_j(\cP)+\langle \PD[H(\cP)], \Sigma_j \rangle\\
		=&(n_{\ve{w}}-n_{\ve{z}})_j(\cP)+\langle \PD[H(\cP)], (\Sigma_{\alpha\beta\beta'})_j \rangle\\
		=&0\end{align*} by Equation \eqref{eq:intersecttriplyperiodicdomain}.
			
	The independence of $A_S(\ys)_j$ from $\xs$ and $\Theta$ is handled similarly. If $\xs'$ is another choice of intersection point in $\bT_{\alpha}\cap \bT_{\beta}$, then we pick a homology class $\phi\in \pi_2(\xs',\xs)$. We compute directly from the definition that
	\begin{align*}
&A_{S}^{(\xs',\Theta,\psi+\phi)}(\ys)_j-A_{S}^{(\xs,\Theta,\psi)}(\ys)_j\\
=&\tilde{A}(\xs')_j-\tilde{A}(\xs)_j+(n_{\ws}-n_{\zs})_j(\phi)\\
=&0
	\end{align*} 
	since $\tilde{A}(\xs')_j-\tilde{A}(\xs)_j=(n_{\zs}-n_{\ws})_j(\phi)$, by definition. Independence from $\Theta$ is proven analogously. 
			
	\end{proof}

Next, for fixed $\bP$ and $\cB^\beta$, we consider the dependence on the triple $\cT$ subordinate to $\cB^\beta$:

\begin{lem}\label{lem:Ainvariantformsurgerytriple}For fixed parametrized Kirby diagram $\bP$ with framed link $\bS_1$, and fixed $\beta$-bouquet $\cB^\beta$ for $\bS_1$, the Alexander grading $A_{S}$ defined in Equation~\eqref{eq:defabsgrading} is independent of the choice of Heegaard triple $\cT=(\Sigma,\as,\bs,\bs',\ws,\zs)$ subordinate to $\cB^\beta$. 
\end{lem}

	\begin{proof} Suppose $\cT_1=(\Sigma_1,\as_1,\bs_1,\bs_1',\ws,\zs)$ and $\cT_2=(\Sigma_2,\as_2,\bs_2,\bs_2',\ws,\zs)$ are two triples subordinate to $\cB^\beta$. Let $A_{S,\cT_1}$ denote the grading defined with the triple $\cT_1$, and let $A_{S,\cT_2}$ be the grading defined with $\cT_2$. Write $\cH_1=(\Sigma_1,\as_1,\bs'_1,\ws,\zs)$ and $\cH_2=(\Sigma_2,\as_2,\bs_2',\ws,\zs)$. By definition, we need to show that
	\begin{equation}
A_{S,\cT_2}=F_{\cH_1\to \cH_2}(A_{S,\cT_1}),	\label{eq:transitionmaponalexandertriples}
	\end{equation}
	where $F_{\cH_1\to \cH_2}$ is the transition map on sets of Alexander gradings defined in Section~\ref{sec:transitivesystemsofgradings}.
	
	 Any two triples subordinate to $\cB^\beta$ can be connected by a sequence of the six moves of Lemma~\ref{lem:movesbetweenbetasubordinatediagrams}, so it is sufficient to prove Equation~\eqref{eq:transitionmaponalexandertriples} when $\cT_1$ and $\cT_2$ differ by one of the moves on the list.
		
		We consider Move~\eqref{move:connecttwotriples1} first, when  $\cT_2$ is obtained from $\cT_1$ by a  handleslide or isotopy of the $\as$ curves. In this case, let us write $\cT_1=(\Sigma, \as,\bs,\bs')$ and $\cT_2=(\Sigma,\as',\bs,\bs')$.

		Suppose that $\ys_{\a'\b'}\in \bT_{\alpha'}\cap \bT_{\beta'}$. We make choices of the following:
	\begin{enumerate}
	\item  $\xs_{\a'\b}\in \bT_{\a'}\cap \bT_{\b}$.
	\item $\Theta_{\b\b'}\in \bT_{\b}\cap \bT_{\b'}$, representing the torsion $\Spin^c$ structure.
	\item $\psi_{\a'\b\b'}\in \pi_2(\xs_{\a'\b}, \Theta_{\b\b'}, \ys_{\a'\b'})$. 
	\item $\ys_{\a\b'}\in \bT_{\alpha}\cap \bT_{\beta'}$ such that $\frs_{\ws}(\ys_{\a\b'})=\frs_{\ws}(\ys_{\a'\b'})\in \Spin^c(Y)$.
	\item $\psi_{\alpha\beta\beta'}\in \pi_2(\xs_{\a\b}, \Theta_{\b\b'}, \ys_{\a\b'})$.
	\item $\Theta_{\a'\a}\in \bT_{\a'}\cap \bT_{\a}$, representing the torsion $\Spin^c$ structure.
		\end{enumerate}
	It follows from \cite{OSDisks}*{Proposition~8.5} that by adding triply periodic domains into $\psi_{\a'\b\b'}$ and $\psi_{\a\b\b'}$, we can assume  
	\begin{equation}
	\frs_{\ws}(\psi_{\a'\b\b'})=\frs_{\ws}(\psi_{\a\b\b'})\label{eq:equalityofspincstructures}
	\end{equation}
	 under the canonical inclusions of $X_{\a'\b\b'}$ and $X_{\a\b\b'}$ into $W(S^3,\bS_1)$ from Lemma~\ref{lem:uniqueembeddingintoW}.
	
Similar to the map from homology classes of triangles to $\Spin^c$ structures discussed in Section~\ref{sec:heegaardtriplesandspinc}, there is a $\Spin^c$ map on quadrilateral classes
\[
\frs_{\ws}\colon \pi_2(\Theta_{\a'\a}, \xs_{\a\b}, \Theta_{\b\b'}, \ys_{\a'\b'})\to \Spin^c(X_{\a'\a\b\b'}).
\]	

	We note that after filling in $Y_{\a'\a}$ with 3-handle and 4-handles, the manifolds $X_{\a'\a\b'}$ and $X_{\a'\a\b}$ become $[0,1]\times S^3$ and $[0,1]\times Y^3$ respectively. Hence we can find triangle classes $\psi_{\a'\a\b}\in \pi_2(\Theta_{\a'\a},\xs_{\a\b},\xs_{\a'\b})$ and $\psi_{\a'\a\b'}\in \pi_2(\Theta_{\a'\a}, \ys_{\a\b'},\ys_{\a'\b'})$ such that
	\[
\frs_{\ws}(\psi_{\a'\a\b'}+\psi_{\a\b\b'})=\frs_{\ws}(\psi_{\a'\b\b'}+\psi_{\a'\a\b})\in \Spin^c(X_{\a'\a\b\b'})\iso \Spin^c(W(S^3,\bS_1)).
	\]
By \cite{OSDisks}*{Section~8.1.5}, it follows that the quadrilateral class $\psi_{\a'\a\b'}+\psi_{\a\b\b'}$ can be obtained from $\psi_{\a'\b\b'}+\psi_{\a'\a\b}$ by splicing homology classes of disks into the four ends. By splicing these four disk classes into the triangle classes $\psi_{\a'\b\b'}$ and $\psi_{\a'\a\b}$, we may simply assume that
\begin{equation}
\psi_{\a'\a\b'}+\psi_{\alpha\beta\beta'}=\psi_{\a'\b\b'}+\psi_{\a'\a\b},\label{eq:relationoftrianglesforassoc}
\end{equation}		
as homology classes of quadrilaterals.

By expanding out the definitions of the gradings, and simplifying slightly using Equation~\eqref{eq:relationoftrianglesforassoc}, we obtain the equality
\begin{equation}
\begin{split}
&A_{S,\cT_2}(\ys_{\a'\b'})_j-F_{\cH_1\to \cH_2}(A_{S,\cT_1})(\ys_{\a'\b'})_j
\\=&\tilde{A}(\xs_{\a'\b})_j-\tilde{A}(\xs_{\a\b})_j-\tilde{A}(\Theta_{\a'\a})_j-(n_{\ws}-n_{\zs})_j(\psi_{\a'\a\b})\\
&+\frac{\langle c_1(\frs_{\ws}(\psi_{\a'\b\b'})), [\hat{\Sigma}_j]\rangle -[\hat{\Sigma}]\cdot [\hat{\Sigma}_j] }{2}-\frac{\langle c_1(\frs_{\ws}(\psi_{\alpha\beta\beta'})), [\hat{\Sigma}_j]\rangle -[\hat{\Sigma}]\cdot [\hat{\Sigma}_j] }{2}.
\end{split}
\label{eq:indofTmoveofalphacurves}
\end{equation}

The expression
\[
\tilde{A}(\xs_{\a'\b})_j-\tilde{A}(\xs_{\a\b})_j-\tilde{A}(\Theta_{\a'\a})_j-(n_{\ws}-n_{\zs})_j(\psi_{\a'\a\b}),
\]
in Equation~\eqref{eq:indofTmoveofalphacurves} vanishes because it is equal to $\tilde{A}(\xs_{\a'\b})_j-F_{\bs}^{\as\to \as'}(\tilde{A})(\xs_{\a'\b})_j$, which vanishes by Lemma~\ref{lem:canonicalgradingsaretransitive}.

By Equation~\eqref{eq:equalityofspincstructures}, the two summands involving Chern classes and self intersection numbers also vanish. Hence the entirety of Equation~\eqref{eq:indofTmoveofalphacurves} vanishes.

Invariance from Moves~\eqref{move:connecttwotriples2}, \eqref{move:connecttwotriples4} and \eqref{move:connecttwotriples5} also amount to proving invariance from a sequence of isotopies or handleslides of some of the attaching curves, and are proven similarly to Move~\eqref{move:connecttwotriples1}. Invariance under Move~\eqref{move:connecttwotriples3}, (de)stabilization, is an easy computation. Finally, invariance under Move~\eqref{move:connecttwotriples6}, isotopies of the Heegaard surface $\Sigma$ within $S^3$, is a tautology.

\end{proof}

	Next, we address independence from the $\beta$-bouquet $\cB^\beta$:

\begin{lem}\label{lem:independentofbouquet}For a fixed parametrized Kirby diagram for $(Y,\bL)$, the Alexander multi-grading $A_{S}$ is invariant from the $\beta$-bouquet $\cB^\beta$ for the framed link $\bS_1$ of $\bP$.
\end{lem}

\begin{proof}This follows from an adaptation of the original argument that the 2-handle maps are invariant of the choice of $\beta$-bouquet \cite{OSTriangles}*{Lemma~4.8}. As argued therein, if $\cB^\beta_1$ and $\cB^{\beta}_2$ are two bouquets which differ by replacing a single arc with another, then Heegaard triples $\cT_1=(\Sigma,\as,\bs_1,\bs'_1,\ws,\zs)$ and $(\Sigma,\as,\bs_2,\bs_2',\ws,\zs)$ can be constructed so that $\bs_2$ is obtained from $\bs_1$ via a sequence of handleslides and isotopies, and $\bs_2'$ is obtained from $\bs_1'$ via a sequence of handleslides and isotopies. Adapting the argument from Lemma~\ref{lem:Ainvariantformsurgerytriple} for associativity on the level of homology classes yields the statement.
\end{proof}

\begin{lem}\label{lem:invariantfromhandleslides} The Alexander multi-grading is invariant under Move~\ref{move:L1}, handleslides amongst the components of $\bS_1$.
\end{lem}

\begin{proof}This follows by adapting the proof of invariance of the 2-handle maps from handleslides \cite{OSTriangles}*{Lemma~4.14}. Handlesliding a component of $\bS_1$ across another can be realized as a sequence of several handles of the $\bs$ curves over each other, and several handleslides of the $\bs'$ curves over each other. An argument using associativity on the level of homology classes as in Lemma~\ref{lem:Ainvariantformsurgerytriple} shows invariance.
\end{proof}

We now consider Move~\ref{move:L2}, invariance under blowing-up or down:

\begin{lem}\label{lem:blowupawayfromL} Suppose that $K$ is an unknot in $S^3$ which is contained in a ball which is disjoint from $\bS_1$ and $U$. Suppose that $\bP=(\phi_0,\lambda, \bS_1,f)$ is a parametrized Kirby diagram for $(Y,\bL)$ and let $\bP'=(\phi_0,\lambda,\bS_1\cup \{K\},f_K)$ denote the parametrized Kirby diagram obtained by adding $K$ to $\bS_1$ with framing $\pm 1$, and let $f_K$ be the induced diffeomorphism. The gradings $A_{S,\bP}$ and $A_{S,\bP'}$ agree.
\end{lem}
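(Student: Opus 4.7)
The plan is to realize the parametrized Kirby diagram $\bP'$ by an explicit Heegaard triple obtained from a triple for $\bP$ by stabilization, and then to show directly from the formula \eqref{eq:defabsgrading} that every term is preserved.

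First I would choose a Heegaard triple $\cT=(\Sigma,\ve{\alpha},\ve{\beta},\ve{\gamma},\ve{w},\ve{z})$ subordinate to a $\beta$--bouquet $\cB^\beta$ for $\bS_1$. Since $K$ lies in a ball $B\subset S^3$ disjoint from $U$, $\bS_1$ and the bouquet $\cB^\beta$, I can extend $\cB^\beta$ to a $\beta$--bouquet for $\bS_1\cup\{K\}$ by adjoining a new arc inside $B$, and extend $\cT$ to a triple $\cT'$ subordinate to this new bouquet simply by taking a connected sum with a standard genus--one triple $(E,\alpha_0,\beta_K,\gamma_K)$ localized in $B$, where $\beta_K$ is a meridian of $K$, $\gamma_K$ is the $\pm 1$--framed longitude, and $\alpha_0$ is a small Hamiltonian translate of $\beta_K$. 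The intersection numbers are $|\alpha_0\cap\beta_K|=|\alpha_0\cap\gamma_K|=|\beta_K\cap\gamma_K|=1$, and the resulting $\alpha\gamma$--diagram $(\Sigma\#E,\ve{\alpha}\cup\{\alpha_0\},\ve{\gamma}\cup\{\gamma_K\})$ for $(Y,\bL)$ is obtained from $(\Sigma,\ve{\alpha},\ve{\gamma})$ by a single $(1,2)$--stabilization. Thus it suffices to compare the two gradings on $\cT$ and $\cT'$ via the stabilization map described in Section \ref{sec:coherentgradings}.

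Next I would pick a homology triangle $\psi\in\pi_2(\ve{x},\theta,\ve{y})$ for $\cT$ representing $\frs$, and form the composite triangle $\psi'=\psi+\psi_0\in\pi_2(\ve{x}\times c_{\alpha\beta},\theta\times c_{\beta\gamma},\ve{y}\times c_{\alpha\gamma})$ on $\cT'$, where $\psi_0$ is the small triangle on $E$ supported in the unique ``inner'' region. The 4--manifold $X_{\alpha'\beta'\gamma'}$ is $X_{\alpha\beta\gamma}$ with a $\pm 1$--framed 2--handle attached along an unknot in a ball and is therefore diffeomorphic to $X_{\alpha\beta\gamma}\#(\pm\overline{\CP}{}^2)$. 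Since $K\subset B$ is disjoint from $L$ and from the chosen $J$--Seifert surfaces, the capped surfaces $\hat{\Sigma}'_j$ coincide with $\hat{\Sigma}_j$, and similarly $[\hat{\Sigma}']\cdot[\hat{\Sigma}'_j]=[\hat{\Sigma}]\cdot[\hat{\Sigma}_j]$. The contribution of $\psi_0$ to $\frs_{\ve{w}}(\psi')$ lies on the $(\pm\overline{\CP}{}^2)$ summand, whose exceptional sphere is disjoint from each $\hat{\Sigma}_j$, so $\langle c_1(\frs_{\ve{w}}(\psi')),[\hat{\Sigma}'_j]\rangle=\langle c_1(\frs_{\ve{w}}(\psi)),[\hat{\Sigma}_j]\rangle$. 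Because $E$ contains no basepoints, $(n_{\ve{w}}-n_{\ve{z}})(\psi')_j=(n_{\ve{w}}-n_{\ve{z}})(\psi)_j$.

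It remains to analyze the two auxiliary Alexander gradings $A_{S^3,\bU}(\ve{x}\times c_{\alpha\beta})_j$ and $A_{(S^1\times S^2)^{\#(k+1)},\bU'}(\theta\times c_{\beta\gamma})_j$. The $\alpha\beta$--diagram of $\cT'$ is a $(1,2)$--stabilization of that of $\cT$, both representing $(S^3,\bU)$, and by Lemma~\ref{lem:declaredgradingiscoherent} the declared absolute Alexander grading on unlinks is coherent, so $A_{S^3,\bU}(\ve{x}\times c_{\alpha\beta})_j=A_{S^3,\bU}(\ve{x})_j$. For the $\beta\gamma$--term the ambient manifold jumps from $(S^1\times S^2)^{\#k}$ to $(S^1\times S^2)^{\#(k+1)}$, and no new basepoints or link components appear on the extra summand; the formula from Section \ref{sec:absolutegradingsunknots} expresses the $A$--grading of the top $\ve{w}$--graded generator purely in terms of basepoint counts on each link component, which are unchanged, so this term is also preserved. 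Assembling the four terms in \eqref{eq:defabsgrading} for $\cT'$ applied to $\ve{y}\times c_{\alpha\gamma}$ with those for $\cT$ applied to $\ve{y}$, we obtain $A_{\cT'}(\ve{y}\times c_{\alpha\gamma})_j=A_{\cT}(\ve{y})_j$, which under the identification $\eta_{(\Sigma,\ve{\alpha},\ve{\gamma}),\bA}$ gives $A_{Y,\bL,S,\bP'}=A_{Y,\bL,S,\bP}$. The only subtle step is bookkeeping the extra $\CP^2$ summand in $\Spin^c$ terms; once one checks that the small triangle $\psi_0$ contributes a class supported away from the capped surfaces, everything else is routine.
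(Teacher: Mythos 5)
Your proposal follows the paper's own approach: stabilize the Heegaard triple by a torus summand in the ball $B$ carrying $\beta_K$ (a meridian of $K$) and $\gamma_K$ (a $\pm1$-framed longitude), observe that $X_{\alpha\beta\gamma}$ gains a $\CP^2$ or $\bar{\CP}^2$ summand, and check that each term of \eqref{eq:defabsgrading} is unchanged because the exceptional sphere is disjoint from $\hat{\Sigma}_j$ and no new basepoints appear. Two slips are worth flagging, though neither undermines the argument. First, $\alpha_0$ cannot be a small Hamiltonian translate of $\beta_K$; that would contradict your own claim $|\alpha_0\cap\beta_K|=1$, and it would make the new $\alpha\beta$ diagram represent $(S^3,\bU)$ with an extraneous $S^1\times S^2$ summand rather than a $(1,2)$-stabilization. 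What you want is $\alpha_0$ dual to $\beta_K$ on the torus, consistent with the intersection numbers you state. Second, since $\beta_K$ and $\gamma_K$ also intersect once, the new torus piece is a genus-one splitting of $S^3$ and the $\beta\gamma$ end remains $(S^1\times S^2)^{\#k}$, not $(S^1\times S^2)^{\#(k+1)}$; your conclusion that $A(\theta)$ is preserved survives, because that grading depends only on per-component basepoint counts, but the stated reasoning is off. Lastly, for $\psi'=\psi+\psi_0$ to be a well-defined domain on $\cT'$ the multiplicity of $\psi_0$ at the connected-sum circle must match that of $\psi$ (the paper's Figure~\ref{fig::36} shows a triangle of multiplicity $k$, not $0$, at the dashed circle); this is bookkeeping but the phrase ``small triangle'' glosses over it.
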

\begin{proof}The proof is similar to the standard proof of the blow-up formula \cite{OSTriangles}*{Section~6}. If $\cT$ is subordinate to a bouquet for $\bS_1$, then we can construct a triple $\cT^+$ which is subordinate to a bouquet for $\bS_1\cup \{K\}$ by taking the connected sum of $\cT$ with one of the two diagrams shown in Figure~\ref{fig::36}, depending on whether we are taking a positive or negative blow-up.

\begin{figure}[ht!]
\centering
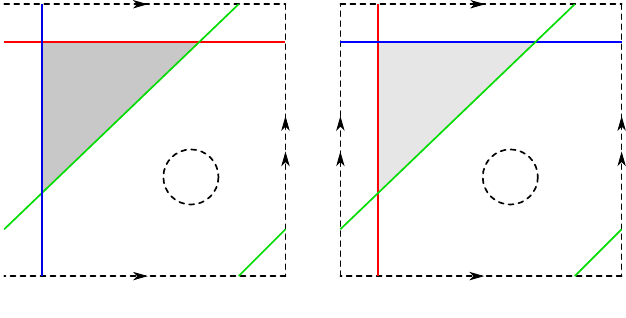
\caption{\textbf{A Heegaard triple for surgery after a blow-up (Move~\ref{move:L1}).} Taking connected sum of a surgery triple with one of the two triples shown above, at the dashed circle shown, results in a surgery triple for blowing up away from $\Sigma$. Multiplicities of a homology triangles are shown.}\label{fig::36}
\end{figure}

For both positive and negative blow-ups, if $\psi$ is a class of triangles on $(\Sigma, \as,\bs,\bs',\ws,\zs)$, we can make $\psi$ have multiplicity 0 at the connected sum point by splicing $\psi$ with the class $k\cdot [\Sigma]$ (where $\Sigma$ denotes the Heegaard surface).  We can construct a homology class of triangles $\psi^+$ by taking the product of $\psi$ and the triangle class shown in Figure~\ref{fig::36}. We note that
\begin{equation}\frs_{\ve{w}}(\psi^+)=\frs_{\ve{w}}(\psi)\# \frs'\label{eq:Alexanderchangeblowup0}\end{equation} for some $\frs'$ in $\Spin^c(\CP^2)$ or $\Spin^c(\bar{\CP}^2)$.  Equation~\eqref{eq:Alexanderchangeblowup0} implies that 
\[
\langle c_1(\frs_{\ve{w}}(\psi^+)),[\hat{\Sigma}_j] \rangle=\langle c_1(\frs_{\ve{w}}(\psi)), [\hat{\Sigma}_j]\rangle.
 \]
  Furthermore, $[\hat{\Sigma}]\cdot [\hat{\Sigma}_j]$ is easily seen to be unchanged, since the blow-up occurs away from $[0,1]\times U\subset W(S^3,\bS_1)$. Since  $(n_{\ve{w}}-n_{\ve{z}})_j(\psi)$ is also unchanged, it follows that Equation~\eqref{eq:defabsgrading} is unchanged, so $A_{S,\bP}$ and $A_{S,\bP'}$ agree.
\end{proof}

We now consider Move~\ref{move:L3}, when the new component is given framing $-1$:

\begin{lem}\label{lem:invariantO3'-1framing}Suppose that $\bP=(\phi_0,\lambda,\bS_1,f)$ is a parametrized Kirby diagram for $(Y,\bL)$  and that $K\subset S^3\setminus N(U)$ is a meridian of a single component of $U$, as in Move~\ref{move:L3}, and suppose $K$ is given framing $-1$. Let $\bP'=(\phi_0,\lambda',\bS_1\cup \{K\},f_K)$ where $f_K$ is the induced diffeomorphism, and  $\lambda'$ is the new framing on $L$. The gradings $A_{S,\bP}$ and $A_{S,\bP'}$ agree.
\end{lem}
\begin{proof}Given a Heegaard triple $\cT$ subordinate to a bouquet for $\bS_1$, we can construct a Heegaard triple $\cT^+$ subordinate to a bouquet for $\bS_1\cup \{K\}$ by taking the connected sum of the genus 1 Heegaard triple on the right side of Figure~\ref{fig::36} with $\cT$, near a basepoint $z$ on the link component which $K$ is a meridian of, and then moving $z$ into the position shown in  Figure~\ref{fig::24}. Let $j\in \bJ$ denote the index $j:=J(z)$.

\begin{figure}[ht!]
\centering
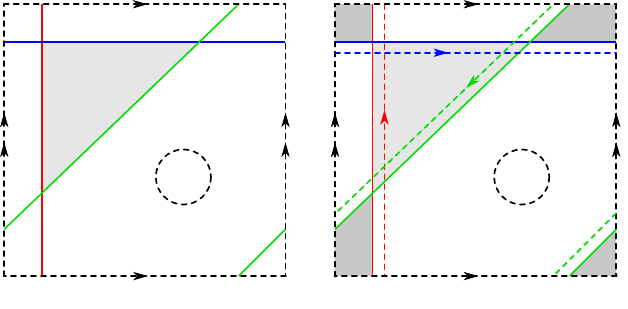
\caption{\textbf{The Heegaard triple $\cT^+$ for surgery on $\bS_1\cup \{K\}$ where $K$ has framing $-1$.} This corresponds to Move~\ref{move:L3}. On the left is the homology class $\psi_1^+$, and a dual spider, with arcs $a$, $b$ and $b'$. On the right is the triply periodic domain $\bar{\cP}$ with $H(\bar{\cP})\in H_2(\bar{\CP}^2;\Z)$ a generator. Also on the right are the translates $\d'_\alpha \bar{\cP},$ $\d'_\beta \bar{\cP}$ and $\d'_{\b'} \bar{\cP}$.}\label{fig::24}
\end{figure}

We note that a there is a diffeomorphism between $W(S^3,\bS_1\cup \{K\})$ and $ W(S^3,\bS_1)\# \bar{\CP}^2$ which is the identity outside  of $[0,1]\times B\subset W(S^3,\bS_1)$ for a ball $B\subset S^3$ containing $K$. Under this connected sum decomposition, we can describe $H_2(W(S^3,\bS_1\cup \{K\});\Z)$ as  $H_2(W(S^3,\bS_1);\Z)\oplus \Z$ where the new copy of $\Z$ is generated by an embedded sphere $E$ in $W(S^3,\bS_1\cup \{K\})$ formed by taking a Seifert disk for $K$ in $S^3\times \{1\}$ and gluing on the core of the 2-handle attached along $K$. Let $\Sigma_j$ denote the link cobordism surface $[0,1]\times U_j\subset W(S^3,\bS_1)$ and let $\Sigma'_j$ denote the analogous surface in $W(S^3,\bS_1\cup \{K\})$.

We can view the original link cobordism surface $\Sigma_j$ as a surface in $W(S^3,\bS_1\cup \{K\})\iso W(S^2,\bS_1)\# \bar{\CP}^2$, by first pushing $\Sigma_j$ outside of the connected sum region before we take the connected sum. We note that, by inspection, the class $E$ is equal to $H(\bar{\cP})$,  the homology class obtained by capping off the triply periodic domain $\bar{\cP}$ shown in Figure~\ref{fig::24}. Note that clearly
\begin{equation}
[\hat{\Sigma}_j']=[\hat{\Sigma}_j]+a\cdot E,
\end{equation}
for some $a\in \Z$. In fact, we can compute that $a=-1$ by applying Equation~\eqref{eq:intersecttriplyperiodicdomain} to the triply periodic domain $\bar{\cP}$ in Figure~\ref{fig::24}:
\[
1=(n_{\ve{z}}-n_{\ve{w}})_j( \bar{\cP})=\#(\Sigma'_j\cap H( \bar{\cP}))=[\hat{\Sigma}_j+a H( \bar{\cP})]\cdot[H(\bar{\cP})]=-a.
\] 

Now let $\psi\in \pi_2(\xs,\Theta,\ys)$ be any homology class of triangles on $\cT$, such that $\Theta$ represents the torsion $\Spin^c$ structure. By splicing in the class of the Heegaard surface to $\psi$, we can assume that $n_z(\psi)=0$.

We construct a class $\psi_1^+\in \pi_2(\xs\times x_{\a_0\b_0},\Theta\times \theta_{\b_0\b_0'}, \ys\times y_{\a_0\b_0'})$, by taking the product of $\psi$ and the small triangle class shown in Figure~\ref{fig::24}.

We now compute $\langle c_1(\frs_{\ve{w}}(\psi_1^+)), H(\bar{\cP})\rangle$. According to \cite{OSTriangles}*{Proposition~6.3}, we have 
\begin{equation}
\langle c_1(\frs_{\ve{w}}(\psi_1^+)),H(\bar{\cP})\rangle=e(\bar{\cP})+\# (\d \bar{\cP})-2n_{\ve{w}}(\bar{\cP})+2\sigma(\psi_1^+,\bar{\cP}),\label{eq:chernclassformulaheegaardtriple}
\end{equation}
 where $e(\bar{\cP})=0$ denotes the Euler measure, $\# \d \bar{\cP}=3$ is the number of boundary components of $\bar{\cP}$, and $\sigma(\psi_1,\bar{\cP})$ is the \emph{dual spider number} \cite{OSTriangles}*{Section~6.1}. We recall briefly the construction of the dual spider number $\sigma(\psi_1^+,\bar{\cP})$. Let $u\colon \Delta\to \Sym^n(\Sigma_{\cT^+})$ denote a topological representative of the class $\psi_1^+$. Let $x\in \Delta$ be a generic point, and let $a$, $b$ and $b'$ be three paths in $\Delta$ from $x$ to the $\alpha_0$, $\beta_0$ and $\b'_0$ boundaries of $\Delta$, respectively. Perturbing $u$ slightly if necessary, we can view $u(x)$  as an $n$-tuple of points on $\Sigma$, and $u(a)$, $u(b)$ and $u(b')$  as integral 1-chains on $\Sigma$. The dual spider number is defined as
\begin{equation}
\sigma(\psi_1^+,\bar{\cP}):=n_{u(x)}(\bar{\cP})+\#(u(a)\cap \d'_\a \bar{\cP})+\# (u(b)\cap \d'_\beta \bar{\cP})+\# (u(b')\cap \d'_{\b'} \bar{\cP}),\label{eq:dualspidernumber}
\end{equation}
where  $\d'_\tau \bar{\cP}$ denotes the translation of the boundary component $\d_\tau \bar{\cP}$ of $\bar{\cP}$, in the direction of the inward normal vector field, according to the periodic domain $\bar{\cP}$.

Using Equation~\eqref{eq:dualspidernumber}, we compute  $\sigma(\psi_1^+,\bar{\cP})=1-1-1-1=-2$. Computing the remainder of the terms in Equation~\eqref{eq:chernclassformulaheegaardtriple}, we see
\[
\langle c_1(\frs_{\ve{w}}(\psi_1^+)),H(\bar{\cP}) \rangle=-1.
\] 
We note
\[
n_{\ws}(\psi_1^+)_j=n_{\ws}(\psi)_j\qquad  \text{and} \qquad n_{\zs}(\psi_1^+)_j=n_{\zs}(\psi)_j+1.
\]
Using $\psi$ to compute the Alexander grading $A_{S,\bP}$, and using $\psi_1^+$ to compute the Alexander grading $A_{S,\bP'}$, we see that the difference between the expressions defining $A_{S,\bP'}(\ys\times y_{\a_0\b_0'})_j$ and $A_{S,\bP}(\ys)$ is
\[
\frac{\langle c_1(\frs_{\ve{w}}(\psi_1^+)),-H(\bar{\cP})\rangle-[-H(\bar{\cP})]\cdot [-H(\bar{\cP})]}{2}-n_z(\psi_1^+)=0.
\]

In a similar way to Lemma~\ref{lem:blowupawayfromL}, it is straightforward to see that the other components of the Alexander grading are unchanged, completing the proof.
\end{proof}

We now consider Move~\ref{move:L3}, when the new link component is given framing $+1$:

\begin{lem}\label{lem:invariantO3'+1framing}Suppose that $\bP=(\phi_0,\lambda,\bS_1,f)$ is a parametrized Kirby diagram for $(Y,\bL)$ and suppose that $K$ is a meridian of a single component of $U$, as in Move~\ref{move:L3}, and suppose $K$ is given framing $+1$. Let $\bP'=(\phi_0,\lambda',\bS_1\cup \{K\},f_K)$, where $f_K$ is the induced diffeomorphism, and $\lambda'$ is the new framing on $L$. The two gradings $A_{S,\bP}$ and $A_{S,\bP'}$ agree.
\end{lem}
\begin{proof} The proof is similar to the proof of Lemma~\ref{lem:invariantO3'-1framing}. Let $\cT^+$ be a triple subordinate to a bouquet for $\bS_1\cup \{K\}$, constructed as in the proof of Lemma~\ref{lem:invariantO3'-1framing}; See Figure~\ref{fig::37}. Let $j$ denote the index of the grading that $K$ is assigned to. Let $\psi_{-1}^+$ and $\cP$ be the homology class of triangles and triply periodic domain shown in Figure~\ref{fig::37}.

\begin{figure}[ht!]
\centering
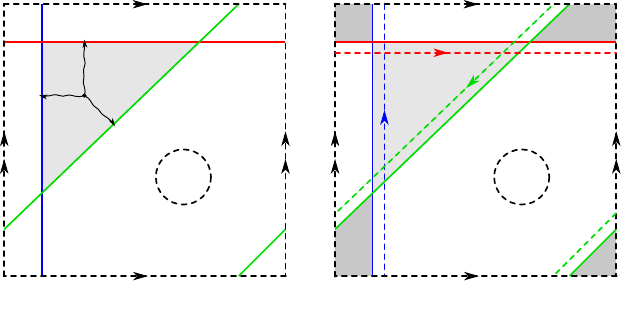

\caption{\textbf{The Heegaard triple $\cT^+$ for surgery on $\bS_1\cup \{K\}$ where $K$ has framing $+1$.} This corresponds to Move~\ref{move:L3}. On the left is the homology class $\psi_1^+$, and a dual spider, with arcs $a$, $b$ and $b'$. On the right is the triply periodic domain $\cP$ with $H(\cP)\in H_2(\CP^2;\Z)$ a generator. Also on the right are the translates $\d'_\alpha \cP,$ $\d'_\beta \cP$ and $\d'_{\b'} \cP$.}
\label{fig::37}
\end{figure}

Let $\Sigma'_j$ denote the surface $[0,1]\times U_j\subset W(S^3,\bS_1\cup \{K\})$ and let $\Sigma_j$ denote the surface $[0,1]\times U_j\subset W(S^3,\bS_1)$, which we can also view as being a surface in $W(S^3,\bS_1\cup \{K\})$. Let $\hat{\Sigma}'_j$ and $\hat{\Sigma}_j$ denote the integral 2-cycles obtained by capping off $\Sigma'_j$ and $\Sigma_j$.  As in Lemma~\ref{lem:invariantO3'-1framing}, we can write $[\hat{\Sigma}'_j]=[\hat{\Sigma}_j]+ a H(\cP)$. Arguing as before, 
\[
1=(n_{\ve{z}}-n_{\ve{w}})_j(\cP)=\#(\Sigma'_j\cap H(\cP))= a [H(\cP)]\cdot[ H(\cP)]=a,
\] 
implying that $[\Sigma'_j]=[\Sigma_j] +H(\cP)$. Computing using Equations~\eqref{eq:chernclassformulaheegaardtriple} and \eqref{eq:dualspidernumber} we have
\[
\sigma(\psi_{-1}^+,\cP)=-2\qquad \text{and} \qquad \langle c_1(\frs_{\ve{w}}(\psi_{-1}^+)),H(\cP)\rangle=-1.
\]
Furthermore
\[
n_{\ws}(\psi_{-1}^+)_j=n_{\ws}(\psi)_j\qquad \text{and} \qquad n_{\zs}(\psi_{-1}^+)_j=n_{\zs}(\psi)_j-1.
\]

Hence, the difference between $A_{S,\bP'}(\ys\times y_{\a_0\b'_0})_j$ and $A_{S,\bP}(\ys)_j$ is 
\[
\frac{\langle c_1(\frs_{\ve{w}}(\psi_{-1}^+)),H(\cP)\rangle-[H(\cP)]\cdot [H(\cP)]}{2}-n_{\zs}(\psi_{-1}^+)_j+n_{\zs}(\psi)_j=0.
\]
 Furthermore, arguing analogously to Lemma~\ref{lem:invariantO3'-1framing}, the Alexander grading is also unchanged in the components other than $j$.
\end{proof}

We now consider Move~\ref{move:L4}, corresponding to changing the identification $\phi_0$ of $U$ with $L$.

\begin{lem}\label{lem:invariantfromphi0}Suppose that $\bP=(\phi_0,\lambda, \bS_1,f)$ is a parametrized Kirby diagram of $(Y,\bL)$ and that $\psi_0\colon U\to U$ is a diffeomorphism, which is extended by $\psi\colon (S^3,U)\to (S^3,U)$, and $\psi$ is orientation preserving for both $S^3$ and $U$. Then $A_{S,\bP}$ and $A_{S,\bP'}$ agree, where $\bP'=(\phi_0\circ \psi_0^{-1}, \lambda,\psi(\bS_1), f\circ (\psi^{\bS_1})^{-1})$.
\end{lem}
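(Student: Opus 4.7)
The plan is to pick a Heegaard triple $\cT=(\Sigma,\ve{\alpha},\ve{\beta},\ve{\gamma},\ve{w},\ve{z})$ subordinate to a $\beta$--bouquet $\cB^\beta$ for $\bS_1$ in $(S^3,\bU)$, apply $\psi$ to obtain the triple $\psi(\cT)=(\psi(\Sigma),\psi(\ve{\alpha}),\psi(\ve{\beta}),\psi(\ve{\gamma}),\psi(\ve{w}),\psi(\ve{z}))$ subordinate to the $\beta$--bouquet $\psi(\cB^\beta)$ for $\psi(\bS_1)$, and verify that the formula \eqref{eq:defabsgrading} produces the same value on $\ve{y}$ (using $\cT$ and $\bP$) as on $\psi_*\ve{y}:=\psi(\ve{y})$ (using $\psi(\cT)$ and $\bP'$). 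The diagrams $(\Sigma,\ve{\alpha},\ve{\gamma})$ and $(\psi(\Sigma),\psi(\ve{\alpha}),\psi(\ve{\gamma}))$ are both diagrams for $(Y,\bL)$, and the change--of--diagrams map between them is by Lemma \ref{lem:diffeomorphismisalphabetaequivalence} the tautological pullback $\psi_*$. Consequently, once the term--by--term equality $A_{\cT}(\ve{y})_j=A_{\psi(\cT)}(\psi_*\ve{y})_j$ is established, it immediately lifts to the statement that the coherent gradings agree.

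For the two ``boundary'' contributions $A_{S^3,\bU}(\ve{x})_j$ and $A_{(S^1\times S^2)^{\#k},\bU'}(\theta)_j$, I would use the fact that the declared absolute gradings on unlinks in $(S^1\times S^2)^{\#k}$ from Section \ref{sec:absolutegradingsunknots} are defined intrinsically in terms of the top--degree generators $\Theta^{\ve{w}}$, $\Theta^{\ve{z}}$, which are preserved by any orientation--preserving diffeomorphism of the ambient multi--based unlink. Combined with Lemma \ref{lem:changeofdiagramsrelativegradings} (invariance of the relative grading), one obtains the componentwise naturality $A^{\psi(\cH)}(\psi_*\ve{x})_{\psi(K)}=A^\cH(\ve{x})_{K}$ over $C(\bU)$, and the analogous statement on the $(\Sigma,\ve{\beta},\ve{\gamma})$ side. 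Using that the new grading assignment on $\bU$ is $\pi_{\bU}'=\pi\circ\phi_0\circ\psi$, the reindexing of components induced by $\psi$ cancels out between the two sides of $A_\cT$ versus $A_{\psi(\cT)}$, so the summed $\Q^J$--valued boundary contributions match. The middle term $(n_{\ve{w}}-n_{\ve{z}})(\psi_*\psi')_j$ is handled by the same bookkeeping, because $\psi$ carries basepoints on component $K$ to basepoints on $\psi(K)$ without changing disk multiplicities.

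The topological terms $\langle c_1(\frs_{\ve{w}}(\psi')),[\hat\Sigma_j]\rangle$ and $[\hat\Sigma]\cdot[\hat\Sigma_j]$ are handled by extending $\psi$ to a diffeomorphism $\hat\psi\colon W(S^3,\bS_1)\to W(S^3,\psi(\bS_1))$ of the associated 4--manifolds. Under $\hat\psi$ the capped surface $\hat\Sigma_j^{\bP}$ (built from $\bU_j\times [0,1]$, a Seifert cap for $\bU_j$ in $S^3$, and $f^{-1}(S_j)$ in $S^3(\bS_1)$) is carried to the capped surface used to define $\hat\Sigma_j^{\bP'}$; the matching at the $Y$--end is precisely the content of the definition $f'=f\circ (\psi^{\bS_1})^{-1}$, which ensures $(f')^{-1}(S_j)=\psi^{\bS_1}(f^{-1}(S_j))$, and the matching at the $S^3$--end uses that any two Seifert surfaces in $S^3$ for homologous boundaries are homologous. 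Since $\hat\psi^*\frs_{\ve{w}}(\psi_*\psi')=\frs_{\ve{w}}(\psi')$ and Chern classes and intersection numbers are diffeomorphism invariants, these two terms are equal.

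The hard part is purely combinatorial bookkeeping: $\psi$ may permute components of $\bU$, and the definition of $\bP'$ simultaneously twists $\phi_0$ (changing how $J$--labels are assigned to components of $\bU$) and $f$ (changing the surface $\hat\Sigma_j$ inside the 4--manifold) by $\psi$. One has to verify that these two twists are coordinated so that, index by index in $J$, the four terms of \eqref{eq:defabsgrading} each transform compatibly. Once this compatibility is unwound, the equality of all four terms follows from the naturality of the gradings on unlinks and from diffeomorphism invariance of 4--manifold topological quantities, completing the proof.
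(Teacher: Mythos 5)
Your proposal is correct and follows essentially the same route as the paper: push the subordinate triple $\cT$ forward by $\psi$, observe that the absolute gradings on the two unlink ends are diffeomorphism-natural (because they are pinned down by the top $\gr_{\ve{w}}$-graded generator, which $\psi_*$ preserves, together with the relative gradings), and note that the capped-surface intersection/Chern-class term is tautologically a diffeomorphism invariant once $\psi$ is extended to a diffeomorphism of the 2-handle cobordisms. You are more explicit than the paper about the component-reindexing bookkeeping and about extending $\psi$ to $\hat\psi$ on the 4-manifolds, but these are expository refinements of the same argument rather than a different strategy.
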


\begin{proof}The proof is essentially a tautology. We take a triple $\cT=(\Sigma, \as,\bs,\bs',\ws,\zs)$ subordinate to a $\beta$-bouquet of $\bS_1$. The triple $\psi_* \cT=(\psi(\Sigma), \psi(\as),\psi(\bs),\psi(\bs'),\ws,\zs)$ is subordinate to a $\beta$-bouquet for $\psi(\bS_1)$. Note that the resulting Heegaard surface for $(Y,\bL)$ from both the pairs $(\bP,\cT)$ and $(\bP',\psi_*\cT)$ is $(f(\Sigma),f(\as),f(\bs'),\ws,\zs)$.  In particular, the pairs $(\bP,\cT)$ and $(\bP',\psi_*\cT)$ define an absolute grading on the same chain complex.  Furthermore,the expression defining the Alexander grading in Equation~\eqref{eq:defabsgrading} is unchanged, since we can simply push forward a triangle class on $\cT$ under the diffeomorphism $\psi$ to get a triangle class on $\psi(\cT)$. 
\end{proof}

Combining the results of this section, we can prove part~\eqref{thm1.1a} of Theorem~\ref{thm:1}:

\begin{customthm}{\ref{thm:1} Part \eqref{thm1.1a}}\label{thm:Alexandergradingsindependent} Suppose that $\bL$ is a multi-based link in $Y$, and $(\sigma, \bJ)$ is a type-partitioned, indexed coloring of $\bL$,  $L$ is $\bJ$-null-homologous, and that $S$ is a generalized $\bJ$-Seifert surface of $L$. Then the chain complex $\cCFL^\infty(Y,\bL^{\sigma}, \frs)$ admits an absolute Alexander multi-grading $A_{S}$ which takes values in $\Q^{\bJ}$. The multi-grading is additive with respect to collapsing indices.
\end{customthm}
\begin{proof} \emph{A-priori}, the grading $A_S$ depends on the choice of parametrized Kirby decomposition $\bP$ and Heegaard triple $\cT$. By Lemma~\ref{lem:Ainvariantformsurgerytriple} the grading is independent of the choice of Heegaard triple $\cT$, subordinate to a fixed bouquet of the framed link $\bS_1$ of $\bP$. By Lemma~\ref{lem:independentofbouquet}, the grading is independent of the choice of bouquet subordinate to $\bS_1$. Hence the grading $A_S$ depends at most on the $\bP$ (and $Y$, $\bL$ and $S$).

By  Proposition~\ref{prop:connecttwoparamsurgdecomps}, any two choices of $\bP$ can be connected by Moves~\ref{move:L0}, \ref{move:L1}, \ref{move:L2}, \ref{move:L3} and \ref{move:L4}. Invariance from Move~\ref{move:L0} (isotopies of $f$ or $\bS_1$, fixing $\d (S^3_U(\bS_1))$) is automatic, using naturality of Heegaard Floer homology. Invariance under Move~\ref{move:L1} follows from Lemma~\ref{lem:invariantfromhandleslides}. Invariance from Move~\ref{move:L2} follows from Lemma~\ref{lem:blowupawayfromL}. Invariance from Move~\ref{move:L3} follows from Lemmas~\ref{lem:invariantO3'-1framing} and \ref{lem:invariantO3'+1framing}. Finally, invariance from Move~\ref{move:L4} follows from Lemma~\ref{lem:invariantfromphi0}.

The final claim, that the Alexander grading is additive with respect to collapsing components of the index set $\bJ$, is straightforward, since each summand in the expression in Equation~\eqref{eq:defabsgrading} is additive under collapsing gradings.
\end{proof}

\subsection{Dependence on the Seifert surface $S$}

We now prove part \eqref{thm1.1b} of Theorem~\ref{thm:1}:

\begin{customthm}{\ref{thm:1} Part \eqref{thm1.1b}}If $S$ and $S'$ are two generalized $\bJ$-Seifert surfaces for a $\bJ$-null-homologous link $\bL$, and $\frs\in \Spin^c(Y)$, then 
\[
(A_{S'})_j-(A_{S})_j=\frac{\langle c_1(\frs),[S_j'\cup -S_j]\rangle}{2}.
\]
 In particular, if $c_1(\frs)$ is torsion, then the absolute Alexander grading does not depend on the choice of generalized $\bJ$-Seifert surface.
\end{customthm}

\begin{proof} Let $S$ and $S'$ be two choices of generalized $\bJ$-Seifert surfaces. Pick a parametrized Kirby diagram of $(Y,\bL)$ with framed 1-dimensional link $\bS_1$, and pick a Heegaard triple subordinate to a $\beta$-bouquet of $\bS_1$. Let $\Sigma_j\subset W(S^3,\bS_1)$ denote the surface $[0,1]\times U_j$ and let $\hat{\Sigma}_j$ and $\hat{\Sigma}_j'$ denote the closed 2-chain obtained by capping $\Sigma$ with a Seifert surface of $U$ in $\{0\}\times S^3$, and $S_j$ or $S_j'$, respectively, in $Y$. Write $[F_j]=[S_j'\cup -S_j]$. As elements of $H_2(W(S^3,\bS_1);\Z)$, we have $[\hat{\Sigma}'_j]=[\hat{\Sigma}_j]+[F_j]$. Let $[\hat{\Sigma}]$, $[\hat{\Sigma}']$ and $[F]$ denote the sum over $j\in\bJ$ of the classes $[\hat{\Sigma}_j]$, $[\hat{\Sigma}_j']$, and $[F_j]$ respectively. Using the definition of the absolute grading in Equation~\eqref{eq:defabsgrading}, we see
\begin{align*}(A_{S'})_j-(A_{S})_j&=\frac{\langle c_1(\frs), [\hat{\Sigma}'_j]\rangle -[\hat{\Sigma}']\cdot [\hat{\Sigma}'_j]}{2}-\frac{\langle c_1(\frs),[\hat{\Sigma}_j]\rangle- [\hat{\Sigma}]\cdot [\hat{\Sigma}_j]}{2}\\
&=\frac{\langle c_1(\frs),[F_j]\rangle}{2}+\frac{-[F]\cdot [\hat{\Sigma}_j]-[\hat{\Sigma}]\cdot [F_j]-[F]\cdot [F_j]}{2}\\
&=\frac{\langle c_1(\frs),[F_j]\rangle}{2},\end{align*} since $[F]$ and $[F_j]$ are in the image of the inclusion map $H_2(\d W(S^3,\bS_1);\Z)\to H_2(W(S^3,\bS_1);\Z)$, and hence the intersection number of either with anything in $H_2(W(S^3,\bS_1);\Z)$ vanishes.
\end{proof}

We illustrate Part~\eqref{thm1.1b} of Theorem~\ref{thm:1} when $c_1(\frs)$ is torsion with the following example:

\begin{example}Consider an unknot $\bU$ in $S^1\times S^2$ with two basepoints. For convenience, we  view $\bU$ as being embedded in $\{pt\}\times S^2$, so that there are two distinguished Seifert disks, $D_1$ and $D_2$, for $\bU$. Note that $D_2\cup (-D_1)=\{pt\}\times S^2$. In Figure~\ref{fig::18}, two diagrams $\cH_1=(\Sigma,\alpha,\beta,w,z)$ and $\cH_2=(\Sigma,\alpha',\beta,w,z)$ are shown. We can assume that $D_1$ and $D_2$ intersect $\Sigma$ in an embedded arc, connecting $w$ to $z$. Furthermore, we assume that $D_1$ is disjoint from $\alpha\cup \beta$, and $D_2$ is disjoint from $\alpha'\cup \beta$.

\begin{figure}[ht!]
\centering
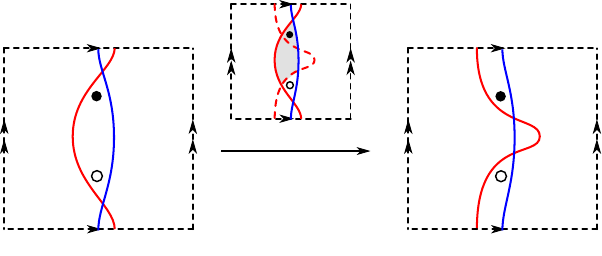
\caption{\textbf{Two diagrams $\cH_1$ and $\cH_2$ for $(S^1\times S^2,\bU)$.} In both diagrams, $\frs_{w}(x)$ is non-torsion for any intersection point $x$. The two Seifert disks, $D_1$ and $D_2$, can be picking a path between the basepoints the two diagrams. Pushing this arc into the handlebodies sweeps out a half disk. The orientation of the Heegaard surface is clockwise with respect to the page.}\label{fig::18}
\end{figure}

It is straightforward to see from the definition of the grading that $A_{D_1}\equiv 0$ on $\Hat{\CFL}(\cH_1)$ and $A_{D_2}\equiv 0$ on $\Hat{\CFL}(\cH_2)$. As transitive gradings, however, we claim that 
\[
A_{D_2}-A_{D_1}=\tfrac{1}{2}\langle c_1(\frs_{w}(x)), [D_2\cup -D_1] \rangle =-1,
\]
where $x\in \alpha\cap \beta$ is either intersection point.

Note that to compare $A_{D_2}$ and $A_{D_1}$, we need to use the transition maps on sets of Alexander gradings. Using the triangle class shown in Figure~\ref{fig::18}, we compute
\[
F_{\beta}^{\alpha\to \alpha'}(A_{D_1})(y)=A_{D_1}(x)+(n_w-n_z)(\psi)=1,
\]
for some (and hence any) choice of $x\in \alpha\cap \beta$ and $y\in \alpha'\cap \beta$. Hence, as transitive gradings, we have
\begin{equation}
A_{D_2}-A_{D_1}\equiv -1.\label{eq:gradingsdifferby1}
\end{equation}

On the other hand, from our orientation conventions (recall that we are using the outward normal first convention for the boundary orientation, Seifert surfaces are oriented so that $\d S=-L$ and that in the handlebody $U_{\b}$ the knot goes from $z$ to $w$), we have that
\[
 H(\cP)=[D_2\cup (-D_1)],
 \] 
  where $\cP$ is the periodic domain on $(\Sigma,\alpha,\beta,w,z)$ which has multiplicity $+1$ in the bigon containing $w$ and $z$, multiplicity $-1$ in the empty bigon, and multiplicity 0 elsewhere. Using Equation~\eqref{eq:chernclassformula}, we see that
\begin{equation}
\langle c_1(\frs_{w}(x)), H(\cP)\rangle =-2,\label{eq:chernclassesformulaworks}
\end{equation}
for any intersection point $x\in \alpha\cap \beta$.

We finally remark that Equations~\eqref{eq:gradingsdifferby1} and \eqref{eq:chernclassesformulaworks} are in accordance with Part~\eqref{thm1.1b} of Theorem~\ref{thm:1}.
\end{example}

\subsection{Invariance of the absolute Maslov gradings}
\label{sec:absolutemaslov}
In this section, we prove Parts~\eqref{thm1.1c} and \eqref{thm1.1d} of Theorem~\ref{thm:1}, and sketch the proof of the well-definedness of the two absolute Maslov gradings, $\gr_{\ve{w}}$ and $\gr_{\ve{z}}$. The proof of well-definedness of $\gr_{\ve{w}}$ and $\gr_{\ve{z}}$ is mostly analogous to the proof of the analogous result for the absolute grading on $\HF^-$ from \cite{OSTriangles}, as well as the proof of Part~\ref{thm1.1a} of Theorem~\ref{thm:1} from Section~\ref{sec:invariancealexandergrading}. Hence we only sketch the details the proof of invariance. A helpful formula is the following:

\begin{lem}\label{lem:sarkarsformula}Suppose that $\cP$ is triply periodic domain and $\psi$ is a homology class of triangles on the Heegaard triple $(\Sigma,\as,\bs,\gs,\ve{w},\ve{z})$. Then 
	\[
	\mu(\psi+\cP)-\mu(\psi)=2n_{\ve{w}}(\cP)+\frac{c_1(\frs_{\ve{w}}(\psi+\cP))^2-c_1(\frs_{\ve{w}}(\psi))^2}{4}.
	\]
	\end{lem}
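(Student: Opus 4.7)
The plan is to reduce this to two well-known combinatorial formulas: Sarkar's formula for the Maslov index of a homology triangle, and the Ozsv\'ath--Szab\'o first Chern class formula for triples, and then compare the contributions of $\cP$ on each side.

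First, I would apply Sarkar's combinatorial formula for the Maslov index of a homology triangle, which expresses $\mu(\psi)$ as a sum of the Euler measure $e(\psi)$ and point measures (average local multiplicities) at the three vertex intersection points, together with a constant depending only on $g(\Sigma)$. Since Euler measures and local multiplicities at intersection points are additive under juxtaposition with a triply periodic domain, the difference satisfies
\[\mu(\psi+\cP)-\mu(\psi)=e(\cP)+\bar{n}_{\ve{x}}(\cP)+\bar{n}_{\ve{\theta}}(\cP)+\bar{n}_{\ve{y}}(\cP),\]
where $\bar n_{\ve{p}}(\cP)$ denotes the average local multiplicity of $\cP$ at the points of $\ve{p}$. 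A standard computation identifies this combination in terms of the dual spider number, namely
\[\bar{n}_{\ve{x}}(\cP)+\bar{n}_{\ve{\theta}}(\cP)+\bar{n}_{\ve{y}}(\cP)=\sigma(\psi,\cP)+\tfrac{1}{2}\#(\partial \cP),\]
by expressing each vertex-point measure using the translated boundary components $\d'_\tau\cP$ appearing in the definition of $\sigma(\psi,\cP)$.

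Second, I would apply the first Chern class formula for Heegaard triples from \cite{OSTriangles}*{Prop.~6.3} (already used in Lemmas \ref{lem:invariantO3'-1framing} and \ref{lem:invariantO3'+1framing}):
\[\langle c_1(\frs_{\ve{w}}(\psi)), H(\cP)\rangle = e(\cP)+\#(\partial \cP) - 2n_{\ve{w}}(\cP)+2\sigma(\psi,\cP).\]
Combining with the previous step yields
\[\mu(\psi+\cP)-\mu(\psi)-2n_{\ve{w}}(\cP)=\langle c_1(\frs_{\ve{w}}(\psi)), H(\cP)\rangle.\]

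Third, I would compute the Chern class difference. By Lemma 3.3 of the present paper (and \cite{OSDisks}*{Prop.~8.5}), one has $\frs_{\ve{w}}(\psi+\cP)=\frs_{\ve{w}}(\psi)+q_*PD[H(\cP)]$, so $c_1(\frs_{\ve{w}}(\psi+\cP))=c_1(\frs_{\ve{w}}(\psi))+2q_*PD[H(\cP)]$. Interpreting $c_1(\frs)^2$ through the pairing with the class $H(\cP)$ (which is the only way $\cP$ enters the self-intersection under this perturbation), we obtain
\[\frac{c_1(\frs_{\ve{w}}(\psi+\cP))^2-c_1(\frs_{\ve{w}}(\psi))^2}{4}=\langle c_1(\frs_{\ve{w}}(\psi)),H(\cP)\rangle+H(\cP)\cdot H(\cP),\]
and a final computation showing that $H(\cP)\cdot H(\cP)=0$ (since $H(\cP)$ lies in the image of $H_2(\partial X_{\alpha\beta\gamma};\Z)$, or equivalently by observing that the relevant contribution of $\cP$ to the Chern square is symmetric under the shift) completes the identification. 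The main obstacle will be Step 1, namely carefully matching Sarkar's point measures with the dual spider number, since both are defined via averaged local multiplicities near intersection points but with different normalizations; this combinatorial identification is the heart of the proof.
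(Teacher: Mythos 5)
The paper itself does not prove this lemma; it simply cites \cite{SMaslov}*{Section 5.1}, so your job is effectively to reconstruct Sarkar's argument. Your overall strategy --- combining the combinatorial Maslov index formula for triangles with the Ozsv\'ath--Szab\'o first Chern class evaluation --- is indeed the right one and is close in spirit to Sarkar's. However, the details contain two serious errors.

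First, the claim in Step 3 that $H(\cP)\cdot H(\cP)=0$ is false, and the justification you offer (that $H(\cP)$ lies in the image of $H_2(\partial X_{\alpha\beta\gamma};\Z)$) applies only to \emph{doubly} periodic domains, whose boundaries involve at most two of the three curve families. A genuine triply periodic domain represents a class that need not come from the boundary, and in fact the present paper relies on exactly this: in Lemmas \ref{lem:invariantO3'-1framing} and \ref{lem:invariantO3'+1framing}, the relevant $\cP$ has $H(\cP)$ equal to a generator of $H_2(\bar{\CP}^2)$ or $H_2(\CP^2)$ with $H(\cP)\cdot H(\cP)=\mp 1$. Since Lemma \ref{lem:sarkarsformula} is invoked precisely to show that the absolute Maslov grading is insensitive to blow-ups, the term $H(\cP)\cdot H(\cP)$ must survive into the final formula rather than vanish; it is exactly what is absorbed by the $c_1^2$ difference.

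Second, your Step 1 identity $\bar n_{\ve{x}}(\cP)+\bar n_{\theta}(\cP)+\bar n_{\ve{y}}(\cP)=\sigma(\psi,\cP)+\tfrac{1}{2}\#(\partial\cP)$ is not consistent with your own Step 2. Substituting Step 1 into the Sarkar index formula gives $\mu(\psi+\cP)-\mu(\psi)=e(\cP)+\sigma(\psi,\cP)+\tfrac{1}{2}\#\partial\cP$; comparing with the Chern class formula $\langle c_1(\frs_{\ve{w}}(\psi)),H(\cP)\rangle = e(\cP)+\#\partial\cP-2n_{\ve{w}}(\cP)+2\sigma(\psi,\cP)$ reduces Step 2's conclusion to the identity $\sigma(\psi,\cP)=-\tfrac{1}{2}\#\partial\cP$, which is false (for instance $\sigma=-2$ with $\#\partial\cP=3$ in Lemma \ref{lem:invariantO3'-1framing}). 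Once the $H(\cP)^2$ term is restored, the identity you actually need in Step 1 is $\bar n_{\ve{x}}(\cP)+\bar n_{\theta}(\cP)+\bar n_{\ve{y}}(\cP)=\#\partial\cP+2\sigma(\psi,\cP)+H(\cP)\cdot H(\cP)$. This is the combinatorial heart of the lemma, and it is not established by your proposal. So while the framework is sound, the two errors do not cancel and the proof as written does not go through.
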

The proof of Lemma~\ref{lem:sarkarsformula} can be found in \cite{SMaslov}*{Section 5.1}.

\begin{comment}
We recall from Section~\ref{sec:defabsolutegradings} that the absolute Maslov gradings are defined by picking a parametrized Kirby diagram $\bP=(\phi_0,\lambda, \bS_1,f)$ of $(Y,L)$, as well as a triple $\cT=(\Sigma,\as,\bs,\bs',\ws,\zs)$ which is subordinate to a $\beta$-bouquet of $\bS_1$. If $\ys\in \bT_{\alpha}\cap \bT_{\beta'}$, then the absolute grading $\gr_{\ws}(\ys)$ is defined by picking a class $\psi\in \pi_2(\xs,\Theta,\ys)$, for some $\Theta\in \bT_{\beta}\cap \bT_{\beta'}$ representing the torsion $\Spin^c$ structure, and $\xs\in \bT_{\alpha}\cap \bT_{\beta}$, and  setting
\begin{equation}
\begin{split}
\gr_{\ws}(\ys):=&\tilde{\gr}_{\ws}(\xs)+\tilde{\gr}_{\ws}(\Theta)-\frac{1}{2}(k+|\ws|-1)-\mu(\psi)+2n_{\ws}(\psi)\\
&+\frac{c_1(\frs_{\ws}(\psi))^2-2\chi(W(S^3,\bS_1))-3\sigma(W(S^3,\bS_1))}{4}.\label{eq:grwdef2}
\end{split}
\end{equation}
\end{comment}

We now prove part \eqref{thm1.1c} of Theorem~\ref{thm:1}, which we rephrase as follows:

\begin{customthm}{\ref{thm:1} Part~\eqref{thm1.1c}} If $c_1(\frs)$ is torsion, the absolute grading $\gr_{\ve{w}}$ defined in Equation~\eqref{def:absolutegrw} is a well defined transitive grading, and  is independent of the intersection points $\xs$ and $\Theta$, the homology class $\psi$, the Heegaard triple $\cT$, and the parametrized Kirby diagram $\bP$. Similarly, when $c_1(\frs-\PD[L])$ is torsion, the grading $\gr_{\zs}$ is well defined.
\end{customthm}
\begin{proof}Most of the proof proceeds as in the proof of Part~\eqref{thm1.1a} of Theorem~\ref{thm:1}. The only major difference is in the proof that the formula from Equation~\eqref{def:absolutegrw} is independent from the homology class $\psi$ and intersection points $\xs$ and $\Theta$. Independence of the absolute grading from $\ve{x}$ and $\Theta$ can be proven by splicing in disks on those ends and seeing that the formula does not change. Next, to see that the formula is invariant under the choice of $\psi$, we note that any two homology classes of triangles with the same endpoints differ by a triply periodic domain. Suppose $\psi$ is a homology class of triangles and $\cP$ is a triple periodic domain. Letting $\gr_{\ve{w}}^{\psi}(\ve{y})$ and $\gr_{\ve{w}}^{\psi+\cP}(\ve{y})$ denote the gradings, computed with the classes $\psi$ or $\psi+\cP$, respectively, we observe that the difference is
\[
\gr_{\ve{w}}^{\psi+\cP}(\ve{y})-\gr_{\ve{w}}^{\psi}(\ve{y})=-\mu(\psi+\cP)+\mu(\psi)+2n_{\ve{w}}(\cP)+\frac{c_1(\frs_{\ve{w}}(\psi+\cP))^2-c_1(\frs_{\ve{w}}(\psi))^2}{4},
\]
 which is zero by Sarkar's formula from Lemma~\ref{lem:sarkarsformula}.
 
 Independence from the choice of Heegaard triple $\cT$ and bouquet $\cB^\beta$ can be proven by adapting Lemmas~\ref{lem:Ainvariantformsurgerytriple} and \ref{lem:independentofbouquet}. Independence  from the parametrized Kirby diagram follows by adapting Lemmas~\ref{lem:invariantfromhandleslides}, \ref{lem:blowupawayfromL}, \ref{lem:invariantO3'-1framing}, \ref{lem:invariantO3'+1framing} and \ref{lem:invariantfromphi0}. 
\end{proof}

The following is part \eqref{thm1.1d} of Theorem~\ref{thm:1}:

\begin{customthm}{\ref{thm:1} Part~\eqref{thm1.1d}} The absolute Maslov and collapsed Alexander gradings satisfy
\[
A=\frac{1}{2}(\gr_{\ve{w}}-\gr_{\ve{z}}).
\]
\end{customthm}
\begin{proof} We pick a parametrized Kirby diagram and associated Heegaard triple $\cT=(\Sigma,\as,\bs,\bs',\ws,\zs)$ for $(Y,\bL)$. We compute $\tfrac{1}{2}$ times the difference between the expressions defining $\gr_{\ve{w}}(\ve{x})$ and $\gr_{\ve{z}}(\ve{x})$ in Equation~\eqref{def:absolutegrw}. By Equation~\eqref{eq:collapsedAlexanderMaslov}, the formula $\tilde{A}=\tfrac{1}{2}(\tilde{\gr}_{\ve{w}}-\tilde{\gr}_{\ve{z}})$ holds for the gradings associated to unlinks in $(S^1\times S^2)^{\# k}$. By Lemma~\ref{lem:poincaredualofsurface},  
\[
\frs_{\ve{w}}(\psi)-\frs_{\ve{z}}(\psi)=\PD[\Sigma_{\alpha\beta\beta'}].
\]
 Under the inclusion $\iota_*\colon X_{\alpha\beta\beta'}\hookrightarrow W(Y,\bS_1)$, we have that $\PD[\Sigma_{\alpha\beta\beta'}]=\iota^*\PD[\Sigma]$. Noting that
\[
\frac{c_1(\frs_{\ve{w}}(\psi))^2-c_1(\frs_{\ve{z}}(\psi))^2}{4}=c_1(\frs_{\ve{w}}(\psi))\cup \PD[\Sigma]-\PD[\Sigma]\cup \PD [\Sigma]=\langle c_1(\frs_{\ve{w}}(\psi)),[\hat{\Sigma}] \rangle-[\hat{\Sigma}]\cdot [\hat{\Sigma}],
\]
 we see that the expression defining $\tfrac{1}{2}(\gr_{\ve{w}}(\ve{x})-\gr_{\ve{z}}(\ve{x}))$ becomes exactly the expression defining $A(\ve{x})$. 
\end{proof}

\section{Link cobordisms and absolute gradings}
\label{sec:gradingchange}

 In this section we prove the grading formulas for link cobordisms stated in Theorems~\ref{thm:maingradingchangeformula} and \ref{thm:generalAlexandergradingformula}. In Section~\ref{sec:overviewofmaps} we  outline the construction of cobordism maps from \cite{ZemCFLTQFT}. In Section~\ref{subsec:gradingchangeelementary} we compute the grading change associated to each elementary link cobordism, and in Section~\ref{subsection:proofofgradingformula} we prove the general grading formula.
 
 \subsection{Overview of the link cobordism maps}
 \label{sec:overviewofmaps}
 Before we prove Theorem~\ref{thm:generalAlexandergradingformula}, we need to give a brief summary of the construction of the link cobordism maps from \cite{ZemCFLTQFT}. The maps are defined as a composition of maps for elementary link cobordisms of the following form:
 \begin{enumerate}
 \item $0$- and $4$-handles, which contain a standardly embedded disk with a dividing set consisting of a single arc.
\item Cobordisms obtained by attach a 1-handle or 3-handle to $Y\setminus L$.
\item Cobordisms obtained by attaching a collection of 2-handles along a framed 1-dimensional link in $Y\setminus L$.
\item Elementary saddle link cobordisms in $[0,1]\times Y$ (see Figure~\ref{fig::51}).
\item Cylindrical link cobordisms that add a pair of adjacent basepoints to a link (see Figure~\ref{fig::6}).
 \end{enumerate}

We now briefly describe the maps associated to each of the five elementary link cobordisms. We refer the reader to \cite{ZemCFLTQFT} for further details.

We begin with the 0-handle and 4-handle maps. A 0-handle cobordism is a decorated link cobordism $(W,\cF)$ from $(Y,\bL)$ to $(Y\sqcup S^3,\bL\sqcup \bU)$, where $\bU$ is a doubly based unknot in $S^3$,  $W=([0,1]\times Y)\sqcup B^4$ and $\cF$ consists of the surface $([0,1]\times L)\sqcup D^2$, where $D^2\subset B^4$ is a standardly embedded slice disk of an unknot. A 4-handle cobordism is defined analogously. We let $(S^2,w,z)$ be the Heegaard diagram for $(S^3,\bU)$ with no $\as$ or $\bs$ curves.  On the level of diagrams, if $(\Sigma,\as,\bs,\ws,\zs)$ is a diagram for $(Y,\bL)$, then $(\Sigma\sqcup S^2,\as,\bs,\ws\cup \{w\}, \zs\cup \{z\})$ is a diagram for $(Y\sqcup S^3,\bL\sqcup \bU)$. Noting that the tori $\bT_{\alpha}$ and $\bT_{\beta}$ coincide between the two diagrams, we define the 0-handle and 4-handle maps as the identity on the level of intersection points, with respect to these diagrams.

The 1-handle and 3-handle maps are defined similarly to the constructions in \cite{OSTriangles} and \cite{JCob}. Given a pair of points $\bS_0=\{p_1,p_2\}$ in $Y\setminus L$ (thought of as a 0-sphere), we pick a Heegaard diagram $(\Sigma,\as,\bs,\ws,\zs)$ such that $p_1,p_2\in \Sigma\setminus (\as\cup \bs).$ A Heegaard diagram for the surgered manifold $Y(\bS_0)$ can be obtained by removing two small disks centered at $p_1$ and $p_2$, and connecting the resulting boundary components with an annulus $A$. Inside of $A$, we add two new curves, $\alpha_0$ and $\beta_0$, which are both homologically essential in $A$, and intersect in a pair of points $\{\theta^+,\theta^-\}$. The points $\theta^+$ and $\theta^-$ are distinguished by the relative Maslov grading. The 1-handle map is then defined by the formula
\[
F_{Y,\bL,\bS_0,\frs}(\xs) := \xs\otimes \theta^+,
\]
extended equivariantly over the ring $\cR_{\bmP}^\infty$. The 3-handle map is defined similarly, and is the dual of the 1-handle map.

The 2-handle maps are defined similarly to \cite{OSTriangles}. If $\bS_1$ is a framed 1-dimensional link in $Y\setminus L$, and $\frs\in \Spin^c(W(Y,\bS_1))$, then the 2-handle map $F_{Y,\bL,\bS_1,\frs}$ is defined by picking a $\beta$-bouquet in $Y$ for $\bS_1$, as well as a Heegaard triple $\cT=(\Sigma,\as,\bs,\bs',\ws,\zs)$ which is subordinate to $\bS_1$. In this case, the Floer homology $\cHFL^-(\Sigma,\bs,\bs',\ws,\zs,\sigma,\frs_0)$ has a distinguished element $\Theta^+_{\beta\beta'}$, for any coloring $\sigma$ of $\ws\cup \zs$. The 2-handle map is then defined by counting holomorphic triangles via the formula
\[
F_{Y,\bL,\bS_1,\frs}(\xs):=\sum_{\ys\in \bT_{\alpha}\cap \bT_{\beta'}}\sum_{\substack{
\psi\in \pi_2(\xs,\Theta_{\beta\beta'}^+,\ys)\\
\mu(\psi)=0\\ \frs_{\ws}(\psi)=\frs}} \# \Hat{\cM}(\psi)\cdot U_{\ws}^{n_{\ws}(\psi)} V_{\zs}^{n_{\zs}(\psi)}\cdot \ys.
\]

Next we discuss the band maps. The band maps are described in detail in \cite{ZemCFLTQFT}*{Section~6}. We refer the reader back to Definitions~\ref{def:alphaband} and \ref{def:subordinatetoalphaband} for the definition of an oriented $\alpha$-band, and the definition of a Heegaard triple subordinate to an $\alpha$-band. If $(\Sigma,\as',\as,\bs,\ws,\zs)$ is subordinate to an $\alpha$-band, then $(\Sigma,\as',\as,\ws,\zs)$ represents an unlink $\bU_{\a'\a}$ in $(S^1\times S^2)^{\# k}$. Furthermore, all components of $\bU_{\a'\a}$ have exactly two basepoints, except for one component, which has four.

According to \cite{ZemCFLTQFT}*{Lemma~3.7}, there are two distinguished elements 
\[
\Theta^{\ws}_{\a'\a}, \Theta^{\zs}_{\a'\a}\in \cHFL^-(\Sigma,\as',\as,\ws,\zs,\sigma,\frs_0)
,
\]
 for an appropriate coloring $\sigma$.  The element $\Theta^{\ws}_{\a'\a}$ is a generator of the top $\gr_{\ws}$ graded subset of $\cHFL^-(\Sigma,\as',\as,\ws,\zs,\sigma,\frs_0)$, while $\Theta^{\zs}_{\a'\a}$ is a generator of the top $\gr_{\zs}$ graded subset.
 
 There are two band maps
 \begin{equation}
F_{B}^{\ws}, F_{B}^{\zs}\colon \cCFL^\infty(Y,\bL^\sigma, \frs)\to \cCFL^\infty (Y,\bL(B)^\sigma,\frs), \label{eq:bandmapsdef}
 \end{equation}
 defined by counting holomorphic triangles via the formulas
 \[
F_B^{\ws}(\xs):=F_{\a'\a\b,\frs}( \Theta^{\zs}_{\a'\a}\otimes \xs) \qquad \text{and} \qquad F_B^{\zs}(\xs):=F_{\a'\a\b,\frs}(\Theta^{\ws}_{\a'\a}\otimes\xs).
 \]
The map $F_B^{\ws}$ is the map induced by a  decorated saddle cobordism inside of $[0,1]\times Y$, where all divides go from $\{0\}\times L$ to $\{1\}\times L(B)$. Furthermore, if $f$ denotes the Morse function $(t,y)\mapsto t$, then $f$ restricts to the link cobordism surface to be Morse and have a single critical point, which is of index 1, and occurs inside of $\Sigma_{\ws}$. Furthermore, $f$ restricts to a Morse function on the dividing set with no critical points. The map $F_{B}^{\zs}$ corresponds to a similar decorated saddle cobordism,  where the index 1 critical point occurs inside of $\Sigma_{\zs}$. These are illustrated in Figure~\ref{fig::51}.  

The requirement on the coloring $\sigma$ for one of the band maps in Equation~\eqref{eq:bandmapsdef} to be defined corresponds exactly to the requirement that $\sigma$ is induced by the appropriate decorated link cobordism from Figure~\ref{fig::51}.

\begin{figure}[ht!]
\centering
%% Creator: Inkscape inkscape 0.92.3, www.inkscape.org
%% PDF/EPS/PS + LaTeX output extension by Johan Engelen, 2010
%% Accompanies image file '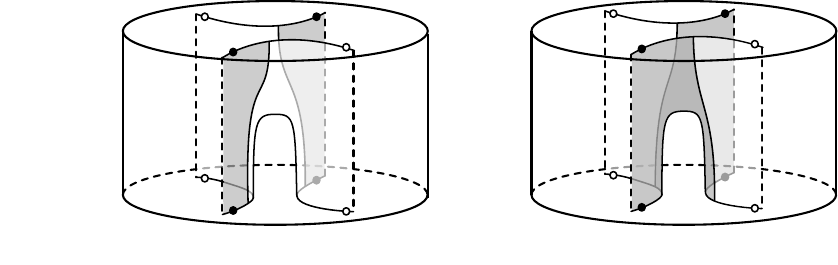' (pdf, eps, ps)
%%
%% To include the image in your LaTeX document, write
%%   \input{<filename>.pdf_tex}
%%  instead of
%%   \includegraphics{<filename>.pdf}
%% To scale the image, write
%%   \def\svgwidth{<desired width>}
%%   \input{<filename>.pdf_tex}
%%  instead of
%%   \includegraphics[width=<desired width>]{<filename>.pdf}
%%
%% Images with a different path to the parent latex file can
%% be accessed with the `import' package (which may need to be
%% installed) using
%%   \usepackage{import}
%% in the preamble, and then including the image with
%%   \import{<path to file>}{<filename>.pdf_tex}
%% Alternatively, one can specify
%%   \graphicspath{{<path to file>/}}
%% 
%% For more information, please see info/svg-inkscape on CTAN:
%%   http://tug.ctan.org/tex-archive/info/svg-inkscape
%%
\begingroup%
  \makeatletter%
  \providecommand\color[2][]{%
    \errmessage{(Inkscape) Color is used for the text in Inkscape, but the package 'color.sty' is not loaded}%
    \renewcommand\color[2][]{}%
  }%
  \providecommand\transparent[1]{%
    \errmessage{(Inkscape) Transparency is used (non-zero) for the text in Inkscape, but the package 'transparent.sty' is not loaded}%
    \renewcommand\transparent[1]{}%
  }%
  \providecommand\rotatebox[2]{#2}%
  \newcommand*\fsize{\dimexpr\f@size pt\relax}%
  \newcommand*\lineheight[1]{\fontsize{\fsize}{#1\fsize}\selectfont}%
  \ifx\svgwidth\undefined%
    \setlength{\unitlength}{401.96166464bp}%
    \ifx\svgscale\undefined%
      \relax%
    \else%
      \setlength{\unitlength}{\unitlength * \real{\svgscale}}%
    \fi%
  \else%
    \setlength{\unitlength}{\svgwidth}%
  \fi%
  \global\let\svgwidth\undefined%
  \global\let\svgscale\undefined%
  \makeatother%
  \begin{picture}(1,0.31589271)%
    \lineheight{1}%
    \setlength\tabcolsep{0pt}%
    \put(0,0){\includegraphics[width=\unitlength,page=1]{fig51.pdf}}%
    \put(0.07232596,0.07676952){\color[rgb]{0,0,0}\makebox(0,0)[t]{\lineheight{1.25}\smash{\begin{tabular}[t]{c}$(Y,\bL)$\end{tabular}}}}%
    \put(0.07232596,0.26522032){\color[rgb]{0,0,0}\makebox(0,0)[t]{\lineheight{1.25}\smash{\begin{tabular}[t]{c}$(Y,\bL(B))$\end{tabular}}}}%
    \put(0,0){\includegraphics[width=\unitlength,page=2]{fig51.pdf}}%
    \put(0.32857018,0.00586722){\color[rgb]{0,0,0}\makebox(0,0)[t]{\lineheight{1.25}\smash{\begin{tabular}[t]{c}$F_{B}^{\zs}$\end{tabular}}}}%
    \put(0.82456458,0.00940464){\color[rgb]{0,0,0}\makebox(0,0)[t]{\lineheight{1.25}\smash{\begin{tabular}[t]{c}$F_{B}^{\ws}$\end{tabular}}}}%
  \end{picture}%
\endgroup%

\caption{\textbf{Decorated link cobordisms corresponding to $F_{B}^{\zs}$ and $F_{B}^{\ws}$.} The underlying 4-manifold is $[0,1]\times Y$. Outside of the region shown, all of the dividing arcs are of the form $[0,1]\times \{p\}$, for  a point $p\in L\setminus (\ws\cup \zs)$.\label{fig::51}}
\end{figure}

Finally, we need to describe the cobordism maps which add or remove an adjacent pair of basepoints on the link. These are the \emph{quasi-stabilization} maps, and are described in the context of the link Floer TQFT in \cite{ZemCFLTQFT}*{Section~4}. Suppose that $\cH=(\Sigma,\as,\bs,\ws,\zs)$ is a diagram for $\bL$ and that $C$ is a component of $L\setminus (\ws\cup \zs)$ which is contained in the $\as$ handlebody $U_{\a}$. Suppose that $\d C=\{w',z'\}$. Let $w$ and $z$ be two new basepoints, contained in $C$. Let $\bL^+_{w,z}$ denote the link $(L,\ws\cup \{w\}, \zs\cup \{z\})$. We can form a diagram $\cH^+$ for $(Y,\bL_{w,z}^+)$, as follows.  Let $A\subset \Sigma\setminus \as$ be the connected component containing $w'$ and $z'$. Let $\alpha_s$ be a simple closed curve in $A$ which divides $A$ into two components, one of which contains $w'$, and the other contains $z'$. We then pick an arbitrary point on $\alpha_s$ (away from the $\bs$ curves), and add a very small $\beta_0$ curve, which intersects $\alpha_s$ in two points, and bounds a small disk. The disk bounded by $\beta_0$ is split into two bigons by $\alpha_s$. In one bigon, we put $w$, and in the other, we put $z$. The curves $\alpha_s$ and $\beta_0$ intersect in two points, which are distinguished by their $\gr_{\ws}$ and $\gr_{\zs}$ gradings. Let us write $\theta^{\ws}$ for the top $\gr_{\ws}$ graded intersection point, and $\theta^{\zs}$ for the top $\gr_{\zs}$ graded intersection point. The curves $\alpha_s$, $\beta_0$, and the intersection points $\theta^{\ws}$ and $\theta^{\zs}$ are shown in Figure~\ref{fig::52}.

\begin{figure}[ht!]
\centering
%% Creator: Inkscape inkscape 0.92.3, www.inkscape.org
%% PDF/EPS/PS + LaTeX output extension by Johan Engelen, 2010
%% Accompanies image file '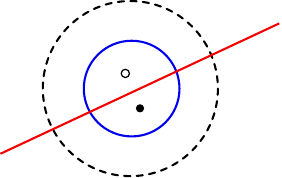' (pdf, eps, ps)
%%
%% To include the image in your LaTeX document, write
%%   \input{<filename>.pdf_tex}
%%  instead of
%%   \includegraphics{<filename>.pdf}
%% To scale the image, write
%%   \def\svgwidth{<desired width>}
%%   \input{<filename>.pdf_tex}
%%  instead of
%%   \includegraphics[width=<desired width>]{<filename>.pdf}
%%
%% Images with a different path to the parent latex file can
%% be accessed with the `import' package (which may need to be
%% installed) using
%%   \usepackage{import}
%% in the preamble, and then including the image with
%%   \import{<path to file>}{<filename>.pdf_tex}
%% Alternatively, one can specify
%%   \graphicspath{{<path to file>/}}
%% 
%% For more information, please see info/svg-inkscape on CTAN:
%%   http://tug.ctan.org/tex-archive/info/svg-inkscape
%%
\begingroup%
  \makeatletter%
  \providecommand\color[2][]{%
    \errmessage{(Inkscape) Color is used for the text in Inkscape, but the package 'color.sty' is not loaded}%
    \renewcommand\color[2][]{}%
  }%
  \providecommand\transparent[1]{%
    \errmessage{(Inkscape) Transparency is used (non-zero) for the text in Inkscape, but the package 'transparent.sty' is not loaded}%
    \renewcommand\transparent[1]{}%
  }%
  \providecommand\rotatebox[2]{#2}%
  \newcommand*\fsize{\dimexpr\f@size pt\relax}%
  \newcommand*\lineheight[1]{\fontsize{\fsize}{#1\fsize}\selectfont}%
  \ifx\svgwidth\undefined%
    \setlength{\unitlength}{135.02655489bp}%
    \ifx\svgscale\undefined%
      \relax%
    \else%
      \setlength{\unitlength}{\unitlength * \real{\svgscale}}%
    \fi%
  \else%
    \setlength{\unitlength}{\svgwidth}%
  \fi%
  \global\let\svgwidth\undefined%
  \global\let\svgscale\undefined%
  \makeatother%
  \begin{picture}(1,0.62979738)%
    \lineheight{1}%
    \setlength\tabcolsep{0pt}%
    \put(0,0){\includegraphics[width=\unitlength,page=1]{fig52.pdf}}%
    \put(0.46259129,0.38703895){\color[rgb]{0,0,0}\makebox(0,0)[lt]{\lineheight{0}\smash{\begin{tabular}[t]{l}$z$\end{tabular}}}}%
    \put(0.51537227,0.25458235){\color[rgb]{0,0,0}\makebox(0,0)[lt]{\lineheight{0}\smash{\begin{tabular}[t]{l}$w$\end{tabular}}}}%
    \put(0.29992594,0.22437993){\color[rgb]{0,0,0}\makebox(0,0)[rt]{\lineheight{1.25}\smash{\begin{tabular}[t]{r}$\theta^{\ws}$\end{tabular}}}}%
    \put(0.6292629,0.40808606){\color[rgb]{0,0,0}\makebox(0,0)[lt]{\lineheight{1.25}\smash{\begin{tabular}[t]{l}$\theta^{\zs}$\end{tabular}}}}%
    \put(0.84231979,0.42843075){\color[rgb]{1,0,0}\makebox(0,0)[lt]{\lineheight{1.25}\smash{\begin{tabular}[t]{l}$\alpha_s$\end{tabular}}}}%
    \put(0.54621567,0.4913336){\color[rgb]{0,0,1}\makebox(0,0)[lt]{\lineheight{1.25}\smash{\begin{tabular}[t]{l}$\beta_0$\end{tabular}}}}%
  \end{picture}%
\endgroup%

\caption{\textbf{A local picture of a quasi-stabilized Heegaard diagram.} }\label{fig::52}
\end{figure}

For appropriately chosen colorings $\sigma$ and $\sigma'$, there are two positive quasi-stabilization maps
\begin{equation}
S_{w,z}^+, T_{w,z}^+\colon \cCFL^\infty(Y,\bL^{\sigma},\frs)\to \cCFL^\infty(Y,(\bL_{w,z}^+)^{\sigma'}, \frs),\label{eq:positivequasistabilization}
\end{equation}
defined by the formulas
\begin{equation}
S_{w,z}^+(\xs):=\xs\otimes \theta^{\ws}\qquad \text{and} \qquad T_{w,z}^+(\xs)=\xs\otimes \theta^{\zs},\label{eq:quasi-stabilizationformula1}
\end{equation}
extended linearly over the ring $\cR_{\bmP}^\infty$.

There are also two negative quasi-stabilization maps, with the opposite domain and codomain as $S_{w,z}^+$ and $T_{w,z}^+$, defined via the formulas
\begin{equation}
S_{w,z}^-(\xs\otimes \theta^{\ws})=0,\quad S_{w,z}^-(\xs\otimes \theta^{\zs})=\xs,\quad T_{w,z}^-(\xs\otimes \theta^{\ws})=\xs \quad \text{and} \quad T_{w,z}^-(\xs\otimes \theta^{\zs})=0. \label{eq:quasi-stabilizationformula2}
\end{equation}
The quasi-stabilization maps correspond to the decorated link cobordisms shown in Figure~\ref{fig::6}. The requirement on the colorings $\sigma$ and $\sigma'$ in Equation~\eqref{eq:positivequasistabilization} corresponds exactly to $\sigma$ and $\sigma'$ being induced by a coloring of the associated decorated link cobordism in Figure~\ref{fig::6}. See \cite{ZemCFLTQFT}*{Corollary~4.4} for more on the coloring requirements and the quasi-stabilization maps. 

\begin{figure}[ht!]
\centering
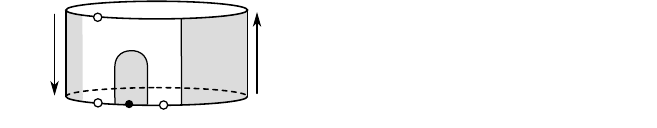
\caption{\textbf{Decorated link cobordisms for the quasi-stabilization maps $S_{w,z}^{+}$, $S_{w,z}^-$,  $T_{w,z}^{+}$ and $T_{w,z}^-$.} The underlying 4-manifolds are $[0,1]\times Y$.}\label{fig::6}
\end{figure}

\subsection{Grading changes of elementary link cobordism maps}

\label{subsec:gradingchangeelementary}

In this section, we compute the grading changes induced by the link cobordism maps for elementary link cobordisms.

We begin with the quasi-stabilization maps:

\begin{lem}\label{lem:quasi-stabgradingchange}The quasi-stabilization maps are graded and satisfy
\[
A_S(S_{w,z}^\circ(\xs))_j-A_S(\xs)_j=\tfrac{1}{2}\delta(J(K),j)\qquad \text{and}\qquad  A_S(T_{w,z}^\circ(\xs))_j-A_S(\xs)_j=-\tfrac{1}{2}\delta(J(K),j),
\]
for $\circ\in \{+,-\}$, and where $K$ denotes the component of $L$ which contains $w$ and $z$. Furthermore, 
\[
\gr_{\ws}(S_{w,z}^\circ(\xs))-\gr_{\ws}(\xs)=\gr_{\zs}(T_{w,z}^\circ(\xs))-\gr_{\zs}(\xs)=+\tfrac{1}{2}
\]
and
\[
\gr_{\zs}(S_{w,z}^\circ(\xs))-\gr_{\zs}(\xs)=\gr_{\ws}(T_{w,z}^\circ(\xs))-\gr_{\ws}(\xs)=-\tfrac{1}{2}
\]

\end{lem}
\begin{proof}We will focus on the Alexander grading formula, since the proof of the Maslov grading formula is similar.

We start with a parametrized Kirby diagram $\bP=(\phi_0,\lambda,\bS_1,f)$ for $(Y,\bL)$ and a Heegaard triple $\cT=(\Sigma,\as,\bs,\bs',\ws,\zs)$ subordinate to a $\beta$-bouquet of $\bS_1$. We can  quasi-stabilize $\cT$ to get a triple $\cT^+=(\Sigma,\as\cup \{\alpha_s\}, \bs\cup \{\beta_0\}, \bs'\cup \{\beta_0'\}, \ws\cup \{w\}, \zs\cup \{z\})$ which is subordinate to the same $\beta$-bouquet for $\bS_1$. In $\cT^+$, the curves $\beta_0$ and $\beta_0'$ both bound small disks, and intersect each other in two points, $\theta^+_{\beta_0\beta_0'}$ and $\theta^-_{\beta_0\beta_0'}$. The configuration is shown in Figure~\ref{fig::6}.

Suppose $\ys_{\a\b'}\in \bT_{\alpha}\cap \bT_{\beta'}$. Pick $\xs_{\alpha\beta}\in \bT_{\alpha}\cap \bT_{\beta}$, $\Theta_{\beta\beta'}\in \bT_{\beta}\cap \bT_{\beta'}$, and a homology class $\psi\in \pi_2(\xs_{\alpha\beta}, \Theta_{\beta\beta'}, \ys_{\a\b'})$. 

To compute the Alexander grading of $S_{w,z}^+(\ys_{\a\b'})=\ys_{\a\b'}\times \theta^{\ws}_{\alpha_s\beta_0'}$, we will use the homology class $\psi^+\in \pi_2(\xs_{\alpha\beta}\times \theta^{\ws}_{\alpha_s\beta_0}, \Theta_{\beta\beta'}\times \theta^+_{\beta_0\beta_0'}, \ys_{\a\b'}\times \theta^{\ws}_{\alpha_s\beta_0'})$ shown in Figure~\ref{fig::53}. The homology class $\psi^+$ is formed by taking a class $\psi\in \pi_2(\xs_{\a\b}, \Theta_{\b\b'}, \ys_{\a\b'})$, and adjoining a small triangle in the quasi-stabilization region.

\begin{figure}[ht!]
\centering
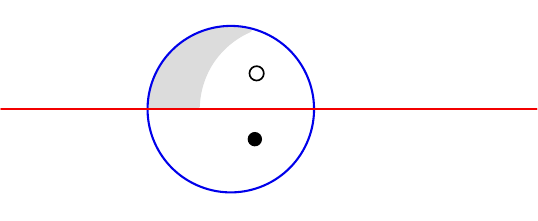
\caption{\textbf{The quasi-stabilization region of the triple $\cT^+$, and the triangle class $\psi^+$.} The subscripts on the intersection points labeled $\theta^{\ws}$ and $\theta^{\zs}$ have been suppressed.}\label{fig::53}
\end{figure}

 By explicit examination of the formula defining the Alexander grading in Equation~\eqref{eq:defabsgrading}, we  compute
 \begin{equation}
\begin{split}
&A(S_{w,z}^+(\ys_{\a\b'}))_j-A(\ys_{\a\b'})_j\\
=&A(\ys_{\a\b'}\times \theta^{\ws}_{\alpha_s\beta_0'})_j-A(\ys_{\a\b'})_j
\\=&\tilde{A}(\xs_{\a\b}\times \theta^{\ws}_{\a_s\b_0})_j-\tilde{A}(\xs_{\a\b})_j+\tilde{A}(\Theta_{\b\b'}\times \theta^{+}_{\b_0\b'_0})_j-\tilde{A}(\Theta_{\b\b'})_j\\
 &+(n_{\ws\cup \{w\}}-n_{\zs\cup \{z\}})_j(\psi^+)
-(n_{\ws}-n_{\zs})_j(\psi)\\
=&\tilde{A}(\xs_{\a\b}\times \theta^{\ws}_{\a_s\b_0})_j-\tilde{A}(\xs_{\a\b})_j+\tilde{A}(\Theta_{\b\b'}\times \theta^{+}_{\b_0\b'_0})_j-\tilde{A}(\Theta_{\b\b'})_j.\label{eq:gradingchangeafterquasistabilization}
\end{split}
\end{equation}
However by the definition of $\tilde{A}$ from Section~\ref{sec:absolutegradingsunknots}, the last line of Equation~\eqref{eq:gradingchangeafterquasistabilization} is $+\tfrac{1}{2}$ if $J$ maps $w$ and $z$ to $j$, and 0 otherwise.

The above argument can be modified to compute the grading changes of the other quasi-stabilization maps $S_{w,z}^-$, $T_{w,z}^+$ and $T_{w,z}^-$. The Maslov grading formulas are proven using a similar argument.
\end{proof}

We now consider the grading changes associated to the band maps. Suppose that $L$ is a link, $J\colon L\to \bJ$ is an indexing and $S$ is a generalized $\bJ$-Seifert surface. If $B$ is an oriented band, whose ends are on components of $L$ which are given the same index by $J$, then $S\cup B$ is a generalized $\bJ$-Seifert surface for $L(B)$. Note that $S\cup B$ may not be an embedded Seifert surface, though that's not a requirement for a generalized $\bJ$-Seifert surface (see Definition~\ref{def:generalizedJSeifertsruface}).

\begin{lem}\label{lem:bandmapsinducedcorrectgrading}Suppose that $(Y,\bL)$ is a multi-based link with an indexed, type-partitioned coloring $(\sigma,J)$, with index set $\bJ$, and $B$ is an $\alpha$-band for $\bL$ in $Y$. If $(\sigma,J)$ is compatible with one of the decorated link cobordisms shown in Figure~\ref{fig::51}, then the maps $F_{B}^{\ws}$ and $F_{B}^{\zs}$ are graded with respect to the Alexander multi-grading over $\bJ$, and satisfy 
\[
A_{S\cup B}(F_{B}^{\zs}(\xs))_j-A_S(\xs)_j=+\frac{1}{2} \delta(j,j_0) \qquad \text{and} \qquad A_{S\cup B}(F_{B}^{\ws}(\xs))_j-A_S(\xs)_j=-\frac{1}{2} \delta(j,j_0),
\]
where $j_0$ denotes the index assigned to the link components that $B$ is attached to. If $\sigma$ is a type-partitioned coloring of $\bL$ which is compatible with the appropriate decorated link cobordism in Figure~\ref{fig::51}, then the band maps are graded with respect to the Maslov gradings, and satisfy
\[
\gr_{\ws}(F_{B}^{\zs}(\xs))-\gr_{\ws}(\xs)=\gr_{\zs}(F_B^{\ws}(\xs))-\gr_{\zs}(\xs)=0,
\]
and
\[
\gr_{\ws}(F_B^{\ws}(\xs))-\gr_{\ws}(\xs)=\gr_{\zs}(F_B^{\zs}(\xs))-\gr_{\zs}(\xs)=-1.
\]
\end{lem}

\begin{proof} We will focus on the Alexander grading change of $F_B^{\zs}$.

First,  in general, if $(\Sigma,\as',\as,\bs,\ws,\zs)$ is any triple subordinate to a band $B$ attached to $\bL$ in $Y$, and $\ys_{\a\b}\in \bT_{\a}\cap \bT_{\b}$ and $\ys_{\a'\b}\in \bT_{\a'}\cap \bT_{\b}$ are intersection points representing the same $\Spin^c$ structure in $Y$, then there is a homology class $\psi_{\a'\a\b}\in \pi_2( \Theta_{\a'\a}^{\ws}, \ys_{\a\b}, \ys_{\a'\b})$ by \cite{OSDisks}*{Proposition~8.5}. Recalling from Lemma~\ref{lem:uniqueembeddingintoW} that $X_{\a'\a\b}$ becomes $[0,1]\times Y$ after filling $Y_{\a'\a}$ with 3- and 4-handles, any other triangle in $\pi_2( \Theta_{\a'\a}^{\ws}, \ys_{\a\b}, \ys_{\a'\b})$ can be obtained by splicing disks into the ends of $\psi_{\a'\a\b}$. Hence, it follows that the quantity
\begin{equation}
A_{S\cup B}(\ys_{\a'\b})_j+(n_{\zs}-n_{\ws})_j(\psi_{\a'\a\b'})-A_S(\ys_{\a\b})_j \label{eq:formalgradinchangeofband}
\end{equation}
is independent of the intersection points $\ys_{\a'\b}$, $\ys_{\a\b}$ and the triangle $\psi_{\a'\a\b}$. Furthermore, any two Heegaard triples subordinate to $B$ can related by a set of moves similar to those in Lemma~\ref{lem:movesbetweenbetasubordinatediagrams} (see \cite{ZemCFLTQFT}*{Lemma~6.3}) and hence an associativity argument like the one in Lemma~\ref{lem:Ainvariantformsurgerytriple} implies that the quantity in Equation~\eqref{eq:formalgradinchangeofband} is independent of the Heegaard triple $(\Sigma,\as',\as,\bs,\ws,\zs)$ subordinate to $B$.

We now claim that we can choose a parametrized Kirby diagram $\bP$ so that the set $\bL_{\a\b}\cup B_0$ can be isotoped into a plane in $S^3$. To construct such a $\bP$, we work backwards, and start with an unlink $U$ embedded in a plane in $S^3$, and define $B_0$ to be a planar band connecting or separating two components of $U$. We let $S^3_{U\cup B_0}$ denote $S^3\setminus (N(U)\cup N(B_0))$, and  $Y_{L\cup B}$ denote $Y\setminus N(L\cup B)$.  The 3-manifolds $S^3_{U\cup B_0}$ and $Y_{L\cup B}$ each have one genus 2 boundary component, as well as the same number of torus boundary components. Let $D$ and $D'$ denote two compressing disks in $N(L\cup B)$  which are disjoint from $L$ and $L(B)$, respectively. Let $D_0$ and $D_0'$ denote analogous compressing disks in $N(U\cup B_0)$. We pick a diffeomorphism 
\[
\phi\colon \d S^3_{U\cup B_0}\to \d  Y_{L\cup B}
\] which maps meridians of $U$ to meridians of $L$, and sends $\d D$ to $\d D_0$ and sends $\d D'$ to $\d D_0'$. We now pick a parametrized surgery datum (Definition~\ref{def:parsurgdata}) for the pair $(S^3_{U\cup B_0}, Y_{L\cup B}, \phi)$, which then induces a parametrized Kirby diagram $\bP$ with the stated properties.

Using the parametrized Kirby diagram $\bP$ constructed in the previous paragraph, we construct a Heegaard quadruple $(\Sigma,\as',\as,\bs,\bs',\ws,\zs)$ such that
\begin{enumerate}
\item $(\Sigma,\as,\bs,\bs',\ws,\zs)$ is subordinate to the framed link $\bS_1\subset S^3\setminus U$.
\item $(\Sigma,\as',\as,\bs',\ws,\zs)$ is subordinate to the band $B\subset Y$.
\end{enumerate}
Since $B$ is contained in the $\alpha$-handlebody, there is an induced band $B_0$ for the link $\bL_{\a\b}$ inside of $Y_{\a\b}\iso Y_{\a'\b}\iso S^3$. We note that $\bL_{\a\b}(B_0)=\bL_{\a'\b}$. Furthermore, the triple $(\Sigma,\as',\as,\bs,\ws,\zs)$ 
is subordinate to the band $B_0$.

 Let $\ys_{\a\b'}\in \bT_{\a}\cap \bT_{\b'}$,  $\Theta^{\ws}_{\a'\a}\in \bT_{\a'}\cap \bT_{\a}$, and let $\psi_{\a'\a\b'}\in \pi_2(\Theta_{\a'\a}^{\ws},\ys_{\a\b'},\ys_{\a'\b'})$ be any homology class of triangles (such as one which might be counted by the map $F_{B}^{\ws}$). Next, pick intersection points $\xs_{\a\b}\in \bT_{\a}\cap \bT_{\b}$ and $\Theta_{\b\b'}\in \bT_{\b}\cap\bT_{\b'}$ with $\frs_{\ws}(\Theta_{\b\b'})$ torsion, as well as a class $\psi_{\a\b\b'}\in \pi_2(\xs_{\a\b}, \Theta_{\b\b'}, \ys_{\a\b'})$. 

Arguing as in Lemma~\ref{lem:Ainvariantformsurgerytriple}, we can find an intersection point $\xs_{\a'\b}\in \bT_{\a'}\cap \bT_{\b}$, as well as homology classes $\psi_{\a'\a\b}\in \pi_2(\Theta_{\a'\a}^{\ws}, \xs_{\a\b}, \xs_{\a'\b})$ and $\psi_{\a'\b\b'}\in \pi_2(\xs_{\a'\b}, \Theta_{\b\b'}, \ys_{\a'\b'})$ such that
\begin{equation}
\psi_{\a'\a\b}+\psi_{\a'\b\b'}=\psi_{\a'\a\b'}+\psi_{\a\b\b'}.\label{eq:equlityoftrianglesbandgrad}
\end{equation}
We note that $(\Sigma,\as',\bs,\bs',\ws,\zs)$ is a Heegaard triple subordinate to the framed link $\bS_1$, and $(\Sigma,\as',\bs,\ws,\zs)$ is a diagram for a multi-based unlink in $S^3$. Hence $(\Sigma,\as',\bs,\bs',\ws,\zs)$ can be used to compute the Alexander grading of the intersection point $\ys_{\a'\b'}$.

Arguing as in Lemma~\ref{lem:Ainvariantformsurgerytriple}, using the definition of the Alexander grading, we compute that
\begin{equation}
\begin{split} A_{S\cup B}(\ys_{\a'\b'})_j-A_S(\ys_{\a\b'})_j=&\tilde{A}(\xs_{\a'\b})_j+\tilde{A}(\Theta_{\b\b'})_j+(n_{\ws}-n_{\zs})_j(\psi_{\a'\b\b'})\\
&-\tilde{A}(\xs_{\a\b})_j-\tilde{A}(\Theta_{\b\b'})_j-(n_{\ws}-n_{\zs})_j(\psi_{\a\b\b'}).
\end{split}\label{eq:equlityoftrianglesbandgrad2}
\end{equation}
Combining Equations~\eqref{eq:equlityoftrianglesbandgrad} and \eqref{eq:equlityoftrianglesbandgrad2} and rearranging, we see that
\begin{equation}
A_{S\cup B}(\ys_{\a'\b'})_j+(n_{\zs}-n_{\ws})_j(\psi_{\a'\a\b'})-A_S(\ys_{\a'\b})_j=\tilde{A}(\xs_{\a'\b})_j+(n_{\zs}-n_{\ws})_j(\psi_{\a'\a\b})-\tilde{A}(\xs_{\a\b})_j.\label{eq:FBwsgradingchange}
\end{equation}
We note that quantity on the left side of Equation~\eqref{eq:FBwsgradingchange} is exactly the formal grading change of the map $F_{B}^{\zs}$.

Next, we note that the quantity on the right side of  Equation~\eqref{eq:FBwsgradingchange} is exactly the formal Alexander grading change of the map $F_{B_0}^{\zs}$, for the band $B_0$ attached to $\bU\subset S^3$.

As we argued earlier, the formal grading change of $F_{B_0}^{\zs}$ is independent of the choice of homology class of triangle, as well as the Heegaard triple subordinate to $B_0$, and hence we can pick any convenient Heegaard triple and compute the grading change for any convenient homology class of triangle. We perform the model computation  in Figure~\ref{fig::54}. We can pick a triple $(\Sigma,\as',\as,\bs,\ws,\zs)$ subordinate to the band $B_0$ such there is an annular subregion which appears as in Figure~\ref{fig::54}, and outside the annular region, the $\as'$ curves are all small Hamiltonian isotopies of the $\as$ curves. We pick a triangle class $\psi\in \pi_2(\Theta_{\a'\a}^{\ws}, \Theta_{\a\b}^{\ws},\Theta_{\a'\b}^{\ws})$. Outside of the annular region shown in Figure~\ref{fig::54}, we can assume that the class $\psi$ consists of only small triangles, and inside the annular region, we assume that the class $\psi$ is one of the two shown in Figure~\ref{fig::54}, depending on whether $B_0$ splits a component of $U$ into two components, or connects two components. It is straightforward to compute in both cases, using Figure~\ref{fig::54} and the definition of the Alexander grading $\tilde{A}$ for unlinks in $S^3$ from Section~\ref{sec:absolutegradingsunknots}, that
\begin{equation}
\tilde{A}(\Theta^{\ws}_{\a'\b})_j+(n_{\zs}-n_{\ws})_j(\psi)-\tilde{A}(\Theta^{\ws}_{\a\b})_j=\frac{1}{2}\delta(j,j_0).\label{eq:gradingchangeformulafrommodelcomputation}
\end{equation}
As we described previously, Equation~\eqref{eq:gradingchangeformulafrommodelcomputation} is equal to the right hand side of Equation~\eqref{eq:FBwsgradingchange}, and the left hand side of Equation~\eqref{eq:FBwsgradingchange} is exactly the formal Alexander grading change of $F_{B}^{\zs}$, establishing the grading formula for $F_{B}^{\zs}$. 

A symmetrical argument can be used to compute the Alexander grading change of the band maps $F_{B}^{\ws}$.  Finally, we note that a simple modification of the above argument can be used to compute the Maslov grading changes.

\end{proof}

\begin{figure}[ht!]
	\centering
	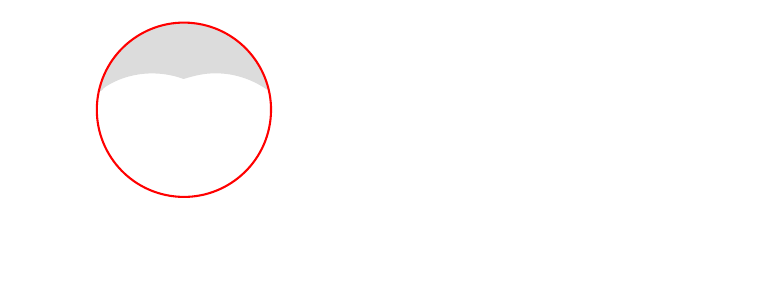
	\caption{\textbf{A model computation to compute the grading change of the band map $F_{B_0}^{\zs}$.} The left side corresponds to the case that $B_0$ is connecting to components of $L$, and the right side corresponds to the case that $B_0$ is splitting a component of $L$ into two components.}\label{fig::54}
\end{figure}

\subsection{Proof of Theorem~\ref{thm:generalAlexandergradingformula}}
\label{subsection:proofofgradingformula}
We can now prove our main grading theorem:

\begin{proof}[Proof of Theorem~\ref{thm:generalAlexandergradingformula}] 
Firstly, it is straightforward to check that each of the formulas we've described for the Alexander and Maslov gradings are additive under composition of cobordism. Hence, it is sufficient to check the grading change formulas for the elementary cobordisms in Section~\ref{sec:overviewofmaps}.

We will focus on proving the theorem for the Alexander grading, since the proof of the Maslov grading formulas is an easy modification.

We begin with the 0-handle and 4-handle maps. Clearly the 0-handle and 4-handle maps induce grading change zero. To verify our grading formula, note that the link cobordism surface consists of a cylindrical decorated link cobordism inside of $[0,1]\times Y$, together with a disk in $B^4$ which is split into two components by its single dividing arc. 
It is straightforward to compute that the expected Alexander grading change in the theorem statement is also 0, agreeing with the actual grading change.

Next, we consider 1-handle and 3-handle cobordisms. For simplicity, we will focus on the 1-handle maps, since the 3-handle maps are dual to the 1-handle maps. For a 1-handle which connects two components $(Y_1,\bL_1)$ and $(Y_2,\bL_2)$, we can obtain a parametrized Kirby diagram $\bP$ for $(Y_1\# Y_2, \bL_1\cup \bL_2)$ by taking the connected sum of a parametrized Kirby diagram $\bP_1$ for $(Y_1,\bL_1)$, and a parametrized Kirby diagram $\bP_2$ for $(Y_2,\bL_2)$. A Heegaard triple $\cT$ for $\bP$ can be obtained by taking the connected sum of a triple $\cT_1=(\Sigma_1,\as_1,\bs_1,\bs_1')$ for $\bP_1$ and a triple $\cT_2=(\Sigma_2,\as_2,\bs_2,\bs_2')$ for $\bP_2$. In the connected sum annulus, we add three new curves to the Heegaard triple, $\alpha_0$, $\beta_0$ and $\b'_0$, which are homologically essential in the annulus and such that $\alpha_0\cap \beta_0,$ $\beta_0\cap \beta_0'$ and $\alpha_0\cap \beta_0'$ each consists of exactly two points. We will write $\theta_{\a_0\b_0}^+$ and $\theta_{\a_0\b_0}^-$ for the two points of $\alpha_0\cap \beta_0$, and similarly for $\beta_0\cap \beta_0'$ and $\alpha_0\cap \beta_0'$. We pick classes $\psi_{\a_1\b_1\b_1'}\in \pi_2(\xs_{\a_1\b_1}, \Theta_{\b_1\b_1'},\ys_{\a_1\b_1'})$ and
$\psi_{\a_2\b_2\b_2'}\in \pi_2(\xs_{\a_2\b_2}, \Theta_{\b_2\b_2'},\ys_{\a_2\b_2'})$, assuming  $\Theta_{\b_1\b_1'}$ and $\Theta_{\b_2\b_2'}$ represent the torsion $\Spin^c$ structure. For convenience, we can assume $\psi_{\a_1\b_1\b_1'}$ and $\psi_{\a_2\b_2\b_2'}$ have zero multiplicity at the connected sum points. We construct a triangle class
\[
\psi^+\in \pi_2(\xs_{\a_1\b_1}\times \theta_{\a_0\b_0}^+\times \xs_{\a_2\b_2}, \Theta_{\b_1\b_1'}\times \theta_{\b_0\b_0'}^+\times \Theta_{\b_2\b_2'},\ys_{\a_1\b_1'}\times \theta_{\a_0\b_0'}^+\times \ys_{\a_2\b_2'})
\]
on $\cT^+$ by taking the connected sum of $\psi_{\a_1\b_1\b_1'}$ and $\psi_{\a_2\b_2\b_2'}$ and inserting a small triangle class in the connected sum region.  A straightforward computation using $\psi^+,$ $\psi_{\a_1\b_1\b_1'}$ and $\psi_{\a_2\b_2\b_2'}$ shows that
\[
A_{S_1\cup S_2}(\ys_{\a_1\b_1'}\times \theta^+_{\a_0\b_0'}\times \ys_{\a_2\b_2'})_j=A_{S_1}(\ys_{\a_1\b_1'})_j+A_{S_2}(\ys_{\a_2\b_2'})_j,
\]
whenever $S_1$ and $S_2$ are generalized $\bJ$-Seifert surfaces of $\bL_1$ and $\bL_2$ in $Y_1$ and $Y_2$, respectively. It follows that the 1-handle maps are 0-graded with respect to the Alexander grading, agreeing with the formula in the theorem statement. A similar argument works for the 3-handle maps, when a 3-handle splits a component of $Y$ into two components.

For a 1-handle which is attached with both feet on a single component of $Y$, we argue as follows. If $\bP$ is a parametrized Kirby diagram for $(Y,\bL)$, with framed link $\bS_1$, then a parametrized Kirby diagram $\bP'$ for $(Y\# (S^1\times S^2),\bL)$ can be obtained by adding a 0-framed unknot to $\bS_1$, which is unlinked from $\bS_1\cup U$. If $\cT=(\Sigma,\as,\bs,\bs',\ws,\zs)$ is a surgery triple for $\bP$, a surgery triple for $\bP'$ can be obtained by taking the connected sum of a genus one Heegaard triple $(T^2,\alpha_0,\beta_0,\beta_0')$, where $\beta_0$ and $\beta_0'$ are small Hamiltonian isotopies of each other, intersecting twice, and $\alpha_0$ is a curve on $T^2$ which intersects each of $\beta_0$ and $\beta_0'$ exactly once. An easy computation shows that if $\ys_{\a\b'}\in \bT_{\a}\cap \bT_{\b'}$, and $S$ is a generalized $\bJ$-Seifert surface for $\bL$ in $Y$, then
\[
A_S(\ys_{\a\b'}\times \theta^+_{\b\b'})_j=A_S(\ys_{\a\b'})_j,
\]
showing that the 1-handle maps induce Alexander grading change 0, agreeing with the formula from the theorem statement.

We now consider the 2-handle maps. The fact that the 2-handle maps induce the stated grading change is essentially a tautology, and follows from an associativity argument similar to the proof of Lemma~\ref{lem:bandmapsinducedcorrectgrading}.

Finally, the quasi-stabilization and band maps have the expected Alexander grading changes by Lemmas~\ref{lem:quasi-stabgradingchange} and \ref{lem:bandmapsinducedcorrectgrading} (note that the corresponding decorated link cobordisms are shown in Figures~\ref{fig::51} and \ref{fig::6}).

Having established the Alexander grading change for each elementary link cobordism, the grading change follows for a general link cobordism. The formula for the Maslov grading change is a straightforward modification.
\end{proof}

\section{Equivalence with Ozsv\'{a}th and Szab\'{o}'s construction}
\label{sec:equivalence}

In this section, we prove the following:

\begin{prop}\label{prop:agreeswithOSconstruction}If $\bL$ is a multi-based link in $S^3$, and each component of $\bL$ has exactly two basepoints, then the Alexander multi-grading $A$ defined in Section~\ref{sec:defabsolutegradings} coincides with the Alexander multi-grading defined by Ozsv\'{a}th and Szab\'{o} in \cite{OSLinks}.
\end{prop}

We warn the reader that Proposition~\ref{prop:agreeswithOSconstruction} is stated only for the Alexander multi-grading, and not the Maslov gradings. For links in $S^3$, there is a canonical choice of absolute Alexander grading, characterized by a conjugation symmetry property. For the Maslov gradings, there are several natural normalization conventions, depending on ones perspective. Nonetheless, for doubly based knots in $S^3$, the Maslov grading $\gr_{\ws}$ coincides with Ozsv\'{a}th and Szab\'{o}'s homological grading; See Section~\ref{subsec:usersguide}.

\subsection{Gradings using $\alpha$-bouquets}
\label{subsec:usealphabouqeuts}
We defined the absolute grading in Section~\ref{sec:defabsolutegradings} using $\beta$-bouquets of framed links in $S^3$, but it will be useful to know that $\alpha$-bouquets can be used as well. We will write $A^{\alpha}_{S}$ for the gradings defined using $\alpha$-bouquets, and $A^{\beta}_S$ for the gradings defined using $\beta$-bouquets. Similarly there are Maslov gradings $\gr_{\ws}^{\alpha},$ $\gr_{\ws}^{\beta}$, $\gr_{\zs}^{\alpha}$ and $\gr_{\zs}^{\beta}$.

\begin{lem}\label{lem:leftsubandrightsubgradingsequal}The absolute gradings satisfy
\[
\gr_{\ws}^\alpha=\gr_{\ws}^\beta,\qquad \gr_{\zs}^{\alpha}=\gr_{\zs}^{\beta}\qquad \text{and} \qquad A_S^\alpha=A_S^\beta.
\]
\end{lem}
\begin{proof}We focus on the equality for the Alexander gradings; the equality for the Maslov gradings can be proven similarly.

The key idea is that given a framed link $\bS_1$ in $Y\setminus L$, we can define two ``cobordism maps''
\begin{equation}
F_{Y,\bL,\bS_1,S,S'}^\alpha,\quad  F_{Y,\bL,\bS_1,S,S'}^\alpha \colon \bA(Y,\bL,\frs)\to \bA(Y(\bS_1),\bL,\frs'),
\end{equation}
whenever $\frs\in \Spin^c(Y)$ and $\frs'\in \Spin^c(Y(\bS_1))$ are $\Spin^c$ structures which have a common extension over the 2-handle cobordism $W(Y,\bS_1)$, and $S$ and $S'$ are generalized $\bJ$-Seifert surfaces in $Y$ and $Y(\bS_1)$.

To define the map $F_{Y,\bL,\bS_1,S,S'}^\beta$, we pick a $\beta$-bouquet $\cB^\beta$ for $\bS_1$, as well as a triple $\cT=(\Sigma,\as,\bs,\bs',\ws,\zs)$ which is subordinate to $\cB^\beta$. We pick a homology class $\psi\in \pi_2(\xs,\Theta,\ys)$ where $\frs_{\ws}(\xs)=\frs$, $\frs_{\ws}(\ys)=\frs'$ and $\frs_{\ws}(\Theta)$ is torsion. If $A\in \bA(Y,\bL^{(\sigma,\bJ)},\frs)$,  we define
\[
F_{Y,\bL,\bS_1,S,S'}^\beta(A)(\ve{y})_j:=A(\ve{x})_j+\tilde{A}(\Theta)_j+(n_{\ve{w}}-n_{\ve{z}})_j(\psi)+\frac{\langle c_1(\frs),[\hat{\Sigma}_j]\rangle -[\hat{\Sigma}]\cdot [\hat{\Sigma}_j]}{2}
\]
 where $\hat{\Sigma}_j$ is formed by capping off the surface $[0,1]\times  L_j\subset W(Y,\bS_1)$ with $S_j$ and $-S'_j$. A map $F_{Y,\bL,\bS_1,S,S'}^{\alpha}$ is defined analogously, using $\alpha$-bouquets.
 
 The proofs of Lemmas~\ref{lem:indoftriangle},~\ref{lem:Ainvariantformsurgerytriple} and \ref{lem:independentofbouquet} adapt to show invariance of the maps $F_{Y,\bL,S,S'}^\alpha$ and $F_{Y,\bL,S,S'}^{\beta}$ from the choice of bouquet and  Heegaard triple.

 Furthermore, we claim that the cobordism maps on gradings satisfy the following ``composition law'':
 \begin{equation}
 F_{Y,\bL,\bS_1,S,S'}^\beta(A^\beta_{S})=A^{\beta}_{S'}\qquad \text{and}\qquad F_{Y,\bL,\bS_1,S,S'}^\alpha(A^\alpha_{S})=A^\alpha_{S'}.\label{eq:abcobordismmapspreservegradings}
 \end{equation}
 Equation~\eqref{eq:abcobordismmapspreservegradings} is proven similar to the standard composition for the link cobordism maps. If $\bP$ is a parametrized Kirby diagram for  $(Y,\bL)$, with framed link $\bS_1'\subset S^3\setminus U$, one takes a Heegaard quadruple $(\Sigma,\as,\bs,\bs',\bs'',\ws,\zs)$ such that $(\Sigma,\as,\bs,\bs')$ is subordinate to a bouquet of $\bS_1'$, and $(\Sigma,\as,\bs',\bs'')$ is subordinate to a bouquet for $\bS_1\subset Y$, and $(\Sigma,\as,\bs,\bs'')$ is subordinate to a bouquet for $\bS_1'\cup \bS_1\subset S^3$. Using an associativity argument like the one in Lemma~\ref{lem:Ainvariantformsurgerytriple}, it is straightforward to establish Equation~\eqref{eq:abcobordismmapspreservegradings}.
 
 If $\bS_1$ and $\bS_1'$ are two framed links in $Y$, then adapting the associativity argument from Lemma~\ref{lem:Ainvariantformsurgerytriple} also shows that
 \begin{equation}
 F_{Y(\bS_1), \bL,\bS_1',S'', S'}^\beta\circ F_{Y,\bL,\bS_1,S,S''}^\alpha= F_{Y(\bS_1'),\bL,\bS_1,S''',S'}^\alpha\circ F_{Y,\bL,\bS_1',S,S'''}^\beta\label{eq:composingalphabetagradingmaps}
 \end{equation}
 whenever $S''$ and $S'''$ are generalized $\bJ$-Seifert surfaces for $\bL$ inside of $Y(\bS_1)$ and $Y(\bS_1')$, respectively (compare \cite{OSTriangles}*{Lemma~5.2}).
 
 If $U$ is an unlink in any $(S^1\times S^2)^{\# k}$, we will write (abusing notation slightly) $S_0$ for any Seifert surface for $U$. Recall that $\tilde{A}$ denotes the Alexander grading on the link Floer homology of the unlink in $(S^1\times S^2)^{\# k}$, which we declared in Section~\ref{sec:absolutegradingsunknots}.
  We note that it is easy to compute from the definition that
 \begin{equation}
  \tilde{A}=A^\alpha_{S_0}=A^\beta_{S_0}. \label{eq:canonicalgradingsforunknots}
 \end{equation} 
 
 We pick a parametrized Kirby diagram $\bP$ of $(Y,L)$, with framed link $\bS_1\subset S^3\setminus U$, and a diffeomorphism between $(S^3(\bS_1),U)$ and $(Y,L)$.  Let $\bS_1'$ denote the framed link consisting of  a 0-framed meridian for each component of $\bS_1$.
 
  By Equation~\eqref{eq:abcobordismmapspreservegradings}, we have
 \begin{equation}
\tilde{A}=(F^\beta_{Y, \bL, \bS_1',S,S_0}\circ F^\beta_{S^3,\bU, \bS_1, S_0, S})(\tilde{A})=F^\beta_{Y, \bL, \bS_1',S,S_0} (A_S^\beta),\label{eq:firstmanipulationofcompgradings}
 \end{equation}
where $\bU$ denotes the unlink $U$ in $S^3$, decorated with the basepoints from $\bL$ induced by $\bP$ 
 
 Since the ``cobordism maps'' on the set of Alexander gradings are isomorphisms of affine sets over $\Q^{\bJ}$,  to establish that $A^\beta_{S}=A^\alpha_S$, it is sufficient to show that they have the same evaluation under any 2-handle cobordism map. Hence, by Equation~\eqref{eq:firstmanipulationofcompgradings}, it is sufficient to show that
 \begin{equation}
 \tilde{A}=F^\beta_{Y, \bL, \bS_1',S,S_0} (A_S^\alpha).
 \end{equation}

Importantly, we note that since $\bS_1'\subset S^3$ consists of 0-framed unknots which are unlinked from $\bU$, the pair $(S^3(\bS_1'),\bU)$ is an unlink in $(S^1\times S^2)^{\# |\bS_1'|}$. Hence, by Equation~\eqref{eq:canonicalgradingsforunknots}
\[
F^\beta_{S^3,\bU,\bS_1',S_0,S_0}(\tilde{A})=F^\alpha_{S^3,\bU,\bS_1',S_0,S_0}(\tilde{A})=\tilde{A}.
\] 
 Hence we compute that
 \begin{align*}
F^\beta_{Y, \bL, \bS_1',S,S_0} (A_S^\alpha)&=(F^\beta_{Y, \bL, \bS_1',S,S_0}\circ F_{S^3,\bU,\bS_1,S_0,S}^\alpha)(\tilde{A})&&\text{(Equation~\eqref{eq:abcobordismmapspreservegradings})}\\
&=(F_{S^3(\bS_1'),\bL,\bS_1,S_0,S_0}^\alpha\circ F_{S^3,\bU,\bS_1',S_0,S_0}^\beta)(\tilde{A})&&\text{(Equation~\eqref{eq:composingalphabetagradingmaps})}\\
&=F_{S^3(\bS_1'),\bL,\bS_1,S_0,S_0}^\alpha(\tilde{A})&&\text{(Equation~\eqref{eq:canonicalgradingsforunknots})}\\
&=\tilde{A},&& \text{(Equation~\eqref{eq:abcobordismmapspreservegradings})}
 \end{align*}
completing the proof.
\end{proof}

\subsection{Conjugation symmetry}

As a step towards proving the equivalence of our construction with Ozsv\'{a}th and Szab\'{o}'s, we will analyze the interaction of our gradings with the conjugation action on link Floer homology.

If $\bL=(L,\ve{p},\ve{q})$ is a multi-based link, then we let $\bar{\bL}=(L,\ve{q},\ve{p})$ denote the same link, but with the designation of the basepoints as type-$\ws$ or type-$\zs$ switched.  There is a natural conjugation action on $\Spin^c(Y)$. If $\frs\in \Spin^c(Y)$ corresponds to the vector field $v$, then the conjugate $\Spin^c$ structure $\bar{\frs}$ corresponds to the vector field $-v$.

We now describe the conjugation action $\eta$ on $\Hat{\CFL}$. Given a Heegaard diagram $\cH=(\Sigma, \as,\bs,\ve{p},\ve{q})$ for $\bL=(L,\ve{p},\ve{q})$, we consider the conjugate diagram $\bar{\cH}=(-\Sigma, \bar{\bs},\bar{\as},\ve{q},\ve{p})$ for $(Y,\bar{\bL})$. There is a natural correspondence between the intersection points on $\cH$ and $\bar{\cH}$. We note however, that $\bar{\frs_{\cH,\ve{p}}(\ve{x})}=\frs_{\bar{\cH},\ve{p}}(\eta(\ve{x}))$, by explicit examination of the vector fields constructed by Ozsv\'{a}th and Szab\'{o} in \cite{OSDisks}. On the other hand, 
\begin{equation}
\frs_{\bar{\cH},\ve{q}}(\eta(\ve{x}))=\bar{\frs_{\cH,\ve{p}}(\ve{x})}+\PD[L], \label{eq:conjugationchangeofspinc}
\end{equation} 
from Lemma~\ref{lem:spincpoincareduallink}. By Equation~\eqref{eq:conjugationchangeofspinc}, we see that $\eta$ maps $\Hat{\CFL}(Y,\bL,\frs)$ to $\Hat{\CFL}(Y,\bar{\bL},\bar{\frs}+\PD[L])$. 

By extending $\eta$ linearly over the ring $\bF_{2}[U_{\ve{p}}, V_{\ve{q}}]$, the map $\eta$ induces an equivariant, filtered chain homotopy equivalence
\[
\cCFL^\infty(Y,\bL,\frs)\to \cCFL^\infty(Y,\bar{\bL}, \bar{\frs}+\PD[L]).
\]
In this section, we prove the following:

\begin{prop}\label{lem:conjugationactiongraded}Suppose that $\bL=(L,\ve{p},\ve{q})$ is a multi-link in $Y$ with an indexed, type partitioned coloring $(\sigma,J)$, and  $L$ is $\bJ$-null-homologous. If $S$ is a choice of generalized $\bJ$-Seifert surface for $\bL$, then the map $\eta\colon \cCFL^\infty(Y,\bL,\frs)\to \cCFL^\infty(Y,\bar{\bL},\bar{\frs})$ satisfies
\[
A_{S}(\eta(\ve{x}))=-A_{S}(\ve{x}),
\]
for homogeneously graded $\xs$. If in addition $c_1(\frs)$ is torsion, then
\[
 \gr_{\ve{w}}(\eta(\ve{x}))=\gr_{\ve{z}}(\ve{x})\qquad \text{and}\qquad \gr_{\ve{z}}(\eta(\ve{x}))=\gr_{\ve{w}}(\ve{x}).
\]
\end{prop}

\begin{proof}It is sufficient to show the claim for $\Hat{\CFL}$, since the map $\eta$ is $\bF_2[U_{\ve{p}},V_{\ve{q}}]$-equivariant.

We take a parametrized Kirby diagram $\bP=(\phi_0,\lambda,\bS_1,f)$ and a surgery triple $\cT=(\Sigma, \as,\bs,\bs',\ve{p},\ve{q})$, which is subordinate to a $\beta$-bouquet for $\bS_1$. If $\ve{y}\in \bT_{\alpha}\cap \bT_{\b'}$, and $\psi\in \pi_2(\ve{x},\Theta,\ve{y})$ is a homology class of triangles, then the absolute Alexander grading $A_S$ on $\Hat{\CFL}(\Sigma,\as,\bs',\ve{w},\ve{z})$ is defined by the formula
\[
A_{S}(\ve{y})_j=\tilde{A}(\ve{x})_j+\tilde{A}(\Theta)_j+(n_{\ve{p}}(\psi)-n_{\ve{q}}(\psi))_j+\frac{\langle c_1(\frs_{\ve{p}}(\psi)), [\hat{\Sigma}_j]\rangle-[\hat{\Sigma}]\cdot [\hat{\Sigma}_{j}]}{2}.
\] 

We can form the conjugate triple $\bar{\cT}=(-\Sigma, \bar{\bs}',\bar{\bs},\bar{\as},\ve{q},\ve{p})$ of $\cT$.  The $\beta$-bouquet for $\bS_1$ now becomes an $\alpha$-bouquet for $\bS_1$, and $\bar{\cT}$ is now subordinate to this $\alpha$-bouquet for the same framed link $\bS_1$ in $S^3\setminus U$.

Using Lemma~\ref{lem:leftsubandrightsubgradingsequal}, we can use $\bar{\cT}$ to compute the grading of $\eta(\ve{y})$. Notice that there is a canonical, orientation preserving diffeomorphism
\begin{equation}
X_{\alpha\beta\b'}\iso X_{\bar{\b}'\bar{\b}\bar{\a}}.\label{eq:conjugationdiffeosXs}
\end{equation}
The identification in Equation~\eqref{eq:conjugationdiffeosXs} respects the embedding of both $X_{\a\b\b'}$ and $X_{\bar{\b}'\bar{\b}\bar{\a}}$ into the 2-handle cobordism $W(S^3,\bS_1)$ from Lemma~\ref{lem:uniqueembeddingintoW}. The diffeomorphism from Equation~\eqref{eq:conjugationdiffeosXs} restricts to an orientation preserving diffeomorphism
\[
\Sigma_{\alpha\beta\b'}\iso \Sigma_{\bar{\b}'\bar{\b}\bar{\a}}.
\]
The homology class $\psi$ induces a class $\bar{\psi}$ on the conjugate Heegaard triple. 

The Alexander grading of $\eta(\ve{y})$ can be computed using the triple $\bar{\cT}$ and the triangle class $\bar{\psi}$, and indeed we see that
\[
A(\eta(\ve{y}))_j=\tilde{A}(\eta(\ve{x}))_j+\tilde{A}(\eta(\Theta))_j+(n_{\ve{q}}(\bar{\psi})-n_{\ve{p}}(\bar{\psi}))_j+\frac{\langle c_1(\frs_{\ve{q}}(\bar{\psi})), [\hat{\Sigma}_{j}]\rangle-[\hat{\Sigma}]\cdot [\hat{\Sigma}_{j}]}{2}.
\]

We will show that $A(\eta(\ve{y}))_j=-A(\ve{y})_j$.

 First observe that 
 \begin{equation}
 \tilde{A}(\eta(\ve{x}))_j=-\tilde{A}(\ve{x})_j\qquad\text{and} \qquad \tilde{A}(\eta(\Theta))_j=-\tilde{A}(\Theta)_j,\label{eq:conjugationcomp1}
 \end{equation}
  using the definition of $\tilde{A}$ from Section~\ref{sec:absolutegradingsunknots} and an easy model computation.

Note that the roles of $\ve{p}$ and $\ve{q}$ as type-$\ws$ and type-$\zs$ is reversed in $\bar{\cT}$. Correspondingly
\begin{equation}
(n_{\ve{q}}(\psi)-n_{\ve{p}}(\psi))_j=-(n_{\ve{p}}(\bar{\psi})-n_{\ve{q}}(\bar{\psi}))_j.\label{eq:conjugationcomp2}
\end{equation}

We now consider the homological terms involving the homology class of the surface $\Sigma_{\alpha\beta\b'}$ appearing in the formula for the grading. We note that $\frs_{\ve{p}}(\bar{\psi})=\bar{\frs_{\ve{p}}(\psi)}$. Similarly $\frs_{\ve{q}}(\bar{\psi})=\frs_{\ve{p}}(\bar{\psi})+\PD[\Sigma_{\bar{\b}'\bar{\b}\bar{\a}}]$ by Lemma~\ref{lem:poincaredualofsurface}. Note that after filling in $(Y_{\b\b'}, L_{\b\b'})$ with 3-handles and 4-handles containing standardly embedded slice disks of $L_{\b\b'}$, the pair $(X_{\a\b\b'}, \Sigma_{\a\b\b'})$ becomes $ (W(S^3,\bS_1),\Sigma)$ by Lemma~\ref{lem:uniqueembeddingintoW}.  Hence we  compute
\begin{equation}
\begin{split}\langle c_1(\frs_{\ve{q}}(\bar{\psi})), [\hat{\Sigma}_j]\rangle-[\hat{\Sigma}]\cdot [\hat{\Sigma}_j]&=\langle c_1(\frs_{\ve{p}}(\bar{\psi})+\PD[\Sigma]), [\hat{\Sigma}_j]\rangle-[\hat{\Sigma}]\cdot [\hat{\Sigma}_j]\\
&=-\langle c_1(\frs_{\ve{p}}(\psi)),[\hat{\Sigma}_j] \rangle+2\langle \PD[\Sigma],[\hat{\Sigma}_j]\rangle -[\hat{\Sigma}]\cdot [\hat{\Sigma}_j]\\
&=-\big(\langle c_1(\frs_{\ve{p}}(\psi)),[\hat{\Sigma}_j] \rangle-[\hat{\Sigma}]\cdot [\hat{\Sigma}_j]\big).
\end{split}
\label{eq:conjugationcomp3}
\end{equation}

Combining Equations~\eqref{eq:conjugationcomp1}, \eqref{eq:conjugationcomp2} and \eqref{eq:conjugationcomp3}, we see that each summand of $A(\ve{y})_j$ is changed to its negative in $A(\eta(\ve{y}))_j$, from which we conclude that $A(\eta(\ve{y}))_j=-A(\ve{y})_j$.

The claim about the Maslov gradings is proven similarly.
\end{proof}

\subsection{Proof of the equivalence}
We can now prove that our gradings coincide with Ozsv\'{a}th and Szab\'{o}'s:

\begin{proof}[Proof of Proposition~\ref{prop:agreeswithOSconstruction}]
By Proposition~\ref{lem:conjugationactiongraded}, the map $\eta$ induces an isomorphism  \begin{equation}
 \Hat{\HFL}(S^3,\bL)_{s}\iso \Hat{\HFL}(S^3,\bar{\bL})_{-s},\label{eq:conjugationisomorphism}
 \end{equation} where $s\in \Q^{L}$ denotes our Alexander multi-grading.   Since $\bL$ and $\bar{\bL}$ are isotopic links inside of $S^3$ (they are related by a half twist on each link component), $\Hat{\HFL}(S^3,\bL)$ and $\Hat{\HFL}(S^3,\bar{\bL})$ are isomorphic as multi-graded groups. By \cite{OSLinks}*{Equation~25}, the gradings defined by Ozsv\'{a}th and Szab\'{o} also satisfy Equation~\eqref{eq:conjugationisomorphism}. Since the hat version of link Floer homology groups for links in $S^3$ are non-vanishing and finitely generated over $\bF_2$, it is straightforward to see that our definition of the Alexander multi-gradings must coincide with theirs.
\end{proof}

 \section{Computations of the link cobordism maps}
 \label{sec:computations}
In this section, we perform some computations of the link cobordism maps in certain special cases. We focus on computing the map when $\cF$ is obtained by puncturing a closed surface, or computing the induced map on $\cHFL^\infty$ for more general link cobordisms.

A key computational tool is \cite{ZemCFLTQFT}*{Theorem~C}, stated below as Theorem~\ref{thm:generalreductionformula},  which computes the map induced by a decorated link cobordism when we algebraically forget about either the $\ws$ basepoints, or the $\zs$ basepoints.

Throughout this section, we will focus on colorings of links  $\sigma\colon \ws\cup \zs\to \bmP$ where $\bmP$ has exactly two colors, and all $\ws$ basepoints are assigned the variable $U$, and all $\zs$ basepoints are assigned the variable $V$. For such colorings, we write $\cR^-$ for the ring 
\[
\cR^-:=\cR^-_{\bmP}\iso \bF_2[U,V].
\]

For notational simplicity, we omit the coloring $\sigma$ from the notation in this section.

\subsection{A distinguished element of $\bF_2[U]\otimes \Lambda^*(H_1(\Sigma;\bF_2))$}

 In this section we describe a distinguished element of $\bF_2[U]\otimes_{\bF_2} \Lambda^*(H_1(\Sigma;\bF_2))$, which appears frequently in our computations, and is a familiar expression from Seiberg--Witten theory.

\begin{define}Suppose $\Sigma$ is an oriented surface of genus $g$, with either zero or one boundary component. We say a collection of simple closed curves $A_1,\dots, A_g,$ $B_1,\dots, B_g $ on $\Sigma$ form a \emph{geometric symplectic} basis of $H_1(\Sigma;\Z)$ if the following hold: 
\begin{enumerate}
\item $\{[A_1],\dots, [A_g], [B_1],\dots,[B_g]\}$ is a basis of $H_1(\Sigma;\Z)$. 
\item The geometric intersection number of $A_i$ and $B_j$ is $ \delta_{i,j}$.
\end{enumerate}
\end{define}
Given an oriented, connected surface with zero or one boundary component, we consider the element
 \begin{equation}
\xi(\Sigma):=\prod_{j=1}^{g(\Sigma)}  (U+[A_j]\wedge [B_j])\in \bF_2[U]\otimes_{\bF_2} \Lambda^*(H_1(\Sigma;\bF_2)).\label{eq:defxiSigma}
 \end{equation}

Although the element $\xi(\Sigma)$ is defined by picking a geometric symplectic basis, we in fact have the following:

\begin{prop}\label{prop:fixedbymappingclassgroup}The element $\xi(\Sigma)$ is independent of the choice of geometric symplectic basis of $H_1(\Sigma;\Z)$. Furthermore, the action of $\MCG(\Sigma)$ on $\bF_2[U]\otimes \Lambda^*(H_1(\Sigma;\bF_2))$ fixes $\xi(\Sigma)$.
\end{prop}

\begin{proof} If $\d \Sigma\neq \varnothing$, let $\hat{\Sigma}$ denote the surface obtained by capping off the boundary of $\Sigma$ with a disk. Since $H_1(\Sigma;\Z)\to H_1(\hat{\Sigma};\Z)$ is an isomorphism, it is sufficient to show the analogous statement for the element $\xi(\hat{\Sigma})\in \bF_2[U]\otimes \Lambda^*(H_1(\hat{\Sigma};\bF_2))$, defined using a geometric symplectic basis of $H_1(\hat{\Sigma};\Z)$. 

Suppose that $\{[A_1],\dots, [A_g], [B_1],\dots, [B_g]\}$ and $\{[A_1'],\dots, [A_g'],[B_1'],\dots, [B_g']\}$ are two choices of geometric symplectic bases. We can pick a single automorphism $\phi$ of $\Sigma$ such that for all $i\in\{1,\dots, g\}$ we have  $\phi(F_i)=F_i'$, where $F_i$ and $F_i'$ are the punctured tori
 \[
 F_i:=N(A_i\cup B_i) \qquad \text{and} \qquad F_i':=N(A_i'\cup B_i').
 \]
  Furthermore, we can arrange that $\phi(A_i)=A_i'$ and $\phi(B_i)=B_i'$ (up to orientation reversal).  Hence, it is sufficient to show that $\xi(\hat{\Sigma})$, computed with the basis $A_1,\dots, A_g,$ $B_1,\dots, B_g$, is fixed by $\MCG(\hat{\Sigma})$. According to \cite{Lickorish}, the group $\MCG(\hat{\Sigma})$ is generated by Dehn twists along the curves $a_1,\dots, a_g,$ $b_1,\dots, b_g$, $c_1,\dots, c_{g-1}$  shown in Figure~\ref{fig::44}.  We can assume $a_i=A_i$ and $b_i=B_i$. 

 \begin{figure}[ht!]
 	\centering
 	%% Creator: Inkscape inkscape 0.92.3, www.inkscape.org
%% PDF/EPS/PS + LaTeX output extension by Johan Engelen, 2010
%% Accompanies image file '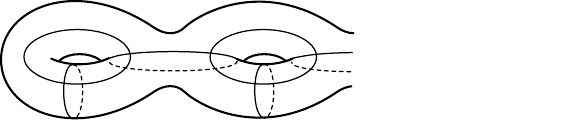' (pdf, eps, ps)
%%
%% To include the image in your LaTeX document, write
%%   \input{<filename>.pdf_tex}
%%  instead of
%%   \includegraphics{<filename>.pdf}
%% To scale the image, write
%%   \def\svgwidth{<desired width>}
%%   \input{<filename>.pdf_tex}
%%  instead of
%%   \includegraphics[width=<desired width>]{<filename>.pdf}
%%
%% Images with a different path to the parent latex file can
%% be accessed with the `import' package (which may need to be
%% installed) using
%%   \usepackage{import}
%% in the preamble, and then including the image with
%%   \import{<path to file>}{<filename>.pdf_tex}
%% Alternatively, one can specify
%%   \graphicspath{{<path to file>/}}
%% 
%% For more information, please see info/svg-inkscape on CTAN:
%%   http://tug.ctan.org/tex-archive/info/svg-inkscape
%%
\begingroup%
  \makeatletter%
  \providecommand\color[2][]{%
    \errmessage{(Inkscape) Color is used for the text in Inkscape, but the package 'color.sty' is not loaded}%
    \renewcommand\color[2][]{}%
  }%
  \providecommand\transparent[1]{%
    \errmessage{(Inkscape) Transparency is used (non-zero) for the text in Inkscape, but the package 'transparent.sty' is not loaded}%
    \renewcommand\transparent[1]{}%
  }%
  \providecommand\rotatebox[2]{#2}%
  \newcommand*\fsize{\dimexpr\f@size pt\relax}%
  \newcommand*\lineheight[1]{\fontsize{\fsize}{#1\fsize}\selectfont}%
  \ifx\svgwidth\undefined%
    \setlength{\unitlength}{280.98547591bp}%
    \ifx\svgscale\undefined%
      \relax%
    \else%
      \setlength{\unitlength}{\unitlength * \real{\svgscale}}%
    \fi%
  \else%
    \setlength{\unitlength}{\svgwidth}%
  \fi%
  \global\let\svgwidth\undefined%
  \global\let\svgscale\undefined%
  \makeatother%
  \begin{picture}(1,0.20637305)%
    \lineheight{1}%
    \setlength\tabcolsep{0pt}%
    \put(0,0){\includegraphics[width=\unitlength,page=1]{fig44.pdf}}%
    \put(0.10626533,0.02596726){\color[rgb]{0,0,0}\makebox(0,0)[rt]{\lineheight{1.25}\smash{\begin{tabular}[t]{r}$a_1$\end{tabular}}}}%
    \put(0.10252637,0.16791179){\color[rgb]{0,0,0}\makebox(0,0)[lt]{\lineheight{1.25}\smash{\begin{tabular}[t]{l}$b_1$\end{tabular}}}}%
    \put(0.30744645,0.13031815){\color[rgb]{0,0,0}\makebox(0,0)[lt]{\lineheight{1.25}\smash{\begin{tabular}[t]{l}$c_1$\end{tabular}}}}%
    \put(0,0){\includegraphics[width=\unitlength,page=2]{fig44.pdf}}%
    \put(0.9074185,0.16028666){\color[rgb]{0,0,0}\makebox(0,0)[lt]{\lineheight{1.25}\smash{\begin{tabular}[t]{l}$b_g$\end{tabular}}}}%
    \put(0.89924752,0.03057313){\color[rgb]{0,0,0}\makebox(0,0)[lt]{\lineheight{1.25}\smash{\begin{tabular}[t]{l}$a_g$\end{tabular}}}}%
    \put(0.75234193,0.1281546){\color[rgb]{0,0,0}\makebox(0,0)[rt]{\lineheight{1.25}\smash{\begin{tabular}[t]{r}$c_{g-1}$\end{tabular}}}}%
  \end{picture}%
\endgroup%

 	\caption{\textbf{Generators of \textbf{$\MCG(\hat{\Sigma})$.}}}\label{fig::44}
 \end{figure}

We now consider invariance of $\xi(\hat{\Sigma})$ under Dehn twists along $a_i$. The only curve in our basis which is changed by a Dehn twist along $a_i$ is $B_i$. A Dehn twist along $a_i$ sends the class $[B_i]$ to $[B_i]+[A_i]$. This does not change the element $\xi(\hat{\Sigma})$, since
\[
U+[A_i]([A_i]+[B_i])=U+[A_i][B_i].
\]
Invariance under Dehn twists along a curve $b_i$ is similar.

We finally consider a Dehn twist along a curve $c_i$, which we note intersects both $B_i$ and $B_{i+1}$, and none of the other $A_i$ or $B_j$ curves. A Dehn twist along $c_i$ sends  the homology class $[B_i]$ to $[B_i]+[A_i]+[A_{i+1}]$ and sends $[B_{i+1}]$ to $[B_{i+1}]+[A_i]+[A_{i+1}]$. To show invariance of $\xi(\hat{\Sigma})$, we compute
\begin{align*}
&\big(U+[A_i]([B_i]+[A_i]+[A_{i+1}])\big)\big(U+[A_{i+1}]([B_{i+1}]+[A_i]+[A_{i+1}])\big)\\
=&U^2+\big([A_i][B_i]+[A_{i+1}][B_{i+1}]\big)U+[A_i][B_i][A_{i+1}][B_{i+1}]\\
=&(U+[A_i][B_i])(U+[A_{i+1}][B_{i+1}]).
\end{align*}

It follows that $\xi(\hat{\Sigma})$ is preserved by the action of $\MCG(\hat{\Sigma})$, and hence is independent of the choice of geometric symplectic basis. As we remarked earlier, this implies that $\xi(\Sigma)$ is also independent of the choice of geometric symplectic basis, and consequently is fixed by the action of $\MCG(\Sigma)$.
\end{proof}

\subsection{The algebraic reductions of the link cobordism maps}

We recall that if $(\Sigma,\as,\bs,\ws)$ is a multi-pointed Heegaard diagram for $(Y,\ws)$, the module $\CF^-(Y,\ws,\frs)$ is the free $\bF_2[U]$-module generated by intersection points $\xs\in \bT_{\alpha}\cap \bT_{\beta}$. By counting holomorphic curves, one can define a differential on $\CF^-(Y,\ws,\frs)$, similar to the expression which appears in Equation~\eqref{eq:del}.

For links equipped with a coloring with exactly two colors, such that the $\ws$ basepoints are given the same color, and the $\zs$ basepoints are given the other, we can recover $\CF^-$ from  $\cCFL^-$ via either of the two natural isomorphisms
\begin{equation}
\begin{split}
\cCFL^-(Y,\bL,\frs)\otimes_{\cR^-} \cR^-/(V-1) \iso \CF^-(Y,\ws,\frs)\qquad \text{and}\\
 \cCFL^-(Y,\bL,\frs)\otimes_{\cR^-} \cR^-/(U-1) \iso \CF^-(Y,\zs,\frs-\PD[L]).
 \end{split}
 \label{eq:tensorreductions}
\end{equation}
As a general algebraic fact, if $F\colon M_1\to M_2$ is a map of $R$-modules, and $N$ is an $R$-module, there is an induced map $F\otimes \id_N\colon M_1\otimes_R N\to M_2\otimes_R N$. Hence, given a decorated link cobordism $(W,\cF)\colon (Y_1,\bL_1)\to (Y_2,\bL_2)$,  we obtain two maps: 
\[
F_{W,\cF,\frs}|_{V=1}\colon \CF^-(Y_1,\ws_1,\frs|_{Y_1})\to \CF^-(Y_2,\ws_2,\frs|_{Y_2})
\]
and
\[
F_{W,\cF,\frs}|_{U=1}\colon \CF^-(Y_1,\zs_1,\frs|_{Y_1}-\PD[L_1])\to \CF^-(Y_2,\zs_2,\frs|_{Y_2}-\PD[L_2]).
\]

In \cite{ZemGraphTQFT}, the author constructs a ``graph TQFT'' for $\CF^-$. The objects of the associated cobordism category are closed 3-manifolds with collections of basepoints. A cobordism from $(Y_1,\ws_1)$ to $(Y_2,\ws_2)$ consists of a pair $(W,\Gamma)$ such that $W$ is a compact, oriented 4-manifold with $\d W=-Y_1\sqcup Y_2$, and $\Gamma\subset W$ is a finite, embedded graph satisfying the following:
\begin{enumerate}
\item $\Gamma\cap Y_i=\ws_i$.
\item Each basepoint in $\ws_i$ has valence 1 in $\Gamma$.
\item $\Gamma$ is decorated which a choice of cyclic ordering at each of its vertices.
\end{enumerate}

Given a ribbon graph cobordism $(W,\Gamma)\colon (Y_1,\ws_1)\to (Y_2,\ws_2)$, there are two maps
\[
F_{W,\Gamma,\frs}^A,\,\,\, F_{W,\Gamma,\frs}^B \colon \CF^-(Y_1,\ws_1,\frs|_{Y_1})\to \CF^-(Y_2,\ws_2, \frs|_{Y_2}).
\]
 The construction from \cite{ZemGraphTQFT} corresponds to the type-$A$ maps. The type-$B$ maps are a simple variation, which are described in \cite{HMZConnectedSums}*{Section~3}.

 The type-$A$ maps and the type-$B$ maps satisfy the relation
\begin{equation}
F_{W,\Gamma,\frs}^A\simeq F_{W,\bar{\Gamma},\frs}^B,\label{eq:switchAtoB}
\end{equation}
where $\bar{\Gamma}$ is the graph obtained by reversing the cyclic orders of $\Gamma$ \cite{HMZConnectedSums}*{Lemma~5.9}.

To relate the link cobordism maps to the graph cobordism maps, we need the following notion:

\begin{define} Suppose $(W,\cF)\colon (Y_1,\bL_1)\to (Y_2,\bL_2)$ is a decorated link cobordism with type-$\ws$  subsurface $\Sigma_{\ws}$. If $\Gamma\subset W$ is a ribbon graph, we say that $\Gamma$ is a \emph{ribbon 1-skeleton} of $\Sigma_{\ws}$ if the following hold:
\begin{enumerate}
\item $\Gamma\subset \Sigma_{\ws}$.
\item $\Gamma\cap Y_i=\ws_i$.
\item $\Sigma_{\ws}$ is a regular neighborhood of $\Gamma$ inside of $\Sigma$.
\item The ribbon structure of $\Gamma$ is compatible with the orientation of $\Sigma$.
\end{enumerate}
\end{define}

The following general reduction theorem is proven in \cite{ZemCFLTQFT}:

\begin{thm}[\cite{ZemCFLTQFT}*{Theorem~C}]\label{thm:generalreductionformula} If $(W,\cF)$ is a decorated link cobordism, and $\Gamma_{\ws}\subset \Sigma_{\ws}$ and $\Gamma_{\zs}\subset \Sigma_{\zs}$ are ribbon 1-skeleta, then
\[
F_{W,\cF,\frs}|_{V=1}\simeq F_{W,\Gamma_{\ws},\frs}^B \qquad \text{and} \qquad F_{W,\cF,\frs}|_{U=1}\simeq F_{W,\Gamma_{\zs},\frs-\PD[\Sigma]}^A.
\]
\end{thm}

We now describe the maps induced by several simple graph cobordism, based on several computations from \cite{ZemDualityMappingTori}.

 Appearing in our formulas are two natural endomorphisms of $\CF^-(Y,\ws,\frs)$. The first endomorphism is the action of $\Lambda^* (H_1(Y;\Z)/\Tors)$, described by Ozsv\'{a}th and Szab\'{o} \cite{OSDisks}*{Section~4.2.5}. If $\gamma$ is a closed loop in $Y$, we will write $A_\gamma$ for the  map
\[
A_{\gamma}(\xs):=\sum_{\substack{\phi\in \pi_2(\xs,\ys)\\ \mu(\phi)=1}} a(\gamma,\phi) \# \Hat{\cM}(\phi)\cdot U^{n_{\ws}(\phi)}\cdot \ys,
\] 
 where $a(\gamma,\phi)$ denotes the intersection number of $\phi$ with  $\gamma$ (appropriately interpreted).
 
  The second endomorphism  is the map $\Phi_w$, defined via the formula
\[
\Phi_{w}(\xs):=U^{-1} \sum_{\substack{\phi\in \pi_2(\xs,\ys)\\ \mu(\phi)=1}} n_{w}(\phi) \# \Hat{\cM}(\phi)\cdot  U^{n_{\ws}(\phi)} \cdot \ys.
\]
The map $\Phi_w$  is considered in \cite{ZemGraphTQFT}.

 We consider the four ribbon graph cobordisms $([0,1]\times Y, \Gamma_i)$ shown in Figure~\ref{fig::41}. The homology classes of various loops in the graphs are labeled.

 \begin{figure}[ht!]
 	\centering
 	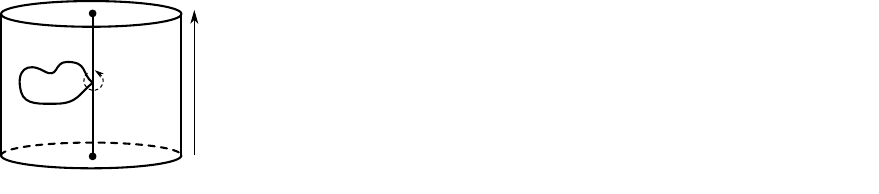
 	\caption{\textbf{The graph cobordisms $([0,1]\times Y, \Gamma_i)$ for $i\in \{1,2,3,4\}$ considered in Proposition~\ref{prop:graphcobordismcomp}.}}\label{fig::41}
 \end{figure}

\begin{prop}\label{prop:graphcobordismcomp}The graph cobordism maps for $([0,1]\times Y, \Gamma_i)$ satisfy
\begin{enumerate}
\item $F_{[0,1]\times Y, \Gamma_1,\frs}\simeq A_\gamma$.
\item $F_{[0,1]\times Y, \Gamma_2,\frs}\simeq U+A_{\gamma_1} A_{\gamma_2}$.
\item $F_{[0,1]\times Y, \Gamma_3,\frs}\simeq A_\gamma+U\Phi_w$.
\item $F_{[0,1]\times Y, \Gamma_4,\frs}\simeq \Phi_w$.
\end{enumerate}
The above relations hold for both the type-$A$ and $B$ versions of the graph cobordism maps.
\end{prop}

\begin{proof}The computation of the maps for $\Gamma_1$ and $\Gamma_2$ is performed in \cite{ZemDualityMappingTori}*{Proposition~4.6}. We note that \cite{ZemDualityMappingTori}*{Proposition~4.6} is stated only in the case that $[\gamma_1]=[\gamma_2]=0\in H_1(Y;\Z)$ (so that the induced map is the action of $U$). Nonetheless, the proof given in \cite{ZemDualityMappingTori} demonstrates the stated formula for general $\gamma_1$ and $\gamma_2$, and then specializes to the case that $[\gamma_1]=[\gamma_2]=0$. The computation of the map for $\Gamma_4$ is performed in \cite{ZemDualityMappingTori}*{Lemma~4.5}.

 The computation of the cobordism map for $\Gamma_3$ follows from the computation of the map for $\Gamma_1,$ and $\Gamma_4$, using the \emph{vertex breaking relation}, which describes the effect of changing the relative order of two edges adjacent to a vertex in the graph. The relation is shown in Figure~\ref{fig::45}, and is proven in \cite{ZemDualityMappingTori}*{Lemma~4.4}. To obtain the stated formula for the graph cobordism map for $\Gamma_3$, we apply the vertex breaking relation at the valence 4 vertex, showing that the induced map is a sum of a graph cobordism for the graph $\Gamma_1$, as well as $U$ times the graph cobordism map for $\Gamma_4$.

Finally, we briefly describe why the type-$A$ and $B$ maps coincide for the graphs $\Gamma_1,$ $\Gamma_2,$ $\Gamma_3$ and  $\Gamma_4$. For a rigorous proof, we refer the reader to the proof of \cite{ZemDualityMappingTori}*{Proposition~4.6},  where the claim is proven for $\Gamma_1$ and $\Gamma_2$. The proof therein extends easily to $\Gamma_3$. We note that Equation~\eqref{eq:switchAtoB} implies that switching from type-$A$ to type-$B$ corresponds to reversing the cyclic orders (immediately implying that the type-$A$ and $B$ maps agree for $\Gamma_4$). Hence, on a more conceptual level, one can use several applications of the vertex breaking relation in Figure~\ref{fig::45} to relate the type-$A$ the graph cobordism map for $\Gamma_i$ to the type-$A$ graph cobordism map for $\bar{\Gamma}_i$, for $i\in \{1,2,3,4\}$, though we leave this exercise to the reader.
\end{proof}

 \begin{figure}[ht!]
 	\centering
 	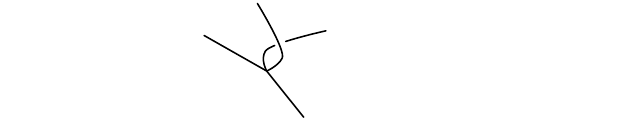
 	\caption{\textbf{The vertex breaking relation.} The vertex breaking relation illustrates the effect of changing the relative ordering of two edges adjacent at a vertex $v_0$. Note that the embedding of the actual edges at a vertex does not effect the induced map, though it makes it clearer to switch the embedding between the first and second pictures.}\label{fig::45}
 \end{figure}

The graph cobordism computations from Proposition~\ref{prop:graphcobordismcomp} provide a useful description of certain reductions of the link cobordism maps:

\begin{lem}\label{lem:computereductionsconnectedsurface}Suppose that $(W,\cF)\colon (Y_1,\bK_1)\to (Y_2,\bK_2)$ is a decorated knot cobordism, such that $\cF$ is a connected, decorated surface whose dividing set consists of exactly two arcs, $a_1$ and $a_2$, which divide $\cF$ into two connected components. Then
\[
F_{W,\cF,\frs}|_{V=1}(-)\simeq F_{W,c_{\ws},\frs}(\iota_*\xi(\Sigma_{\ws})\otimes -)\qquad \text{and} \qquad F_{W,\cF,\frs}|_{U=1}(-)\simeq  F_{W,c_{\zs},\frs-\PD[\Sigma]}(\iota_* \xi(\Sigma_{\zs})\otimes -),
\]
where $c_{\ws}$ and $c_{\zs}$ denote any choice of paths in $W$ formed by concatenating either of the arcs, $a_1$ or $a_2$, with subarcs of $\Sigma_{\ws}\cap \bK_i$ or $\Sigma_{\zs}\cap \bK_i$, respectively. Here $\iota\colon \Sigma\hookrightarrow W$ denotes inclusion.
\end{lem}

\begin{proof}  From Theorem~\ref{thm:generalreductionformula} we know that
\begin{equation}
F_{W,\cF,\frs}|_{V=1}\simeq F_{W,\Gamma_{\ws},\frs}^B, \label{eq:linkcobtographred1}
\end{equation}
where $\Gamma_{\ws}$ is a ribbon 1-skeleton of the type-$\ws$ subsurface $\Sigma_{\ws}$ of $\cF$. We will use the graph cobordism computations from Proposition~\ref{prop:graphcobordismcomp}. The key observation is that a ribbon 1-skeleton for $\Sigma_{\ws}$ can be constructed from choice of geometric symplectic basis of $H_1(\Sigma_{\ws};\Z)$, as well as an additional arc, which will be the path $c_{\ws}$. Let $c_{\ws}$ be one of the two embedded paths on $\Sigma_{\ws}$, which connect the two $\ws$ basepoints and run parallel to $\d \Sigma_{\ws}$. Suppose $A_1,\dots, A_g, $ $B_1,\dots, B_g$  is a symplectic basis of $H_1(\Sigma_{\ws};\Z)$. We can now construct a ribbon 1-skeleton for $\Sigma_{\ws}$ by isotoping each $A_i$ so that it intersects $c_{\ws}$ non-transversally at a single point. This procedure is shown in Figure~\ref{fig::46}.  Write $\Gamma_{\ws}$ for the ribbon 1-skeleton formed via this procedure.

 \begin{figure}[ht!]
 	\centering
 	%% Creator: Inkscape inkscape 0.92.3, www.inkscape.org
%% PDF/EPS/PS + LaTeX output extension by Johan Engelen, 2010
%% Accompanies image file '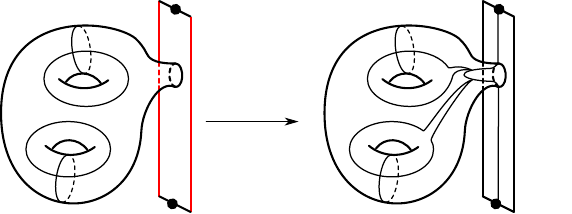' (pdf, eps, ps)
%%
%% To include the image in your LaTeX document, write
%%   \input{<filename>.pdf_tex}
%%  instead of
%%   \includegraphics{<filename>.pdf}
%% To scale the image, write
%%   \def\svgwidth{<desired width>}
%%   \input{<filename>.pdf_tex}
%%  instead of
%%   \includegraphics[width=<desired width>]{<filename>.pdf}
%%
%% Images with a different path to the parent latex file can
%% be accessed with the `import' package (which may need to be
%% installed) using
%%   \usepackage{import}
%% in the preamble, and then including the image with
%%   \import{<path to file>}{<filename>.pdf_tex}
%% Alternatively, one can specify
%%   \graphicspath{{<path to file>/}}
%% 
%% For more information, please see info/svg-inkscape on CTAN:
%%   http://tug.ctan.org/tex-archive/info/svg-inkscape
%%
\begingroup%
  \makeatletter%
  \providecommand\color[2][]{%
    \errmessage{(Inkscape) Color is used for the text in Inkscape, but the package 'color.sty' is not loaded}%
    \renewcommand\color[2][]{}%
  }%
  \providecommand\transparent[1]{%
    \errmessage{(Inkscape) Transparency is used (non-zero) for the text in Inkscape, but the package 'transparent.sty' is not loaded}%
    \renewcommand\transparent[1]{}%
  }%
  \providecommand\rotatebox[2]{#2}%
  \newcommand*\fsize{\dimexpr\f@size pt\relax}%
  \newcommand*\lineheight[1]{\fontsize{\fsize}{#1\fsize}\selectfont}%
  \ifx\svgwidth\undefined%
    \setlength{\unitlength}{274.2311966bp}%
    \ifx\svgscale\undefined%
      \relax%
    \else%
      \setlength{\unitlength}{\unitlength * \real{\svgscale}}%
    \fi%
  \else%
    \setlength{\unitlength}{\svgwidth}%
  \fi%
  \global\let\svgwidth\undefined%
  \global\let\svgscale\undefined%
  \makeatother%
  \begin{picture}(1,0.3728839)%
    \lineheight{1}%
    \setlength\tabcolsep{0pt}%
    \put(0,0){\includegraphics[width=\unitlength,page=1]{fig46.pdf}}%
    \put(0.92085829,0.31246524){\color[rgb]{0,0,0}\makebox(0,0)[lt]{\lineheight{1.25}\smash{\begin{tabular}[t]{l}$c_{\ws}$\end{tabular}}}}%
    \put(0,0){\includegraphics[width=\unitlength,page=2]{fig46.pdf}}%
    \put(0.34162257,0.27206134){\color[rgb]{1,0,0}\makebox(0,0)[lt]{\lineheight{1.25}\smash{\begin{tabular}[t]{l}$a_1$\end{tabular}}}}%
    \put(0.27280649,0.32089914){\color[rgb]{1,0,0}\makebox(0,0)[rt]{\lineheight{1.25}\smash{\begin{tabular}[t]{r}$a_2$\end{tabular}}}}%
  \end{picture}%
\endgroup%

 	\caption{\textbf{Constructing a ribbon 1-skeleton of $\Sigma_{\ws}$ from a geometric symplectic basis of $H_1(\Sigma_{\ws};\Z)$, and the additional arc $c_{\ws}$.} On the left are the curves in the symplectic basis. The two solid dots are the $\ws$ basepoints. On the right, a ribbon 1-skeleton has been constructed, using the symplectic basis and the arc $c_{\ws}$. The dividing arcs of $\cF$, labeled  $a_1$ and $a_2$, are shown in red on the left side.}\label{fig::46}
 \end{figure}

We can now use Proposition~\ref{prop:graphcobordismcomp} to compute the graph cobordism map $F_{W,\Gamma_{\ws},\frs}^B$. We decompose the 4-manifold $W$ into a sequence of handle attachments, so that the 1-handles occur before the 2-handles, which occur before the 3-handles. It is straightforward to arrange that all of the loops of $\Gamma_{\ws}$ occur in a product cobordism $[0,1]\times Y\subset W$ which occurs between the 1-handles and the 2-handles. Furthermore, since $W$ is a 4-manifold, it is straightforward to arrange $\Gamma_{\ws}\cap ([0,1]\times Y)$ so that it is a composition of graph cobordisms, each with two loops, and with the same configuration as the graph cobordism $([0,1]\times Y,\Gamma_2)$, shown in Figure~\ref{fig::41}. Using the computation from Proposition~\ref{prop:graphcobordismcomp}, combined with the composition law, it follows that
\begin{equation}
F_{W,\Gamma_{\ws},\frs}^B(-)\simeq F_{W,c_{\ws},\frs}(\xi(\Sigma_{\ws})\otimes -).\label{eq:linkcobtographred2}
\end{equation}
Combining Equation~\eqref{eq:linkcobtographred1} and \eqref{eq:linkcobtographred2} implies the stated formula for the $V=1$ reduction of $F_{W,\cF,\frs}$.

A similar argument works for the $U=1$ reduction.
\end{proof}

\subsection{The cobordism maps for closed surfaces}
\label{sec:cobordismmapsforclosedsurfaces}
In this section, we compute the cobordism maps for link cobordism obtained by puncturing a closed, decorated surface inside of a 4-manifold $W$ with $\d W=-Y_1\sqcup Y_2$.

Suppose that $\cF$ is a closed, decorated surface in a cobordism $W\colon Y_1\to Y_2$. Suppose furthermore that $W$, $Y_1$ and $Y_2$ are nonempty and connected. Let $D_1$ and $D_2$ be two embedded disks in $\cF$, which each intersect the dividing set of $\cF$ in a single arc.  Let $\cF_0$ be a properly embedded decorated surface in $W$, obtained by isotoping $\cF$ so that it intersects $Y_i$ along $D_i$, and then removing $D_1$ and $D_2$ from $\cF$. Let $\bU_i$ denote the decorated unknot in $Y_i$ obtained by adding two basepoints to $\d D_i$ in such a way that $(W,\cF_0)$ becomes a decorated link cobordism from $(Y_1,\bU_1)$ to $(Y_2,\bU_2)$.

Let  $\bF_2[\hat{U}]$ denote the free polynomial ring in the variable $\hat{U}$. We give the ring $\bF_2[U,V]$ an action of $\bF_2[\hat{U}]$ by declaring $\hat{U}$ to act by $UV$.

\begin{comment} There is a canonical isomorphism
\[
\bF_2[U,V]\iso \bF_2[\hat{U}]\otimes_{\bF_2[\hat{U}]} \bF_2[U,V],
\]
which is an isomorphism of both $\bF_2[\hat{U}]$- and $\bF_2[U,V]$-modules.
\end{comment}

If $\bU$ is an unknot in $Y$ with exactly two basepoints, $w$ and $z$, as well as a distinguished Seifert disk $D$, then we can restrict attention to diagrams $\cH$ for $(Y,\bU)$, where the disk $D$ intersects the Heegaard surface in an arc connecting $w$ and $z$ which is disjoint from the  $\as$ and $\bs$ curves on $\cH$. If $p$ denotes the center of $D$, then by viewing $\CF^-(Y,p,\frs)$ as an $\bF_2[\hat{U}]$-module, we obtain a canonical isomorphism
\[
\cCFL^-(Y,\bU,\frs)\iso \CF^-(Y,p,\frs)\otimes_{\bF_2[\hat{U}]} \bF_2[U,V].
\]
In particular, if $(W,\Gamma)\colon (Y_1,p_1)\to (Y_2,p_2)$ is a graph cobordism, the graph cobordism map $F_{W,\Gamma,\frs}$ determines a map from $\cCFL^-(Y_1,\bU_1,\frs|_{Y_1})$ to $\cCFL^-(Y_2,\bU_2,\frs|_{Y_2})$, for which we write $F_{W,\Gamma,\frs}|^{\bF_2[U,V]}$. To be explicit, if the expression $F_{W,\Gamma,\frs}(\xs)$ contains the summand $\hat{U}^\ell \cdot \ys$, then $F_{W,\Gamma,\frs}|^{\bF_2[U,V]}(\xs)$ contains the summand $U^\ell V^\ell \cdot \ys$.

\begin{prop}\label{prop:reductionformulasforpuncturedsurfaces}Suppose that $\cF=(\Sigma,\cA)$ is a closed, decorated surface inside of the cobordism $W\colon Y_1\to Y_2$, and let $(W,\cF_0)$ denote the decorated link cobordism obtained by isotoping $\cF$ so that it intersects $Y_1$ and $Y_2$ in two disks, and then removing those two disks, as described above. Let $\Delta A$ denote the quantity
\[
\Delta A=\frac{\langle c_1(\frs),\Sigma\rangle-[\Sigma]\cdot [\Sigma]}{2}+\frac{\chi(\Sigma_{\ws})-\chi(\Sigma_{\zs})}{2}.
\]
\begin{enumerate}
\item\label{prop:red:claim1}  In general,
\[
F_{W,\cF_0,\frs}\simeq \begin{cases}  V^{\Delta A} \cdot F_{W,\Gamma_{\ws},\frs}^B|^{\bF_2[U,V]}& \text{ if }\quad \Delta A\ge 0,\\
U^{-\Delta A}\cdot F_{W,\Gamma_{\zs},\frs-\PD[\Sigma]}^A|^{\bF_2[U,V]}& \text{ if }\quad \Delta A\le 0,
\end{cases}
\]
where $\Gamma_{\ws}\subset \Sigma_{\ws}$ and $\Gamma_{\zs}\subset \Sigma_{\zs}$ are ribbon 1-skeleta.

\item \label{prop:red:claim2} If $\Sigma$ is connected and $\cA$ consists of a single closed curve $a$, dividing $\Sigma$ into two connected components, then
\[
F_{W,\cF_0,\frs}\simeq \begin{cases} V^{\Delta A}\cdot F_{W,c,\frs}(\iota_* \xi(\Sigma_{\ws})\otimes -)|^{\bF_2[U,V]} & \text{ if } \quad \Delta A\ge 0
\\ U^{-\Delta A} \cdot F_{W,c,\frs-\PD[\Sigma]}(\iota_*\xi(\Sigma_{\zs})\otimes -)|^{\bF_2[U,V]} & \text{ if } \quad \Delta A\le 0,
\end{cases}
\]
where $c$ is one of the two dividing arcs in $\cF_0$, viewed as a path from $Y_1$ to $Y_2$.
\end{enumerate}
\end{prop}

\begin{proof}We consider Claim~\eqref{prop:red:claim1} first.  Let $D_1$ and $D_2$ denote the two disks of $\Sigma$, which are pushed into $Y_1$ and $Y_2$, and let $U_1$ and $U_2$ denote $\d  D_1$ and $\d D_2$. We pick Heegaard diagrams $\cH_1=(\Sigma_1,\as_1,\bs_1,w_1,z_1)$ and $\cH_2=(\Sigma_2,\as_2,\bs_2,w_2,z_2)$ such that  the disk $D_i$ intersects $\Sigma_i$ along an arc connecting $w_i$ and $z_i$ which is disjoint from $\as_i$ and $\bs_i$. On this diagram, the Alexander grading on $\cCFL^-(Y_i,\bU_i,\frs|_{Y_i})$ with respect to the Seifert disk $D_i$ is given by 
\[A_{D_i}(U^m V^n \cdot \xs)=(n-m).\] In particular,  the map $F_{W,\cF,\frs}$ is determined entirely by the reduction $F_{W,\cF,\frs}|_{V=1}$ and the Alexander grading change $\Delta A$. If $\Delta A\ge 0$, then $F_{W,\cF,\frs}$ can be recovered from its $V=1$ reduction by correcting powers of $V$ via the formula
\begin{equation}
F_{W,\cF_0,\frs}\simeq V^{\Delta A}\cdot (F_{W,\cF_0,\frs}|_{V=1})|^{\bF_2[U,V]}.\label{eq:extensionofV=1reduction}
\end{equation}
Note that $\Delta A$ must be nonnegative for Equation~\eqref{eq:extensionofV=1reduction} to be satisfied, since otherwise the right hand side is not well defined up to filtered, equivariant chain homotopy.

If $\Delta A\le 0$, a similar argument shows that
\begin{equation}
F_{W,\cF_0,\frs}\simeq U^{-\Delta A}\cdot (F_{W,\cF_0,\frs}|_{U=1})|^{\bF_2[U,V]}\label{eq:extensionofU=1reduction}.
\end{equation}
Combining Equations~\eqref{eq:extensionofV=1reduction} and \eqref{eq:extensionofU=1reduction} with Theorem~\ref{thm:generalreductionformula} concludes the proof of Claim~\eqref{prop:red:claim1}.

Claim~\eqref{prop:red:claim2} follows from Claim~\eqref{prop:red:claim1}, together with Lemma~\ref{lem:computereductionsconnectedsurface}.
\end{proof}

\subsection{The cobordism maps on $\cHFL^\infty$}
\label{sec:4-manifoldmaps}

In this section we compute the maps associated to knot cobordisms on the level of $\cHFL^\infty$ when the dividing set is relatively simple, focusing on the case that the 4-manifold is negative definite.

When $\bL$ is not an unlink, there is in general no way to recover the $\Z\oplus \Z$-filtered complex $\cCFL^\infty(Y,\bL,\frs)$ from $\CF^\infty(Y,\ws,\frs)$, however we can recover $\cHFL^\infty(Y,\bL,\frs)$ from $\HF^\infty(Y,\ws,\frs)$, as we now describe.

As a general algebraic principle, if $R$ and $R_0$ are two unital rings such that $R_0$ has an action of $R$, and $N$ is an $R$-module, then there is a natural map
\[
N\to N\otimes_R R_0,
\]
which is simply $n\mapsto n\otimes 1_{R_0}$. 

Hence, using the isomorphisms in Equation~\eqref{eq:tensorreductions}, we obtain chain maps
\begin{equation}
R_{\ws}\colon \cCFL^\infty(Y,\bL,\frs)\to \CF^\infty(Y,\ws,\frs) \quad \text{and} \quad R_{\zs}\colon \cCFL^\infty(Y,\bL,\frs)\to \CF^\infty(Y,\zs,\frs-\PD[L]), \label{eq:reductionmaps}
\end{equation}
 by mapping $V$ to $1$, or mapping $U$ to $1$, respectively.

Suppose $\bL$ is a null-homologous link in $Y$, and $\frs\in \Spin^c(Y)$ is torsion. We can decompose $\cCFL^\infty(Y,\bL,\frs)$ as a direct sum over (collapsed) Alexander gradings
\[
\cCFL^\infty(Y,\bL,\frs)=\bigoplus_{i\in \Z} \cCFL^{\infty}(Y,\bL,\frs)_i.
\]
Note that the direct sum is not of $\bF_2[U,V,U^{-1},V^{-1}]$-modules, but instead of $\bF_2[\hat{U}, \hat{U}^{-1}]$-modules.

We define maps 
\[
(R_{\ws})_i\colon \cCFL^\infty(Y,\bL,\frs)_i\to \CF^\infty(Y,\ws,\frs) \qquad \text{and} \qquad (R_{\zs})_i\colon \cCFL^\infty(Y,\bL,\frs)_i\to \CF^\infty(Y,\zs,\frs), 
\]
as the restriction of the reduction maps $R_{\ws}$ and $R_{\zs}$ from Equation~\eqref{eq:reductionmaps} to $\cCFL^\infty(Y,\bL,\frs)_i$. 

\begin{lem}\label{lem:reductionmapischainisomorphism}Suppose that $\bL$ is a null-homologous link in $Y$. The map
\[
(R_{\ws})_i\colon \cCFL^\infty(Y,\bL,\frs)_i\to  \CF^\infty(Y,\ws,\frs),
\]
is an isomorphism of chain complexes over $\bF_2[\hat{U},\hat{U}^{-1}]$.
\end{lem}
\begin{proof} Suppose $\cH=(\Sigma,\as,\bs,\ws,\zs)$ is a fixed diagram of $(Y,\bL)$. We define an inverse $(Q_{\ws})_i$ of $(R_{\ws})_i$ via the formula
\[
(Q_{\ws})_i(\hat{U}^{j}\cdot \xs)=U^j V^{j+i-A(\xs)} \cdot \xs.
\]
It is straightforward to see that $(Q_{\ws})_i$ is a chain map, and that $(R_{\ws})_i$ and $(Q_{\ws})_i$ are inverses of each other.
\end{proof}

Note that the map $(Q_{\ws})_i$ in Lemma~\ref{lem:reductionmapischainisomorphism} does not necessarily respect the $\Z\oplus \Z$-filtration on $\cCFL^\infty(Y,\bL,\frs)$.

Using Lemma~\ref{lem:reductionmapischainisomorphism}, we can define a chain isomorphism
\[
\ve{R}_{\ws}\colon \cCFL^\infty(Y,\bL,\frs)\to \bigoplus_{i\in \Z} \CF^\infty(Y,\ws,\frs),
\]
by taking the direct sum of all the $(R_{\ws})_i$. The map $\ve{R}_{\ws}$ intertwines the action of $V$ on $\cCFL^\infty(Y,\bL,\frs)$ with the endomorphism of  $\bigoplus_{i\in \Z} \CF^\infty(Y,\ws,\frs)$ which shifts the index $i$ by $+1$.

\begin{thm}\label{thm:mostgeneralcompHFLinfty} Suppose that $(W,\cF)\colon (Y_1,\bK_1)\to (Y_2,\bK_2)$ is a knot cobordism.  Write $\cF=(\Sigma,\cA)$.
\begin{enumerate}
\item \label{prop:infinitycomp:claim1}If $\Gamma_{\ws}$ and $\Gamma_{\zs}$ are ribbon 1-skeleta of $\Sigma_{\ws}$ and $\Sigma_{\zs}$, then
\[
F_{W,\cF,\frs}=V^{\Delta A}\cdot \ve{R}_{\ws}^{-1}\circ F_{W,\Gamma_{\ws},\frs}^B\circ \ve{R}_{\ws}
\] and
\[
F_{W,\cF,\frs}=U^{-\Delta A}\cdot \ve{R}_{\zs}^{-1}\circ F_{W,\Gamma_{\zs},\frs-\PD[\Sigma]}^A\circ \ve{R}_{\zs}
\]
as maps from $\cHFL^\infty(Y_1,\bK_1,\frs|_{Y_1})$ to $\cHFL^\infty(Y_2,\bK_2,\frs|_{Y_2})$.  Here 
\[
\Delta A:=\frac{\langle c_1(\frs), \Sigma\rangle -[\Sigma]\cdot [\Sigma]}{2}+\frac{\chi(\Sigma_{\ws})-\chi(\Sigma_{\zs})}{2}
\]
denotes the Alexander grading change.
\item \label{prop:infinitycomp:claim2} Suppose further that each $\bK_i$ is a null-homologous knot with two basepoints and $\cA$ consists of two arcs running from $\bK_1$ to $\bK_2$ (necessarily implying that $\Sigma,$ $\Sigma_{\ws}$ and $\Sigma_{\zs}$ are connected). If $Y_1 $ and $Y_2$ are rational homology 3-spheres and $b_1(W)=b_2^+(W)=0$,  then the induced map on homology
\[
F_{W,\cF,\frs}\colon \cHFL^\infty(Y_1,\bK_1,\frs|_{Y_1})\to \cHFL^\infty(Y_2,\bK_2,\frs|_{Y_2})
\]
 is an isomorphism. 
 \item \label{prop:infinitycomp:claim3} If $(W,\cF)\colon (S^3,\bK_1)\to (S^3,\bK_2)$ is a knot cobordism such that $\cF$ is decorated as in Part~\eqref{prop:infinitycomp:claim2} and $b_1(W)=b_2^+(W)=0$, then under canonical the identification 
 \[
 \cHFL^\infty(S^3,\bK_i)\iso \cR^\infty=\bF_2[U,U^{-1},V,V^{-1}]
 \]
  given by the gradings, the map $F_{W,\cF,\frs}$ is equal to the map
\[
1\mapsto U^{-d_1/2} V^{-d_2/2},
\]
 where
\[
d_1=\frac{c_1(\frs)^2-2\chi(W)-3\sigma(W)}{4}-2g(\Sigma_{\ve{w}})
\]
 and
\[
d_2=\frac{c_1(\frs-\PD[\Sigma])^2-2\chi(W)-3\sigma(W)}{4}-2g(\Sigma_{\ve{z}}).
\]
\end{enumerate}
\end{thm}

\begin{proof} We consider Claim~\eqref{prop:infinitycomp:claim1} first. It follows from Theorem~\ref{thm:generalreductionformula} and the Alexander grading formula that \[
(R_{\ws})_{i+\Delta A}\circ (F_{W,\cF,\frs})_i=F_{W,\Gamma_{\ws},\frs}^B\circ (R_{\ws})_i,
\]
where $(F_{W,\cF,\frs})_i$ denotes $F_{W,\cF,\frs}$ restricted to the $i^{\text{th}}$ Alexander grading. Noting that $(R_{\ws})_{i+\Delta A}=(R_{\ws})_{i}\circ V^{-\Delta A}$, we obtain the formula
\[
(F_{W,\cF,\frs})_i=V^{\Delta A}\cdot (R_{\ws})_i^{-1}\circ F_{W,\Gamma_{\ws},\frs}^B\circ (R_{\ws})_i
\] 
Taking the direct sum over Alexander gradings yields the first formula in Claim~\eqref{prop:infinitycomp:claim1}. This strategy also adapts to prove the stated formula for $\ve{R}_{\zs}$ and $F_{W,\Gamma_{\zs},\frs-\PD[\Sigma]}^A$ as well.

We now consider Claim~\eqref{prop:infinitycomp:claim2}. By Lemma~\ref{lem:computereductionsconnectedsurface}
\[
F_{W,\cF,\frs}|_{V=1}\simeq F_{W,c_{\ws},\frs}(\iota_*\xi(\Sigma_{\ws})\otimes -),
\]
for some path $c_{\ws}$ from $Y_1$ to $Y_2$. According to the proof of \cite{OSIntersectionForms}*{Theorem 9.6},  the map $F_{W,c_{\ws},\frs}$ is an isomorphism on $\HF^\infty$ for all $\frs\in \Spin^c(W)$. Since $b_1(W)=0$, the actions of the elements in $\Lambda^*(H_1(W;\Z)/\Tors)$ in the formula for $\xi(\Sigma_{\ws})$ vanish, and hence 
\[
F_{W,c_{\ws},\frs}(\iota_*\xi(\Sigma_{\ws})\otimes -)=U^{g(\Sigma_{\ws})}\cdot F_{W,c_{\ws},\frs}(-).
\]
 Since the reduction maps $\ve{R}_{\ws}$ are isomorphisms on homology by Lemma~\ref{lem:reductionmapischainisomorphism}, by applying Claim~\eqref{prop:infinitycomp:claim1} we conclude that $F_{W,\cF,\frs}$ is an isomorphism on $\cHFL^\infty$, since it is a composition of isomorphisms.

Finally, we note that Claim~\eqref{prop:infinitycomp:claim3} follows from Claim~\eqref{prop:infinitycomp:claim2}, as well as our grading formula from Theorem~\ref{thm:maingradingchangeformula}, since the map $1\mapsto U^{-d_1/2} V^{-d_2/2}$ is the unique map from $\cR^\infty$ to $\cR^\infty$ which is an isomorphism of groups, and induces the correct grading change.
\end{proof}

\begin{customcor}{\ref{cor:2knotcomp}} Suppose that $\Sigma\subset S^4$ is a closed, oriented and connected surface, and $\cA$ is a simple closed curve on $\Sigma$ which divides $\Sigma$ into two connected subsurfaces, $\Sigma_{\ve{w}}$ and $\Sigma_{\ve{z}}$. The link cobordism map
\[
F_{S^4,\cF,\frs_0}\colon \cCFL^-(\varnothing)\to \cCFL^-(\varnothing)
 \] 
 is equal to the map 
\[
1\mapsto U^{g(\Sigma_{\ve{w}})} V^{g(\Sigma_{\ve{z}})},
\]
under the canonical identification of $\cCFL^-(\varnothing)\iso \bF_2[U,V]$.
\end{customcor}

In particular the map for the link cobordism obtained by puncturing a 2-knot in any homotopy $S^4$ is the identity map.

\begin{rem}Combining Corollary~\ref{cor:2knotcomp} with the composition law, we can compute the effect of taking the connected sum of a link cobordism $(W,\cF)$ with an oriented surface $\Sigma$ contained in a ball in $W$ which is disjoint from $\cF$. If we add $\Sigma$ to a type-$\ws$ region, then the effect is to multiply $F_{W,\cF,\frs}$ by $U^{g(\Sigma)}$. If we add $\Sigma$ to a type-$\zs$ region, then the effect is to multiply $F_{W,\cF,\frs}$ by $V^{g(\Sigma)}$. 
\end{rem}

\section{Link cobordism proofs of bounds on $\tau(K)$ and $V_k(K)$}
\label{sec:bounds}
In this section, we show that the link cobordism maps give simple proofs on the bounds on $\tau(K)$ and $V_k(K)$ stated in the introduction.

Let $\CFK^-(K)$ denote the module
\[
\CFK^-(K):=\cCFL^-(S^3,\bK,\frs_0)\otimes_{\bF_2[U,V]} \bF_2[U,V]/(V),
\]
i.e.,  $\CFK^-(K)$ is the free $\bF_2[U]$-module generated by intersection points $\xs\in \bT_{\alpha}\cap \bT_{\beta}$, and the differential counts disks $\phi$ with $n_z(\phi)=0$. We let $\HFK^-(K)$ denote the homology of $\CFK^-(K)$. We note that $\HFK^-(K)$ is isomorphic to $\bF_2[U]\oplus T$ for some torsion $\bF_2[U]$-module $T$. 

We note that by \cite{OSTLegendrian}*{Lemma~A.2},
\[
\tau(K)=-\max\{A(x): x\in \HFK^-(K) \text{ is homogeneous and non-torsion}\}.
\]

Similarly, there is an $\infty$ flavor of $\CFK^-(K)$. To avoid confusion with the bifiltered $\CFK^\infty(K)$, which is the zero Alexander graded part of $\cCFL^\infty(S^3,\bK)$, we will write $U^{-1}\CFK^-(K)$ for the module $\cCFL^-(S^3,\bK)\otimes_{\bF_2[U,V]} \bF_2[U,U^{-1},V]/(V)$, and $\HFK^\infty(K)$ for the homology group $H_*(U^{-1} \CFK^-(K))$. We note that
\[
\HFK^\infty(K)\iso \bF_2[U,U^{-1}].
\]
There is a natural map $\HFK^-(K)\to \HFK^\infty(K)$. An element $x\in \HFK^-(K)$ is non-torsion if and only if its image in $\HFK^\infty(K)$ is non-vanishing.

Suppose that $W$ is an oriented 4-manifold with boundary equal to two rational homology spheres, and $b_2^+(W)=b_1(W)=0$. If $[\Sigma]\in H_2(W,\d W;\Z)$ is a class whose image in $H_1(\d W;\Z)$ vanishes, then we can uniquely pull $[\Sigma]$ back to a class in $H_2(W;\Z)/\Tors$. With this in mind, we define 
\[
|[\Sigma]|:=\max_{\substack{C\in \Char(Q_W)\\ C^2=-b_2(W)}} \langle [\Sigma], H\rangle.
\]

If $\d W=S^3\sqcup S^3$, we can use Donaldson's diagonalization theorem to pick an orthonormal basis $e_1,\dots, e_b$  of $H^2(W;\Z)/\Tors$, and write $\PD[\Sigma]=s_1\cdot e_1+\cdots +s_b\cdot e_b.$ In this case, we have
\[
\big|[\Sigma]\big|=|s_1|+\cdots +|s_b|.
\]

We now give our link cobordism proof of Ozsv\'{a}th and Szab\'{o}'s bound:
\begin{thm}[\cite{OS4ballgenus}*{Theorem 1.1}]\label{thm:OSboundtauproof}Suppose that $(W,\Sigma)\colon (S^3,K_1)\to (S^3,K_2)$ is an oriented knot cobordism with $b_2^+(W)=b_1(W)=0$. Then 
\[
\tau(K_2)\le \tau(K_1)-\frac{\big|[\Sigma]\big|+[\Sigma]\cdot [\Sigma]}{2}+g(\Sigma).
\]
\end{thm}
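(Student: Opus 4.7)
My plan is to deduce the bound from Theorem \ref{thm:2} by choosing a decorated link cobordism structure on $(W,\Sigma)$ and a $\Spin^c$ structure that (i) makes the Alexander grading shift as favorable as possible, and (ii) makes the induced map on $\hat{HF}(S^3)$ nontrivial.

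First, I would decorate $\Sigma$ with a dividing set $\cA$ consisting of two arcs from $K_1$ to $K_2$ chosen so that $\Sigma_{\ve{w}}$ is an annulus (hence $\chi(\Sigma_{\ve{w}})=0$) and $\Sigma_{\ve{z}}$ is a surface of genus $g(\Sigma)$ with two boundary arcs (hence $\chi(\Sigma_{\ve{z}})=-2g(\Sigma)$). This is the simple configuration from Theorem \ref{thm:10}, which immediately yields $\tfrac{1}{2}(\chi(\Sigma_{\ve{w}})-\chi(\Sigma_{\ve{z}}))=g(\Sigma)$, providing the $g(\Sigma)$ term in the bound.

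Next I would choose the $\Spin^c$ structure. Pick an orthonormal basis $e_1,\dots,e_b$ of $H^2(W;\Z)$ and write $[\Sigma]=\sum s_i e_i$. Choose $\frs\in \Spin^c(W)$ characteristic with $\langle c_1(\frs),e_i\rangle=-\sgn(s_i)$ when $s_i\ne 0$ (and any odd value otherwise); such an $\frs$ exists because each $e_i\cdot e_i=-1$ so each coefficient must be odd, which is compatible with $\pm 1$. Then $\langle c_1(\frs),[\hat{\Sigma}]\rangle=-|[\Sigma]|$ and $[\hat{\Sigma}]\cdot[\hat{\Sigma}]=[\Sigma]\cdot[\Sigma]$ since capping with Seifert surfaces in $S^3$ does not affect self-intersection. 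Theorem \ref{thm:2} then gives Alexander grading shift
\[\Delta=\frac{-|[\Sigma]|-[\Sigma]\cdot[\Sigma]}{2}+g(\Sigma).\]

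The third and key step is to extract the $\tau$ bound from this grading shift. Collapse all $V_z$ variables to a single $V$ and specialize $V=1$; this turns $\cCFL^-(S^3,\bK_i,\frs_0)$ into the usual knot filtered complex $CF^-(S^3)$ filtered by $\cF_s$, and turns $F^-_{W,F,\frs}$ into a $\Z_2[U]$--equivariant chain map preserving the knot filtration up to the shift $\Delta$. Because $b_1(W)=b_2^+(W)=0$ and $c_1(\frs)$ is characteristic of the above form, $F^\infty_{W,\frs}:HF^\infty(S^3)\to HF^\infty(S^3)$ is an isomorphism by \cite{OSIntersectionForms}; hence so is the induced map on $HF^-$ in sufficiently negative Maslov grading. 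One then checks (using that the dividing set is simple, as in Theorem \ref{thm:10}) that the link cobordism map, after setting $V=1$, reduces to $F^-_{W,\frs}$ up to a nonzero multiplicative constant times a power of $U$, hence induces an isomorphism on $\hat{HF}(S^3)$ after further quotienting by $U$. This isomorphism property, together with the filtration shift, gives an inclusion $H_\ast(\cF_{s}(K_1))\to H_\ast(\cF_{s+\Delta}(K_2))\to \hat{HF}(S^3)$ nontrivial whenever $s\ge \tau(K_1)$, which forces $\tau(K_2)\le \tau(K_1)+\Delta$, i.e.\ the stated bound.

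The main obstacle I expect is Step 3: cleanly reducing the $V=1$ specialization of the link cobordism map to the ordinary Heegaard Floer cobordism map (times a tracked power of $U$), and deducing from negative definiteness that the induced map on $\hat{HF}(S^3)$ is nontrivial. The grading computation itself is immediate from Theorem \ref{thm:2} once the decoration and $\frs$ are chosen, so the topological/algebraic identification of the specialized map is where the real work lies; I would handle it either by invoking the computation underlying Theorem \ref{thm:10} directly, or by running the analogous argument for the simple closed surface complement and adapting it.
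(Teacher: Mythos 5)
Your proposal follows the same route as the paper: a strip decoration so that the Euler characteristic terms contribute $g(\Sigma)$, a characteristic $\Spin^c$ structure realizing $-\big|[\Sigma]\big|=\min_{C\in \Char(W),\, C\cdot C=-b}\langle C,[\Sigma]\rangle$, the nonvanishing of the $\hat{HF}(S^3)$ cobordism map from \cite{OSIntersectionForms}, and the $V=1$ (with $U=0$) specialization connecting the Alexander grading shift to the filtration shift defining $\tau$. The only slip is cosmetic: with two dividing arcs each of $\Sigma_{\ve{w}}$ and $\Sigma_{\ve{z}}$ has a single boundary circle, so the strip region is a disk with $\chi(\Sigma_{\ve{w}})=1$ and $\chi(\Sigma_{\ve{z}})=1-2g(\Sigma)$ rather than an annulus with $\chi=0$ and $-2g(\Sigma)$; the difference $\tfrac{1}{2}\big(\chi(\Sigma_{\ve{w}})-\chi(\Sigma_{\ve{z}})\big)=g(\Sigma)$ is unchanged, so the bound comes out the same.
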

\begin{proof}Let $\bK_1$ and $\bK_2$ denote $K_1$ and $K_2$ decorated with two basepoints. Construct a decorated link cobordism $(W,\cF)\colon (S^3, \bK_1)\to (S^3,\bK_2)$ by letting $\cF$ be obtained by adding two parallel dividing arcs to $\Sigma$, so that $\Sigma_{\zs}$ is a disk, and $\Sigma_{\ws}$ is a genus $g(\Sigma)$ surface with one boundary component. We have a commutative diagram
\begin{equation}
\begin{tikzcd} \cHFL^-(S^3,\bK_1)\arrow{d}\arrow{r}{F_{W,\cF,\frs}} &\cHFL^-(S^3,\bK_2)\arrow{d}\\
\HFK^-(K_1)\arrow{r}{F_{W,\cF,\frs}}& \HFK^-(K_2),
\end{tikzcd}
\label{eq:commutativediagramHFLinftytau}
\end{equation}
induced by the natural maps $\cCFL^-(S^3,\bK_i)\to \CFK^-(K_i).$ There is a similar commutative diagram involving $\cHFL^\infty$ and $\HFK^\infty.$ 

By Part~\eqref{prop:infinitycomp:claim3} of Theorem~\ref{thm:mostgeneralcompHFLinfty}, the map $F_{W,\cF,\frs}$ on $\cHFL^\infty$ will be multiplication by $U^{-d_1/2}V^{-d_2/2}$, where 
\[
d_1=\frac{c_1(\frs)^2+b_2(W)}{4}-2g(\Sigma)\qquad\text{and}\qquad  d_2=\frac{c_1(\frs-\PD[\Sigma])^2+b_2(W)}{4}. 
\]
From Equation~\eqref{eq:commutativediagramHFLinftytau} and its analog for $\cHFL^\infty$ and $\HFK^\infty$, the map $F_{W,\cF,\frs}$ will be an isomorphism on $\HFK^\infty$ if and only if $c_1(\frs-\PD[\Sigma])^2+b_2(W)=0$. Using Theorem~\ref{thm:maingradingchangeformula}, the Alexander grading change of $F_{W,\cF,\frs}$ is (after manipulating the expression slightly)
\[
\frac{\langle c_1(\frs-\PD[\Sigma]), [\Sigma]\rangle +[\Sigma]\cdot [\Sigma]}{2}-g(\Sigma).
\] 
Since $F_{W,\cF,\frs}$ maps non-torsion elements of $\HFK^-(K_1)$ to non-torsion elements of $\HFK^-(K_2)$ when $d_2=0$, it follows that
\begin{equation}
\tau(K_2)\le \tau(K_1)-\frac{\langle c_1(\frs-\PD[\Sigma]), [\Sigma]\rangle +[\Sigma]\cdot [\Sigma]}{2}+g(\Sigma),\label{eq:boundforparticularspincstructure}
\end{equation}
 for any $\frs$ with $c_1(\frs-\PD[\Sigma])^2+b_2(W)=0$. Minimizing Equation~\eqref{eq:boundforparticularspincstructure} over such $\Spin^c$ structures, we obtained the bound in the theorem statement.
\end{proof}

Next, we consider Rasmussen's local $h$-invariants, and the bounds on the slice genus. Recall that the standard, $\Z \oplus \Z$ filtered, full knot Floer complex $\CFK^\infty(K)$ is isomorphic to the subset of $\cCFL^\infty(S^3,\bK)$ in zero Alexander grading. Given an integer $k\ge 0$, we define the sub-complex of $\cCFL^\infty(S^3,\bK)$
\[
A_k(K):=\Span_{\bF_2}\{ U^i V^j\cdot \xs: A(\xs)+j-i=0, \quad i\ge 0,\quad  j\ge -k\}.
\]
Writing $\hat{U}$ for the  product $UV$, $A_k^-(K)$ is a $\bF_2[\hat{U}]$-module. The invariant $V_k(K)$ is defined as
\begin{equation}
V_k(K):=-\frac{1}{2}\max \{\gr(x): x\in H_*(A_k(K)) \text{ is homogeneous and non-torsion}\}.\label{eq:Vkdef}
\end{equation}
In the above expression, $\gr$ denotes either $\gr_{\ws}$ or $\gr_{\zs}$; they are equal when $A=0$.

We note that Rasmussen's original definition \cite{RasmussenKnots}*{Definition~7.1} for $V_k$ was in terms of the $d$-invariants of large surgeries on $K$, though using the large surgery formula \cite{OSKnots}*{Section~4},  Equation~\eqref{eq:Vkdef} is equivalent.

We now give a simple proof of Rasmussen's bound:
\begin{thm}[\cite{RasmussenGodaTeragaito}*{Theorem~2.3}]\label{thm:Rasmussensbound} If $K$ is a knot in $S^3$, then 
\[V_k(K)\le \left\lceil \frac{g_4(K)-k}{2} \right\rceil,\] for any $0\le k\le g_4(K)$.
\end{thm}
\begin{proof}Suppose that $\Sigma$ is a surface in $B^4$ with $\d \Sigma=K$. After puncturing $(B^4,\Sigma)$ along $\Sigma$, we obtain a genus $g$ knot cobordism $([0,1]\times S^3, \Sigma_0)$ from $(S^3,U)$ to $(S^3,K)$. By stabilizing $\Sigma_0$ with a null-homologous torus, if necessary, we may assume that $g(\Sigma_0)-k$ is even. We decorate $\Sigma_0$ with a dividing set $\cA$ consisting of two arcs, both running from $U$ to $K$, which divide $\Sigma_0$ into two connected components, $\Sigma_{\ws}$ and $\Sigma_{\zs}$, such that
\[
g(\Sigma_{\ws})=\frac{g(\Sigma)-k}{2}\qquad \text{and} \qquad g(\Sigma_{\zs})=\frac{g(\Sigma)+k}{2}.
\]
Let $(W,\cF)$ denote $([0,1]\times S^3,(\Sigma_0,\cA))$. On $\cHFL^\infty$, the map $F_{W,\cF,\frs}$ is multiplication by $U^{g(\Sigma_{\ws})} V^{g(\Sigma_{\zs})}$ by Part~\eqref{prop:infinitycomp:claim3} of Theorem~\ref{thm:mostgeneralcompHFLinfty}, and it changes Alexander grading by $g(\Sigma_{\zs})-g(\Sigma_{\ws})=k$. The map $F_{W,\cF,\frs}$ does not map $A_0^-(U)\iso \bF_2[\hat{U}]$ into $A_0^-(K)$, instead $F_{W,\cF,\frs}$ maps $\bF_2[\hat{U}]$ into the subset of $\cCFL^-(S^3,\bK)$ of Alexander grading $+k$. The subset of $\cCFL^-(S^3,\bK)$ in Alexander grading $k$ is canonically isomorphic as an $\bF[\hat{U}]$-module to $A_k^-(K)$; the isomorphism is given by the action of $V^{-k}$. Hence $V^{-k}\cdot F_{W,\cF,\frs}(1)$ is an element of $H_*(A_k^-(K))$ which is non-torsion. Furthermore, $V^{-k} \cdot F_{W,\cF,\frs}(1)$ has Maslov grading $-(g(\Sigma)-k)$ since the $\gr_{\ws}$-grading change of $V^{-k}\cdot F_{W,\cF,\frs}$ is $-(g(\Sigma)-k)$, and $\gr_{\ws}=\gr_{\zs}$ on $A_k^-(K)$. It follows that
\[
V_k(K)\le \frac{g(\Sigma)-k}{2},
\]
completing the proof.
\end{proof}

\section{$t$-modified knot Floer homology and a bound on the $\Upsilon_K(t)$ invariant}
\label{sec:Upsilon}
In \cite{OSSUpsilon}, Ozsv\'{a}th, Stipsicz and Szab\'{o} define a homomorphism from the smooth concordance group to the group of piecewise linear functions from $[0,2]$ to $\R$. In this section,  we prove our bound on $\Upsilon_K(t)$, Theorem~\ref{thm:upsiloninvariant}.

We recall the construction of $\Upsilon_K(t)$. Suppose that $K\subset S^3$ is an oriented knot, and  $\cH=(\Sigma, \as,\bs,w,z)$ is a diagram for $(S^3,K,w,z)$. If $t\in [0,2]$, we define the $t$-grading on intersection points $\ve{x}\in \bT_\alpha\cap \bT_\beta$ by
\[
\gr_t(\ve{x})=\left(1-\frac{t}{2}\right)\gr_{\ve{w}}(\ve{x})+\frac{t}{2} \gr_{\ve{z}}(\ve{x}).
\] 
If $t=\frac{m}{n}$, with $m$ and $n$ relatively prime,   $\tCFK^-(K)$ is the free $\bF_2[v^{1/n}]$-module generated by intersection points $\ve{x}\in \bT_{\alpha}\cap \bT_{\beta}$. The module $\tCFK^-(K)$ has an endomorphism
\[
\d(\ve{x})=\sum_{\ve{y}\in \bT_{\alpha}\cap \bT_{\beta}}\sum_{\substack{\phi\in\pi_2(\ve{x},\ve{y})\\ \mu(\phi)=1}} \# \Hat{\cM}(\phi)\cdot  v^{tn_z(\phi)+(2-t)n_w(\phi)}\cdot \ve{y},
\]
which squares to zero. The module $\tHFK^-(K)$ is defined as the homology of $(\tCFK^-(K),\d)$.

 The grading $\gr_t$ induces a grading on $\tCFK^-(K)$. The differential lowers degree by $1$, and the action of $v$ also lowers degree by $1$. The number $\Upsilon_K(t)\in \R$ is defined as the maximal $\gr_t$-grading of any homogeneous non-torsion element of $\tHFK^-(K)$.

We first need to understand the relationship between $\tCFK^-(K)$ and $\cCFL^-(S^3,\bK)$. We define the rings
\[
\cR_t^-=\bF_2[U,V,v^{1/n}]/(U-v^{2-t}, V-v^t)
\] 
and
\[
\cR_t^{\infty}:=\bF_2[U,U^{-1},V,V^{-1},v^{1/n}, v^{-1/n}]/(U-v^{2-t}, V-v^{t}).
\]

\begin{lem}\label{lem:tensorproductoftHFK}If $\bK=(K,w,z)$ is a doubly based knot in $S^3$, there are canonical isomorphisms 
\[
\cCFL^-(S^3,\bK)\otimes_{\bF_2[U,V]} \cR_t^-\iso \tCFK^-(K)\quad \text{and}\quad \cCFL^\infty(S^3,\bK)\otimes_{\bF_2[U,V,U^{-1},V^{-1}]} \cR_t^\infty\iso \tCFK^\infty(K).
\]
\end{lem}

\begin{proof}We focus on the first isomorphism, involving the minus flavors. We first describe an isomorphism between the rings $\cR_t^-$ and $\bF_2[v^{1/n}]$. Noting that $t=\tfrac{m}{n}$, we define a  map from $\bF_2[U,V,v^{1/n}]/(U-v^{2-t}, V-v^t)$ to $\bF_2[v^{1/n}]$ by the formula
\[
U^i V^j v^{s}\mapsto v^{i(2-t)+jt+s}
\]
 and a map in the opposite direction by the formula
\[
v^{s}\mapsto v^{s}.
\] To define maps between the chain complexes, we use the above maps on rings, extended over linear combinations of intersection points. That these maps are chain maps is immediate. It is also clear that these two maps are inverses of each other.

Essentially the same argument works for the $\infty$ flavors of the complexes.
\end{proof}

Phrased another way, $\cR_t^-$ is isomorphic to $\bF_2[v^{1/n}]$ with a module action of $\bF_2[U,V]$ declared. 

As a consequence, if $(W,\cF)\colon (S^3,\bK_1)\to (S^3,\bK_2)$ is a link cobordism and $\frs\in \Spin^c(W)$, then the link cobordism map $F_{W,\cF,\frs}$ induces a map 
\[ 
\tF_{W,\cF,\frs}\colon \tCFK^-(K_1)\to \tCFK^-(K_2).
\]

\begin{lem}\label{lem:tHFLmapnontrivialontpwers} Suppose that $(W,\cF)$ is a decorated knot cobordism from $(S^3,\bK_1)$ to $(S^3,\bK_2)$ and $\frs\in \Spin^c(W)$. Then the map $\tF_{W,\cF,\frs}\colon \tHFK^-(K_1)\to \tHFK^-(K_2)$ maps non-torsion elements to non-torsion elements if and only if the induced map $F_{W,\cF,\frs}\colon \cHFL^\infty(S^3,\bK_1)\to \cHFL^\infty(S^3,\bK_2)$ is an isomorphism.
\end{lem}
\begin{proof}Since $\cHFL^\infty(S^3,\bK_i)\iso \bF_2[U,V,U^{-1}, V^{-1}]$ and the map $F_{W,\cF,\frs}$ is graded and $\bF_2[U,V, U^{-1}, V^{-1}]$-equivariant, it follows that $F_{W,\cF,\frs}$ is an isomorphism on $\cHFL^\infty$ if and only if it is nonzero.

Similarly, the map $\tF_{W,\cF,\frs}$ maps non-torsion elements of $\tHFK^-(K_1)$ to non-torsion elements of $\tHFK^-(K_2)$ if and only if  the induced map on $\tHFK^\infty$ is non-zero.

Hence it is sufficient to show that the map $F_{W,\cF,\frs}$ is non-zero on $\cHFL^\infty$ if and only if $\tF_{W,\cF,\frs}$ is non-zero on $\tHFK^\infty$.

Using the $\gr_{\ws}$ and $\gr_{\zs}$ gradings, we can canonically identify $\cHFL^\infty(S^3,\bK_i)$ as $\bF_2[U,V,U^{-1},V^{-1}]$. Similarly, we can canonically identify $\tHFK^\infty(K_i)$ with $\bF_2[v^{-1/n},v^{1/n}]$. Since the maps $F_{W,\cF,\frs}$ and $\tF_{W,\cF,\frs}$ are graded, under the above identifications, they must be equal to multiplication by $c\cdot U^i V^j$ and $c'\cdot  v^\ell$, respectively, for $i,j\in \Z$, $\ell\in \R$ and $c,c'\in \bF_2$. We have a commutative diagram
\[
\begin{tikzcd}\cHFL^\infty(S^3,\bK_1)\arrow{d}\arrow{r}{F_{W,\cF,\frs}} &\cHFL^\infty(S^3,\bK_2)\arrow{d}\\
\tHFK^\infty(K_1)\arrow{r}{\tF_{W,\cF,\frs}}& \tHFK^\infty(K_2).
\end{tikzcd}
\]
Since the two vertical arrows are non-zero (they are the natural maps from $\bF_2[U,V,U^{-1},V^{-1}]$ to $\bF_2[U,V,U^{-1}, V^{-1}]\otimes \cR^\infty_t$), and the top horizontal arrow is identified with multiplication by $c\cdot  U^i V^j$ and the bottom arrow is identified with multiplication by $c'\cdot v^{\ell}$, we conclude that $c=c'$ and $\ell=i(2-t)+jt$. In particular, the map $F_{W,\cF,\frs}$ is nonzero if and only if $c=1$, which occurs if and only if $c'=1$, which occurs if and only if  $\tF_{W,\cF,\frs}$ is nonzero, completing the proof.
\end{proof}

If $(W,\cF)$ is a link cobordism, the $\gr_t$-grading change of the map $\tF_{W,\cF,\frs}$ can be computed using the $\gr_{\ve{w}}$- and $\gr_{\ve{z}}$-grading change formula from Theorem~\ref{thm:maingradingchangeformula}. If $\ve{x}$ is a homogeneously graded element, then
 \begin{equation}
 \begin{split}
 &\gr_{t}(F_{W,\cF,\frs}(\ve{x}))-\gr_t(\ve{x})\\
 =&\frac{c_1(\frs)^2-2\chi(W)-3\sigma(W)}{4}+t\cdot \left(\frac{-\langle c_1(\frs), \hat{\Sigma}\rangle+[\hat{\Sigma}]\cdot[\hat{\Sigma}]}{2}\right)+\left(1-\frac{t}{2}\right)\cdot \tilde{\chi}(\Sigma_{\ve{w}})+\frac{t}{2}\cdot\tilde{\chi}(\Sigma_{\ve{z}}).
 \end{split}
 \label{eq:grtgradingchange}
 \end{equation}

We now proceed to prove Theorem~\ref{thm:upsiloninvariant}. Recall from the introduction that if $W:S^3\to S^3$ is a cobordism and $[\Sigma]\in H_2(W,\d W;\Z)$ is a class, we can uniquely pull back $[\Sigma]$ to an element of $H_2(W;\Z)$, for which we will also write $\Sigma$. We define the quantity
\[
M_{[\Sigma]}(t):= \max_{C\in \Char(Q_W)} \frac{C^2+b_2(W)-2 t\cdot \langle C, [\Sigma]\rangle+2t\cdot [\Sigma]\cdot [\Sigma]}{4},
\]

 \begin{customthm}{\ref{thm:upsiloninvariant}}Suppose that $(W,\Sigma)\colon (S^3,K_1)\to (S^3,K_2)$ is an oriented knot cobordism with $b_2^+(W)=b_1(W)=0$. Then 
 \[
 \Upsilon_{K_2}(t)\ge \Upsilon_{K_1}(t)+ M_{[\Sigma]}(t)+ g(\Sigma)\cdot (|t-1|-1).
 \]
 \end{customthm}

 \begin{proof}We form the surface with divides $\cF$ by decorating $\Sigma$ with two parallel arcs, both running from $K_1$ to $K_2$, so that both $\Sigma_{\ws}$ and $\Sigma_{\zs}$ are connected and
 \[
g(\Sigma_{\ws})=0\qquad \text{and}\qquad  g(\Sigma_{\zs})=g(\Sigma). 
 \]
 By Part~\eqref{prop:infinitycomp:claim3} of Theorem~\ref{thm:mostgeneralcompHFLinfty}, the induced map $F_{W,\cF,\frs}$ from $\cHFL^\infty(S^3,\bK_1)$ to $\cHFL^\infty(S^3,\bK_2)$ can be identified with multiplication by $U^{-d_1/2}V^{-d_2/2}$, where $d_1$ is the change in $\gr_{\ws}$ grading, and $d_2$ is the change in $\gr_{\zs}$ grading. In particular, the map $F_{W,\cF,\frs}$ is nonzero on $\cHFL^\infty$. By Lemma~\ref{lem:tHFLmapnontrivialontpwers}, the induced map $\tF_{W,\cF,\frs}$ sends non-torsion elements of $\tHFK^-(K_1)$ to non-torsion elements of $\tHFK^-(K_2)$. Using the formula from Equation~\eqref{eq:grtgradingchange} for the change in the $\gr_t$ grading, we conclude that
 \[
\Upsilon_{K_2}(t)\ge \Upsilon_{K_1}(t)+G_{\frs}(t)-t\cdot g(\Sigma), 
 \]
 where 
 \[
 G_{\frs}(t):=\frac{c_1(\frs)^2-2\chi(W)-3\sigma(W)}{4}+t\cdot \left(\frac{-\langle c_1(\frs), \hat{\Sigma}\rangle+[\hat{\Sigma}]\cdot[\hat{\Sigma}]}{2}\right).
 \] Using the fact that $b_1(W)=b_2^+(W)=0$, we compute
 \[
G_{\frs}(t)=\frac{c_1(\frs)^2+b_2(W)-2t\cdot \langle c_1(\frs), [\Sigma]\rangle +2t\cdot [\Sigma]\cdot [\Sigma]}{4}.
 \]
 Taking the maximum over all $\frs\in \Spin^c(W)$, we obtain that
 \begin{equation}
\Upsilon_{K_2}(t)\ge \Upsilon_{K_1}(t)+M_{[\Sigma]}(t)-t\cdot g(\Sigma). \label{eq:Upsilonbound1}
 \end{equation}
 
 An easy computation shows that  $G_{\frs}(t)=G_{\bar{\frs}+\PD[\Sigma]}(2-t)$, so
 \[
M_{[\Sigma]}(t)=M_{[\Sigma]}(2-t). 
 \]
 On the other hand $\Upsilon_{K_i}(t)=\Upsilon_{K_i}(2-t)$ by \cite{OSSUpsilon}*{Proposition~1.2}, so from Equation~\eqref{eq:Upsilonbound1} we obtain
 \begin{equation}
 \Upsilon_{K_2}(t)\ge \Upsilon_{K_1}(t)+M_{[\Sigma]}(t)-(2-t)\cdot g(\Sigma). \label{eq:Upsilonbound2}
 \end{equation}
 Combining Equation~\eqref{eq:Upsilonbound1} and \eqref{eq:Upsilonbound2} yields the theorem statement.
 
 \end{proof}

\begin{rem}One could hope to refine the above bound even further, by considering different surfaces with divides and trying to optimize the expression
 \begin{equation}
 \left(1-\frac{t}{2}\right)\cdot \tilde{\chi}(\Sigma_{\ve{w}})+\frac{t}{2}\cdot \tilde{\chi}(\Sigma_{\ve{z}}).\label{eq:surfacecontributiongrading}
 \end{equation}
It is straightforward to see that if we restrict to dividing sets on $\Sigma$ consisting of two arcs and $\Sigma_{\ws}$ and $\Sigma_{\zs}$ are both connected, then the expression in Equation~\eqref{eq:surfacecontributiongrading} is maximized when $g(\Sigma_{\ws})=g(\Sigma)$ or when $g(\Sigma_{\zs})=g(\Sigma)$, depending on the value of $t$. Furthermore, the maximum value is $g(\Sigma)\cdot (|t-1|-1)$. 

One could investigate more complicated dividing sets as well, though using Part~\eqref{prop:infinitycomp:claim1} of Theorem~\ref{thm:mostgeneralcompHFLinfty} we see that the induced map $F_{W,\cF,\frs}$ will be an isomorphism on $\cHFL^\infty$ if and only if the graph cobordism $F_{W,\Gamma_{\ws}, \frs}^B$ is an isomorphism on $\HF^\infty$, for a ribbon 1-skeleton $\Gamma_{\ws}$ of $\Sigma_{\ws}$. However, noting that $\Phi_w=0$ on $\HF^\infty$ for a singly based 3-manifold, it is not hard to use Proposition~\ref{prop:graphcobordismcomp} to show that if $b_1(W)=0$, then map $F_{W,\Gamma_{\ws},\frs}^B\colon \HF^\infty(S^3,w_1)\to \HF^\infty(S^3,w_2)$ is non-vanishing if and only if $\Sigma_{\ws}$ is connected and has exactly one boundary component (note that this condition is symmetric between $\Sigma_{\ws}$ and $\Sigma_{\zs}$, since $\Sigma_{\ws}$ is connected and has exactly one boundary component if and only if $\Sigma_{\zs}$ is connected and has exactly one boundary component).
 \end{rem}

\subsection{Additional examples of the bound}
 \label{subsec:examplesofbounds}
In this section, we describe how several properties of $\Upsilon_K(t)$ from \cite{OSSUpsilon} which are simple corollaries from Theorem~\ref{thm:1}.

 \begin{cor}[\cite{OSSUpsilon}*{Theorem~1.11}]If $(W,\Sigma)\colon (S^3,K_1)\to (S^3,K_2)$ is an oriented knot cobordism, and $W$ is a rational homology cobordism, then
 \[
 |\Upsilon_{K_2}(t)-\Upsilon_{K_1}(t)|\le t \cdot g(\Sigma).
 \]
 \end{cor}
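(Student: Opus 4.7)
The plan is to deduce the corollary as a two-sided estimate by applying Theorem \ref{thm:upsiloninvariant} both to $(W,\Sigma)$ and to the reversed cobordism $(\bar W,\bar\Sigma):(S^3,K_2)\to(S^3,K_1)$. The first step is to check that a rational homology cobordism $W$ satisfies the hypotheses $b_1(W)=b_2^+(W)=0$, which is immediate from $H_*(W;\Q)=H_*(S^3;\Q)$, and moreover that $\bar W$ is also a rational homology cobordism (orientation reversal preserves the property of having vanishing rational homology in positive degrees). Thus Theorem \ref{thm:upsiloninvariant} may be invoked in both directions.

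Next I would observe that since $W$ is a rational homology cobordism, $H_2(W;\Z)$ is a finite abelian group, so when we pick an orthonormal $\Z$-basis $e_1,\dots,e_n$ of the (necessarily trivial) free part of $H_2(W;\Z)$ we have $n=0$. Consequently the expression
\[
M_t([\Sigma])=\sum_{i=1}^n M_t(s_i)
\]
is an empty sum and equals $0$. Theorem \ref{thm:upsiloninvariant} therefore reduces to
\[
\Upsilon_{K_2}(t)\ge \Upsilon_{K_1}(t)+g(\Sigma)\cdot(|t-1|-1).
\]
Applying the same reasoning to $(\bar W,\bar\Sigma)$, noting that $g(\bar\Sigma)=g(\Sigma)$, yields the reverse inequality
\[
\Upsilon_{K_1}(t)\ge \Upsilon_{K_2}(t)+g(\Sigma)\cdot(|t-1|-1),
\]
which together give
\[
\bigl|\Upsilon_{K_2}(t)-\Upsilon_{K_1}(t)\bigr|\le g(\Sigma)\cdot\bigl(1-|t-1|\bigr).
\]

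The last step is purely arithmetic: for $t\in[0,2]$ one has $1-|t-1|=\min(t,\,2-t)\le t$, so the right-hand side is bounded above by $t\cdot g(\Sigma)$, giving the claimed inequality. There is no real obstacle here; the only subtlety worth double-checking is that reversing the orientation of $W$ preserves both the rational-homology-cobordism hypothesis and the vanishing $b_2^+=0$ condition required by Theorem \ref{thm:upsiloninvariant}, which is true because $b_2^{\pm}(\bar W)=b_2^{\mp}(W)$ and both vanish for a rational homology cobordism. Everything else is a direct substitution into the previously established grading-change formula, so once the $M_t([\Sigma])=0$ observation is in place the corollary follows by symmetrization.
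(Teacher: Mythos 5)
Your proof is correct and fills in the details that the paper compresses into ``this follows immediately from Theorem~\ref{thm:upsiloninvariant}'': a rational homology cobordism has $b_1=b_2^+=0$ and $b_2(W)=0$, forcing $M_t([\Sigma])=0$, and applying the theorem to both $W$ and $\bar W$ yields the two one-sided bounds. In fact you establish the slightly stronger estimate $|\Upsilon_{K_2}(t)-\Upsilon_{K_1}(t)|\le g(\Sigma)\cdot(1-|t-1|)$, which implies the claimed bound since $1-|t-1|\le t$ on $[0,2]$.
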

 \begin{proof}This follows immediately from Theorem~\ref{thm:upsiloninvariant}.
 \end{proof}

 \begin{cor}[\cite{OSSUpsilon}*{Proposition 1.10}]
  If $K_-$ and $K_+$ are knots in $S^3$, which differ by a crossing change, then
 \[
 \Upsilon_{K_+}(t)\le \Upsilon_{K_-}(t)\le \Upsilon_{K_+}(t)+1-|t-1|.
 \]
 \end{cor}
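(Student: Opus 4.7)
The plan is to prove each inequality by applying Theorem \ref{thm:upsiloninvariant} to an oriented knot cobordism between $K_+$ and $K_-$ that lives in a 4-manifold with $b_1=b_2^+=0$. The two inequalities will require cobordisms running in opposite directions, and crucially neither will be the orientation reversal of the other.

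For $\Upsilon_{K_+}(t)\le \Upsilon_{K_-}(t)$, I would realize the crossing change by attaching a $-1$-framed $2$-handle to $S^3\times[0,1]$ along a small unknot $U\subset S^3\times\{1\}$ encircling the crossing with $\lk(U,K_+)=0$ (the two strands through a Seifert disk of $U$ contribute cancelling signs -- the standard local picture of a crossing). The resulting 4-manifold $W\cong \bar{\CP}^2\setminus 2B^4$ has $b_1=b_2^+=0$, and its top boundary is the $S^3$ containing $K_-$. The annulus $\Sigma=K_+\times[0,1]\subset W$ is a genus-zero cobordism from $K_+$ to $K_-$, which I would decorate with a single strip $\Sigma_{\ve{w}}$ joining the two boundary knots, as in the proof of Theorem \ref{thm:upsiloninvariant}. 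Letting $E$ generate $H_2(W;\Z)\cong \Z$ with $E\cdot E=-1$, the key homological computation is that $[\hat{\Sigma}]\cdot[E]=0$: I would take $E$ to be the $2$-handle core capped by a Seifert disk $D_U$ for $U$, so that the only intersections of $E$ with $\hat{\Sigma}$ occur at the two points of $D_U\cap K_+$, which have opposite signs precisely because $\lk(U,K_+)=0$. Hence $[\hat{\Sigma}]=0$ in $H_2(W;\Z)$, so $M_t([\hat{\Sigma}])=M_t(0)=\max_{a\in 2\Z+1}\tfrac{-a^2+1}{4}=0$ (attained at $a=\pm 1$), and $g(\Sigma)=0$; Theorem \ref{thm:upsiloninvariant} then immediately yields $\Upsilon_{K_-}(t)\ge \Upsilon_{K_+}(t)$.

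For the reverse bound, reversing the previous cobordism would force $b_2^+=1$, so I would instead realize the crossing change $K_-\to K_+$ by a genus-one cobordism $\Sigma'$ inside $W'=S^3\times[0,1]$, built from two saddle moves at the crossing: a splitting band first resolves the negative crossing of $K_-$ into a two-component link, and a merging band at the same site then produces the positive crossing, yielding $K_+$. An Euler characteristic count gives $\chi(\Sigma')=-2$ with two boundary components, so $g(\Sigma')=1$. Since $H_2(W';\Z)=0$ we have $[\hat{\Sigma}']=0$ and $M_t([\hat{\Sigma}'])=0$, so Theorem \ref{thm:upsiloninvariant} gives $\Upsilon_{K_+}(t)\ge \Upsilon_{K_-}(t)+(|t-1|-1)$, which rearranges to $\Upsilon_{K_-}(t)\le \Upsilon_{K_+}(t)+1-|t-1|$.

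The main technical point will be the homological claim $[\hat{\Sigma}]=0$ used in the first cobordism; everything else is either a routine Euler characteristic or band-move count, or a direct substitution into the grading formula. That claim reduces to the local geometric identity $\lk(U,K_+)=0$ coming from the two oppositely oriented strands passing through a Seifert disk of $U$ at the crossing, and once this is in hand both inequalities drop out of Theorem \ref{thm:upsiloninvariant}.
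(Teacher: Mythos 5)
Your proof is correct, and the first inequality is obtained exactly as in the paper (the paper's cobordism $W_2: K_+ \to K_-$ is a genus-zero annulus in $\bar{\CP}^2\setminus 2B^4$ with $[\hat\Sigma]=0$, the same construction you describe). The second inequality, however, you obtain by a genuinely different route: the paper uses a \emph{second} genus-zero blowup cobordism $W_1: K_- \to K_+$ in $\bar{\CP}^2\setminus 2B^4$, now with the $-1$-framed unknot placed so that $[\hat\Sigma_1]=2E$, and then feeds $M_t(2)=-(1-|t-1|)$ into Theorem~\ref{thm:upsiloninvariant}; you instead use the classical genus-one unknotting cobordism inside $S^3\times[0,1]$ (two oriented saddles through the oriented resolution of the crossing), where the homological term vanishes and the whole penalty comes from the genus term $g(\Sigma')\cdot(|t-1|-1)=|t-1|-1$. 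It is a pleasant coincidence worth noting that these produce the \emph{same} bound, since $M_t(2)=|t-1|-1$. Your approach has the advantage of avoiding the need to track the algebraic linking number of the blowup unknot with the incoming knot for one of the two directions; the paper's approach is more uniform (two blowups handled symmetrically) and makes the role of $M_t$ more visible, which is useful because the same analysis extends to changing several crossings simultaneously. One small stylistic point on your justification of $\lk(U,K_+)=0$: this is not automatic from the ``local picture of a crossing'' but is a \emph{choice} of how to position the encircling unknot relative to the two strands (the other positioning gives linking $\pm 2$, which is exactly what the paper uses for $W_1$); it would be clearer to say you choose $U$ so that the two passages of $K_+$ through its Seifert disk are oppositely oriented, rather than asserting this always happens.
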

 
 \begin{proof}Suppose that $K_-$ and $K_+$ are two knots which differ by a crossing change (and $K_-$ has the negative crossing and $K_+$ has the positive crossing). We can construct a negative definite link cobordism $(W_{1},\Sigma_{1})$ from $(S^3,K_-)$ to $(S^3,K_+)$ and also a negative definite link cobordism $(W_{2},\Sigma_{2})$ from $(S^3,K_+)$ to $(S^3,K_-)$. Each is formed by adding a 2-handle with framing $-1$, around the crossing, as shown in Figure~\ref{fig::14}.
 
 A generator $E_i$ of $H_2(W_i;\Z)$ is given by taking a Seifert disk of the unknot, and capping with the core of the 2-handle. Hence  the homology class of the surface $\Sigma_i$ in $W_i$ can be computed from the intersection number  of the knot $K_{\pm}$ on the incoming end of $W_i$ with the Seifert disk for the $-1$ framed unknot. 
 
 \begin{figure}[ht!]
 	\centering
 	%% Creator: Inkscape inkscape 0.92.3, www.inkscape.org
%% PDF/EPS/PS + LaTeX output extension by Johan Engelen, 2010
%% Accompanies image file '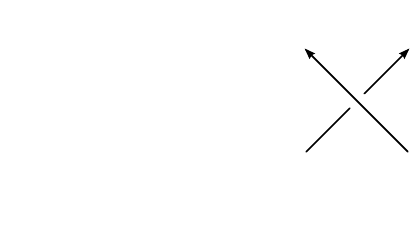' (pdf, eps, ps)
%%
%% To include the image in your LaTeX document, write
%%   \input{<filename>.pdf_tex}
%%  instead of
%%   \includegraphics{<filename>.pdf}
%% To scale the image, write
%%   \def\svgwidth{<desired width>}
%%   \input{<filename>.pdf_tex}
%%  instead of
%%   \includegraphics[width=<desired width>]{<filename>.pdf}
%%
%% Images with a different path to the parent latex file can
%% be accessed with the `import' package (which may need to be
%% installed) using
%%   \usepackage{import}
%% in the preamble, and then including the image with
%%   \import{<path to file>}{<filename>.pdf_tex}
%% Alternatively, one can specify
%%   \graphicspath{{<path to file>/}}
%% 
%% For more information, please see info/svg-inkscape on CTAN:
%%   http://tug.ctan.org/tex-archive/info/svg-inkscape
%%
\begingroup%
  \makeatletter%
  \providecommand\color[2][]{%
    \errmessage{(Inkscape) Color is used for the text in Inkscape, but the package 'color.sty' is not loaded}%
    \renewcommand\color[2][]{}%
  }%
  \providecommand\transparent[1]{%
    \errmessage{(Inkscape) Transparency is used (non-zero) for the text in Inkscape, but the package 'transparent.sty' is not loaded}%
    \renewcommand\transparent[1]{}%
  }%
  \providecommand\rotatebox[2]{#2}%
  \newcommand*\fsize{\dimexpr\f@size pt\relax}%
  \newcommand*\lineheight[1]{\fontsize{\fsize}{#1\fsize}\selectfont}%
  \ifx\svgwidth\undefined%
    \setlength{\unitlength}{196.4983592bp}%
    \ifx\svgscale\undefined%
      \relax%
    \else%
      \setlength{\unitlength}{\unitlength * \real{\svgscale}}%
    \fi%
  \else%
    \setlength{\unitlength}{\svgwidth}%
  \fi%
  \global\let\svgwidth\undefined%
  \global\let\svgscale\undefined%
  \makeatother%
  \begin{picture}(1,0.57416407)%
    \lineheight{1}%
    \setlength\tabcolsep{0pt}%
    \put(0,0){\includegraphics[width=\unitlength,page=1]{fig14.pdf}}%
    \put(0.85557377,0.16555341){\color[rgb]{0,0,0}\makebox(0,0)[lt]{\lineheight{0}\smash{\begin{tabular}[t]{l}$K_-$\end{tabular}}}}%
    \put(0,0){\includegraphics[width=\unitlength,page=2]{fig14.pdf}}%
    \put(0.09839506,0.16555341){\color[rgb]{0,0,0}\makebox(0,0)[lt]{\lineheight{0}\smash{\begin{tabular}[t]{l}$K_+$\end{tabular}}}}%
    \put(0,0){\includegraphics[width=\unitlength,page=3]{fig14.pdf}}%
    \put(0.62606365,0.51477607){\color[rgb]{0,0,0}\makebox(0,0)[lt]{\lineheight{0}\smash{\begin{tabular}[t]{l}$-1$\end{tabular}}}}%
    \put(0.41764118,0.4955503){\color[rgb]{0,0,0}\makebox(0,0)[rt]{\lineheight{0}\smash{\begin{tabular}[t]{r}$W_{1}=$\end{tabular}}}}%
    \put(0.46071681,0.06959555){\color[rgb]{0,0,0}\makebox(0,0)[rt]{\lineheight{0}\smash{\begin{tabular}[t]{r}$W_{2}=$\end{tabular}}}}%
    \put(0,0){\includegraphics[width=\unitlength,page=4]{fig14.pdf}}%
    \put(0.58262117,0.0768595){\color[rgb]{0,0,0}\makebox(0,0)[lt]{\lineheight{0}\smash{\begin{tabular}[t]{l}$-1$\end{tabular}}}}%
  \end{picture}%
\endgroup%

 	\caption{\textbf{The  knot cobordism $(W_{1},\Sigma_{1})$ from $(S^3,K_+)$ to $(S^3,K_-)$, and the knot cobordism $(W_2,\Sigma_2)$, in the opposite direction.}}\label{fig::14}
 \end{figure}

After orienting $E$ appropriately, we can take  $[\Sigma_1]=2\cdot E$ and $[\Sigma_2]=0$. Hence
 \[
 M_{[\Sigma_2]}(t)=0\qquad \text{and}\qquad M_{[\Sigma_1]}(t)=-(1-|t-1|).
 \] 
  Applying Theorem~\ref{thm:upsiloninvariant}, we see
\[
\Upsilon_{K_+}(t)\le \Upsilon_{K_-}(t)\le \Upsilon_{K_+}(t)+1-|t-1|.
\]
 \end{proof}
 
 \subsection{Positive torus knots}
 \label{sec:positivetorusknots}
 We show that the bound from Theorem~\ref{thm:upsiloninvariant} is sharp for torus knots, in the following sense:
 
 \begin{prop}\label{prop:generaltorusknots}Given any positive torus knot $T_{a,b}$, there is a knot cobordism $(W,\Sigma)\colon (S^3,U)\to (S^3,T_{a,b})$ with $g(\Sigma)=b_1(W)=b_2^+(W)=0$ and
 \[
\Upsilon_{T_{a,b}}(t)=M_{[\Sigma]}(t). 
 \]
 \end{prop}
 
 Before we prove Proposition~\ref{prop:generaltorusknots}, it is convenient for us to consider the torus knot $T_{n,n+1}$. Given an integer $n$, we define the quantity
\begin{equation}
m_n(t):=\max_{c\in 2\Z+1} \frac{-c^2+1+2tcn-2tn^2}{4}.\label{def:mnt}
\end{equation}
 
 \begin{lem}\label{lem:UpsilonTnn+1}We have $\Upsilon_{T_{n,n+1}}(t)=m_n(t)$.
 \end{lem}
 \begin{proof}We use the computation of $T_{n,n+1}$ due to Ozsv\'{a}th, Stipsicz and Szab\'{o} \cite{OSSUpsilon}*{Proposition~5}, which states that if  $t\in \left[\tfrac{2i}{n}, \tfrac{2i+2}{n}\right],$ then
 \begin{equation}
 \Upsilon_{T_{n,n+1}}(t)=-i(i+1)-\frac{1}{2}n(n-1-2i)t.\label{eq:OSScompUpsilonTnn1}
 \end{equation}
 Although one can directly compare this expression with $m_n(t)$, we also note that there is a negative definite knot cobordism $(W,\Sigma)$ from $(S^3,T_{n,1})$ (the unknot) to $(S^3, T_{n,n+1})$ which is obtained by embedding $T_{n,1}$ on a standard torus $T^2\subset S^3$, and performing $-1$ surgery on the core of one of the two handlebody components of $S^3\setminus T^2$. Theorem~\ref{thm:upsiloninvariant} implies 
 \begin{equation}
 \Upsilon_{T_{n,n+1}}(t)\ge M_{[\Sigma]}(t)= m_n(t).
 \label{eq:comareupsilon1}
 \end{equation}
On the other hand, if $t\in \left[\tfrac{2i}{n}, \tfrac{2i+2}{n}\right]$, we   plug $c=2n+1$ into the expression for $m_n(t)$ in Equation~\eqref{def:mnt}, and after an easy algebraic manipulation, we obtain
\begin{equation}
m_n(t)\ge -i(i+1)-\frac{1}{2}n(n-1-2i)t.\label{eq:plugintogetbound}
\end{equation}
 Combining   Equations~\eqref{eq:OSScompUpsilonTnn1}, \eqref{eq:comareupsilon1} and \eqref{eq:plugintogetbound}, we obtain the equality $m_n(t)=\Upsilon_{T_{n,n+1}}(t)$.
 
 \end{proof}
 
 \begin{proof}[Proof of Proposition~\ref{prop:generaltorusknots}]
  Suppose that $(a,b)$ is a pair of positive, relatively prime integers. Using the Euclidean algorithm, we can find a sequence $\{(a_i,b_i)\}_{i=1}^n$ such that \begin{enumerate}
   \item $(a_1,b_1)=(1,1)$,
	\item   $(a_n,b_n)=(a,b)$, and
   \item  $(a_{i+1}, b_{i+1})=(a_i+b_i, b_i)$ or $(a_{i+1},b_{i+1})=(a_i,b_i+a_i).$
   \end{enumerate}
   According to Feller and Krcatovich's recursive formula \cite{FellerKrcatovichUpsilon}*{Proposition 6}, we have

   \begin{equation}	\Upsilon_{T_{a_{i+1}, b_{i+1}}}(t)=\Upsilon_{T_{a_i,b_i}}(t)+\Upsilon_{T_{b_i,b_i+1}}(t)\qquad \text{or} \qquad 	\Upsilon_{T_{a_{i+1}, b_{i+1}}}(t)=\Upsilon_{T_{a_i,b_i}}(t)+\Upsilon_{T_{a_i,a_i+1}}(t)\label{eq:bothcasesupsilonrecursion}
   \end{equation}
depending on whether    $(a_{i+1},b_{i+1})=(a_i+b_i,b_i)$ or $(a_{i+1},b_{i+1})=(a_i,b_i+a_i)$ (respectively).

   On the other hand, there is a negative definite knot cobordism $(W_i,\Sigma_i)$ from $(S^3,T_{a_i,b_i})$ to $(S^3, T_{a_{i+1}, b_{i+1}})$. The surface $\Sigma_i$ is an annulus. Viewing the torus knots as being embedded on a standard torus $T^2$ in $S^3$, the cobordism $W_i$ is obtained by performing $-1$ surgery on an unknot which is the core of one of the two solid tori forming $S^3\setminus T^2$.

 We note that if $(a_{i+1},b_{i+1})=(a_i+b_i,b_i)$ then 
 $M_{[\Sigma_i]}=\Upsilon_{T_{b_i,b_i+1}}(t)$ by Lemma~\ref{lem:UpsilonTnn+1}, since $[\Sigma_i]=b_i\cdot E$, for a generator $E$ of $H_2(W_i;\Z)$. Similarly, if $(a_{i+1},b_{i+1})=(a_i,a_i+b_i)$, then $M_{[\Sigma_i]}(t)=\Upsilon_{T_{a_i,a_i+1}}(t)$. In both cases,  Equation~\eqref{eq:bothcasesupsilonrecursion} implies that
 \[
 \Upsilon_{T_{a_{i+1},b_{i+1}}}(t)=\Upsilon_{T_{a_i,b_i}}+M_{[\Sigma_i]}(t).
 \]
 Hence 
 \[
\Upsilon_{T_{a_n,b_n}}(t)=M_{[\Sigma_n]}(t)+\cdots + M_{[\Sigma_1]}(t)=M_{[\Sigma]}(t),
 \]
 where $(W,\Sigma)$ denotes the composition of all of the $(W_i,\Sigma_i)$, completing the proof.
 \end{proof}

  \begin{rem}We can give a more concrete description of the expression $M_{[\Sigma]}(t)$ when $\Sigma$ is a surface inside a cobordism $W\colon S^3\to S^3$ satisfying $b_2^+(W)=b_1(W)=0$. Using Donaldson's diagonalizability theorem, we can pick an orthonormal basis $e_1,\dots, e_n$ of $H^2(W;\Z)$, and write $\PD[\Sigma]=a_1 \cdot e_n+\cdots +a_n \cdot e_n$. The set of characteristic elements $C$ of $Q_W$ can be identified with the set of elements of the form $c_1\cdot e_1+\cdots+ c_n\cdot e_n$ where $c_i\in 2\Z+1$. Then
 \[
M_{[\Sigma]}(t)=\max_{(c_1,\dots, c_n)\in (2\Z+1)^n} \sum_{i=1}^n \frac{-c_i^2+1+2t c_i a_i-2t a_i^2}{4}. 
 \]
 Since each $c_i$ is involved in exactly one summand, we can commute the maximization and the summation to see that
 \[
M_{[\Sigma]}(t)=\sum_{i=1}^n m_{a_i}(t).
 \]
 \end{rem}

\section{Adjunction relations and inequalities}
\label{sec:adjunctionrelations}

As another application of our grading formula, we prove several adjunction relations for Heegaard Floer homology and link Floer homology. The version we prove for Heegaard Floer homology is a generalization of \cite{OSTrianglesandSymplectic}*{Theorem 3.1}. The version we prove for the link cobordism maps is new.

\subsection{The standard adjunction relation for Heegaard Floer homology}

In this section, we prove the following using the link cobordism maps:

\begin{customthm}{\ref{thm:7}}\label{thm:generaladjunctionrelation}Suppose that $\cF=(\Sigma,\cA)$ is an oriented, closed, decorated surface inside of a cobordism $W\colon Y_1\to Y_2$, with $W$, $Y_1$ and $Y_2$ connected. Write $\Sigma_{\ws}$ and $\Sigma_{\zs}$ for the type-$\ws$ and type-$\zs$ subsurfaces of $\cF$. Suppose $\cA$ consists of a simple closed curve, the subsurfaces $\Sigma_{\ws}$ and $\Sigma_{\zs}$ are both connected, and
\[
\langle c_1(\frs),[\Sigma]\rangle -[\Sigma]\cdot [\Sigma]+2g(\Sigma_{\zs})-2g(\Sigma_{\ws})=0.
\]
 Then
\[
F_{W,c,\frs}(\iota_*\xi(\Sigma_{\ws})\otimes -)\simeq F_{W,c,\frs-\PD[\Sigma]}(\iota_*\xi(\Sigma_{\zs})\otimes -),
\]
as maps on $\CF^-$, where  $c$ is any path from $Y_1$ to $Y_2$.
\end{customthm}

\begin{rem}When $g(\Sigma)=g(\Sigma_{\zs})>0$ and $g(\Sigma_{\ws})=0$, we recover \cite{OSTrianglesandSymplectic}*{Proposition 3.1}. This case is also an analog of the adjunction relation for the Seiberg--Witten invariant \cite{OSSymplecticThom}*{Theorem 1.3}.  If $\Sigma$ has genus zero, then an analogous result was proven by Fintushel and Stern \cite{FintushelSternImmersedSpheres}*{Lemma 5.2} for the Seiberg--Witten invariant.
\end{rem}

\begin{rem}Theorem~\ref{thm:generaladjunctionrelation} also holds for $\Hat{\CF}$, $\CF^\infty$ and $\CF^+$, since the chain homotopy between $F_{W,c,\frs}(\xi(\Sigma_{\ws})\otimes -)$ and $F_{W,c,\frs-\PD[\Sigma]}(\xi(\Sigma_{\zs})\otimes -)$ can be taken to be $U$-equivariant, and $\Hat{\CF}$, $\CF^\infty$ and $\CF^+$ can all be obtained algebraically from $\CF^-$ via a tensor product, or a quotient of a tensor product.
\end{rem}

\begin{proof}[Proof of Theorem~\ref{thm:generaladjunctionrelation}] We perform an isotopy of $\cF$ so that it intersects $\d W$ along two embedded disks, $D_1$ and $D_2$, in $Y_1$ and $Y_2$, and each $D_i$ intersects $\cA$ in a single arc.  Write $\cF_0$ for the resulting, properly embedded, decorated surface in $W$ obtained by removing these two disks from $\cF$. Let $ c$ denote one of the two dividing arcs on $\cF_0$. By choosing the isotopy of $\cF$ inside of $W$ appropriately, we can achieve any embedded path $c$ from $Y_1$ to $Y_2$.

 We now apply Proposition~\ref{prop:reductionformulasforpuncturedsurfaces}. Since 
 \[
 \Delta A= \frac{\langle c_1(\frs),[\Sigma]\rangle -[\Sigma]\cdot [\Sigma]}{2}+g(\Sigma_{\zs})-g(\Sigma_{\ws})=0,
 \]
  we conclude that
\begin{equation}
F_{W,c,\frs}(\iota_* \xi(\Sigma_{\ws})\otimes -)|^{\bF_2[U,V]}\simeq F_{W,c,\frs-\PD[\Sigma]}(\iota_* \xi(\Sigma_{\zs})\otimes -)|^{\bF_2[U,V]}, \label{eq:almostadjunctionrelation1}
\end{equation}
as maps between $\cCFL^-(Y_1,\bU_1,\frs|_{Y_1})$ and $\cCFL^-(Y_2,\bU_2,\frs|_{Y_2})$, where $|^{\bF_2[U,V]}$ is the operation defined in Section~\ref{sec:cobordismmapsforclosedsurfaces}, which amounts to replacing each instance of $\hat{U}$ with $UV$. If we restrict the maps in Equation~\eqref{eq:almostadjunctionrelation1} to the zero Alexander graded subsets of $\cCFL^-(Y_i,\bU_i,\frs|_{Y_i})$ (with respect to the gradings $A_{D_1}$ and $A_{D_2}$), which are canonically identified with $\CF^-(Y_i,w_i,\frs|_{Y_i})$, we obtain the relation
\[
F_{W,c,\frs}(\iota_*\xi(\Sigma_{\ws})\otimes -)\simeq F_{W,c,\frs-\PD[\Sigma]}(\iota_*\xi(\Sigma_{\zs})\otimes -),
\]
completing the proof.
\end{proof}

\begin{rem}More generally, one could put more exotic sets of divides on $\Sigma$ and consider the $U=1$ and $V=1$ reductions of the associated link cobordism maps, and try to recover the higher type adjunction relations from \cite{OSHigherTypeAdjunction}, which Ozsv\'{a}th and Szab\'{o} proved for the Seiberg--Witten invariant.
\end{rem}

We note that applying Theorem~\ref{thm:generaladjunctionrelation} to the identity cobordism recovers Ozsv\'{a}th and Szab\'{o}'s adjunction inequality for $\HF^+(Y,\frs)$ \cite{OSProperties}*{Theorem~7.1}:

\begin{cor}\label{cor:adjunctioninequality}If $\Sigma$ is a closed, oriented surface in $Y$ with $g(\Sigma)>0$ and $\HF^+(Y,\frs)\neq 0$, then
\[
|\langle c_1(\frs),\Sigma\rangle |\le 2g(\Sigma)-2.
\] 
\end{cor}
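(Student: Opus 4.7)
My plan is to apply the adjunction relation Theorem~\ref{thm:adjunctionrelation} to the cylindrical cobordism $W = Y\times[0,1]$, with $\Sigma$ pushed to an interior slice, and extract the inequality from a grading contradiction. I will argue by contradiction: assume $HF^+(Y,\frs)\neq 0$ yet $|\langle c_1(\frs),[\Sigma]\rangle|>2g(\Sigma)-2$. After possibly reversing the orientation of $\Sigma$, I may suppose $\langle c_1(\frs),[\Sigma]\rangle\leq -2g(\Sigma)$. Since every oriented $3$--manifold is spin, $c_1(\frs)$ is characteristic and the pairing $\langle c_1(\frs),[\Sigma]\rangle$ is even; I therefore stabilize $\Sigma$ by adding $n\geq 0$ trivial handles inside small balls in $W$, producing a connected surface $\Sigma'$ with $[\Sigma']=[\Sigma]$, $g(\Sigma')=g(\Sigma)+n>0$, and $\langle c_1(\frs),[\Sigma']\rangle=-2g(\Sigma')$ exactly.

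Next, I will verify two topological properties that both unlock Theorem~\ref{thm:adjunctionrelation} and simplify its right-hand side. First, $[\Sigma']\cdot[\Sigma']=0$ in $W=Y\times[0,1]$, since $\Sigma'$ is isotopic to a parallel copy at a different $[0,1]$--level; this confirms the hypothesis $\langle c_1(\frs),[\Sigma']\rangle-[\Sigma']\cdot[\Sigma']=-2g(\Sigma')$. Second, $PD_W[\Sigma']=0$ in $H^2(W;\Z)$: in the long exact sequence of $(W,\d W)$ the restriction map $H^2(W;\Z)\to H^2(\d W;\Z)=H^2(Y;\Z)\oplus H^2(Y;\Z)$ is the diagonal inclusion, which is injective, so by exactness the map $H^2(W,\d W;\Z)\to H^2(W;\Z)$ is zero, and $PD_W[\Sigma']$ lies in its image. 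With $\frs-PD[\Sigma']=\frs$, Theorem~\ref{thm:adjunctionrelation} then yields
\[F_{W,\frs}^+(x)=F_{W,\frs}^+\bigl(\iota_*(\xi(\Sigma'))\cdot x\bigr)\qquad\text{for all } x\in HF^+(Y,\frs).\]

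Finally, I will extract the contradiction. Under the identification $\Spin^c(W)\cong\Spin^c(Y)$, the cobordism map $F_{W,\frs}^+$ is the identity on $HF^+(Y,\frs)$, forcing $\iota_*(\xi(\Sigma'))$ to act as the identity. But each of the $g(\Sigma')$ factors $U+\iota_*(A_i)\iota_*(B_i)$ is homogeneous of Maslov degree $-2$, so the product acts with net Maslov degree $-2g(\Sigma')<0$; a homogeneous operator of nonzero degree can equal the identity only on the zero module, contradicting $HF^+(Y,\frs)\neq 0$. Since $\langle c_1(\frs),[\Sigma]\rangle$ is even, ruling out $\langle c_1(\frs),[\Sigma]\rangle\leq -2g(\Sigma)$ upgrades to $\langle c_1(\frs),[\Sigma]\rangle\geq -(2g(\Sigma)-2)$, and the symmetric argument (applied to $-\Sigma$) gives the matching upper bound. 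The hard part will be the topological identity $PD_W[\Sigma']=0$, which is what collapses the right-hand side of the adjunction back to $F_{W,\frs}^+$ itself; once this is established, the grading-degeneracy contradiction is essentially automatic.
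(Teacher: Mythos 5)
The setup in your proof (apply the adjunction relation to $W=Y\times[0,1]$, flip the orientation of $\Sigma$, stabilize to force equality $\langle c_1(\frs),[\Sigma']\rangle=-2g(\Sigma')$, check $[\Sigma']\cdot[\Sigma']=0$ and $PD_W[\Sigma']=0$ so that $F^+_{W,\frs-PD[\Sigma']}=F^+_{W,\frs}=\id$) is exactly the approach in the paper, and all of those steps are correct.

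The final step, however, is where the argument breaks. You conclude that $\iota_*(\xi(\Sigma'))$ acts as the identity on $HF^+(Y,\frs)$ and then derive a contradiction from the observation that $\xi(\Sigma')$ is ``homogeneous of Maslov degree $-2g(\Sigma')\neq 0$.'' But under the hypothesis you've arranged, $\langle c_1(\frs),[\Sigma']\rangle=-2g(\Sigma')\neq 0$, which forces $c_1(\frs)$ to be non-torsion. For a non-torsion $\Spin^c$ structure there is no absolute $\Q$-grading, and the relative Maslov grading on $HF^+(Y,\frs)$ is only defined modulo $d(\frs)=\gcd_{h\in H_2(Y;\Z)}\langle c_1(\frs),h\rangle$. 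Since $d(\frs)$ divides $\langle c_1(\frs),[\Sigma']\rangle=-2g(\Sigma')$, the operator $\xi(\Sigma')$ has degree $0$ in the only grading that exists on $HF^+(Y,\frs)$, so ``a homogeneous operator of nonzero degree cannot be the identity'' is vacuous here. The grading contradiction simply doesn't exist in the non-torsion setting.

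The paper closes the argument differently, and the missing ingredient is precisely what your approach skips: because $\frs$ is non-torsion, $U^n\cdot HF^+(Y,\frs)=0$ for $n\gg 0$ (this is \cite{OSTrianglesandSymplectic}*{Lemma 2.3}), and for any $[\gamma]\in H_1(Y;\Z)$ one has $[\gamma]^2=0$. Expanding $\iota_*(\xi(\Sigma'))^n=\prod_i(U+A_iB_i)^n$, every term either has at least $n$ factors of $U$ or a repeated $A_iB_i$, so the $n$-th power is eventually zero on $HF^+(Y,\frs)$. But it must equal $\id^n=\id$, giving $\id=0$ on $HF^+(Y,\frs)$, i.e.\ $HF^+(Y,\frs)=0$. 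You should replace the degree argument with this nilpotence argument; the rest of your proof then goes through.
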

\begin{proof} Suppose $\Sigma$ is a closed, oriented surface in $Y$ which violates the inequality. We can reverse the orientation of $\Sigma$ if necessary, and add null-homologous handles so that 
\begin{equation}\langle c_1(\frs),\Sigma\rangle =-2g(\Sigma)\label{eq:chernclassgenusequation}\end{equation} (note that $\langle c_1(\frs),\Sigma\rangle$ is always even, since $W=[0,1]\times Y$). Decorating  $\Sigma$ with a single dividing curve so that $g(\Sigma_{\zs})=g(\Sigma)$ and $g(\Sigma_{\ws})=0$ and then applying Theorem~\ref{thm:generaladjunctionrelation} to the cobordism $W=[0,1]\times Y$, we obtain the relation that
\[
F_{W,\frs}\simeq [\iota_*(\xi(\Sigma))]\cdot F_{W,\frs-\PD[\Sigma]}.
\] 
Since $[\Sigma]=0\in H_2(W,\d W;\Z)\iso H^2(W;\Z)$, it follows that $\frs-\PD[\Sigma]=\frs$ on $W$, so  $F_{W,\frs-\PD[\Sigma]}=F_{W,\frs}=\id_{\HF^+(Y,\frs)}$. Hence
\[
\id_{\HF^+(Y,\frs)}=[\iota_*(\xi(\Sigma))].
\] 
 On the other hand, $\frs$ must be non-torsion for Equation \eqref{eq:chernclassgenusequation} to be satisfied, so  $U^n\cdot \HF^+(Y,\frs)=0$ for sufficiently large $n$ \cite{OSTrianglesandSymplectic}*{Lemma 2.3}. We also have that $A_{\gamma}\circ A_{\gamma}=0$ for any $\gamma\in H_1(Y;\Z)$, and also $A_{\gamma_1}\circ  A_{\gamma_2}=A_{\gamma_2}\circ A_{\gamma_1}$ \cite{OSDisks}*{Section~4.2.5}. Hence, if $n$ is sufficiently large, the action of $[\iota_*(\xi(\Sigma))]^n$ will be zero on $\HF^+(Y,\frs)$. Hence $\id=\id^n=[\iota_*(\xi(\Sigma))]^n=0$ as maps on $\HF^+(Y,\frs)$, so $\HF^+(Y,\frs)$ must vanish. 
\end{proof}

\subsection{An adjunction relation for the link cobordism maps}

In this section, we describe a generalization of Theorem~\ref{thm:generaladjunctionrelation} for closed surfaces in the complement of a link cobordism. Before we state the theorem, we need to recall some additional facts about link Floer homology.

Firstly, similar to the 3-manifold invariants, there is a homotopy action of $\Lambda^*(H_1(Y;\Z)/\Tors)$ on $\cCFL^-(Y,\bL,\frs)$. Its description is essentially the same as the action of $\Lambda^*(H_1(Y;\Z)/\Tors)$ on $\CF^-(Y,\ws,\frs)$. Given a  Heegaard diagram $(\Sigma,\as,\bs,\ws,\zs)$ for $(Y,\bL)$, and an element $h\in H_1(Y;\Z)$, we pick a representative $\gamma$ of $h$, which is an immersed, closed curve on the Heegaard surface $\Sigma$. The endomorphism $A_{h}$ is defined via the formula
\begin{equation}
A_h(\xs)=\sum_{\ys\in \bT_{\alpha}\cap \bT_{\beta}} \sum_{\substack{\phi\in \pi_2(\xs,\ys)\\ \mu(\phi)=1}} a(\phi,\gamma) \cdot \Hat{\cM}(\phi) U^{n_{\ws}(\phi)} V^{n_{\zs}(\phi)} \cdot \ys.\label{eq:homologyaction}
\end{equation}
The quantity $a(\phi,\gamma)$ is defined as the sum of the changes of the multiplicities of $\phi$ across each $\as$ curve, as one traverses $\gamma$ on $\Sigma$.
Since $a(\cP,\gamma)=0$ for any periodic class $\cP$ with boundary only on the $\as$ or only on the $\bs$ curves, a straightforward count of the ends of index 2 moduli spaces shows that $A_h$ is a chain map. Adapting the arguments from \cite{OSDisks}*{Section~4.2.5} shows that $A_h$ is independent of the representative curve $\gamma$ and is well defined on the level of transitive chain homotopy type invariants. Some additional details can be found in \cite{ZemGraphTQFT}*{Section~5} about the homology action in the context of multi-pointed Heegaard Floer complexes.

Given a surface $\Sigma$ with zero or one boundary components, as well as an element $T$ in an algebra $\cR$ over $\bF_2$, we define the element
\[
\xi_T(\Sigma)\in \cR\otimes_{\bF_2} \Lambda^*( H_1(\Sigma;\bF_2)),
\]
by picking a geometric symplectic basis of $H_1(\Sigma;\bF_2)$ and using the same formula as Equation~\eqref{eq:defxiSigma}, with the element $T$ in place of $U$.

The action of $\Lambda^*(H_1(Y;\Z)/\Tors)$ on $\cCFL^-(Y,\bL,\frs)$ can be incorporated into the link cobordism maps, in the sense that if $(W,\cF)\colon (Y_1,\bL_1)\to (Y_2,\bL_2)$ is a decorated link cobordism, then the cobordism maps from \cite{ZemCFLTQFT} induce maps
\[
F_{W,\cF,\frs}\colon \Lambda^*(H_1(W;\Z)/\Tors)\otimes_{\bF_2} \cCFL^-(Y_1,\bL_1,\frs|_{Y_1})\to \cCFL^-(Y_2,\bL_2,\frs|_{Y_2}).
\]

\begin{comment}We note as well that the categories of decorated link cobordisms and graph cobordisms can be combined in a natural way to form a category whose objects are 3-manifolds containing multi-based links, as well as disjoint collections of basepoints. The morphisms in this category consists of tuples $(W,\cF,\Gamma)$, where $\cF$ is a decorated surface, and $\Gamma$ is a ribbon graph, which is disjoint from $\cF$.
\end{comment}

We can now state our adjunction relation for the link cobordism maps:
\begin{thm}\label{thm:adjunctionrelationlinkcob} Suppose that $(W,\cF)\colon (Y_1,\bL_1)\to (Y_2,\bL_2)$ is a decorated link cobordism, such that $\bL_1$ and $\bL_2$ are null-homologous and $\cS=(\Sigma,\cA)$ is a closed, decorated surface in the complement of $\cF$, such that $\cA$ consists of a single closed curve, dividing $\Sigma$ into two connected components, $\Sigma_{\ws}$ and $\Sigma_{\zs}$. If
\[
\langle c_1(\frs),[\Sigma]\rangle -[\Sigma]\cdot [\Sigma]+2g(\Sigma_{\zs})-2g(\Sigma_{\ws})=0, 
\]
then
\[
F_{W,\cF,\frs}(\iota_*\xi_{UV}(\Sigma_{\ws})\otimes -)\simeq F_{W,\cF,\frs-\PD[\Sigma]}(\iota_*\xi_{UV}(\Sigma_{\zs})\otimes -).
\]
\end{thm}

\begin{proof}The proof is similar to our proof of Theorem~\ref{thm:generaladjunctionrelation}.  

We pick two disks, $D_1$ and $D_2$, in $\cS$, which intersect $\cA$ along two arcs, then we isotope $\Sigma$ so that $\Sigma\cap Y_i=D_i$. Let $\cS_0$ denote the decorated surface obtained by removing $D_1$ and $D_2$ from $\cS$, after we perform the isotopy. Let $\bU_1$ and $\bU_2$ denote the unknots $\d D_1$ and $\d D_2$, in $Y_1$ and $Y_2$, respectively,  each decorated with two basepoints.

 Let $\sigma$ denote a coloring of $\cF$ which maps the type-$\ws$ subsurface to the variable $U$, and the type-$\zs$ subsurface to $V$. Let $\hat{\sigma}$ denote an extension of $\sigma$ to the decorated surface $\cF\cup \cS_0$, which assigns $\Sigma_{\ws}$ the variable $U'$, and $\Sigma_{\zs}$ the variable $V'$. We assume $\hat{\sigma}$ has a codomain $\bmP$ with $|\bmP|=4$, so that we can identify the ring $\cR^-_{\bmP}$ with $\bF_{2}[U,V,U',V']$.
 
We pick Seifert surfaces $S_1$ and $S_2$ for $\bL_1$ and $\bL_2$, and consider the link cobordism map
 \[
 F_{W,(\cF\cup \cS_0)^{\hat{\sigma}},\frs}\colon \cCFL^-(Y_1,(\bL_1\cup \bU_1)^{\hat{\sigma}},\frs|_{Y_1})\to \cCFL^-(Y_2,(\bL_2\cup \bU_2)^{\hat{\sigma}}, \frs|_{Y_2}).
 \]
 
 We give $\cF$ and $\cS_0$  an indexing $J\colon F\cup S_0\to \{1,2\}$ which sends $F$ to $1$ and $S_0$ to $2$. The indexing $J$ induces a two component Alexander grading $A=(A_1,A_2)$, where $A_1$ corresponds to the links $\bL_1$ and $\bL_2$, with Seifert surfaces $S_1$ and $S_2$, while $A_2$ corresponds to the unknots $\bU_1$ and $\bU_2$, with Seifert surfaces $D_1$ and $D_2$. The map $F_{W,(\cF\cup \cS_0)^{\hat{\sigma}},\frs}$ is bi-graded with respect to $A$. Furthermore, $F_{W,(\cF\cup \cS_0)^{\hat{\sigma}},\frs}$ induces an $A_2$ grading change of
 \begin{align*}
 \Delta A_2&= 
 \frac{\langle c_1(\frs),[\Sigma]\rangle -([\Sigma]+[\hat{F}])\cdot [\Sigma]}{2}+g(\Sigma_{\zs})-g(\Sigma_{\ws}) \\
&=\frac{\langle c_1(\frs),[\Sigma]\rangle -[\Sigma]\cdot [\Sigma]}{2}+g(\Sigma_{\zs})-g(\Sigma_{\ws})\\
&=0.
\end{align*}
 
We now claim that
\begin{equation}
F_{W,(\cF\cup \cS_0')^{\hat{\sigma}}, \frs}(\xi_{U'V'}(\Sigma_{\ws})\otimes -)\simeq F_{W, (\cF\cup \cS_0')^{\hat{\sigma}}, \frs-\PD[\Sigma]}(\xi_{U'V'}(\Sigma_{\zs})\otimes -), \label{eq:adjunctionrelationwithextratube}
\end{equation}
where $\cS_0'$ is a properly embedded, decorated annulus in $W$, with boundary $-\bU_1\sqcup \bU_2$, which is disjoint from $\cF$, and is obtained by adding a tube along one of the dividing arcs of $\cS_0$, and decorating it with two parallel dividing arcs, running from $\bU_1$ to $\bU_2$.

To prove Equation~\eqref{eq:adjunctionrelationwithextratube}, we adapt the proof of Proposition~\ref{prop:reductionformulasforpuncturedsurfaces}. We consider the $U'=1$ and $V'=1$ reductions of the map $F_{W,(\cF\cup \cS_0)^{\hat{\sigma}},\frs}$, which by a natural extension of Theorem~\ref{thm:generalreductionformula}, one can identify as the map induced by a cobordism $W$ containing the decorated surface $\cF$ as well as a ribbon graph. The cobordism with decorated surface and graph connects two 3-manifolds which contain multi-based links as well as free basepoints.  If $c_{\ws}$ (resp. $c_{\zs}$) is a path in $W$ connecting the $\ws$ basepoints (resp. $\zs$ basepoints) of $\bU_1$ and $\bU_2$, constructed as in the statement of Lemma~\ref{lem:computereductionsconnectedsurface}, then by adapting the proof thereof, we see that
\begin{equation}\begin{split}
F_{W,(\cF\cup \cS_0)^{\hat{\sigma}},\frs}|_{V'=1}&\simeq F_{W,\cF^\sigma\cup c_{\ws},\frs}(\xi_{U'}(\Sigma_{\ws})\otimes -)\\
 \text{and} \qquad F_{W,(\cF\cup \cS_0)^{\hat{\sigma}},\frs}|_{U'=1}&\simeq F_{W,\cF^\sigma\cup c_{\zs},\frs-\PD[\Sigma]}(\xi_{V'}(\Sigma_{\zs})\otimes -).
 \end{split}\label{eq:reductionstolink/graphcob}
\end{equation}
 
By identifying $w_i$ and $z_i$ with a single point $p_i\in Y_i$, we can view the two reductions of $F_{W,(\cF \cup \cS_0)^{\hat{\sigma}}, \frs}$ as having a single domain and range, namely $\cCFL^-(Y_1,\bL_1^\sigma\cup \{p_1\}, \frs|_{Y_1})$ and $\cCFL^-(Y_2,\bL_2^\sigma\cup \{p_2\},\frs|_{Y_2})$, both of which we view as modules over the ring $\bF_2[U,V,\hat{U}]$. Note that when we identify $w_i$ and $z_i$ with the point $p_i$, we can pick $c_{\ws}$ and $c_{\zs}$ to be isotopic relative to their endpoints. We write $c$ for the common path from $p_1$ to $p_2$.

 We can view $\hat{U}$ as acting on $\bF_2[U',V']$ as the product $U'V'$. Since $\Delta A_2=0$, by adapting the proof of Proposition~\ref{prop:reductionformulasforpuncturedsurfaces}, it follows that
\begin{equation}
F_{W,(\cF\cup \cS_0)^{\hat{\sigma}}, \frs}\simeq (F_{W,(\cF\cup \cS_0)^{\hat{\sigma}},\frs}|_{V'=1})|^{\bF_2[U',V']}\simeq (F_{W,(\cF\cup \cS_0)^{\hat{\sigma}},\frs}|_{U'=1})|^{\bF_2[U',V']}. \label{eq:reductions-toextensionsyieldoriginal}
\end{equation}
 
 Combining Equations~\eqref{eq:reductionstolink/graphcob} and \eqref{eq:reductions-toextensionsyieldoriginal} shows that
 \begin{equation}
 F_{W,\cF^\sigma\cup c,\frs}(\xi_{\hat{U}}(\Sigma_{\ws})\otimes -)|^{\bF_2[U',V']}\simeq F_{W,\cF^\sigma\cup c,\frs-\PD[\Sigma]}(\xi_{\hat{U}}(\Sigma_{\zs})\otimes -)|^{\bF_2[U',V']}.\label{eq:almostthereadjunctionrelationlinks}
 \end{equation} 
The notation $|^{\bF_2[U',V']}$ is described in Section~\ref{sec:cobordismmapsforclosedsurfaces}, and amounts to replacing each instance of $\hat{U}$ with $U'V'$. Finally, we  replace the path $c$ with a properly embedded annulus $\cS'_0$ contained in a small neighborhood of $c$, which we decorate with two dividing arcs running from $Y_1$ to $Y_2$. The effect on the cobordism maps is simply to replace any single basepoint $p$ on a Heegaard diagram or triple (with $p$ corresponding to a point on $c$) with a pair of basepoints $w$ and $z$, which are immediately adjacent on the Heegaard diagram or triple. It follows that the algebraic extension operation $|^{\bF_2[U',V']}$ amounts to just replacing the path $c$ with the annulus $\cS_0'$. In light of this, Equation~\eqref{eq:adjunctionrelationwithextratube} follows from Equation~\eqref{eq:almostthereadjunctionrelationlinks}.

 Note that the relation in Equation~\eqref{eq:adjunctionrelationwithextratube} persists when we set $U=U'$ and $V=V'$, since the chain homotopy is $R_{\bmP}^-$-equivariant.
 
We now construct two cobordisms $([0,1]\times Y_1, \cF_1')$ and $([0,1]\times Y_2, \cF_2')$, such that the following are satisfied:
 \begin{enumerate}
 \item The cobordism $([0,1]\times Y_1, \cF_1')\colon (Y_1,\bL_1)\to (Y_2,\bL_1\cup \bU_1)$ splits an unknot off of $\bL_1$, which becomes $\bU_1$.
 \item  The cobordism $([0,1]\times Y_2, \cF_2')\colon (Y_2, \bL_2\cup \bU_2)\to (Y_2,\bL_2)$ caps off $\bU_2$ with the disk $D_2$.
 \end{enumerate}
 
Note that the composition
\[ 
([0,1]\times Y_2, \cF_2')\circ (W,\cF\cup\cS_0') \circ([0,1]\times Y_1, \cF_1')
\] 
is diffeomorphic to $(W,\cF)$. Pre- and post-composing both sides of Equation~\eqref{eq:adjunctionrelationwithextratube} with the maps for $([0,1]\times Y_2, \cF_2')$ and $([0,1]\times Y_1,\cF_1')$, after having set $U=U'$ and $V=V'$, we obtain the theorem statement.
\end{proof}

\subsection{An adjunction inequality for the link cobordism maps}
\label{sec:adjunctioninequalityforlinkcobs}
In this section, we use Theorem~\ref{thm:adjunctionrelationlinkcob} to prove an adjunction inequality for the link cobordism maps. As in the previous section, we work over the ring $\cR^-:=\bF_2[U,V]$. The adjunction inequality we state concerns the version of link Floer homology
 \[
 \CFL^-(Y,\bL,\frs):= \cCFL^-(Y,\bL,\frs)\otimes_{\cR^-} \cR^-/(V).
 \]
 
 \begin{customthm}{\ref{thm:linkadjunctioninequality}}\label{thm:adunctioninequalitylinkcobs}Suppose that $(W,\cF)\colon (Y_1,\bL_1)\to (Y_2,\bL_2)$ is a link cobordism with $b_1(W)=0$, such that $\bL_1$ and $\bL_2$ are null-homologous in $Y_1$ and $Y_2$, respectively. Suppose that the induced map 
\[
F_{W,\cF,\frs}\colon \CFL^-(Y_1,\bL_1,\frs|_{Y_1})\to \CFL^-(Y_2,\bL_2,\frs|_{Y_2})
\]
 is not $\bF_2[U]$-equivariantly chain homotopic to the zero map. If $\Sigma$ is a closed, oriented surface in the complement of $\cF$ such that $g(\Sigma)>0$, then
 \[
|\langle c_1(\frs), [\Sigma]\rangle| +[\Sigma]\cdot [\Sigma] \le 2 g(\Sigma)-2.
 \] 
 \end{customthm}
\begin{proof}Suppose that $\Sigma$ is a surface with $|\langle c_1(\frs), [\Sigma]\rangle |+[\Sigma]\cdot [\Sigma]\ge 2 g(\Sigma)$ (note that $|\langle c_1(\frs), [\Sigma]\rangle| +[\Sigma]\cdot [\Sigma]$ is always even since $c_1(\frs)$ is a characteristic vector of  $Q_W$). By  reversing the orientation of $\Sigma$ if necessary, we may assume that  $|\langle c_1(\frs),[\Sigma]\rangle|=-\langle c_1(\frs),[\Sigma]\rangle$, so that
 \[
-\langle c_1(\frs), [\Sigma]\rangle +[\Sigma]\cdot [\Sigma] \ge 2 g(\Sigma).
 \]
 
 By taking the connected sum of $\Sigma$ with null-homologous tori to raise the genus, if necessary, we may assume that
 \[
-\langle c_1(\frs),[\Sigma]\rangle+[\Sigma]\cdot [\Sigma]=2g(\Sigma). 
 \]
 
 We now decorate $\Sigma$ with a dividing set consisting of a single closed curve, dividing $\Sigma$ into two components, such that $\Sigma_{\ws}$ is a disk, and $\Sigma_{\zs}$ is a connected surface of genus $g(\Sigma)$. Since
 \[
  \langle c_1(\frs),[\Sigma]\rangle -[\Sigma]\cdot [\Sigma]+2g(\Sigma_{\zs})-2g(\Sigma_{\ws})=0
 \]
 we can apply Theorem~\ref{thm:adjunctionrelationlinkcob} to see that
 \begin{equation}
F_{W,\cF,\frs}(-)\simeq F_{W,\cF,\frs-\PD[\Sigma]}(\iota_*\xi_{UV}(\Sigma_{\zs})\otimes -). \label{eq:pre-adjunctioninequality}
 \end{equation}
 Since $b_1(W)=0$, the element $\iota_*\xi_{UV}(\Sigma_{\zs})$ is simply $(UV)^{g(\Sigma_{\zs})}=(UV)^{g(\Sigma)}$. Since $V=0$ on $\CFL^-$, we obtain immediately from Equation~\eqref{eq:pre-adjunctioninequality} that $F_{W,\cF,\frs}$ is $\bF_2[U]$-equivalently chain homotopic to 0 on $\CFL^-$, completing the proof.
\end{proof}
 
 We now describe two examples of Theorem~\ref{thm:adjunctionrelationlinkcob}. 
 
\begin{example}\label{ex:blowup1} Suppose that $\bL$ is a multi-based link in $S^3$, and $(W,\cF)\colon (S^3,\bL)\to (S^3,\bL)$ is the link cobordism obtained by performing $-1$ surgery on an unknot $U\subset S^3$ which is unlinked from $\bL$. Let $E\subset W$ denote an exceptional sphere which is disjoint from $\cF$. Write $\frs_k^+$ for the $\Spin^c$ structure with $\langle c_1(\frs_k^+), E\rangle =2k+1$, for $k\ge 0$. Let $\frs_k^-$ be the $\Spin^c$ structure with $\langle c_1(\frs_k^-), E\rangle =-(2k-1)$, for $k\ge 0$.  Theorem~\ref{thm:linkadjunctioninequality} implies that  $F_{W,\cF,\frs_k^{\pm}}\simeq 0$ on $\CFL^-$ unless $k=0$. Indeed using Theorem~\ref{thm:adjunctionrelationlinkcob}, for appropriate stabilizations of $E$, we obtain the relations
\[
F_{W,\cF,\frs_k^+}\simeq (UV)^{k}\cdot  F_{W,\cF,\frs_{k-1}^+}\simeq\cdots\simeq  (UV)^{k(k+1)/2}\cdot F_{W,\cF,\frs_0^+},
\]
on $\cCFL^-(S^3,\bL,\frs_0)$.
 A similar relation holds for a $\Spin^c$ structure $\frs_k^-$ with $k\ge 1$. In analogy to the standard blow-up formula \cite{OSTriangles}*{Theorem~1.4}, we expect $F_{W,\cF,\frs_k^+}\simeq (UV)^{k(k+1)/2}\cdot \id_{\cCFL^-(S^3,\bL)}$, however for the sake of brevity, we will not endeavor to write down a proof.
\end{example}

 \begin{example} Let $(W_2,\cF)\colon (S^3,\bK_+)\to (S^3,\bK_-)$ denote a decorated version of the 2-handle knot cobordism $(W_2,\Sigma_2)$ shown in Figure~\ref{fig::14}, which changes a positive crossing to a negative one. The 4-manifold $W_2$ is obtained by attaching a 2-handle to an unknot with framing $-1$ which is linked with $K_+$. Define the $\Spin^c$ structures $\frs_k^{\pm}\in \Spin^c(W_2)$ as in Example~\ref{ex:blowup1}. There is an embedded torus $T$ in the complement of $\cF$ which is a generator of $H_2(W_2;\Z)$, and satisfies
 \[
|\langle c_1(\frs_k^{\pm}), T\rangle| +[T]\cdot [T]=2k.
 \]
  The torus $T$ is shown in Figure~\ref{fig::47}. By Theorem~\ref{thm:adunctioninequalitylinkcobs}, it follows that whenever $k>0$, the map $F_{W,\cF,\frs_{k}^{\pm}}$ vanishes on $\HFK^-$ and $\Hat{\HFK}$ by Theorem~\ref{thm:linkadjunctioninequality}.
 
 We note in \cite{OSKnots}*{Theorem~8.2}, Ozsv\'{a}th and Szab\'{o} give a proof of the Skein exact triangle for knot Floer homology, by adapting the proof of the standard surgery exact triangle for $\Hat{\HF}$ \cite{OSProperties}*{Theorem~9.16}. The map $F:=\sum_{\frs\in \Spin^c(W_2)} F_{W_2,\cF,\frs}$ is one of the maps in their Skein exact sequence for $\Hat{\HFK}$ . We note that in the statement of \cite{OSKnots}*{Theorem~8.2}, the map $F$ is not known to be a graded map with respect to the Maslov grading. Theorem~\ref{thm:adunctioninequalitylinkcobs} shows  $F$ in fact does preserve the Maslov grading, since the only $\Spin^c$ structures with non-trivial contribution are $\frs_0^+$ and $\frs_0^-$. 
 
 This is in contrast to \cite{OSSkeinExactSequence}, where Ozsv\'{a}th and Szab\'{o} give a proof of the Skein exact triangle, where all three maps in the triangle are graded. The proof they give is not an adaptation of the surgery exact sequence, but instead is a careful examination of certain specially constructed Heegaard diagrams.
 \end{example}

\begin{figure}[ht!]
	\centering
	%% Creator: Inkscape inkscape 0.92.3, www.inkscape.org
%% PDF/EPS/PS + LaTeX output extension by Johan Engelen, 2010
%% Accompanies image file '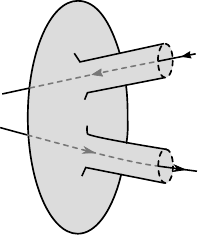' (pdf, eps, ps)
%%
%% To include the image in your LaTeX document, write
%%   \input{<filename>.pdf_tex}
%%  instead of
%%   \includegraphics{<filename>.pdf}
%% To scale the image, write
%%   \def\svgwidth{<desired width>}
%%   \input{<filename>.pdf_tex}
%%  instead of
%%   \includegraphics[width=<desired width>]{<filename>.pdf}
%%
%% Images with a different path to the parent latex file can
%% be accessed with the `import' package (which may need to be
%% installed) using
%%   \usepackage{import}
%% in the preamble, and then including the image with
%%   \import{<path to file>}{<filename>.pdf_tex}
%% Alternatively, one can specify
%%   \graphicspath{{<path to file>/}}
%% 
%% For more information, please see info/svg-inkscape on CTAN:
%%   http://tug.ctan.org/tex-archive/info/svg-inkscape
%%
\begingroup%
  \makeatletter%
  \providecommand\color[2][]{%
    \errmessage{(Inkscape) Color is used for the text in Inkscape, but the package 'color.sty' is not loaded}%
    \renewcommand\color[2][]{}%
  }%
  \providecommand\transparent[1]{%
    \errmessage{(Inkscape) Transparency is used (non-zero) for the text in Inkscape, but the package 'transparent.sty' is not loaded}%
    \renewcommand\transparent[1]{}%
  }%
  \providecommand\rotatebox[2]{#2}%
  \newcommand*\fsize{\dimexpr\f@size pt\relax}%
  \newcommand*\lineheight[1]{\fontsize{\fsize}{#1\fsize}\selectfont}%
  \ifx\svgwidth\undefined%
    \setlength{\unitlength}{104.59414591bp}%
    \ifx\svgscale\undefined%
      \relax%
    \else%
      \setlength{\unitlength}{\unitlength * \real{\svgscale}}%
    \fi%
  \else%
    \setlength{\unitlength}{\svgwidth}%
  \fi%
  \global\let\svgwidth\undefined%
  \global\let\svgscale\undefined%
  \makeatother%
  \begin{picture}(1,1.07648621)%
    \lineheight{1}%
    \setlength\tabcolsep{0pt}%
    \put(0,0){\includegraphics[width=\unitlength,page=1]{fig47.pdf}}%
    \put(0.87823096,0.86104734){\color[rgb]{0,0,0}\makebox(0,0)[lt]{\lineheight{1.25}\smash{\begin{tabular}[t]{l}$K_+$\end{tabular}}}}%
    \put(0.30736225,0.91265621){\color[rgb]{0,0,0}\makebox(0,0)[lt]{\lineheight{1.25}\smash{\begin{tabular}[t]{l}$T$\end{tabular}}}}%
    \put(0.48935184,1.01315806){\color[rgb]{0,0,0}\makebox(0,0)[lt]{\lineheight{1.25}\smash{\begin{tabular}[t]{l}$U$\end{tabular}}}}%
  \end{picture}%
\endgroup%

	\caption{\textbf{The embedded torus $T$ in $W_2\setminus \Sigma_2.$} The exterior circle is the unknot $U$ which we perform $-1$ surgery on to obtain $(S^3, K_-)$. The gray region shown is a punctured torus in $S^3$, with boundary on the unknot $U$. By attaching the core of the 2-handle which is attached along $U$, we obtain a torus with self intersection $-1$.}\label{fig::47}
\end{figure}

\bibliographystyle{custom}
\bibliography{biblio}

\end{document}